\theoremstyle{plain}
\newtheorem{theorem}{Theorem}[section]
\newtheorem{lemma}[theorem]{Lemma}
\newtheorem{proposition}[theorem]{Proposition}
\theoremstyle{definition}
\newtheorem{definition}[theorem]{Definition}
\newtheorem{remark}[theorem]{Remark}
\newtheorem{example}[theorem]{Example}
\newtheorem{question}[theorem]{Question}
\numberwithin{equation}{section}
\numberwithin{figure}{section}
\numberwithin{table}{section}
\newcommand{\R}{\mathbb{R}}
\newcommand{\bp}{\mathbb{P}}
\newcommand{\F}{\mathbb{F}}
\newcommand{\tangle}[1]{\langle #1 \rangle}
\renewcommand{\geq}{\geqslant}
\renewcommand{\leq}{\leqslant}
\newcommand{\bu}{\mathbf{u}}
\newcommand{\bv}{\mathbf{v}}
\newcommand{\bx}{\mathbf{x}}
\newcommand{\bz}{\mathbf{z}}
\newcommand{\cS}{\mathcal{S}}
\begin{document}

\title[Entrywise calculus, dimension-free positivity preservers, sphere
packings]{The entrywise calculus and dimension-free positivity
preservers, with an Appendix on sphere packings}

\author{Apoorva Khare}
\address[A.~Khare]{Department of Mathematics, Indian Institute of
Science, Bangalore 560012, India and Analysis \& Probability Research
Group, Bangalore 560012, India}
\email{\tt khare@iisc.ac.in}

\date{\today}

\begin{abstract}
We present an overview of a classical theme in analysis and matrix
positivity: the question of which functions preserve positive
semidefiniteness when applied entrywise. In addition to drawing the
attention of experts such as Schoenberg, Rudin, and Loewner, the subject
has attracted renewed attention owing to its connections to various
applied fields and techniques. In this survey we will focus mainly on the
question of preserving positivity in all dimensions. Connections to
distance geometry and metric embeddings, positive definite sequences and
functions, Fourier analysis, applications and covariance estimation,
Schur polynomials, and finite fields will be discussed.

The Appendix contains a mini-survey of sphere packings, kissing numbers,
and their ``lattice'' versions. This part overlaps with the rest of the
article via Schoenberg's classification of the positive definite
functions on spheres, aka dimension-free entrywise positivity preservers
with a rank constraint -- applied via Delsarte's linear programming
method.
\end{abstract}

\keywords{positive semidefinite matrix,
entrywise calculus,
Toeplitz matrix,
Hankel matrix,
absolutely monotonic function,
metric geometry,
spherical embedding,
positive definite function,
two-point homogeneous space,
Schur polynomials,
symmetric function identities,
sample covariance,
covariance estimation,
finite fields; \quad
sphere packing, packing density, lattice packing, $E_8$ lattice,
Leech lattice, kissing number, Hermite constant, spherical harmonics,
addition theorem, Gegenbauer polynomial, Chebyshev polynomial, spherical
code}

\subjclass[msc2020]{
15-02, %Research exposition (monographs, survey articles) pertaining to linear algebra
26-02, %Research exposition (monographs, survey articles) pertaining to real functions
52-02, %Research exposition (monographs, survey articles) pertaining to convex and discrete geometry
52-03; %History of convex and discrete geometry
15B48, %Positive matrices and their generalizations; cones of matrices
51F99, %Metric geometry, none of the above, but in this section
15B48, %Positive matrices and their generalizations; cones of matrices
42A70, %Trigonometric moment problems in one variable harmonic analysis
15A15, %Determinants, permanents, traces, other special matrix functions
15A45, %Miscellaneous inequalities involving matrices
15A83, %Matrix completion problems
62J10, %Analysis of variance and covariance (ANOVA)
47B34, %Kernel operators
42A82, %Positive definite functions in one variable harmonic analysis
43A35, %Positive definite functions on groups, semigroups, etc.
51K05, %General theory of distance geometry
30L05, %Geometric embeddings of metric spaces
52C17, %Packing and covering in $n$ dimensions (aspects of discrete geometry)
33C55, %Spherical harmonics
11H31, %Lattice packing and covering (number-theoretic aspects)
94B27} %Geometric methods (including applications of algebraic geometry) applied to coding theory

\maketitle

\settocdepth{subsubsection}
\tableofcontents

%{{{1 Section 1 - Introducing positivity preservers
\section{Introducing positivity preservers}

The goal of this article is to survey a foundational result in matrix
analysis, whose origins can be traced back to P\'olya and Szeg\H{o}
exactly one hundred years ago (following Schur). This result, originally
proved by Schoenberg (and then Rudin and many others), continues to yield
connections to active areas of mathematics and applied fields.

The result in question combines two evergreen ingredients in mathematics:
preserver problems and positive matrices. The notion of positivity is as
old as mathematics -- starting with counting and measuring. More
pertinently, positivity of real symmetric matrices occurs at least as
early as the second partial derivative test for local minima (via the
Hessian matrix). On the complex side, an early occurrence of positive
semidefinite matrices is in Pick and Nevanlinna's solutions of their
eponymous interpolation problem (1910s).

Recall that a Hermitian matrix $A \in \mathbb{C}^{n \times n}$ is said to
be \textit{positive definite} if the associated quadratic form $Q(x) :=
x^* A x$, $x \in \mathbb{C}^n$ is positive definite (this notation has
appeared as early as~{1868} in~\cite{Smith} -- in connection with the
existence of the $E_8$ lattice; see Section~\ref{Slattices} in the
Appendix). The non-strict relaxation of this condition is that of
\textit{positive semidefiniteness} of a Hermitian matrix $A$:
\begin{equation}\label{Epsd}
x^* A x = \tangle{Ax, x} \geq 0\ \forall x \in \mathbb{C}^n.
\end{equation}

Classical results by Sylvester and others provide numerous
characterizations of this notion:

\begin{theorem}\label{Tbasics}
The following are equivalent for a complex (resp.\ real) Hermitian matrix
$A_{n \times n}$:
\begin{enumerate}
\item $A$ is positive semidefinite (henceforth termed \textbf{psd}, or
simply {\em positive}): $x^* A x \geq 0\ \forall x \in \mathbb{C}^n$
(resp.\ $x \in \mathbb{R}^n$).

\item The eigenvalues of $A$ are all in $[0,\infty)$.

\item $A = B^* B$ for some 
$B \in \mathbb{C}^{n \times n}$ (resp.\ $B \in \mathbb{R}^{n \times n}$).

\item There exist vectors in $\mathbb{C}^n$ (resp.\ $\mathbb{R}^n$), say
$\bx_1, \dots, \bx_n$, such that $A$ is their {\em Gram matrix}:
$a_{ij} = \tangle{\bx_i, \bx_j}$ for all $1 \leq i,j \leq n$.

\item (Sylvester's criterion.) The principal minor $\det A_{I \times I}$
is nonnegative$,$ for all $I \subseteq [n] := \{ 1, \dots, n \}$.
\end{enumerate}
\end{theorem}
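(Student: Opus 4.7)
The plan is to use the spectral theorem for Hermitian matrices to link (1)--(4) cheaply, and then separately bring in (5) via the characteristic polynomial.

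For the first phase, I would write $A = U D U^*$ with $U$ unitary and $D = \operatorname{diag}(\lambda_1, \ldots, \lambda_n)$ real. The substitution $y := U^* x$ converts $x^* A x$ into $\sum_i \lambda_i |y_i|^2$, giving the equivalence (1)$\Leftrightarrow$(2) at once. For (2)$\Rightarrow$(3) one sets $B := D^{1/2} U^*$, which makes sense exactly because each $\lambda_i \geq 0$. For (3)$\Rightarrow$(4), let $\bx_i$ denote the $i$-th column of $B$, so that $a_{ij} = (B^* B)_{ij} = \langle \bx_i, \bx_j \rangle$. And (4)$\Rightarrow$(1) is the computation $x^* A x = \| \sum_i x_i \bx_i \|^2 \geq 0$. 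Each of these is a single line once the spectral decomposition is in hand.

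For the second phase, (1)$\Rightarrow$(5) is straightforward: restricting the test vector $x$ to be supported on an index set $I \subseteq [n]$ shows that each submatrix $A_{I \times I}$ is itself psd, and so has nonnegative determinant via the already-established (1)$\Rightarrow$(2).

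The hard part will be the reverse direction (5)$\Rightarrow$(2), because one genuinely needs every principal minor to be nonnegative, not merely the leading ones --- the matrix $\operatorname{diag}(0,-1)$ has all leading principal minors equal to $0$ yet fails (1), so a naive induction on $n$ cannot work. My approach is to expand $\det(\lambda I - A)$ using multilinearity in the columns to obtain the identity
\[
\det(\lambda I - A) = \sum_{k=0}^n (-1)^k \lambda^{n-k} \sum_{I \subseteq [n],\, |I| = k} \det A_{I \times I},
\]
and then match it with $\prod_i (\lambda - \lambda_i) = \sum_{k=0}^n (-1)^k e_k(\lambda_1, \ldots, \lambda_n)\, \lambda^{n-k}$. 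This identifies the $k$-th elementary symmetric polynomial $e_k(\lambda_1, \ldots, \lambda_n)$ with the sum of $k \times k$ principal minors, which is nonnegative under (5). Evaluating at $\lambda = -t$ for $t > 0$ then gives $(-1)^n \det(-tI - A) = t^n + e_1 t^{n-1} + \cdots + e_n > 0$, ruling out negative real eigenvalues; since $A$ is Hermitian its eigenvalues are all real, so all are nonnegative and (2) holds. The combinatorial principal-minor expansion of the characteristic polynomial is the only step I expect to require any real care.
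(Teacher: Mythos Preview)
Your proof is correct. Note, however, that the paper does not actually prove Theorem~\ref{Tbasics}: it is stated as a classical result (``Classical results by Sylvester and others provide numerous characterizations\ldots'') and invoked without proof throughout the survey. So there is no proof in the paper to compare against. Your argument is a standard and complete one; the route to (5)$\Rightarrow$(2) via the principal-minor expansion of the characteristic polynomial is clean, and your remark that $\operatorname{diag}(0,-1)$ shows leading principal minors alone do not suffice correctly identifies why the full Sylvester criterion is needed in the semidefinite case.
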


The following fact is also standard.

\begin{lemma}\label{Lbasics}
Let $A$ be the Gram matrix of any finite set of vectors drawn from
$\R^r$. Then $A$ has rank at most $r$, with equality if and only if the
vectors span $\R^r$.
\end{lemma}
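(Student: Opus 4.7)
\smallskip

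\noindent\textbf{Proof plan.} The plan is to package the vectors $\bx_1, \dots, \bx_n \in \R^r$ as the columns of an $r \times n$ matrix $X$, so that $A = X^T X$ by definition of the Gram matrix. Both claims will then reduce to the linear-algebra identity $\mathrm{rank}(X^T X) = \mathrm{rank}(X)$.

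First I would establish this identity by comparing kernels: clearly $\ker(X) \subseteq \ker(X^T X)$, and conversely, if $X^T X \bv = 0$ then $\|X \bv\|^2 = \bv^T X^T X \bv = 0$, so $X \bv = 0$. Hence $\ker(X^T X) = \ker(X)$, and the rank-nullity theorem gives $\mathrm{rank}(X^T X) = \mathrm{rank}(X)$.

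Now $\mathrm{rank}(X) \leq r$ because $X$ has only $r$ rows, which yields the inequality $\mathrm{rank}(A) \leq r$. For the equality statement, observe that the column space of $X$ is precisely the span of $\bx_1, \dots, \bx_n$ in $\R^r$; this span has dimension $r$ (that is, equals all of $\R^r$) if and only if $\mathrm{rank}(X) = r$, if and only if $\mathrm{rank}(A) = r$.

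There is no real obstacle here; the only step that deserves attention is the kernel equality $\ker(X^T X) = \ker(X)$, which is the one place where positivity of the inner product (and not merely the definition of a Gram matrix) is used. Everything else is formal linear algebra.
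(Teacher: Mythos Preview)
Your argument is correct: writing $A = X^T X$ and using the kernel identity $\ker(X^T X) = \ker(X)$ (established via $\|X\bv\|^2 = \bv^T X^T X \bv$) is exactly the standard route, and both the rank bound and the equality characterization follow as you say. The paper itself does not supply a proof of this lemma, treating it as a standard fact, so there is nothing to compare against; your write-up is a clean justification of what the paper leaves implicit.
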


In this survey, the \textbf{question of interest} is to understand the
functions that are applied to matrices and preserve positivity.
Understanding preservers of mathematical structures is an age-old
question; for example, in matrix theory one of the first such results is
by Frobenius, who in~{1897} classified all determinant-preserving linear
maps on matrix algebras~\cite{Frob}. Just as another example:
Marcus~\cite{Marcus59} and Russo--Dye~\cite{Russo-Dye66} classified
linear preservers of the unitary group in $\mathbb{C}^{n \times n}$ and
in general $C^*$-algebras, respectively. However, even after a century of
work, basic questions in preservers of positive matrices remain
unanswered. As a first example, the linear preservers of positive
semidefinite matrices are not fully determined; for this and other
aspects of the area, see the survey articles~\cite{GLS2000,Li-Pierce01}
and the monograph~\cite{Molnar07}. (The goal of this survey is to discuss
nonlinear preservers, and we will mention another open question below.)

\begin{definition}
Returning to the question of interest -- and removing the linearity
constraint -- there are two natural ways in which a function acts on a
Hermitian matrix $A_{n \times n} = U^* D U$ (by the spectral theorem),
with $D = {\rm diag}(\lambda_1, \dots \lambda_n)$:
\begin{itemize}
\item On its spectrum, via the \textit{functional calculus}: $f(A) := U^*
f(D) U$, where $f(D) := {\rm diag}(f(\lambda_1), \dots, f(\lambda_n))$;
and
\item On its entries, via the \textit{entrywise calculus}: $f[A] :=
(f(a_{ij}))_{i,j=1}^n$.
\end{itemize}
There is also a second notion of positivity:
\begin{equation}
A = (a_{ij}) \in  \mathbb{C}^{n \times n} \text{ is } entrywise\
nonnegative \text{ if } a_{ij} \in [0,\infty) \ \forall i,j \in [1,n].
\end{equation}
\end{definition}

While the functional calculus is the more well-known mechanism, this work
will mainly focus on the entrywise calculus. From above, we find four
ways in which a function may act on matrix spaces and preserve
positivity:
\begin{enumerate}
\item $f(A)$ is psd if $A$ is psd.

\item $f[A]$ is entrywise nonnegative if $A$ is so.

\item $f(A)$ is entrywise nonnegative if $A$ is so.

\item $f[A]$ is psd if $A$ is psd.
\end{enumerate}\smallskip

\noindent \textbf{Notation.} Unless otherwise declared, we will
henceforth focus on real symmetric matrices, and real-valued functions
acting on them.\medskip

Now the first two of the four classifications above are easy:

\begin{proposition}
The functions satisfying conditions~(1) or~(2) above, are precisely the
functions $f : [0,\infty) \to [0,\infty)$.
\end{proposition}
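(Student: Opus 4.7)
The plan is to prove the equivalence by establishing both directions for each of conditions (1) and (2), then observe that the two conditions collapse to the same characterization. The forward direction---that any $f : [0,\infty) \to [0,\infty)$ satisfies both (1) and (2)---is essentially a restatement of how each calculus is defined. For (1), invoking the spectral theorem one writes $A = U^* D U$ with $D = \mathrm{diag}(\lambda_1,\dots,\lambda_n)$; by Theorem~\ref{Tbasics} the eigenvalues $\lambda_i$ of a psd matrix lie in $[0,\infty)$, so $f(\lambda_i) \in [0,\infty)$ by hypothesis, whence $f(A) = U^* \mathrm{diag}(f(\lambda_1),\dots,f(\lambda_n)) U$ has nonnegative spectrum and is therefore psd. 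For (2), the statement is immediate: if each $a_{ij} \in [0,\infty)$ then each $f(a_{ij}) \in [0,\infty)$.

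The converse directions say that one cannot enlarge the codomain (i.e.\ $f$ must be nonnegative on $[0,\infty)$) nor insist $f$ be defined only on a proper subset of $[0,\infty)$. To see this it suffices to test on the simplest possible matrices. For any $a \in [0,\infty)$, the $1 \times 1$ matrix $(a)$ is simultaneously psd and entrywise nonnegative, and moreover $f(A) = f[A] = (f(a))$ under either calculus. Hence condition (1) (psd output) or condition (2) (entrywise nonnegative output) at this single matrix forces $f(a) \geq 0$. Ranging over $a \in [0,\infty)$ gives both that $f$ must be defined on all of $[0,\infty)$ and that its image lies in $[0,\infty)$.

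If one prefers to work at a fixed size $n \geq 2$ rather than with $1 \times 1$ matrices, the identical argument goes through using the block-diagonal test matrix $A = \mathrm{diag}(a,0,\dots,0)$, which is psd and entrywise nonnegative whenever $a \geq 0$; then $f(A)$ and $f[A]$ each have $f(a)$ as a diagonal entry (and hence as an eigenvalue, in the psd case), forcing $f(a) \geq 0$ just as before.

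There is essentially no technical obstacle here; the proposition is elementary and serves mainly to frame the subsequent, substantially harder classification of entrywise psd preservers (i.e.\ condition~(4)). The only point worth flagging is that the characterization is exactly the same for (1) and (2), even though the two notions of positivity preservation will diverge dramatically when one crosses them (as in condition~(4)), which is the main subject of the survey.
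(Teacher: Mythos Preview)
Your proof is correct. The paper does not actually supply a proof of this proposition --- it is stated as self-evident and the text moves immediately to condition~(3) --- so there is nothing to compare against; your argument via $1 \times 1$ test matrices is exactly the kind of one-line verification the authors evidently had in mind.
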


The third was worked out by Hansen, for all positive matrices with
entries in a symmetric interval (for completeness, we also mention the
related work~\cite{bharali} of Bharali--Holtz):

\begin{theorem}[{\cite[Theorem~3.3(1)]{Hansen92}}]
Fix $0 < \rho \leq \infty$, and let a function $f : (-\rho,\rho) \to \R$.
The following are equivalent.
\begin{enumerate}
\item If $A$ is any real symmetric matrix (of any dimension) with
spectrum in $(-\rho,\rho)$ and nonnegative entries, then $f(A)$ has
nonnegative entries.

\item $f$ is given on $(-\rho,\rho)$ by a convergent power series
$\sum_{k \geq 0} c_k x^k$ with nonnegative coefficients: $c_k \geq 0\
\forall k \geq 0$.
\end{enumerate}
\end{theorem}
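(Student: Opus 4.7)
The direction $(2) \Rightarrow (1)$ is a one-line application of the functional calculus. If $f(x) = \sum_{k \geq 0} c_k x^k$ converges on $(-\rho,\rho)$ and the symmetric matrix $A$ has spectral radius strictly less than $\rho$, then $f(A) = \sum_{k \geq 0} c_k A^k$ converges in operator norm. Each power of an entrywise nonnegative matrix is entrywise nonnegative, and a nonnegative linear combination of such matrices retains the property, so $f(A)$ has nonnegative entries.

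For $(1) \Rightarrow (2)$, I would proceed in three stages: pointwise information from small matrices, absolute monotonicity on $[0,\rho)$ via Bernstein's theorem, and extension of the resulting series to $(-\rho,0)$. The $1 \times 1$ matrix $(x)$ with $x \in [0,\rho)$ yields $f \geq 0$ on $[0,\rho)$; and the matrix $A = \bigl(\begin{smallmatrix} x & y \\ y & x \end{smallmatrix}\bigr)$ with $0 \leq y \leq x$, $x+y < \rho$ (simultaneously diagonalized by a fixed Hadamard-type orthogonal matrix independent of $x,y$) produces $f(A)$-entries $\tfrac12(f(x+y) \pm f(x-y))$, so the nonnegativity hypothesis gives $f(u) \geq |f(v)|$ whenever $|v| \leq u$ with $u,v \in (-\rho,\rho)$; in particular $f$ is nondecreasing on $[0,\rho)$ and $|f(-x)| \leq f(x)$.

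The crux is the second stage: to show that $f|_{[0,\rho)}$ is absolutely monotonic, i.e., $C^\infty$ with $f^{(k)} \geq 0$ for every $k \geq 0$. My plan is, for each ordered grid $0 \leq x_0 < \cdots < x_k < \rho$, to construct an entrywise nonnegative symmetric matrix with spectrum $\{x_0, \ldots, x_k\}$ and eigenvector basis arranged so that one off-diagonal entry of $f(A) = V\,{\rm diag}(f(x_0),\dots,f(x_k))\,V^T$ equals a positive multiple of the divided difference $f[x_0,\dots,x_k]$. Imposing nonnegativity of this entry, then letting the grid coalesce, forces $f^{(k)} \geq 0$ on $[0,\rho)$. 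Bernstein's theorem then upgrades absolute monotonicity to real analyticity: $f(x) = \sum_{k \geq 0} c_k x^k$ on $[0,\rho)$ with $c_k \geq 0$, and by nonnegativity of the coefficients the series in fact converges on all of $(-\rho, \rho)$.

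To match $f$ with $F(x) := \sum_{k \geq 0} c_k x^k$ on $(-\rho, 0)$, I would exploit test matrices whose spectra straddle both signs, most simply $A = \bigl(\begin{smallmatrix} 0 & x \\ x & 0 \end{smallmatrix}\bigr)$ (eigenvalues $\pm x$), together with higher-dimensional analogues having prescribed negative eigenvalues. Since both $f$ and $F$ satisfy $(1)$ and agree on $[0,\rho)$, the system of inequalities produced (applied in turn to $f$ and to $F$) pins down $f(-x) = F(-x)$. The main obstacle throughout this strategy is the engineering of the test matrices, especially in the divided-difference step: symmetric entrywise nonnegative matrices satisfy Perron--Frobenius-type constraints on their spectra, so realizing a prescribed grid of eigenvalues together with the eigenvector alignment needed to isolate a single divided difference as a single entry of $f(A)$ is the genuinely delicate ingredient of the argument.
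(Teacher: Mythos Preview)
The paper is a survey and does not prove Hansen's theorem; it merely cites \cite{Hansen92}. So there is no ``paper's own proof'' to compare against, and I assess your proposal on its own merits.

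Your argument for $(2)\Rightarrow(1)$ is correct. For $(1)\Rightarrow(2)$, the three-stage plan is reasonable and Stage~1 is fine, but Stages~2 and~3 each contain a genuine gap that you have flagged but not closed. For Stage~2, the right tool is indeed the classical corner-entry formula for a Jacobi (symmetric tridiagonal) matrix $J$ with positive subdiagonal $b_1,\dots,b_k$ and simple spectrum $\{x_0,\dots,x_k\}$: one has $f(J)_{1,k+1}=b_1\cdots b_k\,f[x_0,\dots,x_k]$, which is exactly the divided difference you want. What you have \emph{not} done is show that, given nonnegative nodes $0\le x_0<\cdots<x_k<\rho$, a Jacobi realization with \emph{nonnegative diagonal} exists. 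This is precisely the Perron--Frobenius obstruction you allude to; one way around it is to take $J=xI+yP_k$ with $P_k$ the path-graph adjacency matrix (automatically nonnegative for $x,y\ge0$), whose corner entry yields a specific combination of values $f(x+2y\cos\frac{j\pi}{k+1})$ that still forces higher differences to be nonnegative. Either route requires an actual argument.

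Stage~3 is the more serious gap. The $2\times2$ matrix $\bigl(\begin{smallmatrix}0&x\\x&0\end{smallmatrix}\bigr)$ only yields $|f(-x)|\le f(x)$, and the $n\times n$ matrix $t(J_n-I)$ with spectrum $\{(n-1)t,-t,\dots,-t\}$ only yields the sandwich $-\tfrac{1}{n-1}f((n-1)t)\le f(-t)\le f((n-1)t)$. These are inequalities, not equalities, and your claim that ``the system of inequalities\dots\ pins down $f(-x)=F(-x)$'' is unsupported: for each fixed $x$ the feasible interval for $f(-x)$ has positive length, and comparing $f$ with $F=\sum c_kx^k$ (both satisfying~(1)) does not force them to agree, since $f-F$ need not itself satisfy~(1). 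A correct proof of this step requires either a richer family of test matrices with several prescribed negative eigenvalues, or an argument that bypasses pointwise identification on $(-\rho,0)$ altogether (for instance, by proving absolute monotonicity directly on a two-sided neighborhood via matrices of the form $xI+yB$ with $x\ge0$ and carefully chosen $B$). As written, this stage is a hope rather than a proof.
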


This leaves the fourth and final question, which is the subject of this
article:

\begin{question}
\textit{Which entrywise maps preserve positive semidefiniteness (of
matrices of all dimensions)?}
\end{question}
%}}}

%{{{1 Section 2 - Schoenberg's theorem and its (classical) variants
\section{Schoenberg's theorem and its (classical) variants}

We now embark on the study of the question above. Our journey begins
exactly one hundred years ago, which is when the question was asked, in
the well-known 1925 book \cite{polya-szego} of P\'olya and Szeg\H{o}. The
authors also supplied a large class of functions that are entrywise
positivity preservers -- the reason behind this is a celebrated 1911
result of Schur:

\begin{theorem}[{Schur product theorem,
\cite[Satz~VII]{Schur1911}}]\label{Tschur}
If $n \geq 1$, and two $n \times n$ matrices $A,B$ are positive
semidefinite, then so is their {\em Schur/entrywise product}
\begin{equation}
A \circ B := (a_{ij} b_{ij})_{i,j=1}^n.
\end{equation}
\end{theorem}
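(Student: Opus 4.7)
My approach will be to use the Gram-matrix characterization in Theorem~\ref{Tbasics}(4), which yields a short, conceptual proof and reveals \emph{why} the Schur product preserves positivity.

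First, I would invoke Theorem~\ref{Tbasics}(4) on $A$ and $B$ separately, producing vectors $\bu_1, \dots, \bu_n$ and $\bv_1, \dots, \bv_n$ with $a_{ij} = \tangle{\bu_i, \bu_j}$ and $b_{ij} = \tangle{\bv_i, \bv_j}$ for all $i,j$. Next, I would apply the multiplicativity of inner products under tensor products, namely $\tangle{\bu \otimes \bv,\, \bu' \otimes \bv'} = \tangle{\bu, \bu'}\,\tangle{\bv, \bv'}$, to rewrite each entry of the Schur product as
$$a_{ij}\, b_{ij} = \tangle{\bu_i \otimes \bv_i,\ \bu_j \otimes \bv_j}.$$
This exhibits $A \circ B$ as a Gram matrix (in the ambient space $\C^n \otimes \C^n$), so for every $x \in \C^n$,
$$x^* (A \circ B) x = \Big\| \sum_{i=1}^n x_i\, (\bu_i \otimes \bv_i) \Big\|^2 \geq 0,$$
confirming that $A \circ B$ is psd.

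I do not anticipate a serious obstacle, since both ingredients -- the Gram-matrix description in Theorem~\ref{Tbasics}(4) and multiplicativity of inner products on tensors -- are entirely elementary. A variant worth noting, if one wishes to bypass tensors explicitly, is to realize $A \circ B$ as the principal submatrix of the Kronecker product $A \otimes B$ indexed by the ``diagonal'' pairs $\{(i,i) : 1 \leq i \leq n\}$: the spectrum of $A \otimes B$ consists of the products $\lambda_i(A)\,\lambda_j(B)$, which are nonnegative, so $A \otimes B$ is psd by Theorem~\ref{Tbasics}(2), and any principal submatrix of a psd matrix is psd. The only genuine content of the argument is the observation that entrywise multiplication of Gram matrices corresponds to tensoring the underlying vectors.
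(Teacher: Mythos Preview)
Your proposal is correct and aligns with the paper's treatment: the paper gives exactly your Kronecker-product variant (that $A \circ B$ is a principal submatrix of $A \otimes B$) as one proof, and your Gram-matrix/tensor-product argument is a transparent reformulation of that same idea. The paper additionally records a spectral-decomposition proof---writing $A = \sum_i \lambda_i v_i v_i^*$ and $B = \sum_j \mu_j u_j u_j^*$ and expanding $A \circ B = \sum_{i,j} \lambda_i \mu_j (v_i \circ u_j)(v_i \circ u_j)^*$---which is another face of the same tensor-product structure you identified.
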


Among other things, this theorem is useful in proving that the closed
convex cone of positive definite kernels (defined below) on $X \times X$
for any set $X$ is moreover closed under multiplication. 

\begin{proof}
There are several proofs of this result: for instance, $A \circ B$ is a
principal submatrix of the Kronecker product $A \otimes B$, which is
positive semidefinite. Alternately, write
\[
A = \sum_{i \geq 1} \lambda_i v_i v_i^*, \qquad B = \sum_{j \geq 1} \mu_j
u_j u_j^*, \qquad \text{with all } \lambda_i, \mu_j \geq 0
\]
by the spectral theorem and Theorem~\ref{Tbasics}. As the entrywise/Schur
product is bilinear, $\displaystyle A \circ B = \sum_{i,j \geq 1}
\lambda_i \mu_j (v_i \circ u_j) (v_i \circ u_j)^*$, and this is positive
semidefinite from first principles.
\end{proof}

The Schur product theorem helps find entrywise positivity
preservers as follows. It is clear from the definition~\eqref{Epsd} that
the positive semidefinite (psd) matrices form a closed convex cone: they
are stable under sums, positive dilations, and entrywise limits. In
addition, by Theorem~\ref{Tschur} they are also closed under Schur
products. Thus, the set of entrywise maps preserving positivity is also
closed under these operations. In addition, this set contains the
functions $f(x) \equiv 1, x$ -- since the latter leaves each psd matrix
unchanged, while the former sends it entrywise to the all-ones matrix
${\bf 1}_{n \times n} := (1)_{i,j=1}^n$, and this rank-one matrix is psd.
Thus, the closure of $\{ 1, x \}$ under sums, positive dilations,
products, and limits preserves positivity of matrices of all sizes. This
closure is precisely the set of convergent power series with nonnegative
coefficients, and this was the 1925 observation of P\'olya--Szeg\H{o}:

\begin{definition}
Given a subset $I \subseteq \mathbb{C}$, define $\bp_n(I)$ to be the set
of $n \times n$ positive semidefinite matrices with entries in $I$.
\end{definition}

\begin{theorem}[{\cite[Problem~37]{polya-szego}}]\label{Tps}
Let $I \subseteq \R$ be an interval, and $f(x) = \sum_{k=0}^\infty c_k
x^k$ be a power series convergent on $I$, with all $c_k \in [0,\infty)$.
Then the entrywise map $f[-]$ sends $\bp_n(I)$ to $\bp_n(\R)$ for all $n
\geq 1$.
\end{theorem}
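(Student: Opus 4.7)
The plan is to carry out the informal reasoning sketched in the paragraph immediately preceding Theorem~\ref{Tps}, by exhibiting $f[A]$ as an entrywise limit of nonnegative combinations of Schur powers of $A$, each of which is psd. Fix $n \geq 1$ and $A \in \bp_n(I)$, so in particular $a_{ij} \in I$ for all $i,j$.

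First I would establish that every Schur power $A^{\circ k}$ is psd, where $A^{\circ 0} := \mathbf{1}_{n \times n}$ and $A^{\circ k} := \underbrace{A \circ \cdots \circ A}_{k}$ for $k \geq 1$. The case $k = 0$ follows from $\mathbf{1}_{n \times n} = \mathbf{1}\mathbf{1}^\top$ being a rank-one Gram matrix, hence psd by Theorem~\ref{Tbasics}. The case $k \geq 1$ follows by induction from Theorem~\ref{Tschur} (the Schur product theorem). Consequently, the finite partial sums
\[
S_N[A] := \sum_{k=0}^N c_k A^{\circ k}, \qquad N \geq 0,
\]
are psd, since $c_k \geq 0$ and the cone $\bp_n(\R)$ is closed under nonnegative scalar multiples and sums (immediate from the definition~\eqref{Epsd}).

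Second, I would pass to the limit $N \to \infty$. For each pair $(i,j)$ the entry $a_{ij}$ lies in $I$, so by hypothesis the scalar series $\sum_{k \geq 0} c_k a_{ij}^k$ converges to $f(a_{ij})$. Therefore $S_N[A] \to f[A]$ entrywise as $N \to \infty$. Since convergence is on a finite $n \times n$ grid of scalars, this is equivalent to convergence in any matrix norm, and so for every fixed $x \in \R^n$ we have $x^\top f[A] x = \lim_N x^\top S_N[A] x \geq 0$. Hence $f[A] \in \bp_n(\R)$.

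There is really no serious obstacle here beyond keeping the bookkeeping straight: the argument is a direct synthesis of the Schur product theorem with the closure of $\bp_n(\R)$ under sums, nonnegative scalings, and entrywise limits. The only minor point worth flagging is that the hypothesis ``convergent on $I$'' must be interpreted pointwise on $I$ so as to guarantee convergence of $\sum_k c_k a_{ij}^k$ for each entry $a_{ij} \in I$; no uniformity is needed because one works at a single matrix $A$ of fixed finite size.
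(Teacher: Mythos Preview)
Your proof is correct and follows essentially the same approach as the paper: the paper does not give a separate formal proof of Theorem~\ref{Tps}, but the paragraph immediately preceding it sketches exactly the argument you have carried out in detail --- namely, that Schur powers of a psd matrix are psd by Theorem~\ref{Tschur}, that nonnegative combinations remain psd, and that the psd cone is closed under entrywise limits. You have simply made this sketch rigorous.
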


\subsection{Matrices with entries in $[-1,1]$}

After stating their result, P\'olya--Szeg\H{o} asked if there are other
preservers. This was answered by Schoenberg (a student of Schur and
Sanielevici) 17 years later -- for continuous functions. The next result
-- and its later variants stated below -- can be considered as
collectively forming a deep converse to the Schur product theorem.

\begin{theorem}[{\cite[Theorem~2]{Schoenberg42}}]\label{Tschoenberg}
Let $I = [-1,1]$ and let $f : I \to \R$ be continuous. The following are
equivalent.
\begin{enumerate}
\item The entrywise map $f[-]$ sends $\bp_n(I)$ to $\bp_n(\R)$ for all $n
\geq 1$.

\item $f$ is given on $I$ by a convergent power series $\sum_{k=0}^\infty
c_k x^k$ with all $c_k \geq 0$.
\end{enumerate}
\end{theorem}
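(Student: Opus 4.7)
The implication (2) $\Rightarrow$ (1) is already Theorem~\ref{Tps}. For the substantive direction (1) $\Rightarrow$ (2), the plan is Schoenberg's: reinterpret entrywise positivity preservation as positive definiteness of zonal kernels on spheres in every dimension, invoke the known classification of such kernels as Gegenbauer series with nonnegative coefficients, and then send the dimension to infinity to extract a genuine power series expansion.

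First I would fix $n \geq 2$ and note that for any finite family $\bx_1, \dots, \bx_N$ on the unit sphere $S^{n-1} \subset \R^n$, the Gram matrix $(\tangle{\bx_i, \bx_j})_{i,j}$ lies in $\bp_N([-1,1])$ by Theorem~\ref{Tbasics} and Lemma~\ref{Lbasics}. Hypothesis~(1) then forces $(\bx,\bx') \mapsto f(\tangle{\bx,\bx'})$ to be a continuous positive definite zonal kernel on $S^{n-1}$. By Schoenberg's earlier classification of such kernels, $f$ admits a uniformly convergent expansion on $[-1,1]$ of the form
\begin{equation*}
f(t) = \sum_{k=0}^\infty a_{k,n}\, \frac{C_k^{(\lambda_n)}(t)}{C_k^{(\lambda_n)}(1)}, \qquad \lambda_n = \frac{n-2}{2},
\end{equation*}
with all $a_{k,n} \geq 0$, where $C_k^{(\lambda)}$ denotes the Gegenbauer polynomial. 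Evaluating at $t=1$ yields the crucial uniform $\ell^1$-bound $\sum_k a_{k,n} = f(1)$, independent of $n$.

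The second main step is to pass to the limit $n \to \infty$. Here I would exploit the classical asymptotic $C_k^{(\lambda)}(t)/C_k^{(\lambda)}(1) \to t^k$, uniformly for $t \in [-1,1]$ with $k$ fixed, as $\lambda \to \infty$. The $\ell^1$-bound gives termwise compactness of each sequence $(a_{k,n})_n \subset [0, f(1)]$, so a diagonal extraction produces a subsequence along which $a_{k,n} \to c_k \in [0, f(1)]$ for every $k$, with $\sum_k c_k \leq f(1)$ by Fatou. Identifying $f(t) = \sum_k c_k t^k$ on $[-1,1]$ then completes the proof of~(2).

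The main obstacle is this final interchange of limits: the asymptotic $C_k^{(\lambda)}/C_k^{(\lambda)}(1) \to t^k$ is not uniform in $k$, so one cannot naively commute $\lim_n$ with $\sum_k$. The standard workaround is a truncation argument — split the series at a large index $K$, control the tail uniformly in $n$ by the $\ell^1$-bound $\sum_{k > K} a_{k,n} \leq f(1) - \sum_{k \leq K} a_{k,n}$, and apply the Gegenbauer asymptotic only on the finite head, with $K$ then sent to $\infty$. A secondary ingredient, invoked as a black box, is Schoenberg's classification of continuous positive definite functions on $S^{n-1}$, itself a substantial result relying on spherical harmonics and the addition theorem for Gegenbauer polynomials.
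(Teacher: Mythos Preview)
Your outline is exactly Schoenberg's route, which the paper sketches around Theorems~\ref{Tschoenberg-pd} and~\ref{Tsinfty}: recast~(1) as positive definiteness of $t\mapsto f(t)$ on every sphere $S^{n-1}$, expand in normalized Gegenbauer polynomials via Theorem~\ref{Tschoenberg-pd} as a black box, and send $n\to\infty$ using $G_k^{(n)}(t)\to t^k$.

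The truncation step you propose, however, does not close the argument. The tail bound $\sum_{k>K}a_{k,n_j}=f(1)-\sum_{k\le K}a_{k,n_j}$ is not small uniformly in $j$: along the subsequence it converges to $f(1)-\sum_{k\le K}c_k$, so after sending $K\to\infty$ you obtain only
\[
\Bigl|\,f(t)-\sum_{k\ge 0}c_k t^k\,\Bigr|\ \le\ f(1)-\sum_{k\ge 0}c_k,
\]
and Fatou delivers $\sum_k c_k\le f(1)$ but never equality. Nothing in the $\ell^1$ bound plus diagonal extraction rules out mass escaping to $k=\infty$, in which case the right-hand side is strictly positive and you have not identified $f$. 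Schoenberg supplies the missing tightness by a different mechanism: he uses the explicit integral formula for $a_{k,n}$ against the weight $(1-t^2)^{(n-3)/2}$ together with the nonnegative connection coefficients expressing each $G_k^{(n)}$ in the basis $\{G_j^{(n-1)}\}_{j\le k}$ --- the Gegenbauer linearization the paper alludes to near its discussion of \cite[Footnote~2]{Schoenberg42} --- to track how the $a_{k,n}$ evolve with $n$, rather than relying on abstract compactness of the coefficient sequences.
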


\begin{definition}
Note that each such power series $f(x) = \sum_{k \geq 0} c_k x^k$ is
infinitely differentiable on $I' = (0,1)$ and satisfies: $f^{(k)} \geq 0$
on $I'$. Such a function is said to be \textit{absolutely
monotone/monotonic} on $I'$.
\end{definition}

\begin{remark}\label{Rreduce}
The easy implication is $(2) \implies (1)$, and this is precisely
Theorem~\ref{Tps}. The hard part is $(1) \implies (2)$, which was
Schoenberg's contribution. In fact, Schoenberg assumed positivity
preservation on an even smaller set -- the matrices in $\bp_n([-1,1])$
with all diagonal entries $1$, aka \textit{correlation matrices} -- and
deduced absolute monotonicity.
(From this, one can deduce the power series representation -- and in
particular, real analyticity -- using a 1929 theorem of
Bernstein~\cite{Bernstein}.)
This effort to ``reduce the test set'' will recur in this section.
\end{remark}

Let $C_1$ denote the multiplicative closed convex cone of entrywise
positivity preservers of $\bigcup_{n \geq 1} \bp_n([-1,1])$. In a sense,
the countably many (rescaled) monomials $\R_{\geq 0} x^k$ are the
``extreme rays'' of $C_1$ that are continuous.

It is natural to ask what happens if the continuity assumption is
removed. In this case, there are two other extreme rays that are
discontinuous -- but only at the endpoints -- and they are generated by
the following functions on $[-1,1]$:
\[
f_+(x) := \lim_{n \to \infty} x^{2n} = \mathbbm{1}(x = \pm 1), \qquad
f_-(x) := \lim_{n \to \infty} x^{2n+1} = \mathbbm{1}(x=1) -
\mathbbm{1}(x=-1),
\]
where $\mathbbm{1}(E)$ denotes the indicator of an event/statement $E$.
Thus, nonnegative linear combinations of $f_+, f_-$, and the functions
in Schoenberg's theorem~\ref{Tschoenberg} are indeed (possibly
discontinuous) preservers. A natural question is if there are no others,
and this was affirmatively answered in~1978 by Christensen and Ressel:

\begin{theorem}[{\cite[Theorems~1,2]{ChrRes1}}]
Let $I = [-1,1]$ and let $f : I \to \R$. The following are equivalent.
\begin{enumerate}
\item The entrywise map $f[-]$ sends $\bp_n(I)$ to $\bp_n(\R)$ for all $n
\geq 1$.

\item The function $f$ is equal to a convergent power series plus
two other terms:
\[
f(x) = \sum_{k=0}^\infty c_k x^k + c_{-1} \left( \mathbbm{1}(x=1) -
\mathbbm{1}(x=-1) \right) + c_{-2} \mathbbm{1}(x = \pm 1), \qquad x \in
I,
\]
with $c_k \geq 0$ for all $k \geq -2$ and $\sum_{k \geq -2} c_k <
\infty$.
\end{enumerate}
\end{theorem}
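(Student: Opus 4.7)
The $(2) \implies (1)$ direction follows from Theorem~\ref{Tps} together with two observations. First, the family of dimension-free entrywise positivity preservers on $[-1,1]$ is closed under non-negative sums and pointwise limits -- each $\bp_n(\R)$ is closed under entrywise limits, as the condition $\bx^T A \bx \geq 0$ passes to such limits. Second, $f_+(x) = \lim_{n \to \infty} x^{2n}$ and $f_-(x) = \lim_{n \to \infty} x^{2n+1}$ pointwise on $[-1,1]$, so $f_{\pm}$ are preservers. Combined with the summability $\sum_{k \geq -2} c_k < \infty$ and the uniform bounds $|x|^k, |f_{\pm}(x)| \leq 1$ on $[-1,1]$, this exhibits $f$ as a pointwise convergent non-negative combination of preservers.

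For $(1) \implies (2)$, the first reductions are routine: testing $f$ on $1 \times 1$ matrices gives $f \geq 0$ on $[0,1]$, and testing on $2 \times 2$ correlation matrices with off-diagonal entry $x$ gives $|f(x)| \leq f(1) < \infty$ for all $x \in [-1,1]$. The heart of the argument is to peel off the boundary values. Granting that the one-sided limits $L_{+1} := \lim_{x \uparrow 1} f(x)$ and $L_{-1} := \lim_{x \downarrow -1} f(x)$ exist, I would set
\[
c_{-2} := \tfrac{1}{2}\bigl[(f(1) - L_{+1}) + (f(-1) - L_{-1})\bigr], \qquad c_{-1} := \tfrac{1}{2}\bigl[(f(1) - L_{+1}) - (f(-1) - L_{-1})\bigr],
\]
so that $g(x) := f(x) - c_{-1} f_-(x) - c_{-2} f_+(x)$ agrees with $f$ on $(-1,1)$ and takes the values $L_{\pm 1}$ at $x = \pm 1$. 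The remaining tasks are then: (a) $g$ is continuous on $[-1,1]$; (b) $g$ is itself a dimension-free positivity preserver; and (c) $c_{-1}, c_{-2} \geq 0$. From (a)--(b), Schoenberg's Theorem~\ref{Tschoenberg} delivers $g(x) = \sum_{k \geq 0} c_k x^k$ with $c_k \geq 0$; evaluating at $x = 1$ gives $f(1) = \sum_{k \geq -2} c_k < \infty$; and (c) would follow by probing $f$ on rank-one PSD matrices $(\epsilon_i \epsilon_j)_{i,j}$ with $\epsilon_i \in \{\pm 1\}$, comparing the observed contributions at $\pm 1$ against those predicted by Schoenberg's expansion of $g$.

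The main obstacle is the regularity package -- the existence of the one-sided limits at $\pm 1$, continuity of $g$ on the open interval, and persistence of positivity preservation under the subtraction. A natural route uses the Gram-matrix viewpoint: $\bigcup_{n \geq 1} \bp_n([-1,1])$ is exactly the set of Gram matrices of finite tuples of unit vectors in a real separable Hilbert space, so $f \circ \tangle{\cdot,\cdot}$ is a positive definite kernel on the infinite-dimensional unit sphere $S^\infty$, to which a variant of Schoenberg's sphere analysis applies once continuity is available. The route historically taken by Christensen and Ressel is to recast the positivity-preservation hypothesis as defining a bounded positive definite function on a commutative $*$-semigroup associated to $[-1,1]$, to identify its extreme non-negative bounded characters as precisely $\{x \mapsto x^k : k \geq 0\} \cup \{f_+, f_-\}$, and to invoke the Choquet-type integral-representation theorem on such semigroups to obtain the full decomposition in one stroke -- with the continuous part of the representing measure controlled by Schoenberg's Theorem~\ref{Tschoenberg}.
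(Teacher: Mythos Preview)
The paper does not prove this theorem; it is cited from Christensen--Ressel with only the one-line remark that their proof ``was convexity-theoretic, involving Choquet theory and Bauer simplices.'' So there is no detailed argument in the paper to compare against, and your closing paragraph in fact matches the paper's description of the method.

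Your $(2)\implies(1)$ is correct and is exactly the observation the paper makes just before stating the theorem.

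For $(1)\implies(2)$, what you call the ``regularity package'' is actually the easy part, not the obstacle. Since $\bp_n((-1,1)) \subset \bp_n([-1,1])$, the restriction $f|_{(-1,1)}$ already satisfies the hypothesis of Rudin's Theorem~\ref{Trudin}, which immediately gives $f(x)=\sum_{k\ge 0} c_k x^k$ on $(-1,1)$ with all $c_k\ge 0$; your $2\times 2$ bound $|f(x)|\le f(1)$ then forces $\sum c_k\le f(1)<\infty$, so both one-sided limits $L_{\pm 1}$ exist by Abel/dominated convergence. Thus (a) is free, and (b) is irrelevant once you already know the explicit form of $g$.

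The genuine gap is (c). Your proposed probe on rank-one sign matrices $(\epsilon_i\epsilon_j)$ only yields $f(1)\ge |f(-1)|$ (from the $2\times 2$ case already), not the required jump inequality $f(1)-L_{+1}\ge |f(-1)-L_{-1}|$. And you cannot rescue this by saying ``$g$ is a preserver, hence so is $h=f-g$'': the class of preservers is a convex cone, not a vector space, so subtraction is exactly where the argument breaks. Establishing $c_{-1},c_{-2}\ge 0$ without an integral-representation theorem seems to require more than finitely many elementary test matrices, which is why Christensen--Ressel pass to the $*$-semigroup $([-1,1],\cdot)$, identify its bounded nonnegative characters as $\{x^k:k\ge 0\}\cup\{f_+,f_-\}$, and invoke a Bochner/Choquet-type representation to get nonnegativity of \emph{all} coefficients simultaneously. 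Your final paragraph describes this correctly; the preceding ``peel off the boundary'' outline, as written, does not close.
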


The methodologies employed in showing the above results are different:
Schoenberg used spherical integrals and ultraspherical polynomials to
prove his result, while Christensen--Ressel's proof was
convexity-theoretic, involving Choquet theory and Bauer simplices.

\subsection{Positive definite sequences, Toeplitz matrices on the circle,
and Rudin's result}

If one removes the endpoints from the domain, then the assumption of
continuity may also be dispensed with, with greater ease. This was first
achieved by Rudin in 1959 -- here is a reformulation of his result:

\begin{theorem}[{\cite[Theorems~I,IV]{Rudin59}}]\label{Trudin}
Suppose $I = (-\rho,\rho)$ where $0 < \rho \leq \infty$, and $f : I \to
\R$. The following are equivalent.
\begin{enumerate}
\item The entrywise map $f[-]$ sends $\bp_n(I)$ to $\bp_n(\R)$ for all $n
\geq 1$.

\item If $A \in \bp_n(I)$ is Toeplitz of rank at most $3$, then $f[A] \in
\bp_n(\R)$.

\item $f$ is given on $I$ by a convergent power series $\sum_{k=0}^\infty
c_k x^k$ with all $c_k \geq 0$.
\end{enumerate}
\end{theorem}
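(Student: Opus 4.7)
The easy directions follow quickly: $(3) \implies (1)$ is Theorem~\ref{Tps} of P\'olya--Szeg\H{o}, and $(1) \implies (2)$ is trivial since the rank-$\leq 3$ positive Toeplitz matrices with entries in $I$ form a sub-collection of $\bp_n(I)$. All the content lies in $(2) \implies (3)$.

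My plan is to exploit a natural three-parameter family of rank-$\leq 3$ positive Toeplitz test matrices, pass the hypothesis through Herglotz's theorem on positive definite sequences, and then extract Taylor coefficients of $f$ via Fourier/Chebyshev analysis and a small-parameter limit. Concretely, for $\alpha, \beta \geq 0$ with $\alpha + 2\beta < \rho$ and $\phi \in (0, \pi)$, I consider
\[
T^{(\alpha, \beta, \phi)} := \alpha\, \mathbf{1}\mathbf{1}^\top + \beta\, (v v^* + \bar v \bar v^*), \qquad v := (1, e^{i\phi}, \ldots, e^{i(n-1)\phi})^\top,
\]
which is real, Toeplitz, psd of rank at most three, with entries $\alpha + 2\beta\cos((j-k)\phi) \in (-\rho,\rho)$. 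By hypothesis~(2), $f[T^{(\alpha, \beta, \phi)}]$ is psd for every $n \geq 1$; Herglotz's theorem then produces a positive finite measure on the circle whose Fourier coefficients are precisely $f(\alpha + 2\beta\cos(m\phi))$, $m \in \mathbb{Z}$. Introducing the even $2\pi$-periodic auxiliary function $g_{\alpha,\beta}(\theta) := f(\alpha + 2\beta\cos\theta)$, this says that $g_{\alpha,\beta}$, restricted to the arithmetic progression $\phi\mathbb{Z}$, is a positive definite sequence. Varying $\phi$ --- for instance taking $\phi/(2\pi)$ irrational so that $\phi\mathbb{Z}$ is dense in the circle, or letting $\phi = 2\pi/N \to 0$ and reading off the DFT eigenvalues of the associated circulant --- should force $g_{\alpha,\beta}$ itself to be a positive definite function on the circle, hence $\widehat{g_{\alpha,\beta}}(k) \geq 0$ for every $k \geq 0$.

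Rewriting this Fourier coefficient via $x = \cos\theta$ and $\cos(k\theta) = T_k(x)$ (the Chebyshev polynomial of the first kind) yields
\[
\widehat{g_{\alpha,\beta}}(k) \;=\; \frac{1}{\pi} \int_{-1}^{1} f(\alpha + 2\beta x)\, T_k(x)\, \frac{dx}{\sqrt{1-x^2}} \;\geq\; 0.
\]
Formally expanding $f(\alpha + 2\beta x) = \sum_{j \geq 0} \frac{f^{(j)}(\alpha)}{j!} (2\beta x)^j$ and using the classical identity $x^j = 2^{1-j} \sum_{m=0}^{\lfloor j/2 \rfloor}{}' \binom{j}{m} T_{j-2m}(x)$, one finds
\[
\widehat{g_{\alpha,\beta}}(k) \;=\; \sum_{\substack{j \geq k \\ j \equiv k \,(\mathrm{mod}\, 2)}} c_{j,k}\, \frac{f^{(j)}(\alpha)}{j!}\, (2\beta)^j,
\]
with explicit positive constants $c_{j,k}$. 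The $j = k$ term dominates as $\beta \to 0^+$, so nonnegativity of $\widehat{g_{\alpha,\beta}}(k)$ forces $f^{(k)}(\alpha) \geq 0$. Running this for every $k \geq 0$ and $\alpha \in [0, \rho)$ shows $f$ is absolutely monotonic on $[0,\rho)$; Bernstein's theorem (cf.\ Remark~\ref{Rreduce}) then delivers a power series $f(x) = \sum_{k \geq 0} c_k x^k$ with $c_k = f^{(k)}(0)/k! \geq 0$, convergent on $(-\rho, \rho)$. Continuity of $f$ on $(-\rho, \rho)$ --- a by-product of $2 \times 2$ rank-$\leq 2$ Toeplitz tests, which already yield $|f(b)| \leq f(a)$ whenever $|b| \leq a < \rho$ --- identifies this series with $f$ on the whole interval.

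The main obstacle will be to justify the small-$\beta$ Taylor step without assuming a priori smoothness of $f$: either I upgrade the rank-$\leq 3$ Toeplitz hypothesis to $C^\infty$ regularity of $f$ on $(0, \rho)$ (perhaps by generating additional test matrices via iterated Schur products), or I replace the Taylor expansion with a finite-difference variant, showing $\Delta_h^k f(\alpha) \geq 0$ directly by extracting $k$-th forward differences as specific quadratic forms against $f[T^{(\alpha, \beta, \phi)}]$ and invoking the Bernstein characterization of absolute monotonicity via nonnegative forward differences. A subsidiary obstacle is showing that the sampled positive definiteness of $g_{\alpha, \beta}$ really extends to positive definiteness on the full circle, which is precisely where continuity of $f$ enters essentially.
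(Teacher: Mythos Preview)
The paper is a survey and does not supply its own proof of Theorem~\ref{Trudin}; it cites Rudin's 1959 paper for $(2)\implies(3)$ and only remarks on the surrounding context (Herglotz's theorem, positive definite sequences, three-point measures on the circle). So there is no in-paper argument to compare against line by line. That said, your outline is precisely Rudin's original strategy: the test matrices $T^{(\alpha,\beta,\phi)}$ are the Toeplitz matrices of the three-atom measure $\alpha\delta_0 + \beta\delta_\phi + \beta\delta_{-\phi}$ on $S^1$, which is why rank~$\leq 3$ suffices; passing through Herglotz to get $g_{\alpha,\beta}(\theta)=f(\alpha+2\beta\cos\theta)$ positive definite on the circle, and then reading off absolute monotonicity from the nonnegativity of its Chebyshev coefficients as the parameters vary, is exactly the architecture of Rudin's proof.

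Your self-diagnosed obstacles are the genuine ones, and one of your side remarks needs correcting. The $2\times 2$ Toeplitz tests you cite yield only local boundedness ($|f(b)|\leq f(a)$ for $|b|\leq a$), not continuity: the indicator $\mathbf{1}_{x>0}$ passes that inequality but is discontinuous at $0$. Continuity of $f$ on $(-\rho,\rho)$ genuinely requires larger Toeplitz tests (or an independent argument), and it is essential for the limiting step from sampled positive definiteness on $\phi\mathbb{Z}$ to full positive definiteness on the circle. Second, your ``small-$\beta$ Taylor'' extraction of $f^{(k)}(\alpha)\geq 0$ is circular as written, since you are assuming the very smoothness you want to prove; your finite-difference alternative via Bernstein's characterization is the right instinct, but note that the Chebyshev integrals $\int_{-1}^{1} f(\alpha+2\beta x)\,T_k(x)\,(1-x^2)^{-1/2}\,dx$ are \emph{not} literally $k$th forward differences of $f$, so you will still need an honest bridge (Rudin builds one; the paper also points to the Loewner--Horn route in Theorem~\ref{Tloewner} combined with Bernstein as an alternative path to smoothness). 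With continuity established first and the Chebyshev-to-monomial passage handled carefully, the plan goes through.
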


Note the significant reduction of the test set in part~(2), from all
positive matrices of all sizes in part~(1) -- in the spirit of
Remark~\ref{Rreduce}.

Rudin was studying preservers of \textit{positive definite
sequences}\footnote{This is the second occurrence, after ``matrices'', of
the phrase ``positive definite'', and we will see one more, which
naturally leads to entrywise transforms.},
and we now take some time to motivate these, from complex function
theory. In 1907, Carath\'eodory published a solution to the following
question:

\textit{Characterize all analytic functions $f$ such that $f(0) = 1$ and
$f$ maps the unit disk $D(0,1)$ into the right half-plane $\Re(z)>0$.}

His solution \cite{Caratheodory} was that if one writes $f(z) = 1 +
\sum_{k=0}^\infty (a_k + i b_k) z^k$, then the above condition holds if
and only if for each $n \geq 1$, the point $(a_1, b_1, \dots, a_n, b_n)
\in \R^{2n}$ lies in the convex hull of the curve
\begin{equation}\label{Ecaratheodory}
\{ (2 \cos \theta, -2 \sin \theta, 2 \cos 2 \theta, \dots, 2 \cos n
\theta, -2 \sin n \theta) \, : \, 0 \leq \theta \leq 2 \pi \}.
\end{equation}

In 1911, Toeplitz observed~\cite{Toeplitz} that the
constraints~\eqref{Ecaratheodory} can be rephrased algebraically, in
terms of the positivity of certain related Hermitian quadratic forms for
all $n \geq 0$:
\[
\sum_{i,j=1}^n c_{i-j} z_i \overline{z_j} \geq 0\ \forall z = (z_1,
\dots, z_n) \in \mathbb{C}^n, \qquad \text{where } c_0 = 2, \ c_k = a_k -
i b_k, \, c_{-k} = \overline{c_k} \, (k>0).
\]
In other words, the semi-infinite matrix
\begin{equation}\label{Etoeplitz}
T = (t_{ij}) := (c_{i-j})_{i,j \geq 0} \text{ is positive semidefinite.}
\end{equation}
Moreover, Toeplitz's matrix $T$ here has the
property that the entries along any ``diagonal line'' (i.e., parallel to
the main diagonal) are all equal -- a structure that is now called a
\textit{Toeplitz matrix}.

Thus, Carath\'eodory's solution is equivalent to the notion of a positive
definite sequence, and it was the preservers of these that Rudin was
classifying in~\cite{Rudin59}. Rudin's motivations come from Fourier
analysis, whose connection to positive definite sequences was established
at the same time as Toeplitz's work. Indeed, in 1911, Herglotz also
published (independently) the work~\cite{Herglotz}, which showed the
equivalence of Toeplitz's conditions to the \textit{trigonometric moment
problem}:
to characterize all sequences $c_n \in \mathbb{C}$ for $n \in
\mathbb{Z}$, for which there exists a nonnegative measure $\mu$ on
$[-\pi,\pi]$ such that the $c_n$ are its Fourier--Stieltjes coefficients
$\int_{-\pi}^\pi e^{ - i n \theta} d \mu(\theta)$ for all $n$.

Herglotz showed that the answer is precisely the positivity
condition~\eqref{Etoeplitz}. Thus, positive definite sequences are
precisely the Fourier--Stieltjes coefficients of nonnegative measures
$\mu$ on $S^1 \subset \mathbb{C}$:

\begin{theorem}[\cite{Herglotz}]\label{Therglotz}
A complex sequence $(c_n)_{n \in \mathbb{Z}}$ is the Fourier--Stieltjes
coefficient-sequence of a nonnegative measure on $S^1$ if and only if the
Toeplitz matrix $T = (c_{i-j})_{i,j \geq 0}$ is positive semidefinite --
in other words, $c : \mathbb{Z} \to \mathbb{C}$ is a {\em positive
definite function} (defined below).
\end{theorem}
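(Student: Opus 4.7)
The plan is to prove the two directions separately, with essentially all of the content residing in the reconstruction of $\mu$ from a positive semidefinite Toeplitz matrix. The easy direction asserts that Fourier--Stieltjes coefficient sequences of nonnegative measures are positive definite: if $c_n = \int_{-\pi}^\pi e^{-in\theta}\, d\mu(\theta)$ with $\mu \geq 0$, then for any finite vector $(z_0, \dots, z_{N-1}) \in \mathbb{C}^N$ a direct rearrangement gives
\[
\sum_{p,q=0}^{N-1} c_{p-q}\, z_p \overline{z_q} \;=\; \int_{-\pi}^\pi \Bigl| \sum_{p=0}^{N-1} z_p\, e^{-ip\theta} \Bigr|^2 d\mu(\theta) \;\geq\; 0,
\]
so every finite principal submatrix of $T$ is psd.

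For the substantive converse I would use a Fej\'er--Ces\`aro construction. Assuming $T$ is psd, define the trigonometric polynomials
\[
f_N(\theta) \;:=\; \sum_{|k|<N} \Bigl(1 - \tfrac{|k|}{N}\Bigr) c_k\, e^{ik\theta}, \qquad N \geq 1.
\]
Counting the pairs $(p,q) \in \{0, \dots, N-1\}^2$ with $p - q = k$ rewrites this as
\[
f_N(\theta) \;=\; \tfrac{1}{N} \sum_{p,q=0}^{N-1} c_{p-q}\, e^{i(p-q)\theta},
\]
which is exactly the quadratic form of the $N \times N$ leading principal submatrix of $T$ evaluated at the unit vector $z_p = e^{ip\theta}/\sqrt{N}$. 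Hence $f_N(\theta) \geq 0$ pointwise -- the psd hypothesis has been converted into honest nonnegativity of a density.

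I would then set $d\mu_N(\theta) := \tfrac{1}{2\pi} f_N(\theta)\, d\theta$, a positive measure on $[-\pi,\pi]$ whose total mass equals $c_0$ (extracted from the $k=0$ coefficient of $f_N$); here $c_0 \geq 0$ by psd of the $1 \times 1$ submatrix $(c_0)$. The degenerate case $c_0 = 0$ forces all $c_n = 0$ via the determinant of the $2 \times 2$ principal submatrix at indices $\{0, n\}$, and is handled by $\mu \equiv 0$. Otherwise $\{\mu_N\}_N$ is a uniformly bounded family of positive measures on the compact space $[-\pi,\pi]$, so by Banach--Alaoglu (equivalently, Helly's selection theorem) some subsequence converges weak-$*$ to a positive measure $\mu$. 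To identify $\mu$, I would pair it against the continuous test functions $\theta \mapsto e^{-ik\theta}$: for each fixed $k$ and $N > |k|$,
\[
\int_{-\pi}^\pi e^{-ik\theta}\, d\mu_N(\theta) \;=\; \Bigl(1 - \tfrac{|k|}{N}\Bigr) c_k \;\longrightarrow\; c_k,
\]
and weak-$*$ convergence along the chosen subsequence identifies this limit with $\int e^{-ik\theta}\, d\mu$, as desired.

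The principal obstacle is conceptual rather than computational, and lies in the opening move of the hard direction: one must guess to plug the characters $z_p = e^{ip\theta}/\sqrt{N}$ into the psd hypothesis, so that the algebraic positivity of $T$ is translated into the analytic positivity of the Ces\`aro kernel $f_N(\theta)$. Once this bridge from algebra to analysis is in place, the remainder is a standard soft-analysis exercise: weak-$*$ compactness produces a candidate measure, and continuity of the exponential characters pins down its Fourier coefficients.
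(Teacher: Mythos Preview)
Your proof is correct and follows the classical Fej\'er--Ces\`aro route to Herglotz's theorem. However, the paper does not actually supply its own proof of Theorem~\ref{Therglotz}: it is stated as a historical result attributed to Herglotz~\cite{Herglotz} and invoked only for context (to explain Rudin's motivations), with no accompanying proof environment. So there is nothing in the paper to compare your argument against.

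For what it is worth, the approach you have written is the standard one and is entirely sound: the Ces\`aro means $f_N$ are nonnegative by the psd hypothesis applied to the vector $(e^{ip\theta})_p$, the resulting measures $\mu_N$ all have mass $c_0$, weak-$*$ compactness extracts a limit $\mu$, and the triangular weights $1 - |k|/N \to 1$ recover the correct Fourier coefficients. The handling of the degenerate case $c_0 = 0$ via the $2 \times 2$ principal minor is also clean.
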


We add for completeness that alongside the solution to the above function
theory problem by Carath\'eodory, and the two equivalent conditions by
Toeplitz and Herglotz, comes yet another classical equivalent condition.
This is the famous Herglotz--Riesz (integral) representation theorem for
the aforementioned analytic maps, proved by both authors independently
in~1911. We state a more general version, wherein $f(0)$ need not equal
$1$:

\begin{theorem}[\cite{Herglotz,Riesz}]
A function $f(z) = u(z) + i v(z)$ is analytic on $D(0,1)$ with image in
the closed right half-plane $\Re(z) \geq 0$, if and only if there exists
a finite positive measure $\mu$ on $[0,2\pi]$ such that
\[
f(z) = i \cdot v(0) + \int_0^{2\pi} \frac{e^{i\theta}+z}{e^{i\theta}-z}\
d \mu(\theta).
\]
\end{theorem}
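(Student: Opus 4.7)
The plan is to dispatch the easy direction by computing the real part of the kernel, then to produce the required measure $\mu$ for the converse by applying the Schwarz integral formula on shrinking circles and extracting a weak-$*$ limit. For the ``if'' direction, suppose $f(z) = iv(0) + \int_0^{2\pi} \frac{e^{i\theta}+z}{e^{i\theta}-z}\, d\mu(\theta)$ for a finite positive measure $\mu$. Analyticity on $D(0,1)$ is immediate from differentiating under the integral (the integrand is analytic in $z$ uniformly in $\theta$ on compact subsets of the disk, and $\mu$ is finite). Nonnegativity of $\Re f$ follows from the pointwise identity
\[
\Re\left(\frac{e^{i\theta}+z}{e^{i\theta}-z}\right) = \frac{1-|z|^2}{|e^{i\theta}-z|^2} \geq 0,
\]
i.e.\ the Poisson kernel integrated against a positive measure.

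For the ``only if'' direction, suppose $f$ is analytic on $D(0,1)$ with $u := \Re f \geq 0$. Fix $0 < r < 1$; on $\overline{D(0,r)}$ the function $f$ is continuous and analytic in the interior, so the Schwarz integral formula gives, for all $|z| < r$,
\[
f(z) = iv(0) + \frac{1}{2\pi}\int_0^{2\pi} \frac{re^{i\theta}+z}{re^{i\theta}-z}\, u(re^{i\theta})\, d\theta.
\]
Define positive measures $d\mu_r(\theta) := \frac{1}{2\pi} u(re^{i\theta})\, d\theta$ on $[0,2\pi]$. By the mean value property for the nonnegative harmonic function $u$, each $\mu_r$ has the same finite total mass $u(0)$, independent of $r$.

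Next, apply Helly's selection theorem (weak-$*$ compactness of positive measures of uniformly bounded total mass on the compact interval $[0,2\pi]$) to extract a sequence $r_n \uparrow 1$ with $\mu_{r_n} \to \mu$ weakly-$*$, where $\mu$ is a positive measure of total mass $u(0)$. For each fixed $z \in D(0,1)$, the kernel $\theta \mapsto \frac{re^{i\theta}+z}{re^{i\theta}-z}$ is continuous on $[0,2\pi]$ and, as $r \to 1$, converges uniformly in $\theta$ to $\frac{e^{i\theta}+z}{e^{i\theta}-z}$ (the denominators stay bounded away from $0$ since $|z|<1$). Passing to the limit along $r_n$ in the Schwarz formula yields the claimed Herglotz--Riesz representation.

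The main obstacle I anticipate is this last limiting step: one simultaneously needs the uniform total-mass bound $\mu_r([0,2\pi]) = u(0)$ (which delivers the weakly convergent subsequence) and the uniform convergence of the Schwarz kernel on $[0,2\pi]$ for each fixed $z$ in the open disk. Both are available for free, but they must be combined correctly; the argument would break down near $|z|=1$, where neither the kernel limit nor the boundary behaviour of $u$ is controlled. Once the representation is established, the optional uniqueness of $\mu$ can be read off from its moment sequence via Herglotz's Theorem~\ref{Therglotz}, but the existence half, which is what the statement asks for, is complete.
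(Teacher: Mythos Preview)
The paper does not actually prove this theorem: it is stated ``for completeness'' as a classical result attributed to Herglotz and Riesz, with no proof given in the text. So there is nothing in the paper to compare your argument against.

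That said, your proof is the standard one and is correct. The ``if'' direction is handled cleanly via the Poisson kernel identity. For the ``only if'' direction, the Schwarz formula on $\overline{D(0,r)}$, the uniform mass bound $\mu_r([0,2\pi]) = u(0)$ from the mean value property, Helly/weak-$*$ compactness, and the limiting argument are all in order. Your handling of the simultaneous limit (kernel depending on $r$, measure depending on $r$) is correct: writing $\int g_{r_n}\,d\mu_{r_n} - \int g\,d\mu = \int (g_{r_n}-g)\,d\mu_{r_n} + \bigl(\int g\,d\mu_{r_n} - \int g\,d\mu\bigr)$, the first term vanishes by uniform convergence of the kernel together with the uniform mass bound, and the second by weak-$*$ convergence against the continuous $g$. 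One cosmetic point: in the Schwarz formula you need $|z|<r$, so when passing to the limit for a fixed $z$ you should restrict to those $r_n$ exceeding $|z|$; this is harmless since $r_n \uparrow 1$.
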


Returning to Rudin: he came to his question about preservers of such
sequences in the context of Fourier analysis. He was considering
functions operating on spaces of Fourier transforms of $L^1$ functions on
Locally Compact Abelian groups $G$ (such $G$ are abbreviated \textit{LCA}
groups), or of measures on $G$. Rudin studied the torus $G = S^1$, while
Kahane and Katznelson were studying similar questions on its dual group
$\mathbb{Z}$. The three authors then proved in 1959 with Helson, a
``converse Wiener--Levy theorem'' in~\cite{HKKR}. In the same year, Rudin
published his related variant of Schoenberg's theorem~\cite{Rudin59}.

\subsection{Open intervals; Hankel matrices}

We return to the story of entrywise preservers. First note that the rank
of the positive Toeplitz matrix in the trigonometric moment problem
above, corresponds to the size of the support of the measure $\mu$. Thus,
Rudin's Theorem~\ref{Trudin} shows that working with at most three-point
measures suffices to recover real analyticity and absolute monotonicity.
This connection between supports of measures and rank constraints of test
matrices resurfaced in positivity preservers very recently, and is
described below.

Two decades after Rudin's work, a variant of the above results was shown
for matrices with strictly positive entries. This was by Vasudeva in
1979:

\begin{theorem}[{\cite[Theorem~6]{vasudeva79}}]
For $I = (0,\infty)$, the two assertions of Schoenberg's
theorem~\ref{Tschoenberg} are again equivalent.
\end{theorem}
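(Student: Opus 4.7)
The plan is to bootstrap Vasudeva's theorem from Schoenberg's Theorem~\ref{Tschoenberg} via a translation trick. The implication $(2) \implies (1)$ is immediate from Theorem~\ref{Tps}, so I focus on $(1) \implies (2)$: assume $f : (0,\infty) \to \R$ is continuous and that $f[-]$ sends $\bp_n((0,\infty))$ into $\bp_n(\R)$ for every $n$.

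The pivotal observation is the following. Fix $\sigma > 0$ and $\rho \in (0, \sigma)$. For any $B \in \bp_n([-\rho,\rho])$, the matrix $B + \sigma \mathbf{1}_{n \times n}$ is a sum of two psd matrices (since the all-ones matrix is rank-one psd), and its entries lie in $[\sigma - \rho, \sigma + \rho] \subset (0, \infty)$. The hypothesis then yields that $f[B + \sigma \mathbf{1}_{n \times n}] = h_\sigma[B]$ is psd, where $h_\sigma(y) := f(y + \sigma)$. Thus the continuous function $h_\sigma$ preserves entrywise positivity on $\bp_n([-\rho, \rho])$ for every $n$ and every $\rho < \sigma$. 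A routine rescaling $y \mapsto \rho y$ then invokes Schoenberg's theorem, giving that $h_\sigma$ is absolutely monotonic on $(-\rho, \rho)$, and letting $\rho \uparrow \sigma$, on $(-\sigma, \sigma)$. Evaluating all derivatives at $y = 0$ gives $f^{(k)}(\sigma) = h_\sigma^{(k)}(0) \geq 0$ for every $k$. Since $\sigma > 0$ is arbitrary, $f$ is absolutely monotonic on $(0, \infty)$.

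To extract a nonnegative power series, I appeal to the 1929 theorem of Bernstein cited in Remark~\ref{Rreduce}; alternatively one argues directly with Taylor's theorem with remainder, the key point being that $f^{(N+1)} \geq 0$ forces $\sum_{k=0}^N \frac{f^{(k)}(x_0)}{k!}(y - x_0)^k \leq f(y)$ for all $y > x_0 > 0$ and all $N$. This makes the Taylor series of $f$ at any $x_0 > 0$ have nonnegative coefficients and infinite radius of convergence, so $f$ extends to an entire function on $\R$; identifying the Maclaurin coefficients as the nonnegative limits $c_k = \lim_{x_0 \downarrow 0} f^{(k)}(x_0)/k!$ yields assertion~(2).

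The main obstacle I expect is the translation bridge itself: one must recognize that shifting by $\sigma \mathbf{1}_{n \times n}$ -- an operation that preserves psd-ness precisely because $\mathbf{1}_{n \times n}$ is itself psd, the very same observation underlying the Schur product theorem -- converts the ``one-sided'' test cone $\bp_n((0,\infty))$ into the centered cone $\bp_n([-\rho,\rho])$ on which Schoenberg applies. Once this reduction is in hand, the remaining steps are a rescaling plus a standard Bernstein-type clean-up.
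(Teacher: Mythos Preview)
Your argument is correct. The translation $B \mapsto B + \sigma \mathbf{1}_{n \times n}$ does exactly what you claim: it carries $\bp_n([-\rho,\rho])$ into $\bp_n((0,\infty))$ for any $\rho < \sigma$, so the shifted function $h_\sigma = f(\cdot + \sigma)$ is a continuous preserver on the centered cone and Schoenberg's Theorem~\ref{Tschoenberg} (after rescaling) yields a power-series representation for $h_\sigma$ on $(-\sigma,\sigma)$. From this you correctly read off $f^{(k)}(\sigma) \geq 0$ for all $k$ and all $\sigma > 0$, and the Bernstein clean-up is standard. One small point worth making explicit: your Taylor-remainder inequality shows the series at $x_0$ \emph{converges}, but to conclude it \emph{equals} $f$ you should note that the Schoenberg step has already given analyticity at each $\sigma$ (with radius $\geq \sigma$), so analytic continuation patches the local representations into a single entire function.

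Your route differs from the one the paper points to. The survey does not prove Vasudeva's theorem directly, but for the stronger Hankel-matrix versions (Theorems~\ref{Tbgkp-1sided} and~\ref{Tbgkp-2sided}) it indicates a path through the fixed-dimension Loewner--Horn Theorem~\ref{Tloewner}: positivity preservation on $\bp_n$ forces $f,f',\dots,f^{(n-3)} \geq 0$, and letting $n\to\infty$ gives absolute monotonicity, whence Bernstein applies. Your reduction is more elementary and self-contained given Schoenberg as a black box, but it leans on the continuity hypothesis (which Schoenberg needs). The Loewner--Horn route is heavier machinery but does not require continuity up front and is what drives the sharper low-rank Hankel results.
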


Vasudeva's theorem was strengthened twofold very recently: in 2022,
Belton--Guillot--Khare--Putinar obtained the same conclusion of absolute
monotonicity from hypotheses that were significantly weaker in two ways.
First, the domain was changed to $(0,\rho)$ for any $0 < \rho \leq
\infty$; and the test set in each dimension was once again reduced, this
time to \textit{Hankel} matrices of rank at most $2$.

\begin{theorem}[{\cite[Theorem~9.6 and
(proof of) Proposition~8.1]{BGKP-hankel}}]\label{Tbgkp-1sided}
Suppose $I = (0,\rho)$ or $[0,\rho)$ where $0 < \rho \leq \infty$, and $f
: I \to \R$. The following are equivalent.
\begin{enumerate}
\item The entrywise map $f[-]$ sends $\bp_n(I)$ to $\bp_n(\R)$ for all $n
\geq 1$.

\item If $A \in \bp_n(I)$ is Hankel of rank at most $2$, then $f[A] \in
\bp_n(\R)$.

\item $f$ is given on $I$ by a convergent power series $\sum_{k=0}^\infty
c_k x^k$ with all $c_k \geq 0$.
\end{enumerate}
\end{theorem}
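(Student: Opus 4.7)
The plan is to handle the three implications in turn, with nearly all the work concentrated in $(2)\Rightarrow(3)$. The implication $(3)\Rightarrow(1)$ is exactly Theorem~\ref{Tps} of P\'olya--Szeg\H{o}, while $(1)\Rightarrow(2)$ is trivial since any rank-$\leq 2$ positive Hankel matrix is positive. For $(2)\Rightarrow(3)$, the approach is to derive absolute monotonicity of $f$ on $(0,\rho)$ from the hypothesis -- which, it is worth emphasizing, only concerns $f[H]$ for $H$ among moment matrices of atomic measures supported on at most two points -- and then invoke the 1929 theorem of Bernstein (cf.\ Remark~\ref{Rreduce}) to produce the power series representation $f(x) = \sum_{k \geq 0} c_k x^k$ with $c_k \geq 0$.

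The test family I would deploy is built from a fixed reference point and a geometric perturbation. Fix $x \in (0,\rho)$, a ratio $r \in (0,1)$, and $n \geq 1$; for all sufficiently small $t > 0$, set
\[
H_n(x,r,t) := x \cdot \mathbf{1}\mathbf{1}^\top + t \cdot \bv(r) \bv(r)^\top, \qquad \bv(r) := (1, r, r^2, \dots, r^{n-1})^\top,
\]
where $\mathbf{1} = \bv(1) \in \R^n$. Then $H_n(x,r,t)$ is a rank-$\leq 2$ positive Hankel matrix with entries $x + t r^{i+j} \in (0,\rho)$, so hypothesis~(2) forces $f[H_n(x,r,t)]$ to be positive semidefinite. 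Formally, a Taylor expansion in $t$ combined with the identity $(r^{i+j})^k = r^{ki} \cdot r^{kj}$ yields
\[
f[H_n(x,r,t)] = \sum_{k \geq 0} \frac{f^{(k)}(x)}{k!} \, t^k \, \bv(r^k) \bv(r^k)^\top.
\]
Since the nodes $r^0, r, r^2, \dots, r^{n-1}$ are pairwise distinct, the vectors $\bv(1), \bv(r), \dots, \bv(r^{n-1})$ form a Vandermonde basis of $\R^n$. Testing $f[H_n(x,r,t)]$ against a vector orthogonal to $\bv(1), \bv(r), \dots, \bv(r^{k-1})$ but not to $\bv(r^k)$, then sending $t \to 0^+$, should isolate the coefficient $f^{(k)}(x) \geq 0$, for each $k \geq 0$ and each $x \in (0,\rho)$.

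To make this rigorous without presupposing smoothness of $f$, I would replace the Taylor expansion by a finite-difference identity: carefully chosen quadratic forms on $f[H_n(x,r,t)]$ directly realize $k$-th forward differences $\Delta_h^k f(x)$ for suitable parameters $h$. Positivity then yields $\Delta_h^k f(x) \geq 0$ for all orders $k$ and admissible $x,h$, whence a Boas--Widder-type monotonicity theorem promotes $f$ to being $C^\infty$ with $f^{(k)} \geq 0$ on $(0,\rho)$, i.e., absolutely monotonic. The hardest step -- and the source of the theorem's appeal -- is the last inference of the previous paragraph: extracting every $f^{(k)}(x) \geq 0$ from a test set as thin as $2$-atomic moment matrices. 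The Vandermonde independence of the Hadamard powers of a single geometric vector is what rescues the argument; although each individual $H_n(x,r,t)$ has rank at most $2$, the family $\{H_n(x,r,t)\}_{n,t}$ encodes arbitrarily many moment-like constraints as $n$ grows, which is precisely what is needed to force absolute monotonicity, after which Bernstein's theorem closes the case and a continuity argument extends the conclusion to $I = [0,\rho)$.
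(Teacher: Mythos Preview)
Your proposal is correct and matches the paper's approach: the paper indicates (just after Theorem~\ref{Tbgkp-2sided}) that $(2)\Rightarrow(3)$ proceeds via the fixed-dimension Loewner--Horn result, Theorem~\ref{Tloewner}, whose hypotheses are exactly supplied by~(2) for every $n\geq 3$ (every matrix in $\bp_2(I)$ being trivially Hankel of rank $\leq 2$), yielding $f\in C^{(n-3)}$ with nonnegative derivatives for all $n$ and hence absolute monotonicity, after which Bernstein gives~(3). Your test family $x\,\mathbf{1}\mathbf{1}^\top + t\,\bv(r)\bv(r)^\top$ and the Vandermonde/orthogonality trick are precisely the engine inside the proof of Theorem~\ref{Tloewner} itself, so you are unpacking that step rather than taking a different route. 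One small correction: since the entries $x+tr^{i+j-2}$ lie on a \emph{geometric} progression, the quadratic forms do not literally realize equally-spaced forward differences $\Delta_h^k f$; what falls out are divided differences at geometric nodes, and the passage to smoothness then runs through continuity (extracted from the $2\times 2$ case) together with an integration/bootstrap argument, as in the original Loewner--Horn proof.
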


Below, we will state the multivariate version of this result. Also note
that parallel to Rudin's approach (via Herglotz), in~\cite{BGKP-hankel}
the authors considered transforms of discrete data obtained from a
positive measure $\mu$ on the real line: \textit{moment sequences}. In
other words, a function $f$ sends the $k$th moment of a positive measure
$\mu$ on the line, to the $k$th moment of another positive measure
$\sigma_\mu$. Once again, the rank of the test matrices is governed by
the support set of $\mu$, and it turns out that one- and two-point test
measures suffice to deduce absolute monotonicity.

The two-sided version of Theorem~\ref{Tbgkp-1sided}, parallel to Rudin's
characterization above, was also shown in~\cite{BGKP-hankel}. Once again,
the test set can be greatly reduced in each dimension, from all
positive matrices to low rank Hankel matrices.

\begin{theorem}[{\cite[Corollary~6.2]{BGKP-hankel}}]\label{Tbgkp-2sided}
Suppose $0 < \rho \leq \infty$ and $I = (-\rho,\rho)$. Given $f : I \to
\R$, the following are equivalent.
\begin{enumerate}
\item The entrywise map $f[-]$ sends $\bp_n(I)$ to $\bp_n(\R)$ for all $n
\geq 1$.

\item If $A \in \bp_n(I)$ is Hankel of rank at most $3$, then $f[A] \in
\bp_n(\R)$.

\item $f$ is given on $I$ by a convergent power series $\sum_{k=0}^\infty
c_k x^k$ with all $c_k \geq 0$.
\end{enumerate}
\end{theorem}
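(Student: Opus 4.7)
\textit{Reducing to the one-sided theorem.} The implications $(3) \Rightarrow (1)$ (Theorem~\ref{Tps}) and $(1) \Rightarrow (2)$ (restriction of the test set) are immediate; the work lies in $(2) \Rightarrow (3)$. Any rank-$\leq 2$ Hankel matrix in $\bp_n([0, \rho))$ is also a rank-$\leq 3$ Hankel matrix in $\bp_n((-\rho, \rho))$, so hypothesis (2) implies the hypothesis of Theorem~\ref{Tbgkp-1sided} applied on $[0, \rho)$. This produces a convergent power series $g(x) = \sum_{k \geq 0} c_k x^k$ with $c_k \geq 0$ and $f = g$ on $[0, \rho)$; nonnegativity of the coefficients ensures $g$ converges absolutely on the entire interval $(-\rho, \rho)$.

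\textit{Forcing $f = g$ on the negative half via Hausdorff uniqueness.} Given $x_0 \in (-\rho, 0)$, I would construct a two-point positive measure $\mu = p\,\delta_a + q\,\delta_{-|b|}$ supported in $[-1, 1]$ (with $0 < |b| < a \leq 1$ and $p, q > 0$) whose moment sequence $s_k = p a^k + (-1)^k q |b|^k$ satisfies $s_1 = x_0$, $s_0 = p + q < \rho$, and $s_k$ lies inside the ``already known'' interval (initially $[0, \rho)$) for every $k \neq 1$. For such a $\mu$, the semi-infinite Hankel matrix $H = (s_{i+j})_{i, j \geq 0}$ has rank $\leq 2$ and entries in $(-\rho, \rho)$, so hypothesis (2) makes every finite principal section of $f[H]$ positive semidefinite; a $2 \times 2$ principal-minor bound further gives $|f(s_k)| \leq g(s_0)$ uniformly in $k$. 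Thus $(f(s_k))$ is a bounded positive-definite sequence and, by the Hausdorff moment theorem, is the moment sequence of a unique compactly supported positive measure $\sigma_f$. Likewise $g[H]$ is PSD by Theorem~\ref{Tps}, producing $\sigma_g$, and by construction $f(s_k) = g(s_k)$ for every $k \neq 1$, so the finite signed measure $\tau := \sigma_f - \sigma_g$ on $[-1, 1]$ has all moments vanishing except the first, which equals $f(x_0) - g(x_0)$. Testing $\tau$ against the polynomials $p_n(y) := y(1 - y^2)^n$, uniformly bounded by $1$ on $[-1, 1]$ and with coefficient of $y^1$ always equal to $1$, gives $\int p_n\, d\tau = f(x_0) - g(x_0)$ for every $n$ (all other moments of $\tau$ vanish), while $\|p_n\|_\infty \to 0$ forces the integral to tend to $0$; hence $f(x_0) = g(x_0)$.

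\textit{Bootstrap and main obstacle.} An elementary optimization shows the above construction of $\mu$ succeeds for every $|x_0| < \rho/3$ when the ``known'' interval is $[0, \rho)$. Once $f = g$ has been extended to $(-\alpha\rho, \rho)$ for some $\alpha > 0$, rerunning the argument with the weaker requirement $s_k > -\alpha\rho$ (for $k \neq 1$) yields the recursion $\alpha_{j+1} = \alpha_j^{1/3}$; starting from $\alpha_1 = 1/3$, the iterates increase monotonically to $1$, and $\bigcup_j (-\alpha_j\rho, \rho) = (-\rho, \rho)$, completing the proof. The Hausdorff moment step is a clean use of classical tools once the setup is in place; the main technical obstacle is the design of the two-point test measures---coordinating $s_1 = x_0$ with the signs and sizes of the higher-order moments $s_k$ and the global bound $|s_k| < \rho$, and verifying that the resulting recursion on $\alpha$ genuinely pushes the iterates up to $1$.
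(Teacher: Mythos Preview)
The paper does not prove this theorem itself; it cites \cite{BGKP-hankel} (Corollary~6.2 there, and Theorem~9.11 for the multivariate version in the skeleton under Theorem~\ref{Tjems}). The only methodological hints are that the argument is moment-theoretic and passes through the one-sided Theorem~\ref{Tbgkp-1sided} (via Loewner--Horn and Bernstein). That is exactly your plan, so your approach is squarely in the intended spirit.

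Your argument is sound. The reduction to Theorem~\ref{Tbgkp-1sided} on $[0,\rho)$ is immediate, and the extension step via two-point moment sequences together with Hamburger uniqueness is correct. The $p_n(y)=y(1-y^2)^n$ device is a clean way to isolate $m_1(\tau)$: each $p_n$ has $y$-coefficient~$1$, only odd-degree terms, and $\|p_n\|_\infty\to 0$, so $m_1(\tau)=\int p_n\,d\tau\to 0$. One terminological quibble: what you actually invoke is Hamburger's theorem (existence of a representing measure from a PSD Hankel matrix) plus determinacy for compactly supported measures, not Hausdorff's theorem, which is specific to $[0,1]$. The bootstrap $\alpha\mapsto\alpha^{1/3}$ checks out: with $a=1$, choose $|b|$ in the nonempty interval $(|x_0|/\rho,\ (\alpha\rho/|x_0|)^{1/2})$ and then $p>0$ small; the binding constraint among odd moments is $s_3>-\alpha\rho$, since $s_k=p-(p+|x_0|)|b|^{k-1}$ increases in $k$ along odd indices.

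One point worth flagging: your construction uses only rank-$2$ Hankel test matrices throughout (two-point measures), whereas hypothesis~(2) permits rank up to~$3$. If your argument is tight this would actually sharpen the statement; the rank-$3$ bound in the cited paper may reflect a different, perhaps three-point, extension scheme rather than a genuine necessity.
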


Notice that all of the above results are ``dimension-free'', in that the
test set in them consists of matrices of unbounded size. It is
interesting that the proof of the last two results above uses a result in
\textit{fixed} dimension -- see Theorem~\ref{Tloewner}. A full account of
both of these proofs -- which are rather accessible to develop ``almost
from scratch'' -- as well as details of the results in multiple sections
below, can also be found in the recent monograph~\cite{AK-book}.
For additional connections and results, see the
survey~\cite{BGKP-survey1}.

As a parting note in the real case, understanding positivity preservers
immediately leads to characterizing the entrywise maps preserving
\textit{monotonicity}:

\begin{definition}
The \textit{Loewner ordering} on real symmetric $n \times n$ matrices is:
$A \geq B$ if $A-B \in \bp_n(\R)$. Now given $I \subseteq \R$, a function
$f : I \to \R$ is said to be
(a)~\textit{Loewner positive on $\bp_n(I)$} if $f[A] \geq 0$ whenever $A
\in \bp_n(I)$ (i.e., $A \geq 0$); and
(b)~\textit{Loewner monotone on $\bp_n(I)$} if $f[A] \geq f[B]$ whenever
$A \geq B \geq 0$.
\end{definition}

In this language, the above ``Schoenberg-type theorems'' classify the
Loewner positive maps. Setting $B=0$, it is also clear that if $0 \in I$
and $f$ is Loewner monotone, then $f - f(0)$ is Loewner positive. In fact
the reverse implication also holds -- and is easy to show using the Schur
product theorem, once we know the Schoenberg--Rudin theorem above. Thus,
we have:

\begin{theorem}[{\cite[Theorem~19.2]{AK-book}}]
Suppose $0 < \rho \leq \infty$ and $I = (-\rho,\rho)$. The following are equivalent for an arbitrary map $f : I \to \R$:
\begin{enumerate}
\item $f[-]$ is Loewner monotone on $\bp_n(I)$ for all $n \geq 1$.

\item $f[-]$ is Loewner monotone on the Hankel matrices in $\bp_n(I)$ of
rank at most $3$, for all $n \geq 1$.

\item $f$ is given on $I$ by a power series $\sum_{k=0}^\infty c_k x^k$,
with $c_k \geq 0$ for all $k>0$ (and any $c_0 \in \R$).
\end{enumerate}
\end{theorem}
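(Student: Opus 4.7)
The plan is to cycle $(1) \Rightarrow (2) \Rightarrow (3) \Rightarrow (1)$. The implication $(1) \Rightarrow (2)$ is tautological, since (2) concerns a smaller test set. The real content sits in $(2) \Rightarrow (3)$, which I would reduce to the already-proved Schoenberg-type Theorem~\ref{Tbgkp-2sided} via the standard trick of taking $B$ to be the zero matrix. The converse $(3) \Rightarrow (1)$ is a Schur-product calculation built on Theorem~\ref{Tschur}.

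For $(2) \Rightarrow (3)$: since $0 \in I$ and the $n \times n$ zero matrix is Hankel of rank zero (hence of rank at most $3$) lying in $\bp_n(I)$, every Hankel $A \in \bp_n(I)$ of rank at most $3$ satisfies $A \geq 0$. Applying (2) to the pair $(A,0)$ yields $f[A] \geq f[0] = f(0)\,\mathbf{1}_{n \times n}$, where $\mathbf{1}_{n \times n}$ is the all-ones matrix (which is psd). Equivalently, the function $g(x) := f(x) - f(0)$ satisfies $g[A] = f[A] - f(0)\,\mathbf{1}_{n \times n} \in \bp_n(\R)$ for all Hankel $A \in \bp_n(I)$ of rank at most $3$, and all $n$. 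Thus $g$ verifies hypothesis~(2) of Theorem~\ref{Tbgkp-2sided}, which delivers a representation $g(x) = \sum_{k \geq 0} c_k x^k$ on $I$ with every $c_k \geq 0$. Since $g(0) = 0$ forces $c_0 = 0$, one recovers $f(x) = f(0) + \sum_{k \geq 1} c_k x^k$, which is exactly the form asserted in (3) with a free real constant term.

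For $(3) \Rightarrow (1)$: write $f(x) = c_0 + \sum_{k \geq 1} c_k x^k$ with $c_k \geq 0$ for $k \geq 1$, and let $A, B \in \bp_n(I)$ with $A \geq B$. Since the series converges absolutely on $I$ and the finitely many entries of $A$ and $B$ lie in a compact subset of $I$, one may take the termwise difference
\[
f[A] - f[B] = \sum_{k \geq 1} c_k \bigl( A^{\circ k} - B^{\circ k} \bigr).
\]
I would then apply the telescoping factorization
\[
A^{\circ k} - B^{\circ k} = \sum_{j=0}^{k-1} A^{\circ j} \circ (A-B) \circ B^{\circ (k-1-j)},
\]
where $A^{\circ 0}$ denotes the all-ones matrix (the identity for $\circ$). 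For each $k \geq 1$, each summand on the right is a Schur product of three psd matrices — namely $A$, $B$, and $A-B$, with Schur powers preserving positivity by Theorem~\ref{Tschur} — and is therefore psd. Hence the partial sums $\sum_{k=1}^N c_k (A^{\circ k} - B^{\circ k})$ are psd, and entrywise convergence through the closed cone $\bp_n(\R)$ yields $f[A] - f[B] \geq 0$.

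The only genuine obstacle would have been proving $(2) \Rightarrow (3)$ \emph{ab initio}; that is already packaged into Theorem~\ref{Tbgkp-2sided}, so the remainder is essentially bookkeeping. The one subtlety worth flagging is that the constant term in (3) is genuinely unconstrained in $\R$ — this is precisely what separates Loewner monotonicity from Loewner positivity, and is exactly what the normalisation $f \mapsto f - f(0)$ absorbs.
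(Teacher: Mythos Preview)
Your proof is correct and follows precisely the approach the paper sketches just before the theorem: reduce $(2) \Rightarrow (3)$ to Theorem~\ref{Tbgkp-2sided} by setting $B=0$ and passing to $g = f - f(0)$, and obtain $(3) \Rightarrow (1)$ via the Schur product theorem applied to the telescoping identity for $A^{\circ k} - B^{\circ k}$. The paper gives no further details beyond this outline, so your write-up is in fact more explicit than the source.
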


The reader can compare and contrast this to Loewner's celebrated
characterization of matrix monotone maps in the functional
calculus~\cite{Loewner34,Simon}.

\subsection{Complex domains}

We conclude this section by discussing two results proved in the
literature for positivity preserving maps acting on complex Hermitian
matrices. Following his proof for preservers of real positive matrices,
Rudin~\cite{Rudin59} made an observation parallel to that of
P\'olya--Szeg\H{o}, for complex matrices. Namely, Rudin observed that the
conjugation map $z \mapsto \overline{z}$ also preserves positive
semidefiniteness. Since the preservers form a multiplicatively closed
convex cone, it follows that the entrywise functions
\[
z \mapsto z^j \overline{z}^k \qquad ( j, k \geq 0 )
\]
also preserve positivity on complex matrices of all sizes. Again taking
nonnegative linear combinations, followed by limits, yields a large
family of preservers; and Rudin conjectured in his 1959
work~\cite{Rudin59} that there are no others. This was shown by Herz soon
after, in 1963:

\begin{theorem}[{\cite[Th\'eor\`eme~1]{Herz63}}]\label{Therz}
Denote by $D( 0, 1 )$ the open unit disk in $\mathbb{C}$, and say $f : D(
0, 1 ) \to \mathbb{C}$. The following are equivalent.
\begin{enumerate}
\item The entrywise map $f[-]$ sends $\bp_n (D( 0, 1))$ to
$\bp_n(\mathbb{C})$ for all $n \geq 1$.

\item The function $f$ is a convergent power series, of the form $f( z )
= \sum_{j, k \geq 0} c_{j k} z^j \overline{z}^k$ for $z \in D(0,1)$, with
$c_{j k} \geq 0\ \forall j, k \geq 0$. Moreover, such a representation
for $f$ is unique.
\end{enumerate}
\end{theorem}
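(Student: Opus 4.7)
The direction $(2) \Rightarrow (1)$ is straightforward and mirrors the real case. Complex conjugation $z \mapsto \overline{z}$ preserves positive semidefiniteness (since $A = B^*B$ implies $\overline{A} = \overline{B}^*\overline{B}$), so by the Schur product theorem each monomial $z^j \overline{z}^k$ preserves positivity on $\bp_n(D(0,1))$ for every $n$; the preservers form a closed convex cone stable under entrywise multiplication and pointwise limits, whence any convergent series $\sum c_{jk} z^j \overline{z}^k$ with $c_{jk} \geq 0$ is a preserver.

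For the hard direction $(1) \Rightarrow (2)$, I would first extract low-rank consequences and then perform a Fourier decomposition in the angular direction. Testing $f$ on the rank-one PSD matrix $\begin{pmatrix} |z| & z \\ \overline z & |z| \end{pmatrix} \in \bp_2(D(0,1))$ forces both $f(\overline z) = \overline{f(z)}$ and the pointwise bound $|f(z)| \leq f(|z|)$; restricting to real entries shows $f|_{(-1,1)}$ is real-valued and preserves positivity on $\bigcup_n \bp_n((-1,1))$, so Rudin's Theorem~\ref{Trudin} (or Theorem~\ref{Tbgkp-2sided}) gives $f|_{(-1,1)}(x) = \sum_{k \geq 0} \tilde c_k x^k$ with $\tilde c_k \geq 0$. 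For each fixed $r \in [0,1)$, the rank-one matrix $(r\, e^{i(\alpha_i-\alpha_j)})_{i,j} \in \bp_n(D(0,1))$ makes $\alpha \mapsto f(re^{i\alpha})$ a continuous positive definite function on $S^1$, and Bochner's theorem yields
\[
f(re^{i\alpha}) \;=\; \sum_{n \in \mathbb{Z}} c_n(r)\, e^{in\alpha}, \qquad c_n(r) \geq 0,
\]
with convergence controlled by $\sum_n c_n(r) = f(r) < \infty$.

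The main technical work is to identify each $c_n(r)$ as a specific power series $c_n(r) = \sum_{k \geq 0} a_{n,k}\, r^{|n|+2k}$ with $a_{n,k} \geq 0$. Applying $f$ to the rank-one test matrix $A = z z^*$ with $z_k = r_k e^{i\alpha_k}$, conjugating $f[A]$ by the diagonal unitary $\mathrm{diag}(e^{in\alpha_k})$, and then averaging independently over each $\alpha_k \in [0,2\pi]$ produces the PSD matrix $(c_n(r_i r_j))_{i,j} + \mathrm{diag}(f(r_i^2) - c_n(r_i^2))$; replicating each $r_i$ many times and restricting to the block-uniform subspace drives the diagonal correction to relative negligibility, forcing the Gram-like matrix $(c_n(r_i r_j))_{i,j}$ itself to be PSD for every $\vec r \in [0,1)^N$ and every $n \in \mathbb{Z}$. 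Under the substitution $r = e^{-\tau}$ this says $\tau \mapsto c_n(e^{-\tau})$ is positive definite on the semigroup $(\R_{>0},+)$, so Bernstein/Bochner in this setting gives $c_n(r) = \int_0^\infty r^s\, d\nu_n(s)$ for a positive Borel measure $\nu_n$ on $[0,\infty)$. Compatibility with $\sum_n c_n(r) = f(r) = \sum_k \tilde c_k r^k$ forces $\sum_n \nu_n = \sum_k \tilde c_k \delta_k$ to be atomic on $\mathbb{Z}_{\geq 0}$; since each $\nu_n \geq 0$, each $\nu_n$ is itself atomic on $\mathbb{Z}_{\geq 0}$, giving $c_n(r) = \sum_{k \geq 0} a_{n,k} r^k$ with $a_{n,k} \geq 0$. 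The parity constraint $a_{n,k} = 0$ for $n \not\equiv k \pmod 2$ follows by comparing $f(r)$ with $f(-r) = \sum_n (-1)^n c_n(r)$, and $a_{n,0} = 0$ for $n \neq 0$ follows from $c_n(0) = 0$ by dominated convergence. The residual offset $a_{n,k} = 0$ for $0 < k < |n|$ is the hardest step and is the main obstacle in the plan: it would follow from a smoothness argument at the origin (the $k$-th radial Taylor coefficient of $f$ at $0$ is a trigonometric polynomial in $\alpha$ of degree $\leq k$, so its $n$-th angular Fourier coefficient vanishes for $|n| > k$) or alternatively from a rank-two probe using $A = uu^* + vv^*$ that couples distinct angular frequencies. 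Once this is in hand, assembling via $r^{|n|+2k} e^{in\alpha} = z^{|n|+k}\overline z^k$ for $n \geq 0$ and $z^k \overline z^{|n|+k}$ for $n < 0$ yields $f(z) = \sum_{j,k \geq 0} c_{jk} z^j \overline z^k$ with $c_{jk} \geq 0$, and uniqueness of this representation is immediate from uniqueness of the Fourier expansion on each circle $|z| = r$.
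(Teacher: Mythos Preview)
The paper does not give its own proof of Herz's theorem --- it merely states the result with attribution --- so there is no in-paper argument to compare against. Your sketch of $(2)\Rightarrow(1)$ is correct, and the broad strategy for $(1)\Rightarrow(2)$ (angular Fourier decomposition of $f(re^{i\alpha})$, then radial analysis of each coefficient $c_n(r)$) is a natural one.

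However, the gap you flag is genuine and cannot be closed using only the information you have extracted. Consider $f(z) = |z|\cos(3\arg z) + C$ for any $C>0$. This $f$ satisfies \emph{every} intermediate conclusion in your argument: $f|_{(-1,1)}(x) = x+C$ is absolutely monotonic; $\alpha\mapsto f(re^{i\alpha})$ has nonnegative Fourier coefficients $c_0(r)=C$ and $c_{\pm 3}(r)=r/2$; each $c_n$ is itself an entrywise positivity preserver on $\bigcup_N\bp_N((0,1))$ (being constant or a positive multiple of the identity), so your replication trick and the ensuing Laplace/Bernstein step go through; and both the parity constraint and $c_n(0)=0$ for $n\neq 0$ hold. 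Yet this $f$ admits no representation $\sum_{j,k\geq 0} c_{jk}z^j\overline{z}^k$ with $c_{jk}\geq 0$, since that would force $c_3(r)=O(r^3)$. Taking Herz's theorem as true, $f$ must fail to preserve positivity on some matrix in $\bp_N(D(0,1))$ --- but one checks directly that $f[DAD^*]=\tfrac12(D_3AD_3^*+D_3^*AD_3)+CJ$ is positive semidefinite for every real $A\in\bp_N((-1,1))$ and diagonal unitary $D$ (here $D_3=\mathrm{diag}(e^{3i\alpha_k})$), and this is precisely the only class of test matrices your averaging-and-replication argument ever probes. The missing constraint therefore lives in genuinely higher-rank complex matrices whose phases are not of the rank-one form $\theta_{ij}=\alpha_i-\alpha_j$; your ``rank-two probe coupling distinct angular frequencies'' is the right instinct, but it must actually be executed, and your alternative ``smoothness at the origin'' route presupposes joint regularity of $f$ in $(r,\alpha)$ that you have not established. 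A smaller but related lacuna: invoking Bochner/Herglotz on each circle requires continuity of $\alpha\mapsto f(re^{i\alpha})$, which you assert but do not derive from the hypothesis.
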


A final result along these ``classical lines'' returns to the
``Schoenberg'' version of the complex setting, of functions on the closed
unit disk. The entrywise positivity preservers were classified in this
setting by Christensen--Ressel in~1982, under Schoenberg's assumption of
continuity:

\begin{theorem}[{\cite[Corollary~1]{ChrRes2}}]\label{Tchrres2}
Let $f : \overline{D}(0,1) \to \mathbb{C}$ be a continuous map on the
closed unit disk. Let $\mathcal{H}$ be an infinite-dimensional complex
Hilbert space, with unit sphere $S$. The following are equivalent.
\begin{enumerate}
\item $f$ is a ``positive definite kernel'' on $S$ -- i.e., for any
finite set of points $z_1, \dots, z_n \in S$, the matrix with $(i,j)$
entry $f(\tangle{z_i, z_j})$ is positive semidefinite.

\item As in the previous result: $f$ is a convergent power series of the
form $f( z ) = \sum_{j, k \geq 0} c_{j k} z^j \overline{z}^k$ for $z \in
\overline{D}(0,1)$, with $c_{j k} \geq 0\ \forall j, k \geq 0$. Such a
representation for $f$ is unique.
\end{enumerate}
As a consequence, these are also equivalent to:
\begin{enumerate}
\setcounter{enumi}{2}
\item $f[-] : \bp_n(\overline{D}(0,1)) \to \bp_n(\mathbb{C})$ for all $n
\geq 1$.
\end{enumerate}
\end{theorem}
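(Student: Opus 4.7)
The plan is to prove the equivalences in the cyclic order $(2) \Rightarrow (3) \Rightarrow (1) \Rightarrow (2)$. The implication $(2) \Rightarrow (3)$ is a direct consequence of Schur's theorem: given $f(z) = \sum_{j,k \geq 0} c_{jk} z^j \overline{z}^k$ with $c_{jk} \geq 0$ and any $A \in \bp_n(\overline{D}(0,1))$, one has $f[A] = \sum_{j,k \geq 0} c_{jk} A^{\circ j} \circ \overline{A}^{\circ k}$. Each summand is psd because $A$ and $\overline{A}$ are Hermitian psd and the Schur product of psd matrices is psd (Theorem~\ref{Tschur}); the series converges entrywise since $|a_{ij}| \leq 1$; and psd is preserved under entrywise limits. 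Then $(3) \Rightarrow (1)$ is immediate, since the Gram matrix $(\tangle{z_i, z_j})_{i,j}$ of any finite set of unit vectors $z_1, \dots, z_n \in S$ lies in $\bp_n(\overline{D}(0,1))$ by Cauchy--Schwarz.

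The core implication is $(1) \Rightarrow (2)$, which I plan to establish via the Choquet-theoretic / representation-theoretic strategy pioneered by Christensen--Ressel. The set of continuous $U(\mathcal{H})$-invariant positive definite kernels on $S$, suitably normalized, forms a Bauer simplex $K$, so by Choquet's theorem every element of $K$ is an integral over its extreme points. To identify these extreme points I would use a harmonic decomposition of $L^2(S)$ under the diagonal $U(\mathcal{H})$-action into isotypic components indexed by bidegrees $(j, k)$, $j, k \geq 0$. The reproducing kernel of the $(j,k)$-component, evaluated at $(z, w) \in S \times S$, is a disk polynomial in $\tangle{z, w}$; crucially, because $\mathcal{H}$ is infinite-dimensional, the finite-dimensional Jacobi-type corrections degenerate in the limit $\dim \mathcal{H} \to \infty$ to the single monomial $\tangle{z, w}^j \overline{\tangle{z, w}}^k$. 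Hence each extreme ray of $K$ is generated by a monomial kernel, and Choquet integration over this countable discrete index set collapses to a nonnegative sum $f(z) = \sum_{j,k \geq 0} c_{jk} z^j \overline{z}^k$. Uniqueness of the $c_{jk}$ follows by first isolating Fourier modes in $e^{i(j-k)\theta}$ on circles $|z| = r \in (0, 1]$, then separating equal-$(j-k)$ terms via the radial weights $r^{j+k}$.

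The principal obstacle is the harmonic-analytic step just described; the infinite-dimensionality of $\mathcal{H}$ is essential, since on any fixed finite-dimensional complex sphere the analogous extreme rays are genuine disk polynomials (Jacobi-polynomial combinations of monomials), collapsing to pure monomials $z^j \overline{z}^k$ only in the limit $d \to \infty$. An alternative and possibly more elementary route is to deduce $(1) \Rightarrow (3)$ directly and then read off $(2)$ from Herz's Theorem~\ref{Therz}: use the infinite-dimensionality of $\mathcal{H}$ to realize any correlation matrix in $\bp_n(\overline{D}(0,1))$ as a Gram matrix of unit vectors in $S$ via orthogonal diagonal padding, reduce an arbitrary $A \in \bp_n(D(0,1))$ to the correlation-matrix case by a Schur-complement / block-extension argument, and then apply Herz on the open disk before extending continuously to the closed disk. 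The technical crux of this second route is precisely the reduction from the correlation-matrix case back to general matrices, since orthogonal padding to unit vectors changes the diagonal entries of $f[\cdot]$ from $f(a_{ii})$ to $f(1)$, and recovering the desired psd conclusion for $f[A]$ requires an additional perturbation argument.
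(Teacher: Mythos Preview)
The paper does not supply its own proof of this theorem; it cites \cite{ChrRes2} for the equivalence $(1)\Leftrightarrow(2)$ and simply notes that (3) follows ``as a consequence''. Your cycle $(2)\Rightarrow(3)\Rightarrow(1)$ is exactly that easy consequence, and both steps are correct as you wrote them.

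For the substantive direction $(1)\Rightarrow(2)$, your Route~A is precisely the Christensen--Ressel strategy: the continuous positive-definite kernels on $S$ normalized by $f(1)=1$ form a Bauer simplex, the extreme points are identified with the monomial kernels $z\mapsto z^j\overline{z}^k$ (obtained as the $\dim\mathcal H\to\infty$ limits of the disk/Jacobi polynomials on finite-dimensional complex spheres), and Choquet representation over this discrete extreme set yields the series. Your sketch has the right architecture, though it understates the work needed to pin down the extreme points --- that identification is the heart of \cite{ChrRes2}, not a formality.

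Your Route~B is honestly flagged as problematic, and the obstacle you name is genuine. From (1) you get $f[\,\cdot\,]$ psd on \emph{correlation} matrices (Gram matrices of unit vectors), but orthogonal padding to unit norm replaces each diagonal entry $f(a_{ii})$ by $f(1)$; the difference $f[A']-f[A]$ is diagonal with entries $f(1)-f(a_{ii})$, and you have no control over its sign before (2) is established. I do not see a way to close this without essentially reproving (2), so Route~A is the one to pursue. Your uniqueness argument at the end (Fourier in $\theta$, then separate radial powers) is fine.
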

%}}}

%{{{1 Section 3 - Schoenberg's motivations: distance geometry
\section{Schoenberg's motivations: distance geometry}

We now discuss the classical motivations of Schoenberg in arriving at
Theorem~\ref{Tschoenberg}, which is a seminal result that has spawned
much subsequent activity (not only in generalizing and refining it, but
in other, modern domains as well, as is mentioned below). Schoenberg was
motivated by the study of \textit{metric/distance geometry}. Indeed,
since the advent of Descartes in the 1600s, it has been the norm
to think of vectors in Euclidean space $\R^n$ as $n$-tuples of real
numbers, and of distances between them via the Pythagorean metric.
However, for almost two millennia before Descartes, geometry meant
working with Euclid's postulates and using points, lines, angles,
distances, etc.

In the early 20th century, there was a revival of this ``distance
geometry'' perspective. We single out the Vienna Circle of mathematicians
and philosophers, in which Karl Menger (a student of Hahn), Tarski, Hahn,
von Neumann, G\"odel, Taussky, and others met regularly to discuss
mathematics. A prevalent theme of their Kolloquium (1928--1936,
see~\cite{Menger}) was the study of metric spaces and of properties
intrinsic to them, such as curvature, homogeneity, and metric convexity.
Indeed, the foundations of metric space theory had been then-recently
established, due to works of Birkhoff, Fr\'echet, and Hausdorff among
others, and the Vienna Kolloquium's investigations led them to advances
not only in mathematics, but also in mathematical economics (von
Neumann's fixed point theorem -- the precursor to the one by Kakutani
that was later used by Nash -- is a case in point, as is the work of
Abraham Wald).

\subsection{Metric embeddings}

Let $X = \{ x_0, \dots, x_n \}$ be a finite set endowed with a metric $d
= d_X$. A seminal 1910 result by Fr\'echet states:

\begin{theorem}[\cite{Frechet0}]
Every metric space of size $n+1$ isometrically embeds into $(\R^n,
\ell_\infty)$ -- where $\ell_\infty(x,y) := \sup_i |x_i-y_i|$ denotes the
sup-norm.
\end{theorem}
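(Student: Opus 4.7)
The plan is to build an explicit isometric embedding $\phi : X \to (\R^n, \ell_\infty)$ by using distances to points of $X$ as coordinates. Labeling $X = \{x_0, x_1, \ldots, x_n\}$, I would try the Fr\'echet-type map
\[
\phi(x) := (d(x, x_1), d(x, x_2), \ldots, d(x, x_n)) \in \R^n,
\]
which uses $n$ of the $n+1$ available ``distance coordinates'' -- dropping the one corresponding to $x_0$ so that the target has dimension exactly $n$.

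First I would verify the upper bound $\ell_\infty(\phi(x), \phi(y)) \leq d(x,y)$ for all $x, y \in X$: this is immediate from the reverse triangle inequality, which gives $|d(x, x_i) - d(y, x_i)| \leq d(x, y)$ coordinatewise, and hence the same bound after taking the supremum over $i \in \{1, \ldots, n\}$.

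The substantive step is the matching lower bound. Here I would exploit a simple pigeonhole observation: since only $x_0$ has been omitted from the list of coordinate-labels, any two distinct points $x, y \in X$ must satisfy $\{x, y\} \cap \{x_1, \ldots, x_n\} \neq \emptyset$. Writing (say) $y = x_j$ with $j \geq 1$, the $j$-th entry of $\phi(x) - \phi(y)$ equals $d(x, x_j) - d(x_j, x_j) = d(x, y)$, so $\ell_\infty(\phi(x), \phi(y)) \geq d(x, y)$, completing the isometry check.

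The only real obstacle to anticipate is the dimension bookkeeping: the naive Fr\'echet embedding $x \mapsto (d(x, x_i))_{i=0}^{n}$ lands in $\R^{n+1}$, one coordinate too many. The trick is to recognize that dropping any single basepoint still preserves the isometry -- precisely because, of any two distinct points of $X$, at least one must survive as a coordinate label, and the sup-norm converts this single ``good'' coordinate into a tight lower bound.
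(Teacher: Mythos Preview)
Your proof is correct and is precisely the Fr\'echet-type embedding the paper alludes to in the remark immediately following the theorem (as the precursor to the Kuratowski embedding). The paper does not spell out a proof, but your argument --- distance-to-basepoint coordinates with one basepoint dropped, plus the pigeonhole observation that of any two distinct points at least one survives as a coordinate label --- is the standard one.
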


\begin{remark}
In fact the embedding Fr\'echet provides is reminiscent of -- and the
precursor to -- the Kuratowski embedding~\cite{Kuratowski}. Fr\'echet's
result was subsequently improved by Witsenhausen~\cite{Witsenhausen} to
$(\R^{n-1}, \ell_\infty)$ if $n \geq 2$.
\end{remark}

It is now natural to ask which finite, or separable, metric spaces embed
into ``usual'' Euclidean space $\R^n$, or into their limit $\R^\infty =
\bigcup_{n \geq 1} \R^n$, or into its closure $\ell^2(\R)$. Following
characterizations by Menger~\cite{Menger0} and Fr\'echet~\cite{Frechet},
Schoenberg provided in~1935 a characterization that related metric
geometry and matrix positivity:

\begin{theorem}[{\cite[Theorem~1]{Schoenberg35}}]\label{Teuclidean}
Let a finite metric space $X = \{ x_0, \dots, x_n \}$, and write $d_{ij}
:= d_X(x_i,x_j)$ for $i,j=0,\dots,n$. Then $X$ embeds isometrically into
some Euclidean space (equivalently, into $\ell^2$) if and only if its
{\em modified Cayley--Menger matrix}
\[
CM'(X) := ( d_{i0}^2 + d_{j0}^2 - d_{ij}^2 )_{i,j=1}^n
\]
is positive semidefinite. Moreover, if this happens then the smallest
dimensional space $\R^r$ into which $X$ embeds is precisely the rank of
$CM'(X)$.
\end{theorem}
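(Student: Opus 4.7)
The plan is to recognize $CM'(X)$ (up to a factor of $2$) as a Gram matrix, which immediately links the isometric embedding question to the characterizations of positive semidefiniteness in Theorem~\ref{Tbasics} and the rank criterion of Lemma~\ref{Lbasics}. The key identity is the polarization
\[
\langle u,v \rangle = \tfrac{1}{2}\bigl( \|u\|^2 + \|v\|^2 - \|u-v\|^2 \bigr),
\]
applied after translating the basepoint $x_0$ to the origin.

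For the forward direction, I would assume $X$ embeds isometrically into $\R^r$, translate so that (the image of) $x_0$ is the origin, and write $y_i$ for the image of $x_i$. Then $d_{i0}^2 = \|y_i\|^2$ and $d_{ij}^2 = \|y_i - y_j\|^2$, so by the polarization identity
\[
d_{i0}^2 + d_{j0}^2 - d_{ij}^2 = 2 \langle y_i, y_j \rangle \qquad (1 \leq i, j \leq n).
\]
Thus $CM'(X) = 2G$, where $G$ is the Gram matrix of $y_1,\ldots, y_n \in \R^r$. By Theorem~\ref{Tbasics}(4), $G$ is psd, so $CM'(X)$ is psd; and by Lemma~\ref{Lbasics}, $\operatorname{rank} G$ equals $\dim \operatorname{span}(y_1,\ldots,y_n)$. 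Since $y_0 = 0$ lies in this span, the affine span of the full image has the same dimension, so no embedding into a smaller Euclidean space is possible, and $r = \operatorname{rank} CM'(X)$ is attainable.

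For the converse, assume $CM'(X)$ is psd, of rank $r$. By Theorem~\ref{Tbasics}(4) again, there exist vectors $y_1, \ldots, y_n \in \R^n$ (in fact lying in an $r$-dimensional subspace, by Lemma~\ref{Lbasics}) whose Gram matrix is $\tfrac{1}{2}CM'(X)$. Set $y_0 := 0$. The diagonal entries give $\|y_i\|^2 = \tfrac{1}{2}(2 d_{i0}^2) = d_{i0}^2$, matching the distance from $y_0$, and for $i,j \geq 1$ the polarization identity yields
\[
\|y_i - y_j\|^2 = \|y_i\|^2 + \|y_j\|^2 - 2\langle y_i, y_j\rangle = d_{i0}^2 + d_{j0}^2 - (d_{i0}^2 + d_{j0}^2 - d_{ij}^2) = d_{ij}^2,
\]
so $x_i \mapsto y_i$ is an isometric embedding into $\R^r$.

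There is no real obstacle here beyond spotting the polarization trick; the argument is essentially a direct translation between the Gram-matrix picture of Theorem~\ref{Tbasics} and the distance data. The one subtlety worth flagging is the rank/dimension statement: one must observe that appending $y_0 = 0$ to the list $y_1, \ldots, y_n$ does not enlarge their linear span, so the rank of $CM'(X)$ equals both the minimum Euclidean dimension and the dimension of the affine hull of the embedded $X$.
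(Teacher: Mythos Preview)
Your proof is correct and follows essentially the same approach as the paper: the key is the polarization identity showing that $CM'(X)$ is twice the Gram matrix of the translated vectors $x_i - x_0$, which is exactly the computation the paper carries out. In fact the paper only outlines the forward direction, so your treatment of the converse and of the rank statement (via Theorem~\ref{Tbasics}(4) and Lemma~\ref{Lbasics}) is more complete than what appears there.
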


\begin{proof}
We outline only one direction, which is the illuminating calculation
relating distances and inner products: if $X$ is Euclidean, so that we
identify each $x_i$ with an isometrically embedded copy in $\R^k$, then
\begin{align}\label{Eschoenberg35}
CM'(X)_{ij} = &\ d_{i0}^2 + d_{j0}^2 - d_{ij}^2 = \| x_i - x_0 \|^2 + \| x_j
- x_0 \|^2 - \| (x_i-x_0) - (x_j-x_0) \|^2 \notag\\
= &\ 2 \tangle{ x_i-x_0, x_j-x_0 }.
\end{align}
But these form a Gram matrix, which is positive semidefinite (see
Theorem~\ref{Tbasics}).
\end{proof}

Parallel to flat space embeddings, Schoenberg also characterized when a
finite metric space embeds into a Euclidean sphere of unit radius,
$S^{r-1} \subset \R^r$. Recall that on any such sphere (for any $r$), any
two antipodal points have distance $\pi$, while two non-antipodal points
$x,y$ and the origin lie on a unique plane, which slices the sphere along
a great circle. Now the intrinsic spherical distance $\sphericalangle
x,y$ equals the length of the shorter arc joining them:
\begin{equation}\label{Esphericaldist}
\sphericalangle x,y := \arccos \tangle{x, y}, \qquad \text{i.e.,} \qquad
\tangle{x,y} = x \cdot y = \cos \sphericalangle x,y, \qquad \forall x,y
\in S^{r-1}.
\end{equation}

This metric exists on each $S^{r-1}$, hence on their union over $r \geq
2$, and hence on its closure -- which is the unit sphere $S^\infty
\subset \ell^2$. This means that applying $\cos(\cdot)$ entrywise to a
spherical distance matrix $(\sphericalangle x_i, x_j)_{i,j=0}^n$ in
$S^\infty$ yields a Gram matrix. The converse is also not hard to show,
leading to another 1935 result of Schoenberg:

\begin{theorem}[{\cite[Theorem~2]{Schoenberg35}}]\label{Tsphere}
A finite metric space $X = \{ x_0, \dots, x_n \}$ embeds isometrically
into a Euclidean unit sphere (with its intrinsic angle metric)
if and only if $X$ has diameter $\leq \pi$ and the entrywise map
$\cos[-]$ sends its distance matrix $D_X$ to a positive semidefinite
matrix $\cos[D_X]$. Moreover, the smallest Euclidean dimension $r$ for
which $X$ embeds spherically in $S^{r-1}$ is the rank of $\cos[D_X]$.
\end{theorem}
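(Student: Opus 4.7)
The plan is to mirror the strategy used in the proof of Theorem~\ref{Teuclidean}: translate the condition that $X$ embeds in a unit sphere into a Gram matrix condition via the identity~\eqref{Esphericaldist}. The key observation is that on $S^{r-1}$ (with the intrinsic angle metric), the inner product $\langle x_i,x_j\rangle$ and the distance $\sphericalangle x_i,x_j$ encode exactly the same data -- via $\cos$ in one direction and $\arccos$ in the other -- and $\arccos \circ \cos$ is the identity precisely on $[0,\pi]$. This is what forces the diameter condition.

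For the forward direction, suppose $X$ embeds isometrically in $S^{r-1} \subset \R^r$; identifying each $x_i$ with its image, the distances $d_{ij} = \sphericalangle x_i,x_j$ lie in $[0,\pi]$, so the diameter is at most $\pi$. Applying $\cos$ entrywise to $D_X$ gives $\cos[D_X]_{ij} = \cos \sphericalangle x_i,x_j = \langle x_i,x_j\rangle$ by~\eqref{Esphericaldist}, which is the Gram matrix of $x_0,\dots,x_n \in \R^r$; by Theorem~\ref{Tbasics}(4) this is psd, and by Lemma~\ref{Lbasics} its rank is at most $r$, with equality iff the $x_i$ span $\R^r$.

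For the converse, assume the diameter condition and that $G := \cos[D_X]$ is psd with $\mathrm{rank}(G) = r$. By Theorem~\ref{Tbasics}(4) there exist vectors $y_0,\dots,y_n$ in some Euclidean space with $G_{ij} = \langle y_i,y_j\rangle$; using Lemma~\ref{Lbasics} we may take these vectors to lie in $\R^r$ and to span it. Since $G_{ii} = \cos(0) = 1$, each $y_i$ lies on $S^{r-1}$. The diameter hypothesis places each $d_{ij}$ in $[0,\pi]$, so
\[
\sphericalangle y_i,y_j = \arccos\langle y_i,y_j\rangle = \arccos(\cos d_{ij}) = d_{ij},
\]
which is the desired isometric embedding into $S^{r-1}$.

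Finally, for minimality: if $X$ embeds isometrically in any $S^{s-1}$, then the forward-direction computation shows $\cos[D_X]$ is a Gram matrix in $\R^s$ and hence has rank at most $s$ (Lemma~\ref{Lbasics}), forcing $s \geq r$; and the converse construction already realizes an embedding into $S^{r-1}$. There is no real obstacle to overcome here -- the whole argument is a direct application of the Gram matrix characterization of positivity, once one notices that the diameter condition is exactly what is needed for $\arccos \circ \cos$ to recover the metric. The only point demanding care is ensuring both that the vectors land on the unit sphere (which comes for free from the diagonal of $\cos[D_X]$) and that the dimension counting matches, for which Lemma~\ref{Lbasics} is invoked on both sides.
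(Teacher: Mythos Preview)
Your proof is correct and follows exactly the approach the paper sketches: the paper observes that applying $\cos[-]$ to a spherical distance matrix yields a Gram matrix (hence psd), remarks that ``the converse is also not hard to show,'' and then states the theorem without further argument. You have filled in precisely these details, including the use of Lemma~\ref{Lbasics} for the rank/dimension claim.
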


\subsection{Positive definite functions}

We make two observations about Theorem~\ref{Tsphere}:
(a)~matrix positivity again plays an indispensable role in it;
and
(b)~the theorem features an early occurrence of the entrywise calculus:
maps that take distance matrices to positive ones; or via composing with
the metric, two-variable symmetric maps that take $X \times X$ to
$\bp_{|X|}(\R)$. This notion was abstracted into

\begin{definition}\hfill\label{Dposdef}
\begin{enumerate}
\item (Schoenberg, 1938, \cite{Schoenberg38}.)
A \textit{positive definite function} on a metric space $(X,d_X)$ is a
map $f : [0,\infty) \to \R$, such that for any points $x_1, \dots, x_n
\in X$, the matrix $(f(d_X(x_i,x_j)))_{i,j=1}^n$ is positive
semidefinite.

\item (Mercer, 1909, \cite{Mercer}; Mathias, 1923, \cite{Mathias}.)
This is the ``traditional'' -- and different -- definition: a
\textit{positive definite function} on a group $G$ is a map $f : G \to
\mathbb{C}$ such that for any points $g_1, \dots, g_n \in G$, the matrix
$(f(g_i^{-1} g_j))_{i,j=1}^n$ is positive semidefinite.
\end{enumerate}
\end{definition}

Indeed, it is the latter notion that appears in the
Carath\'eodory--Toeplitz--Herglotz results discussed after
Theorem~\ref{Trudin} above. We again digress here with a discussion of
the early history of such functions, following the comprehensive
survey~\cite{Stewart} by James Stewart.\footnote{The reader who has
taught freshman calculus, may recognize this author for a different
reason.} Stewart was a student of J.L.B.~Cooper (who descended from
Titchmarsh and hence Hardy), and the survey is an offspring of his
doctoral dissertation on ``Positive definite functions and
generalizations.''

As the name suggests, positive definite functions send ``squares of
domain sets'' to psd matrices.
Indeed, following the use of Smith~\cite{Smith} and others of ``positive
definite'' for matrices, it was Mercer~\cite{Mercer} who extended in~1909
the notion to \textit{kernels}: these are maps $f : X \times X \to
\mathbb{C}$ for an arbitrary set $X$, such that $f(x,x') =
\overline{f(x',x)}$ and the quadratic form induced by $f$ is positive
semidefinite. Examples of such kernels had also appeared in Hilbert's
1904--1910 articles on integral equations~\cite{Hilbert}.

Later, Mathias~\cite{Mathias} independently rediscovered in~1923 such
kernels -- on the group $(\R,+)$. He called them ``positive definite'';
observed using the Schur product theorem that they form a
multiplicatively closed convex cone; and showed that if $f$ is such a
kernel on $(\R,+)$ then its Fourier transform $\widehat{f}(t) := \int_\R
e^{-itx} f(x) dx$ is nonnegative (if it exists). In a sense this is
Bochner's theorem over $\R$, but the connection to a density function on
$\R$ was shown by Bochner a decade later:

\begin{theorem}[\cite{Bochner1}]
A continuous map $f : \R \to \R$ is positive definite if and only if
there exists a (unique) probability measure $\mu$ on $\R$ whose
Fourier--Stieltjes transform is $f$:
\[
f(x) = \int_\R e^{itx} d \mu(t).
\]
\end{theorem}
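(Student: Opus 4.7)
My plan for proving Bochner's theorem is to establish the easy direction by direct calculation, and for the converse, to regularize $f$ into an integrable positive definite function whose Fourier transform is a density, and then extract a weak-$*$ limit. For the easy direction: if $f(x) = \int_\R e^{itx}\,d\mu(t)$, then $f$ is continuous by dominated convergence, and for any $x_1,\dots,x_n \in \R$ and $c_1,\dots,c_n \in \C$,
\[
\sum_{j,k} c_j \overline{c_k}\, f(x_j - x_k) = \int_\R \Bigl| \sum_j c_j e^{itx_j} \Bigr|^2 d\mu(t) \geq 0.
\]
Uniqueness follows since a finite Borel measure on $\R$ is determined by its Fourier--Stieltjes transform.

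For the hard direction, I first note that a continuous positive definite $f$ is bounded with $|f(x)| \leq f(0)$ (by positivity of the $2 \times 2$ Gram matrix with nodes $0, x$), so after rescaling we may assume $f(0) = 1$. For each $\lambda > 0$, set $f_\lambda(x) := f(x) e^{-\lambda x^2}$; since both factors are continuous and positive definite, the Schur product theorem (Theorem~\ref{Tschur}) applied to every Gram matrix $(f(x_i - x_j)\, e^{-\lambda(x_i - x_j)^2})$ shows $f_\lambda$ is positive definite, and $f_\lambda \in L^1(\R) \cap L^2(\R)$. The key step is to upgrade pointwise positive definiteness of the continuous function $f_\lambda$ to the distributional statement
\[
\int_\R \int_\R f_\lambda(x-y)\, \overline{g(x)}\, g(y)\, dx\, dy \;\geq\; 0 \qquad \text{for every Schwartz } g,
\]
which follows by approximating the integral with Riemann sums, each of which is nonnegative upon taking nodes $\{jh\}$ and weights $c_j = g(jh)$, and invoking uniform continuity on compact sets with rapid decay of $g$. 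Rewriting via Plancherel as $\int_\R \widehat{f_\lambda}(\xi)\, |\widehat{g}(\xi)|^2\, d\xi \geq 0$ and letting $g$ vary over a dense family forces $\widehat{f_\lambda} \geq 0$, and hence (by continuity of the transform of an $L^1$ function) everywhere.

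Define $d\mu_\lambda(\xi) := \tfrac{1}{2\pi}\widehat{f_\lambda}(\xi)\, d\xi$. Then $\mu_\lambda$ is a nonnegative measure with total mass $f_\lambda(0) = 1$, so a probability measure, and Fourier inversion gives $f_\lambda(x) = \int_\R e^{itx}\, d\mu_\lambda(t)$. It remains to send $\lambda \downarrow 0$. I would establish tightness of $\{\mu_\lambda\}$ via the classical estimate
\[
\mu_\lambda\bigl( \{|t| \geq 2/\delta\} \bigr) \;\leq\; \frac{1}{\delta} \int_{-\delta}^{\delta} \bigl(1 - \Re f_\lambda(x)\bigr)\, dx,
\]
derived by integrating the pointwise inequality $1 - \tfrac{\sin(\delta t)}{\delta t} \geq \tfrac{1}{2}\mathbbm{1}(|t| \geq 2/\delta)$ against $d\mu_\lambda$; the right side can be made small uniformly in $\lambda$ using continuity of $f$ at $0$ and $f_\lambda \to f$ pointwise with $|f_\lambda| \leq 1$. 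Prokhorov's theorem then delivers a weakly convergent subsequence $\mu_{\lambda_n} \Rightarrow \mu$, and passing to the limit in $f_{\lambda_n}(x) = \int e^{itx}\, d\mu_{\lambda_n}(t)$ yields the desired representation. The main obstacle I anticipate is the tightness step: without it, mass could escape to infinity and any weak-$*$ limit in the dual of $C_0(\R)$ would fail to integrate back to $f$.
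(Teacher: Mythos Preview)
The paper does not prove Bochner's theorem; it is stated as a classical result (with citation to \cite{Bochner1}) in the historical discussion of positive definite functions, and the text moves directly to historical remarks without giving any argument. So there is no ``paper's own proof'' to compare your proposal against.

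That said, your argument is a correct and standard route to Bochner's theorem. The regularization $f_\lambda = f \cdot e^{-\lambda x^2}$, the appeal to the Schur product theorem (Theorem~\ref{Tschur}) to keep positive definiteness, the Riemann-sum upgrade to the distributional inequality yielding $\widehat{f_\lambda} \geq 0$, and the tightness estimate followed by Prokhorov extraction are all sound. One technical point you glide over: to conclude that $\mu_\lambda$ is a \emph{probability} measure (equivalently, that $\widehat{f_\lambda} \in L^1$ with $\tfrac{1}{2\pi}\int \widehat{f_\lambda} = f_\lambda(0)$), you need the standard observation that a nonnegative Fourier transform of an $L^1$ function which is continuous at $0$ is itself integrable --- e.g., via Gaussian summability of the inversion formula together with monotone convergence. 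This is routine, and your identification of tightness as the genuine crux is accurate.
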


We end this part with two historical remarks.

\begin{remark}
Bochner also proved his eponymous theorem for $G = \R^r$ in the following
year~\cite{Bochner2}; the general version for locally compact abelian
(LCA) groups came a few years later, due to Povzner, Raikov, and Weil.
Other uses of positive definite kernels included Moore and Aronszajn's
works on reproducing kernels (see e.g.~\cite{PR}), the theory of
harmonics on homogeneous spaces (Cartan, Weyl, It\^o), and its
comprehensive generalization by Krein~\cite{Krein} -- which subsumes the
preceding three authors' work as well as that of Gelfand and Raikov. See
\cite[Section~8]{Stewart} for more details. For a more recent survey
taking the reader from positive definiteness to harmonic analysis and
operator theory, see Shapiro's article~\cite{Shapiro}.
\end{remark}

\begin{remark}
Bochner-type theorems characterize positive definite functions on a LCA
group $G$, as Fourier transforms of probability measures on the dual
group $\widehat{G}$. From this perspective, one should note that the
``general'' LCA-form of Bochner's theorem was first proved by Herglotz
two decades prior to Bochner -- see Theorem~\ref{Therglotz} over the dual
groups $(S^1, \mathbb{Z}$).
\end{remark}

We will mention Herglotz next in Section~\ref{Sdelsarte} in the Appendix
on sphere packings -- given that he is credited by M\"uller with the
Addition Theorem for spherical harmonics.

\subsection{Reconciling the two notions of positive
definiteness}\label{Sposdef}

Having discussed the ``group'' notion of positive definiteness in
Definition~\ref{Dposdef}(2), we now come to Schoenberg's work on positive
definite functions. He introduced the ``metric'' variant in
Definition~\ref{Dposdef}(1) in~\cite{Schoenberg38}, and showed that a
metric space $X$ is Euclidean (or embeds isometrically in $\ell^2$) if
and only if it is separable and the Gaussian family $\{ \exp(-cx^2) : c >
0 \}$ is ``metric'' positive-definite on $X$. Schoenberg then turned his
attention to such positive-definite functions on (Euclidean and
Hilbertian) spheres, in the 1942 paper~\cite{Schoenberg42} bearing this
title.

We explain briefly here, why in the Euclidean sphere context,
Schoenberg's setting reconciles with the ``other'', group-setting that
was introduced by Mercer/Mathias and taken forward by Bochner and others.
More precisely, we explain how each positive definite function on the
sphere, in Schoenberg's metric setting, is a $G$-invariant positive
definite kernel (which incorporates both the ``metric'' and ``group''
settings), and on a two-point homogeneous space $G/H$. We first introduce
the relevant notions.

\begin{definition}\hfill\label{D2pthom}
\begin{enumerate}
\item A compact metric space $(X,d_X)$ is said to be \textit{two-point
homogeneous} if there is a compact group $G$ acting on $X$, such that
given points $p,q,r,s \in X$ with $d_X(p,q) = d_X(r,s)$, there exists $g
\in G$ with $gp=r, gq=s$.

\item We also isolate the common structure underlying both notions in
Definition~\ref{Dposdef}: given a topological space $X$, a continuous
kernel $: X \times X \to \mathbb{C}$ is \textit{positive definite} if
given any $k \geq 1$ and any points $x_1, \dots, x_k \in X$, the matrix
$(K(x_i,x_j))_{i,j=1}^k \in \bp_k(\mathbb{C})$.
\end{enumerate}
\end{definition}

\begin{remark}
Schoenberg's (continuous) positive definite functions are real-valued
kernels which factor through the distance map: $K(\cdot,\cdot) = f \circ
d_X(\cdot,\cdot)$ -- whereas the Mathias--Mercer definition factors
through ``fractions in $G$'': $K(\cdot,\cdot) = f \circ
\eta(\cdot,\cdot)$, where $\eta(g,h) = g^{-1} h$.
\end{remark}

Two-point homogeneity is a strong condition; for instance, setting
$q=p,s=r$ implies that $G$ acts transitively on $X$. Let us fix a
basepoint $x_0 \in X$ and denote $H := {\rm Stab}_G(x_0)$ (a compact
subgroup of $G$), so that $X = G/H$.

\begin{remark}
If $G$ is infinite and connected, %Sloane--Conway book
then $X$ turns out to be a rank-one Riemannian symmetric space of compact
type; these have been classified by Wang~\cite{Wang}. In this case,
$(G,H)$ is also an example of a Gelfand pair~\cite{Gelfand}.
\end{remark}

Let us now turn our attention to a distinguished \textbf{special case}:

\begin{proposition}
Let $r \geq 2$ and let $X = (S^{r-1}, \sphericalangle)$, the unit sphere
in $\R^r$. This is a two-point homogeneous space as above, with $G =
SO(r)$, and $H = SO(r-1) \oplus \{ 1 \}$ for $x_0 = {\bf e}_r$, the north
pole.
\end{proposition}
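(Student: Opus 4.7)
The plan is to verify three claims in turn: that $SO(r)$ acts by isometries of $\sphericalangle$ on $S^{r-1}$; that this action is two-point homogeneous in the sense of Definition~\ref{D2pthom}; and that the stabilizer of $\mathbf{e}_r$ equals $SO(r-1) \oplus \{1\}$. The isometric action is immediate from~\eqref{Esphericaldist}, since every $g \in SO(r) \subset O(r)$ preserves the Euclidean inner product and $\sphericalangle$ is a function of that inner product; compactness of $SO(r)$ is standard. The stabilizer identification is also essentially mechanical: any $g \in SO(r)$ fixing $\mathbf{e}_r$ preserves the orthogonal complement $\mathbf{e}_r^\perp$, restricts there to an orthogonal transformation of $\R^{r-1}$, and the product-of-determinants constraint combined with $\det g = 1$ forces the restriction to lie in $SO(r-1)$; conversely every block-diagonal element of this form fixes $\mathbf{e}_r$.

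The substantive step is two-point homogeneity. I would first verify transitivity (the special case $q = p$, $s = r$ of the definition): given $v \in S^{r-1}$, complete $v$ to an orthonormal basis of $\R^r$, reversing one basis vector if necessary to make the determinant $+1$, and read off an element of $SO(r)$ that carries $\mathbf{e}_r$ to $v$. For the general case, take $p, q, p', q' \in S^{r-1}$ with $\langle p, q\rangle = \langle p', q'\rangle =: c$ (equivalently, equal spherical distances, by~\eqref{Esphericaldist}). Using transitivity I may arrange $p = p' = \mathbf{e}_r$, and then decompose
\[
q = c\,\mathbf{e}_r + u, \qquad q' = c\,\mathbf{e}_r + u',
\]
with $u, u' \in \mathbf{e}_r^\perp$; the constraint $\|q\| = \|q'\| = 1$ forces $\|u\| = \|u'\| = \sqrt{1 - c^2}$. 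It then suffices to produce a rotation of $\mathbf{e}_r^\perp \cong \R^{r-1}$ sending $u$ to $u'$, which comes from reapplying the basis-extension argument inside $\R^{r-1}$ and lifting the resulting matrix to $H$ by block embedding (fixing the $\mathbf{e}_r$-coordinate). Composing with the transitivity rotations yields the desired $g \in SO(r)$.

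The main obstacle is the low-dimensional case $r = 2$, where $SO(r-1) = SO(1) = \{1\}$ and the ``rotate $u$ to $u'$ in $\mathbf{e}_r^\perp$'' step has nothing to offer when $u = -u'$: a pair on $S^1$ at spherical distance $d \in (0,\pi)$ splits into two $SO(2)$-orbits distinguished by orientation, so strictly speaking two-point homogeneity fails under $SO(2)$ alone. I expect the cleanest fix is to allow $G = O(2)$ in this one dimension, with a reflection of $\mathbf{e}_2^\perp$ supplying the missing transformation, or else to stipulate $r \geq 3$ in the main argument; in either case the stabilizer computation and the higher-dimensional argument above go through without change.
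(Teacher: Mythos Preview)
Your argument mirrors the paper's proof almost exactly: transitivity via completing a unit vector to an orthonormal basis of determinant one, the block-diagonal stabilizer computation, and then the reduction of two-point homogeneity to finding a rotation in $\mathbf{e}_r^\perp$ sending $u$ to $u'$.

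Your observation about $r=2$ is not a defect in your write-up but a genuine issue that the paper's proof glosses over: $SO(2)$ does \emph{not} act two-point homogeneously on $S^1$, since ordered pairs at spherical distance $d \in (0,\pi)$ fall into two $SO(2)$-orbits distinguished by orientation (equivalently, the stabilizer $SO(1)\oplus\{1\}$ is trivial and cannot send $u$ to $-u$ in $\mathbf{e}_2^\perp$). The paper's proof invokes exactly the step you flag---``there exists $g \in SO(r-1)\oplus\{1\}$ sending $b' \mapsto d'$''---which fails when $r=2$ and $b' = -d' \neq 0$. Either of your proposed fixes (enlarge to $G = O(2)$ in this dimension, or restrict the proposition to $r \geq 3$) repairs the statement; the downstream uses of the proposition in the paper are unaffected, since Schoenberg's kernels already have the form $\psi(x\cdot y)$ by construction.
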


\begin{proof}
Let $u \in S^{r-1}$ be any unit vector; then $u$ can be completed to an
ordered orthonormal basis $(u_1, \dots, u_{r-1}, u)$ such that the
orthogonal matrix $U = [u_1 | \cdots | u_{r-1} | u]$ has determinant one.
Thus $SO(r)$ acts transitively on $S^{r-1}$, and the stabilizer of $x_0 =
{\bf e}_r$ is $SO(r-1) \oplus 1$. Note that $G$ acts on $X = S^{r-1}$ by
isometries, since $\tangle{\cdot,\cdot}$ is $O(r)$-invariant.

It remains to show two-point homogeneity. Given $a,b,c,d \in S^{r-1}$
with $\sphericalangle a,b = \sphericalangle c,d$, we may first apply the
$G$-transitive action to assume that $a=c={\bf e}_r$. We will use here --
and below -- the basic fact~\eqref{Esphericaldist} that the spherical
distance on $S^{r-1}$ is connected to the inner product. Thus, writing
$b = (b', b_r), d = (d', d_r)$ as tuples in $\R^r$, it follows that $b_r
= d_r$, and hence $\| b' \| = \| d' \|$. From above, there exists $g \in
SO(r-1) \oplus \{ 1 \}$ sending $b' \mapsto d'$ and hence $b \mapsto d$.
Therefore $g \in H$ sends $b,{\bf e}_r$ to $d,{\bf e}_r$, respectively.
\end{proof}

We next show:

\begin{lemma}\label{Lsame}
Let $X = S^{r-1}, G, H, x_0$ be as above, and let $K$ be a continuous
kernel on $X \times X$ into any codomain $C$. Then $K$ is invariant under
the diagonal action of $G$ if and only if 
$K(x,y) \equiv \psi(x \cdot y)\ \forall x,y \in X$
for some continuous map $\psi : [-1,1] \to C$.
\end{lemma}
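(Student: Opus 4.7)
The plan is to handle the two implications separately, using two-point homogeneity as the engine for the nontrivial direction.

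For the easy direction $(\Leftarrow)$, suppose $K(x,y) = \psi(x \cdot y)$ for a continuous $\psi : [-1,1] \to C$. Every $g \in G = SO(r)$ preserves the standard inner product on $\R^r$, so $gx \cdot gy = x \cdot y$, and hence $K(gx, gy) = \psi(x \cdot y) = K(x,y)$; continuity of $K$ then follows from continuity of $\psi$ and of the pairing map $(x,y) \mapsto x \cdot y$.

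For the nontrivial direction $(\Rightarrow)$, the first step is to check that $K(x,y)$ depends on $(x,y)$ only through the scalar $x \cdot y$. Given pairs $(x,y)$ and $(x',y')$ in $S^{r-1} \times S^{r-1}$ with $x \cdot y = x' \cdot y'$, equation~\eqref{Esphericaldist} gives $\sphericalangle x,y = \sphericalangle x',y'$; by the two-point homogeneity of $S^{r-1}$ established in the preceding proposition, there exists $g \in G$ with $gx = x'$ and $gy = y'$, whence the $G$-invariance of $K$ yields $K(x,y) = K(gx, gy) = K(x',y')$. Consequently, setting $\psi(t) := K(x,y)$ for any choice of $(x,y) \in X \times X$ with $x \cdot y = t$ produces a well-defined map $\psi : [-1,1] \to C$ satisfying $K(x,y) = \psi(x \cdot y)$ for all $x,y \in X$.

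Continuity of $\psi$ is the final step, and I would establish it by exhibiting an explicit continuous section of the pairing: using $r \geq 2$, set
\[
\sigma(t) := \bigl( {\bf e}_r,\ \sqrt{1-t^2}\, {\bf e}_1 + t\, {\bf e}_r \bigr) \in X \times X \qquad (t \in [-1,1]).
\]
Since the inner product of the two coordinates of $\sigma(t)$ equals $t$, one has $\psi = K \circ \sigma$, a composition of continuous maps. The main (small) conceptual obstacle is exactly the well-definedness of $\psi$ in the $(\Rightarrow)$ direction: a priori, $K(x,y)$ need only be constant on diagonal $G$-orbits in $X \times X$, and it is two-point homogeneity that collapses those orbits down to the level sets of the inner product. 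Everything else reduces to routine continuity arguments.
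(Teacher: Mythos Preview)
Your proof is correct and follows essentially the same approach as the paper's: both rest on the observation that two pairs $(x,y)$ and $(x',y')$ in $(S^{r-1})^2$ lie in the same diagonal $G$-orbit if and only if $x \cdot y = x' \cdot y'$, which is exactly what two-point homogeneity gives. You have spelled out more than the paper does, in particular supplying an explicit continuous section $\sigma$ to verify continuity of $\psi$, which the paper leaves implicit.
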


\begin{proof}
The sufficiency is clear. Conversely, two pairs $(x,y)$ and $(x',y')$ in
$(S^{r-1})^2$ are $G$-conjugate if and only if $\sphericalangle x,y =
\sphericalangle x',y'$, if and only if $x \cdot y = x' \cdot y'$.
\end{proof}

We can now conclude the analysis. A continuous map $f : [0,\pi] \to \R$
is a positive definite function on $(S^{r-1}, \sphericalangle)$ in the
Schoenberg/metric sense, if and only if
\begin{equation}\label{Econvert}
\psi := f \circ \cos^{-1} : [-1,1] \to \R
\end{equation}
sends Gram matrices from $S^{r-1}$ to positive semidefinite matrices. By
Lemma~\ref{Lsame}, this is equivalent to a continuous $G$-invariant
positive definite kernel $K : S^{r-1} \times S^{r-1} \to \R$. \qed

\subsection{Schoenberg: from positive definite functions on spheres to
positivity preservers}

Having reconciled the two notions of positive definiteness (and subsumed
them by kernels in Definition~\ref{D2pthom}), we conclude this section on
Schoenberg's motivations. As we saw above, Schoenberg began his studies
in metric geometry by understanding metric embeddings into Euclidean
spaces and spheres. The latter case naturally led him to introduce
``metric'' positive-definite functions, which he then studied (in the
cited and other papers).

Schoenberg then classified in~1942 all continuous positive definite
functions on $S^{r-1}$ for each $r \geq 2$ (which he wrote composing with
$\cos^{-1}$, so with domain $x \cdot y \in [-1,1]$).

\begin{theorem}[{\cite[Theorem~1]{Schoenberg42}}]\label{Tschoenberg-pd}
Fix an integer $r \geq 2$. Given a continuous map $f : [-1,1] \to \R$,
the following are equivalent:
\begin{enumerate}
\item $f\circ \cos$ is positive definite on $(S^{r-1}, \sphericalangle)$.

\item The map $f(x) = \sum_{k=0}^{\infty} c_k C_k^{(r)}(x)$, where
$c_k\geq 0\ \forall k$, and for varying $r$, the functions $\{
\frac{C_k^{(r)}(x)}{C_k^{(r)}(1)} : k \geq 0 \}$ are precisely the first
Chebyshev, Legendre, or Gegenbauer orthogonal polynomials.
\end{enumerate}
\end{theorem}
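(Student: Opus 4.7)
The plan is to leverage the reformulation from Section~\ref{Sposdef}: by Lemma~\ref{Lsame}, condition~(1) is equivalent to saying that $(x,y)\mapsto f(x\cdot y)$ is a continuous $SO(r)$-invariant positive definite kernel on $S^{r-1}$ in the sense of Definition~\ref{D2pthom}(2). The argument will hinge on the harmonic analysis of $S^{r-1}$: the orthogonal Hilbert-space decomposition $L^2(S^{r-1})=\bigoplus_{k\geq 0}\mathcal{H}_k$ into spaces of degree-$k$ spherical harmonics, the Addition Theorem, and the Funk--Hecke formula that diagonalises zonal convolutions via Gegenbauer integrals.

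For the easy direction $(2)\Rightarrow(1)$, I would invoke the Addition Theorem, which asserts that for any orthonormal basis $\{Y_{k,j}\}_{j=1}^{\dim\mathcal{H}_k}$ of $\mathcal{H}_k$ there is a positive constant $\gamma_{k,r}$ with
\[
C_k^{(r)}(x\cdot y) \;=\; \gamma_{k,r}\sum_{j=1}^{\dim\mathcal{H}_k} Y_{k,j}(x)\,\overline{Y_{k,j}(y)}.
\]
Each summand is a rank-one Gram kernel, so $(x,y)\mapsto C_k^{(r)}(x\cdot y)$ is positive definite by Theorem~\ref{Tbasics}(4). Since positive definite kernels form a convex cone closed under sums, nonnegative dilations and pointwise limits, any uniformly convergent series $\sum_k c_k C_k^{(r)}(x\cdot y)$ with $c_k\geq 0$ inherits positive definiteness, yielding~(1).

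The hard direction $(1)\Rightarrow(2)$ I would split into three steps. First, define the formal Gegenbauer coefficients
\[
c_k := \frac{1}{\|C_k^{(r)}\|_{\mu_r}^2}\int_{-1}^{1} f(t)\,C_k^{(r)}(t)\,(1-t^2)^{(r-3)/2}\,dt, \qquad k\geq 0,
\]
using orthogonality of Gegenbauer polynomials with respect to $d\mu_r=(1-t^2)^{(r-3)/2}dt$. Second, upgrade the discrete positivity in~(1) to the integral form
\[
\int_{S^{r-1}}\!\int_{S^{r-1}} f(x\cdot y)\,\phi(x)\,\overline{\phi(y)}\,d\sigma(x)\,d\sigma(y) \geq 0
\]
for every continuous $\phi$ on $S^{r-1}$, via Riemann-sum approximation of the double surface integral. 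Specialising $\phi=Y_{k,j}$ and applying the Funk--Hecke formula to the inner integral, each $Y_{k,j}$ is an eigenfunction of the zonal convolution operator defined by $f$, with eigenvalue proportional to exactly the Gegenbauer integral defining $c_k$; this forces $c_k\geq 0$. Third, for convergence I would exploit the classical bound $|C_k^{(r)}(t)|\leq C_k^{(r)}(1)$ on $[-1,1]$: Ces\`aro or Abel summability of the formal series at $t=1$ gives $\sum_k c_k C_k^{(r)}(1)\leq f(1)<\infty$, whence the Weierstrass $M$-test yields absolute uniform convergence of $\sum_k c_k C_k^{(r)}(t)$, and completeness of the Gegenbauer system in $L^2(\mu_r)$ forces the limit to equal $f$.

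The main obstacle is the second step: assembling the Addition Theorem and the Funk--Hecke formula so that the coefficient $c_k$ coincides -- up to a computable positive constant -- with the Funk--Hecke eigenvalue of the convolution by $f$ on $\mathcal{H}_k$, and justifying rigorously the passage from finite-sum positivity to the integral formulation (uniform continuity of $f$ on $[-1,1]$ together with uniform Riemann approximations on $S^{r-1}$ are needed). Everything downstream -- nonnegativity of the coefficients, Ces\`aro summability at the endpoint, and uniform convergence -- is then essentially mechanical.
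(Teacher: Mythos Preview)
Your treatment of the easier implication $(2)\Rightarrow(1)$ via the Addition Theorem is exactly the argument the paper gives in Section~\ref{Sdelsarte}: express $C_k^{(r)}(\xi\cdot\eta)$ as a sum of products $S_{k,l}(\xi)S_{k,l}(\eta)$ over an orthonormal basis of $\mathcal{SH}_{r,k}$, and read off positive semidefiniteness of the resulting Gram-type matrix.

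Note, however, that the paper proves \emph{only} this easier half; the hard direction $(1)\Rightarrow(2)$ is simply attributed to Schoenberg's original 1942 paper, with a remark that Schoenberg himself used an inductive addition formula relating $\{G_k^{(r)}\}$ to $\{G_k^{(r-1)}\}$ rather than spherical harmonics. Your sketch for the hard direction---passing from finite-sum positivity to integral positivity, extracting $c_k\geq 0$ via Funk--Hecke (equivalently, positivity of the zonal convolution operator on each $\mathcal{H}_k$), and closing with the endpoint bound $|C_k^{(r)}(t)|\leq C_k^{(r)}(1)$ plus Abel/Ces\`aro summability at $t=1$---is the standard modern route and is sound. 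It is genuinely more than the paper undertakes, and arguably cleaner than Schoenberg's inductive approach, though it requires importing Funk--Hecke as an external ingredient.
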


Below, we reinterpret Schoenberg's result from a modern perspective;
and further below in the Appendix -- see Section~\ref{Sdelsarte} --
we will derive the ``easier'' half $(2) \implies(1)$ from the theory of
spherical harmonics. Here we continue to the \textit{Hilbert sphere}
$S^\infty$, which is the closure in $\ell^2$ of $\bigcup_{r \geq 2}
S^{r-1}$. Continuous positive definite functions on $S^\infty$ are in a
sense the ``intersection'' of the maps in Theorem~\ref{Tschoenberg-pd}
over all $r$, and Schoenberg showed in the same work:

\begin{theorem}[{\cite[Theorem~2]{Schoenberg42}}]\label{Tsinfty}
A continuous map $f : [-1,1] \to \R$ is positive definite on $S^\infty$
if and only if
\[
f(\cos \theta) = \sum_{k \geq 0} c_k \cos(\theta)^k, \qquad \theta \in
[0,\pi]
\]
where all $c_k \geq 0$ and $\sum_k c_k < \infty$.
\end{theorem}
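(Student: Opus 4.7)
The plan is to reduce Theorem~\ref{Tsinfty} to Schoenberg's earlier Theorem~\ref{Tschoenberg} on entrywise positivity preservers, via the identification between continuous positive definite functions on $S^\infty$ and continuous entrywise preservers of correlation matrices of every size.

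For the easy direction ($\Leftarrow$), suppose $f(x) = \sum_{k \geq 0} c_k x^k$ with $c_k \geq 0$ and $\sum_k c_k < \infty$. Given any finite tuple $x_1, \ldots, x_n \in S^\infty$, the Gram matrix $G := (x_i \cdot x_j)_{i,j=1}^n$ lies in $\bp_n([-1,1])$ by Theorem~\ref{Tbasics} and Cauchy--Schwarz. The Schur product theorem (Theorem~\ref{Tschur}) shows that each Hadamard power $G^{\circ k}$ is psd; the closure of $\bp_n(\R)$ under nonnegative combinations and entrywise limits --- the latter justified by $|G_{ij}| \leq 1$ and $\sum_k c_k < \infty$ --- then gives that $f[G] = (f(x_i \cdot x_j))_{i,j}$ is psd. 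Hence $f \circ \cos$ is positive definite on $S^\infty$.

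For the nontrivial direction ($\Rightarrow$), the first step is to recast the positive definiteness of $f \circ \cos$ on $S^\infty$ as an entrywise preservation property. Using the identity $x \cdot y = \cos(\sphericalangle x, y)$ from~\eqref{Esphericaldist}, the hypothesis translates to the assertion that $(f(x_i \cdot x_j))_{i,j}$ is psd for every finite tuple $(x_1, \ldots, x_n)$ of unit vectors in $\ell^2$. The key observation is that Gram matrices of unit vectors in $\ell^2$ are precisely the \emph{correlation matrices}, i.e., the matrices in $\bp_n([-1,1])$ with all diagonal entries equal to $1$: given any such $A$, Theorem~\ref{Tbasics}(3) writes $A = B^T B$ with $B \in \R^{n \times n}$, whose columns are unit vectors in $\R^n \hookrightarrow \ell^2$. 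Therefore $f[-]$ sends correlation matrices of every size to psd matrices. By Remark~\ref{Rreduce}, this is exactly Schoenberg's hypothesis in Theorem~\ref{Tschoenberg}, whose conclusion --- since $f$ is continuous on $[-1,1]$ --- gives a power series representation $f(x) = \sum_{k \geq 0} c_k x^k$ convergent on $[-1,1]$ with all $c_k \geq 0$.

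To finish, the constraint $\sum_k c_k < \infty$ is automatic from continuity of $f$ at $x = 1$, since convergence of the series at $x = 1$ yields $\sum_k c_k = f(1) < \infty$. The substitution $x = \cos\theta$ then produces the claimed expansion $f(\cos\theta) = \sum_{k \geq 0} c_k \cos(\theta)^k$ for $\theta \in [0,\pi]$. The only step of substance in this plan is the reduction to correlation matrices, which is immediate given $\ell^2$ is infinite-dimensional; all remaining difficulty is absorbed into Schoenberg's theorem, so if one wished to prove Theorem~\ref{Tsinfty} self-containedly, the main obstacle would be proving Theorem~\ref{Tschoenberg} itself --- via, e.g., Schoenberg's original spherical integrals and ultraspherical polynomial arguments, or via later approaches.
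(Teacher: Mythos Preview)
Your proposal is correct and matches the paper's own treatment: the paper does not prove Theorem~\ref{Tsinfty} independently but instead notes (immediately after stating it, and again in Section~\ref{Sfixeddim}) that setting $x = \cos\theta$ translates positive definiteness on $S^\infty$ into entrywise positivity preservation on correlation matrices of all sizes, which is precisely Theorem~\ref{Tschoenberg} via Remark~\ref{Rreduce}. Your handling of both directions and of the finiteness $\sum_k c_k < \infty$ is accurate.
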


One can free this result from the sphere context by setting $x = \cos
\theta$. The result then translates into: $f[-]$ sends spherical Gram
matrices to positive semidefinite matrices (see~\eqref{Econvert} and the
next lines) if and only if $f(x) = \sum_{k \geq 0} c_k x^k$ on $[-1,1]$.
(See Section~\ref{Sfixeddim} for a longer explanation.) This is precisely
Schoenberg's theorem~\ref{Tschoenberg} on entrywise positivity
preservers.

\subsection{From Schoenberg's positive definite functions to sphere
packings}

Before we discuss the connections of Schoenberg and Rudin's results to
applied fields, and a modern perspective on positivity preservers, we
mention that Schoenberg's theorem~\ref{Tschoenberg-pd} has another
interesting and important application: to the old problem of sphere
packings in any dimension. As this is not directly related to the
question of preserving positivity, we defer this discussion to the
Appendix; the connection to Schoenberg's result is given in
Section~\ref{Sdelsarte}.
%}}}

%{{{1 Section 4 - Modern motivations; fixed dimension
\section{Modern motivations; fixed dimension}

Interest in positivity preservers has also been renewed in recent times,
because of their relevance in modern statistical methodology to analyze
high-dimensional data. In classical statistics, one typically considers a
few random variables ($p$) and has a large number of observations/data
points of them. That is, the sample size $n$ is much larger than $p$,
which renders robustness to traditional statistical estimators. However,
in recent times a new paradigm has emerged, wherein the number of random
variables is very large ($\sim$100,000) and the number of data points is
consequently very small. This motivates strongly, and leads to, many
novel results on entrywise positivity preservers in a fixed dimension, as
well as their connections to hitherto disparate areas in mathematics:
symmetric function theory, finite fields, and graph theory, to name a
few. We refer the reader to the survey~\cite{BGKP-survey2} as well as the
monograph~\cite[Chapter~7]{AK-book} for more on this; here we provide
only a short exposition.

\subsection{An application to machine learning}

Before discussing the fixed dimension setting in earnest, we begin with a
quick digression into an application of entrywise positivity preservers
in the ``dimension-free'' case, i.e., for matrices of all sizes. In this
context, we continue to discuss positive definite kernels on real Hilbert
spaces $\mathcal{H}$: as in Theorem~\ref{Tchrres2}, these are maps $K :
\R \to \R$ such that $(K( \tangle{x_i,x_j}))_{i,j=1}^k$ is positive
semidefinite for all choices of points $x_1, \dots, x_k \in \mathcal{H}$.

It turns out that every such map $K$ yields a reproducing-kernel Hilbert
space, and this is an important tool in Machine Learning (see, e.g.,
\cite{Pinkus04,Steinwart,Vapnik}). By Lemma~\ref{Lbasics}, the Gram
matrices $(\tangle{x_i,x_j})$ are precisely the positive semidefinite
matrices (of all sizes) whose rank is bounded above by $\dim
\mathcal{H}$. Thus, Rudin's theorem~\ref{Trudin} yields the complete
classification of all such kernels -- and in~1959, predating their
application in machine learning by decades.

The next subsection describes how Schoenberg's theorem~\ref{Tschoenberg}
also contains in it the seeds of -- and similarly predates by decades --
the concept of \textit{regularization} in modern high-dimensional data
analysis. For now, we remark for completeness that Schoenberg's results
on positive definite functions on spheres also have applications to
Gaussian random fields, pseudo-differential equations with radial basis
functions, and approximating functions and interpolating data on spheres
-- such as the earth in geospatial modeling. Moreover, Schoenberg's
theorem~\ref{Teuclidean} on Euclidean embeddings of metric spaces is a
crucial ingredient in multidimensional scaling~\cite{CoxCox}.

\subsection{Rudin vs.\ Schoenberg}

We take a moment to compare and contrast the two results:
Rudin's Theorem~\ref{Trudin} vs.\ Schoenberg's
Theorem~\ref{Tschoenberg-pd}.
In both theorems, the test matrices consist of Gram matrices of
arbitrarily large finite sets of vectors drawn from a subset $X \subset
\R^r$:
\begin{itemize}
\item For Rudin, $X$ was the open ball of any fixed radius $0 < \rho \leq
\infty$, and his classification was independent of the precise value of
$r \geq 3$. In this case, the positivity preservers on this test set are
positive sums of monomials.

\item For Schoenberg, $X$ was the unit sphere $S^{r-1}$, which led to the
test matrices being correlation matrices -- so all diagonal entries are
$1$ -- and the rank bounded above by $r$. In this case, the continuous
preservers of positivity on this test set are positive sums of Gegenbauer
polynomials.
\end{itemize}

\subsection{Applied fields and regularization}

We now come to the study of preservers in fixed dimension, strongly
motivated by applied fields. Let us consider a concrete example:
analyzing temperatures on the earth's surface. This has been a subject of
intensive study in recent years in climate science. With the advent of
technology, there are thousands of weather stations and other locations
where one has temperature markers.

Another concrete example involves understanding gene-gene interactions,
in trying to detect the early onset/early warning for cardiovascular
diseases or cancer, say. There are thousands of genes that are studied
today. Alternately, one can consider the behavior over time of financial
instruments (e.g.\ in the stock market) and their interdependencies.

The common theme in all of these examples is of ``complex multivariate
structures'' and trying to understanding their interactions. This is a
key challenge in modern applied fields -- and one of the simplest
measures of dependency between any two such variables is the linear
dependency, captured in their covariance or correlation:
\[
Cov(X,Y) := \mathbb{E}[ (X - \mathbb{E}X) (Y - \mathbb{E}Y) ].
\]
Covariance analysis has been a leading and robust mechanism for data
analysis. The difference now is that the sample covariance matrices
\begin{equation}\label{Esamplecov}
\widehat{\Sigma} = \frac{1}{n-1} \sum_{i=1}^n (x_i - \overline{x})
(x_i - \overline{x})^T
\end{equation}
are enormous in dimension, since there are $p \gg 0$ random variables
being measured. Moreover, as the sample size $n$ is very small, the $p
\times p$ matrix $\widehat{\Sigma}$ is highly singular, which is
unfavorable to subsequent statistical analysis.

Thus, various workarounds have been suggested to ``improve the
properties'' of such matrices and render them more amenable to
statistical techniques. A popular approach has been to apply iterative
methods (``compressed sensing'') -- some names in this regard are Donoho,
Daubechies, Candes, and Tao. While these methods work well for a few
thousand random variables, they are too expensive for matrices of order
$\sim$100,000. As a result, new methodologies are called for.

One alternative in the field of (ultra-)high dimensional covariance
estimation, which has emerged in recent times, is to \textit{regularize}
sample covariance matrices. In other words, one applies a regularizing
function \textit{entrywise} to each covariance or correlation.

\begin{example}
A popular regularizer is \textit{hard-thresholding}. Suppose the true
covariance matrix of the population (or of a probability distribution)
underlying the sampled data, is:
$\Sigma = \begin{pmatrix}
1 & 0.2 & 0 \\
0.2 & 1 & 0.5 \\
0 & 0.5 & 1
\end{pmatrix}$.
Whereas the sample covariance matrix is $\widehat{\Sigma} =
\begin{pmatrix}
0.95 & 0.18 & 0.02 \\
0.18 & 0.96 & 0.47 \\
0.02 & 0.47 & 0.98
\end{pmatrix}$.

It is then natural to threshold small entries (i.e., change them to zero
if their absolute value is below a ``threshold''), with the idea that the
variables are actually independent but the observed value is noise:
$\widetilde{\Sigma} = \begin{pmatrix}
0.95 & 0.18 & \mathbf{0} \\
0.18 & 0.96 & 0.47 \\
\mathbf{0} & 0.47 & 0.98
\end{pmatrix}$.
\end{example}

Such an operation applies directly on the cone, induces sparsity (number
of zero entries), and is scalable because it is entrywise -- no iterative
or algorithmic procedures required. It also displays good consistency and
other properties. (See
e.g.~\cite{bickel_levina,hero_rajaratnam,Hero_Rajaratnam2012,Li_Horvath,Zhang_Horvath}
for some papers that study regularization.)
However, one also needs to ensure that the thresholded (or more
generally, regularized) matrix is itself a proxy for the true covariance
-- and in particular, is positive semidefinite. This is where one has a
theoretical gap, in that it is not clear \textit{for which regularizing
entrywise maps is the transformed matrix guaranteed to be positive
semidefinite}. Thus, while the problem emerges directly and naturally
from applied fields, it brings us back squarely to the question of
classifying entrywise positivity preservers.

Moreover, there is motivation from this perspective to study the question
of preservers in a fixed dimension -- because in a given applied problem,
the number of random variables (aka the dimension of the problem) is
known. So there is no need to study dimension-free preservers; indeed,
only studying these restricts one to using power series with nonnegative
coefficients to regularize covariance matrices, which is unnecessarily
restrictive \textit{and} which does not induce or preserve sparsity. This
strongly motivates trying to classify entrywise positivity preservers --
aka regularizing maps -- in fixed dimension.

\subsection{Positivity preservers in fixed dimension: rank and sparsity
constraints}\label{Sfixeddim}

Resuming the narrative from the previous section, Schoenberg's 1942
classification of positive definite functions can be interpreted today in
terms of regularization. Namely, a continuous function is positive
definite on $S^{r-1}$ for some $r$, if and only if --
via~\eqref{Econvert} -- $f \circ \cos$ sends Gram matrices, of any size /
number of vectors but with all vectors in $S^{r-1}$, to positive
semidefinite matrices. By Theorem~\ref{Tbasics} and Lemma~\ref{Lbasics},
this is equivalent to the entrywise maps preserving positivity, on
matrices of any dimension but with rank at most $r$.

Similarly, if one lets the dimension grow unbounded, $f : [-1,1] \to \R$
is positive definite on $S^\infty$ if and only if $f[-]$ is a positivity
preserver on correlation matrices (i.e., psd matrices with diagonal
entries $1$). Thus, Schoenberg's theorems~\ref{Tschoenberg-pd}
and~\ref{Tsinfty} classified the regularizers of correlation matrices of
any size, but constraining (or not) the \textit{rank}.

The question we are discussing here is different: now the dimension
itself is constrained (and hence, so is the rank). This is not only a
natural theoretical ``next step'' after Schoenberg's and Rudin's results,
it is also motivated from the modern perspective of big data (as
discussed above).

Unfortunately, few results are known in this setting. We mention results
along a few of these fronts. First, the problem in fixed dimension $n=1$
is trivial: the preservers are clearly all functions $: [0,\infty) \to
[0,\infty)$. For $n=2$, the situation is more involved, and was resolved
in~1979 by Vasudeva. We write here his result in a slightly more general
setting; the proof is virtually unchanged.

\begin{theorem}[\cite{vasudeva79}; taken from {\cite[Theorem~6.7]{AK-book}}]
Suppose $0 < \rho \leq \infty$, and the domain $I$ is either $[0,\rho)$
or $(0,\rho)$. Now an entrywise map $f : I \to \R$ preserves positivity
on $\bp_2(I)$ if and only if $f$ is nonnegative, nondecreasing, and
multiplicatively midconvex on $I$ (this last means that $f(\sqrt{xy})^2
\leq f(x) f(y)$ for all $x,y \in I$).

In particular, if we set $I^+ := I \setminus \{ 0 \}$, then: any such
function is continuous on $I^+$, and is either never zero or always zero
on $I^+$.
\end{theorem}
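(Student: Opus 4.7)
The plan is to characterize $\bp_2(I)$ explicitly and then probe $f$ with carefully chosen rank-one and rank-two members. A $2 \times 2$ real symmetric matrix $A = \bigl(\begin{smallmatrix} a & b \\ b & c \end{smallmatrix}\bigr)$ with entries in $I$ is positive semidefinite precisely when $a, c \geq 0$ and $0 \leq b \leq \sqrt{ac}$. For the necessity direction, I would test $f[-]$ on three explicit families in $\bp_2(I)$: (i)~the rank-one matrix $x \cdot \mathbf{1}_{2 \times 2}$ for $x \in I$, whose image has diagonal entry $f(x)$, forcing $f(x) \geq 0$; (ii)~the rank-one Gram matrix $\bigl(\begin{smallmatrix} x & \sqrt{xy} \\ \sqrt{xy} & y \end{smallmatrix}\bigr)$ of the vector $(\sqrt{x}, \sqrt{y})$, whose image must have nonnegative determinant, giving $f(\sqrt{xy})^2 \leq f(x) f(y)$; and (iii)~the matrix $\bigl(\begin{smallmatrix} y & x \\ x & y \end{smallmatrix}\bigr)$ for $0 \leq x \leq y$ in $I$, which is psd because $y^2 - x^2 \geq 0$ and whose image must have determinant $f(y)^2 - f(x)^2 \geq 0$; combined with nonnegativity from~(i), this yields $f(y) \geq f(x)$.

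For the converse, take any $A = \bigl(\begin{smallmatrix} a & b \\ b & c \end{smallmatrix}\bigr) \in \bp_2(I)$; then $b \leq \sqrt{ac}$, and $\sqrt{ac} \in I$ because $I$ is an interval containing both $a$ and $c$. Nondecreasing gives $f(b) \leq f(\sqrt{ac})$, while midconvexity gives $f(\sqrt{ac})^2 \leq f(a) f(c)$; chaining, $\det f[A] = f(a) f(c) - f(b)^2 \geq 0$ while the diagonal entries are nonnegative, so $f[A] \in \bp_2(\R)$.

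I would extract the dichotomy and the continuity on $I^+$ by passing to logarithmic coordinates: define $g : J \to [-\infty, \infty)$ by $g(u) := \log f(e^u)$ on the open interval $J := \log I^+$. The three conditions translate cleanly: $g$ is nondecreasing, and midconvexity of $f$ becomes Jensen midconvexity $g((u+v)/2) \leq \tfrac{1}{2}(g(u) + g(v))$. Let $S := \{ u \in J : g(u) = -\infty \}$; monotonicity makes $S$ downward-closed in $J$. If $S$ were a proper nonempty subset, set $s := \sup S \in J$; I would pick $u_0 \in S$ close to $s$ (or $u_0 = s$) and then $u_1 \in J$ with $u_1 > s$ and $u_1 - s$ slightly larger than $s - u_0$. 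The midpoint $(u_0 + u_1)/2$ then exceeds $s$ and hence lies outside $S$, yet midconvexity forces $g((u_0+u_1)/2) \leq \tfrac{1}{2}(g(u_0) + g(u_1)) = -\infty$, a contradiction. Hence $S = \emptyset$ or $S = J$, that is, $f$ is either never zero or identically zero on $I^+$. In the former case, $g$ is real-valued, nondecreasing (hence locally bounded on $J$), and Jensen midconvex; the classical Bernstein--Doetsch theorem then upgrades $g$ to a convex function on the open interval $J$, which is automatically continuous, and continuity of $f = \exp \circ g \circ \log$ on $I^+$ follows.

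The main obstacle I anticipate is this last step: promoting Jensen midconvexity together with local boundedness to convexity and continuity. Every other ingredient reduces to a $2 \times 2$ determinant computation or a clean, direct invocation of one of the three structural conditions; only at the end does one need the (classical but non-elementary) Bernstein--Doetsch theorem.
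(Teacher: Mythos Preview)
Your proposal is correct, and the argument is essentially the standard one. Note, however, that the paper is a survey and does not include a proof of this theorem; it is quoted from Vasudeva~\cite{vasudeva79} and \cite[Theorem~6.7]{AK-book}. Your three test families and the log-coordinate reduction to a midconvex, monotone function (with Bernstein--Doetsch supplying the upgrade from midconvexity plus local boundedness to convexity and hence continuity) match the approach taken in those references.

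Two minor remarks. First, in test~(iii) and in your statement of the $\bp_2(I)$ characterization, you write ``$0 \leq x \leq y$'' and ``$0 \leq b$''; when $I = (0,\rho)$ the nonnegativity is automatic, so you may simply say $x \leq y$ in $I$ (this is cosmetic). Second, your dichotomy argument on $S = \{u : g(u) = -\infty\}$ is fine, but you should note explicitly that $s = \sup S$ lies strictly inside $J$: if $s$ were the right endpoint of $J$ then downward-closure would force $S = J$, contradicting properness, and $S \neq \emptyset$ forces $s > -\infty$. You allude to this but do not state it; making it explicit closes the only place a reader might pause.
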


The next case is $n=3$. Remarkably, in this case the problem remains
\textbf{open}. Thus, we do not have a classification of the dimension-$n$
positivity preservers for any $n \geq 3$.

In the absence of a classification for arbitrary functions acting on all
matrices in $\bp_n$, several refinements have been proposed and studied,
yielding ``restricted'' preserver results. We list a few of these in
which the \textit{test matrices} are additionally constrained.
\begin{enumerate}
\item One can impose \textit{rank-constraints}: $f[-]$ sends matrices in
$\bp_n$ with rank at most $l$ to matrices in $\bp_n$ with rank at most
$k$. Rank-constraints are natural in both theory and applications: on the
theoretical side, Gram matrices arising from Euclidean spheres $S^{r-1}$
have rank at most $r$ (in Schoenberg's work~\cite{Schoenberg42}), and
Rudin's theorem~\ref{Trudin} as well as Theorems~\ref{Tbgkp-1sided}
and~\ref{Tbgkp-2sided} also had rank constraints, arising from the
support sets of measures on the circle and line, respectively.
On the applied side, rank constraints arise naturally from the sample
size, which is typically small in applications.
For results along these lines, see the 2017 paper~\cite{GKR-lowrank}.

\item Alternately, one can impose \textit{sparsity constraints}: impose
zero entries in pre-fixed positions. These positions may be determined
from a combinatorial perspective (one forms graphs associated to a zero
pattern) or from domain-specific knowledge in applications, where various
random variables are known to be independent or at least uncorrelated,
e.g., Gaussian graphical models. For such results, see the
work~\cite{GKR-sparse}.

\item One can instead also study preservers of structured matrices such
as Toeplitz or Hankel matrices, as was done in Theorems~\ref{Trudin},
\ref{Tbgkp-1sided}, and \ref{Tbgkp-2sided}.
\end{enumerate}

Independently, one can constrain the set of functions that act on test
matrices. We discuss two families; the first consists of \textit{power
functions}. These are a natural family of entrywise maps to consider, and
they are also used in practice to induce sparsity on matrices by sending
very small / spurious observed correlations to (very close to) zero.

By the Schur product theorem, all integer powers $x^k, k \geq 0$
entrywise preserve positivity in every dimension. We now turn to
non-integer powers -- and hence, only consider matrices with positive
entries. The following result was shown in 1977 by FitzGerald and Horn:

\begin{theorem}[{\cite[Theorem~2.2]{FitzHorn}}]\label{Tfitzhorn}
Given an integer $n \geq 2$, the entrywise power $x^\alpha$, $\alpha \in
\R$ preserves positivity on $\bp_n((0,\infty))$ if and only if $\alpha
\in \mathbb{Z}_{\geq 0} \cup [n-2,\infty)$.
\end{theorem}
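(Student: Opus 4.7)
The two easy directions I dispose of first. For $\alpha \in \mathbb{Z}_{\geq 0}$, sufficiency is an immediate iteration of the Schur product theorem (Theorem~\ref{Tschur}): the rank-one matrix $A^{\circ 0} = \mathbf{1}_n \mathbf{1}_n^T$ is psd, and $A^{\circ (k+1)} = A \circ A^{\circ k}$ preserves psd-ness. For necessity when $\alpha < 0$, it suffices to produce a $2 \times 2$ counterexample: any $A = \bigl( \begin{smallmatrix} a & b \\ b & c \end{smallmatrix} \bigr) \in \bp_2((0,\infty))$ with $ac > b^2$ has $\det(A^{\circ \alpha}) = (ac)^\alpha - b^{2\alpha} < 0$ whenever $\alpha < 0$, so $A^{\circ \alpha} \notin \bp_2(\R)$.

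For the harder necessity direction, $\alpha \in (0, n-2) \setminus \mathbb{Z}$, I would argue by perturbation around a rank-one matrix. Fix distinct $x_1, \dots, x_n \in (0,\infty)$, set $x := (x_1, \dots, x_n)^T$, and take $A_\epsilon := \mathbf{1}_n \mathbf{1}_n^T + \epsilon\, x x^T \in \bp_n((0,\infty))$ for small $\epsilon > 0$. The entrywise binomial expansion gives
\[
A_\epsilon^{\circ \alpha} = \sum_{k=0}^\infty \binom{\alpha}{k} \epsilon^k\, x^{\circ k} (x^{\circ k})^T, \qquad \text{where } x^{\circ k} := (x_1^k, \dots, x_n^k)^T.
\]
Writing $m := \lfloor \alpha \rfloor$, one has $\binom{\alpha}{m+2} < 0$ (exactly one negative factor in the numerator). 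The hypothesis $\alpha < n-2$ gives $m + 2 \leq n - 1 < n$, so the $m+2$ Vandermonde-type vectors $x^{\circ 0}, \dots, x^{\circ (m+1)}$ have a nontrivial orthogonal complement in $\R^n$, and one selects $v$ inside it that is not orthogonal to $x^{\circ (m+2)}$ (possible since $x^{\circ 0}, \dots, x^{\circ (n-1)}$ are linearly independent by Vandermonde). Then the leading term of $v^T A_\epsilon^{\circ \alpha} v$ in $\epsilon$ is $\binom{\alpha}{m+2} \epsilon^{m+2} (v^T x^{\circ (m+2)})^2 < 0$, exhibiting a failure of psd preservation.

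For the sufficiency of $\alpha \geq n-2$, the plan is first to peel off integer factors via Schur: writing $\alpha - 1, \alpha - 2, \dots$ one reduces to a base interval $\alpha \in [n-2, n-1)$ (since $A^{\circ \alpha} = A \circ A^{\circ (\alpha - 1)}$ and Schur propagates psd forward). The base interval is the heart of the matter, and I would follow FitzGerald--Horn's induction on $n$. The base case $n = 2$ asks only for $\alpha \geq 0$, and follows from $ac \geq b^2 \implies (ac)^\alpha \geq b^{2\alpha}$. The inductive step uses the real-variable identity
\[
a^\alpha - b^\alpha = \alpha (a - b) \int_0^1 \bigl( b + s(a - b) \bigr)^{\alpha - 1}\, ds
\]
applied entrywise to a pair $(A, A - C)$, where $C$ is a carefully chosen rank-one psd ``correction'' with positive entries that interpolates $A$ to a psd matrix of strictly lower rank. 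This expresses $A^{\circ \alpha}$ as $(A - C)^{\circ \alpha}$ plus an integral of Schur products of the form $(A - sC)^{\circ (\alpha - 1)} \circ C$, each of which is psd once one invokes (i)~Schur's theorem and (ii)~the inductive hypothesis applied at exponent $\alpha - 1 \geq (n-1) - 2$ to the reduced-rank matrix.

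\textbf{Main obstacle.} The delicate point is the rank-reduction step inside the integral argument: one must choose $C$ so that $A - sC$ stays psd with strictly positive entries for all $s \in [0,1]$, so that the integrand $(A - sC)^{\circ (\alpha - 1)}$ is well-defined and accessible to the inductive hypothesis. Handling boundary behavior (as one approaches the psd boundary) and verifying that the resulting integral is genuinely a psd matrix is the analytic heart of FitzGerald--Horn's argument, and the threshold $\alpha \geq n-2$ is exactly what ensures the exponent $\alpha - 1$ clears the corresponding threshold $(n-1) - 2$ at the next stage of the induction.
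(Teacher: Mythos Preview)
The paper itself does not prove Theorem~\ref{Tfitzhorn}; it merely cites FitzGerald--Horn. Your proposal is essentially their argument, and the necessity half (binomial expansion of $(\mathbf{1}\mathbf{1}^T + \epsilon\, xx^T)^{\circ\alpha}$, first negative coefficient $\binom{\alpha}{m+2}$, Vandermonde independence of the $x^{\circ k}$) is correct.

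In the sufficiency half you have swapped the roles of $C$ and $A-C$ in the integral identity, and this creates the very obstacle you flag. With the natural choice $C = \zeta\zeta^T$ where $\zeta = A e_n/\sqrt{a_{nn}}$, the matrix $A - C$ is psd with vanishing last row and column, but its remaining off-diagonal entries are those of the Schur complement and can be negative; hence $(A-C)^{\circ\alpha}$ and $(A - sC)^{\circ(\alpha-1)}$ need not even be defined for non-integer $\alpha$. The correct pairing is $(a,b) = (A_{ij}, C_{ij})$, giving
\[
A^{\circ\alpha} = C^{\circ\alpha} + \alpha \int_0^1 \bigl(sA + (1-s)C\bigr)^{\circ(\alpha-1)} \circ (A - C)\, ds.
\]
Here $C^{\circ\alpha} = \zeta^{\circ\alpha}(\zeta^{\circ\alpha})^T$ is rank-one psd, and the interpolant $sA + (1-s)C$ has strictly positive entries for every $s\in[0,1]$ since both $A$ and $C$ do; there is no boundary issue. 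Because $A-C$ has zero last row and column, so does the integrand, and its positivity reduces to that of the leading $(n-1)\times(n-1)$ block: a Schur product of the (psd) Schur complement with the $(\alpha-1)$-power of the leading block of $sA+(1-s)C \in \bp_{n-1}((0,\infty))$, which is psd by the inductive hypothesis on \emph{dimension} $n-1$ at exponent $\alpha-1 \geq (n-1)-2$. This also renders your preliminary reduction to $\alpha \in [n-2,n-1)$ unnecessary.
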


This interesting result has seen variants for preservers of positive
matrices with negative entries and variants of power maps; rank
constraints; and sparsity constraints. See e.g.~\cite{GKR-crit-2sided,
GKR-critG, Hiai2009}. In all of these papers, entrywise powers preserving
certain matrix properties are classified, and in all cases the solution
set equals the non-negative integers up to some integer $C$, followed by
all real numbers in $[C,\infty)$. This point of phase transition is
called the \textit{critical exponent} (for that particular matrix
property). In particular, in~\cite{GKR-critG} a new graph invariant is
defined and computed for all chordal graphs.

We conclude this part with a significant strengthening of
Theorem~\ref{Tfitzhorn}. It turns out that there is a multiparameter
family of rank two positive matrices, \textit{each of which} encodes the
entire set of power preservers of Loewner positivity. This was shown by
Jain in 2019:

\begin{theorem}[{\cite[Theorem~1.1]{Jain}}]
Given $n \geq 2$, choose distinct positive reals $x_1, \dots, x_n$. Also
let $\alpha \in \R$. Then $x^\alpha$ entrywise preserves positivity on
the matrix $(1 + x_i x_j)_{i,j=1}^n$ if and only if $\alpha \in
\mathbb{Z}_{\geq 0} \cup [n-2,\infty)$.
\end{theorem}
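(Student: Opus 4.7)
For the easy direction, if $\alpha\in\mathbb{Z}_{\geq 0}\cup[n-2,\infty)$ then Theorem~\ref{Tfitzhorn} applied to the positive matrix $(1+x_ix_j)$ (which has positive entries) immediately gives that $((1+x_ix_j)^\alpha)$ is psd.

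I prove the converse by induction on $n$. Write $M_\alpha:=((1+x_ix_j)^\alpha)_{i,j=1}^n$. In the base case $n=2$ with $\alpha<0$, a direct computation gives $\det M_\alpha=((1+x_1^2)(1+x_2^2))^\alpha-(1+x_1x_2)^{2\alpha}$; Cauchy--Schwarz applied to $(1,x_1),(1,x_2)\in\R^2$ yields $(1+x_1^2)(1+x_2^2)>(1+x_1x_2)^2$ strictly, and raising to $\alpha<0$ reverses the inequality, giving $\det M_\alpha<0$. For the inductive step at $n\geq 3$, I split bad $\alpha\notin\mathbb{Z}_{\geq 0}\cup[n-2,\infty)$ into two sub-cases. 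If $\alpha\notin\mathbb{Z}_{\geq 0}\cup[n-3,\infty)$ (i.e., $\alpha$ is bad also in dimension $n-1$), then by the inductive hypothesis some $(n-1)\times(n-1)$ principal submatrix of $M_\alpha$---which has the same form in $n-1$ of the $x_i$'s---fails to be psd, so $M_\alpha$ also fails. The remaining case is $\alpha\in(n-3,n-2)$; here every $(n-1)\times(n-1)$ principal submatrix is psd by Theorem~\ref{Tfitzhorn} in dimension $n-1$ (whose critical exponent is $n-3$), and Cauchy interlacing then forces $M_\alpha$ to have at most one negative eigenvalue. Consequently, $M_\alpha$ is psd iff $\det M_\alpha\geq 0$, so it suffices to prove $\det M_\alpha(x)<0$.

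This determinant estimate is the main obstacle. The plan is to dilate: set $D(t):=\det M_\alpha(tx)$ for $t>0$. For $t$ small enough that $t^2 x_ix_j<1$ for all $i,j$, the binomial series $(1+t^2x_ix_j)^\alpha=\sum_k\binom{\alpha}{k}(t^2x_ix_j)^k$ converges, so $M_\alpha(tx)=\sum_k\binom{\alpha}{k}t^{2k}u_ku_k^T$ with $u_k:=(x_1^k,\dots,x_n^k)^T$, and a generalized Cauchy--Binet identity isolates the leading term
\[
D(t)=t^{n(n-1)}\Bigl(\prod_{k=0}^{n-1}\binom{\alpha}{k}\Bigr)\prod_{i<j}(x_j-x_i)^2+O(t^{n(n-1)+2}).
\]
For $\alpha\in(n-3,n-2)$, each of $\binom{\alpha}{0},\dots,\binom{\alpha}{n-2}$ is positive, whereas $\binom{\alpha}{n-1}<0$ since its numerator contains exactly one negative factor, $\alpha-(n-2)$; hence the leading coefficient is strictly negative and $D(t)<0$ for all small $t>0$. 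A dual expansion $(1+t^2x_ix_j)^\alpha=(t^2x_ix_j)^\alpha(1+(t^2x_ix_j)^{-1})^\alpha$ gives the same conclusion as $t\to\infty$. The primary technical hurdle is to rule out any intermediate zero of the real-analytic $D(t)$ on $(0,\infty)$: such a zero would, by the interlacing analysis of the previous paragraph applied to $M_\alpha(tx)$, force that matrix to be psd of rank $n-1$, and my plan is to rule this rank drop out for non-integer $\alpha\in(n-3,n-2)$ using that the leading Cauchy--Binet coefficient $\prod_{k=0}^{n-1}\binom{\alpha}{k}$ is non-zero in this regime.
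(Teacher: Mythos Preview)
The paper does not prove this theorem; it merely cites Jain's result. So there is no ``paper's own proof'' to compare against, and I assess your argument on its own merits.

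Your easy direction, base case, and inductive reduction for $\alpha\notin\mathbb{Z}_{\geq 0}\cup[n-3,\infty)$ are fine, as is the interlacing reduction to $\det M_\alpha<0$ when $\alpha\in(n-3,n-2)$. The small-$t$ and large-$t$ asymptotics for $D(t)=\det M_\alpha(tx)$ via Cauchy--Binet are also correct, and the sign of $\prod_{k=0}^{n-1}\binom{\alpha}{k}$ is as you claim.

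The gap is exactly where you say it is, and your ``plan'' does not close it. First, the inference ``$D(t_0)=0$ forces $M_\alpha(t_0x)$ to be psd of rank $n-1$'' is unjustified: interlacing only gives $\lambda_2(t_0)\geq 0$, so the signature $(n-2,1,1)$ (one negative, one zero, $n-2$ positive eigenvalues) is not excluded. To salvage this step you would need the $(n-1)\times(n-1)$ principal submatrices to be \emph{strictly} positive definite for $\alpha>n-3$, which is true but is itself essentially the $(n-1)$-dimensional strict version of the result you are proving---so it requires its own argument (e.g.\ a strict determinant inequality one dimension down). Second, and more seriously, even granting that a zero of $D$ forces $M_\alpha(t_0x)$ to be psd of rank $n-1$, your proposed mechanism for deriving a contradiction---``the leading Cauchy--Binet coefficient is nonzero''---is a non sequitur. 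That coefficient governs only the asymptotics of $D(t)$ as $t\to 0^+$; it says nothing about the rank of $M_\alpha(t_0x)$ at a fixed interior $t_0$, where the binomial series need not even converge. You have not supplied any argument linking the nonvanishing of $\prod_k\binom{\alpha}{k}$ to full rank of $M_\alpha(t_0x)$ for intermediate $t_0$.

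In short: knowing $D(t)<0$ near $0$ and near $\infty$ does not yield $D(1)<0$ without controlling the middle, and the hard content of Jain's theorem lies precisely there. Her proof proceeds by a direct analysis of the sign of $\det M_\alpha$ (via an integral/divided-difference representation of $(1+u)^\alpha$ and its successive ``derivatives''), rather than by a dilation argument; you would need an ingredient of comparable strength to finish.
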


\subsection{Connection to symmetric polynomials}

In addition to these modern results, there is essentially only one
classical result for entrywise preservers of positivity in a fixed
dimension (without any restrictions on the test matrices or the
functions). This is due to Loewner, who wrote it in a letter to Josephine
Mitchell in~1967 (courtesy: Stanford Library Archives). Later, the result
appeared in the 1969 PhD thesis of his student, Roger
A.~Horn~\cite{Horn}, and was subsequently refined
in~\cite{GKR-lowrank,horndet}. We state an alternate version from
\cite[Theorem~4.2]{BGKP-hankel}:

\begin{theorem}[``Stronger'' Loewner--Horn,
{\cite[Theorem~1.2]{Horn}}]\label{Tloewner}
Fix a dimension $n \geq 3$ and a scalar $0 < \rho \leq \infty$, and let
$I = (0,\rho)$. Let $f : I \to \R$ be such that $f[-]$ preserves
positivity on $\bp_2(I)$ and on all Hankel matrices in $\bp_n(I)$ of rank
at most two.
Then $f \in C^{(n-3)}(I)$, with $f, f', \dots, f^{(n-3)}$ nonnegative on
$I$. Moreover, $f^{(n-3)}$ is convex and nondecreasing on~$I$.
\end{theorem}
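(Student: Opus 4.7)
\emph{Plan.} The strategy is to run a Hankel-matrix version of the classical FitzGerald--Horn determinantal argument: use the $\bp_2(I)$-preservation to bootstrap baseline regularity, use the rank-$\leq 2$ Hankel hypothesis in $\bp_n(I)$ to upgrade this to nonnegativity of divided differences of $f$ of all orders up to $n-1$, and finish by invoking the classical Boas--Widder regularity theorem. From the $\bp_2(I)$ hypothesis alone, the Vasudeva-type theorem for $n=2$ quoted in the excerpt forces $f \geq 0$, $f$ nondecreasing, and $f$ continuous on $I$; equivalently, the $0$-th and $1$-st divided differences of $f$ on any tuple in $I$ are nonnegative.

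\emph{Main step.} For $x \in I$, $r > 0$ with $r \ne 1$, and $\epsilon > 0$ small enough that all entries below lie in $I$, consider the rank-$\leq 2$ Hankel test matrix
\[
A(x, \epsilon, r) := x \, \mathbf{1}\mathbf{1}^T + \epsilon \, \mathbf{r}\mathbf{r}^T \in \bp_n(I), \qquad \mathbf{r} := (1, r, r^2, \ldots, r^{n-1})^T,
\]
whose $(i,j)$ entry is $x + \epsilon r^{i+j}$. The key algebraic point is that each Horn ``power block'' $\mathbf{r}^{\circ k}(\mathbf{r}^{\circ k})^T$, with $(i,j)$ entry $r^{k(i+j)}$, is itself rank-$1$ and Hankel, and that the vectors $\mathbf{r}^{\circ 0}, \mathbf{r}^{\circ 1}, \ldots, \mathbf{r}^{\circ (n-1)}$ are linearly independent by the Vandermonde identity. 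Writing the entries of $f[A(x,\epsilon,r)]$ via a Newton divided-difference expansion at the abscissae $\{x + \epsilon r^s : 0 \leq s \leq 2(n-1)\}$---a finite algebraic identity requiring only the continuity already available, not smoothness---decomposes $f[A]$ into these rank-$1$ Hankel blocks, with divided differences of $f$ appearing as coefficients. Imposing the psd condition through the sequence of principal Hankel minors of $f[A(x,\epsilon,r)]$, and sending $r \to 1^+$, the FitzGerald--Horn determinantal computation shows that the leading coefficient in $\epsilon$ of the $k$-th principal minor factors as a strictly positive Vandermonde-squared times the $k$-th divided difference $f[y_0, \ldots, y_{k-1}]$ at the relevant abscissae. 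Varying $x, \epsilon, r$ exhibits every tuple of $k+1 \leq n$ distinct points of $I$, so one concludes $f[y_0, \ldots, y_k] \geq 0$ for every such tuple and every $k \leq n-1$.

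\emph{Conclusion and main obstacle.} The nonnegativity of divided differences of all orders $0, 1, \ldots, n-1$ on the open interval $I$ is precisely the hypothesis of the classical Boas--Widder theorem, which delivers the claimed conclusion: $f \in C^{n-3}(I)$ with $f, f', \ldots, f^{(n-3)}$ nonnegative on $I$, and $f^{(n-3)}$ convex and nondecreasing. The main technical obstacle lies in the determinantal bootstrapping of the main step: one must organize the Hankel-minor expansion of $f[A(x,\epsilon,r)]$ so that nonnegativity of the $k$-th principal minor really does isolate the $k$-th divided difference, with the cross-terms generated by the non-orthogonality of the rank-$1$ blocks $\mathbf{r}^{\circ k}(\mathbf{r}^{\circ k})^T$ controlled purely by the continuity of $f$ from Step~1, without any appeal to smoothness.
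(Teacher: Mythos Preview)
The paper does not prove Theorem~\ref{Tloewner}: it is a survey which states the result, cites \cite{Horn} and \cite{BGKP-hankel}, and then discusses the determinantal identity of Theorem~\ref{Tsymm} (the Schur-polynomial expansion of $\det f[t\,\bu\bv^T]$), remarking that Loewner's computation of the first $\binom{n}{2}+1$ Taylor coefficients of this determinant is ``at the heart'' of the result. That identity, however, concerns rank-one test matrices $\bu\bu^T$, not the rank-two Hankel matrices in the stated hypothesis; so the paper's discussion is closer to the classical Loewner--Horn argument than to the ``stronger'' Hankel version. A full proof of the Hankel version is in \cite{BGKP-hankel} and \cite{AK-book}, and your three-step scheme (Vasudeva for continuity $\Rightarrow$ nonnegativity of higher divided differences from Hankel test matrices $\Rightarrow$ Boas--Widder) is exactly that route.

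Two details in your main step need repair. First, the Newton expansion of $f[A(x,\epsilon,r)]$ does \emph{not} decompose it directly into the rank-one blocks $\mathbf{r}^{\circ k}(\mathbf{r}^{\circ k})^T$: the $k$-th Newton basis matrix has $(i,j)$ entry $\prod_{l<k}(r^{i+j}-r^l)$, which is Hankel but is a \emph{triangular} combination of $\mathbf{r}^{\circ 0}(\mathbf{r}^{\circ 0})^T,\ldots,\mathbf{r}^{\circ k}(\mathbf{r}^{\circ k})^T$, with unit leading coefficient on the top block. You therefore need an explicit triangular change of basis (or, equivalently, to compute the principal minors of $f[A]$ directly rather than via a block decomposition). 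Second, ``varying $x,\epsilon,r$ exhibits every tuple of $k+1\leq n$ distinct points of $I$'' is false: the nodes $x+\epsilon r^s$ lie on a shifted geometric progression and sweep out only a three-parameter family of tuples, and moreover a $k\times k$ principal minor involves $2k-1$ such nodes, not $k$. This is not fatal---the forward-difference form of Boas--Widder needs only equally spaced nodes, which you recover by letting $r\to 1$ \emph{after} the divided-difference inequalities are in hand---but the limit must be taken at the end, not inside the determinant computation where it would annihilate the Vandermonde factor you rely on.
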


Interestingly, this result, combined with Bernstein's theorem on
absolutely monotone functions~\cite{Bernstein}, provides a pathway to
proving the dimension-free Schoenberg--Rudin theorem for preservers of
positive matrices with \textit{positive} entries. See e.g.\
\cite{BGKP-hankel} for this treatment in one and several variables.

The proof of Theorem~\ref{Tloewner}, originally due to Loewner, contains
the seeds of a surprising connection to symmetric function theory. We now
describe this connection, which was discovered only about ten years ago,
and is now better understood.

Recall the century-old observation by P\'olya--Szeg\H{o} (or a special
case thereof) from 1925: if $f(x)$ is a polynomial with nonnegative
coefficients, then $f[-]$ entrywise preserves $\bp_n(\R)$ for all $n \geq
1$. Thus, if one seeks preservers of $\bp_n(\R)$ or of
$\bp_n((-\rho,\rho))$ for fixed $n$, then one should expect more
polynomial preservers: ones which have negative coefficients.
However, apart from Vasudeva's $2 \times 2$ characterization (see above),
\textit{no} example of such a polynomial was known for almost a century.
Certainly, work of Fischer--Stegeman~\cite{Fischer92} had shown that if
such a preserver on $\bp_n((-\rho,\rho))$ is to exist, the $n$ nonzero
coefficients of lowest degrees should be positive, as should those of
highest degrees if $\rho = \infty$. This is ``morally'' along the lines
of the Loewner--Horn theorem~\ref{Tloewner}. The question remains:

\textit{Can at least one other coefficient be negative? More generally,
which coefficients in a polynomial preserver of $\bp_n((-\rho,\rho))$ can
be negative,  for fixed $n \geq 1$ and $\rho \in (0,\infty)$?}

This was first answered in~2016 for a special class of polynomials by
Belton--Guillot--Khare--Putinar~\cite{BGKP-fixeddim}, and then in~2021
for all polynomials by Khare--Tao:

\begin{theorem}[\cite{KT}]\label{TKT}
Fix an integer $n \geq 1$ and a scalar $0 < \rho < \infty$. If a
polynomial $f(x)$ with real coefficients is an entrywise positivity
preserver of $\bp_n((-\rho,\rho))$, then its first $n$ coefficients (of
lowest degree) are positive. Moreover, it is possible for every other
coefficient to be negative.

If $f$ has precisely $n+1$ nonzero coefficients, there exists a
closed-form expression for the negative threshold for the unique negative
coefficient possible (which is the leading term).
\end{theorem}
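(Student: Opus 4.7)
The plan is to reduce positivity of $f[A]$ for $A \in \bp_n((-\rho,\rho))$ to a combinatorial inequality among Schur polynomials, via the Cauchy--Binet formula applied to rank-one test matrices. Writing $f(x) = \sum_k c_k x^{m_k}$ with $0 \leq m_0 < m_1 < \cdots$, and taking $A = \bu\bu^T$ with $\bu \in (-\sqrt{\rho},\sqrt{\rho})^n$, one has $f[\bu\bu^T] = V D V^T$, where $V_{ij} = u_i^{m_j}$ and $D = \operatorname{diag}(c_k)$. Cauchy--Binet, combined with the Jacobi factorization of the generalized Vandermonde $\det(u_i^{m_{k_j}})_{i,j} = V(\bu) \cdot s_{\lambda(S)}(\bu)$ (where $V(\bu) = \prod_{i<j}(u_j - u_i)$ and $\lambda(S)$ is the partition built from the exponents indexed by $S$), yields the clean identity
\begin{equation*}
\det f[\bu\bu^T] \;=\; V(\bu)^2 \sum_{\substack{S \subseteq \{0,1,2,\dots\} \\ |S|=n}} \Big(\prod_{k \in S} c_k\Big)\, s_{\lambda(S)}(\bu)^2.
\end{equation*}
This is precisely the bridge to symmetric function theory hinted at in Section~\ref{Sfixeddim}.

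For the first conclusion --- that $c_{m_0}, \ldots, c_{m_{n-1}}$ must be positive --- I would (following Fischer--Stegeman) take $\bu = t\bv$ for generic $\bv$ and let $t \to 0^+$: the summand $S = \{m_0, \ldots, m_{n-1}\}$ has strictly lowest total degree in $t$, so its coefficient $\prod_{j=0}^{n-1} c_{m_j} \cdot s_{\lambda_0}(\bv)^2$ must be nonnegative, forcing $\prod_j c_{m_j} \geq 0$. Scaling different coordinates of $\bu$ at distinct rates $t^{\alpha_i}$ and iterating on principal submatrices of various sizes then isolates each $c_{m_j}$ individually and upgrades nonnegativity to strict positivity.

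The heart of the theorem is the sharp threshold in the $n+1$-term case. Write $f(x) = \sum_{j=0}^{n-1} c_j x^{n_j} + c' x^M$ with $c_0, \dots, c_{n-1} > 0$ and $M > n_{n-1}$. The Cauchy--Binet sum now has exactly $n+1$ terms (one for each way to omit a single exponent), so the necessary condition $\det f[\bu\bu^T] \geq 0$ rearranges to
\begin{equation*}
-c' \;\leq\; \frac{\big(\prod_{j=0}^{n-1} c_j\big)\, s_{\lambda_*}(\bu)^2}{\sum_{j=0}^{n-1} \big(\prod_{k \neq j} c_k\big)\, s_{\lambda_j}(\bu)^2},
\end{equation*}
where $\lambda_*$ corresponds to omitting $M$ and $\lambda_j$ to omitting $n_j$. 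Taking the infimum over $\bu \in (0,\sqrt{\rho})^n$ produces the necessary bound; I expect the infimum to be attained in the limit $\bu \to (\sqrt{\rho}, \ldots, \sqrt{\rho})$, whereupon each Schur polynomial can be evaluated via the principal specialization formula $s_\lambda(1,\ldots,1) = \prod_{i<j}\tfrac{\lambda_i - \lambda_j + j - i}{j-i}$ (Weyl's $GL_n$ dimension formula), combined with the homogeneity $s_\lambda(\sqrt{\rho},\ldots,\sqrt{\rho}) = \rho^{|\lambda|/2} s_\lambda(1,\ldots,1)$, to produce the stated closed-form threshold. Sharpness of the bound then constructs the desired preservers with one negative coefficient, establishing the "moreover" assertion.

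The two main obstacles are as follows. The first is proving that the equal-coordinates limit truly minimizes the displayed ratio --- this is a non-trivial symmetric-function inequality, which I would attack either by analyzing logarithmic derivatives of $s_\lambda(\bu)^2$ under symmetric perturbations, or by invoking log-concavity/normality properties of Schur expansions. The second, and more substantial, obstacle is bootstrapping from rank-one test matrices and from a single determinant to full positive semidefiniteness of $f[A]$ for every $A \in \bp_n((-\rho,\rho))$. For the minor-passage I would induct on $n$, so that smaller principal submatrices are handled by the $(n-1)$-case via Sylvester's criterion and only the full $n \times n$ determinant remains; for the rank-passage I would decompose $A = \sum_\ell \mu_\ell \bv_\ell \bv_\ell^T$ spectrally, expand $f[A]$ using the Schur product theorem and the multinomial theorem, and use the positive terms $c_j A^{\circ n_j}$ to absorb the negative contribution $c' A^{\circ M}$ --- alternatively, an extremality/continuity argument can reduce general $A$ to the boundary configuration where the rank-one infimum is attained.
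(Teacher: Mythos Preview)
The paper is a survey and does not itself prove Theorem~\ref{TKT}; it cites~\cite{KT} and only points to Theorem~\ref{Tsymm} (the Schur-polynomial expansion of $\det f[t\bu\bv^T]$) as the mechanism at the heart of the threshold. So there is no in-paper argument to compare against line by line.

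Your framework does match the one in~\cite{KT} and~\cite{BGKP-fixeddim}: the Cauchy--Binet identity for $\det f[\bu\bu^T]$ as a sum of squared Schur polynomials is exactly the starting point, the necessity of $c_{m_0},\dots,c_{m_{n-1}}>0$ via degree-scaling is standard, the rearranged ratio you write down is the correct necessary bound on $-c'$, and the infimum over $\bu\in(0,\sqrt\rho)^n$ is indeed attained in the equal-coordinate limit, with the closed form coming from the principal specialization of $s_\lambda$.

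The genuine gap is precisely where you flag it, and neither of your proposed patches closes it. For the Schur-ratio minimization, ``log-concavity/normality'' is too vague: in~\cite{KT} this is a standalone theorem (a monotonicity statement for ratios $s_\mu(\bu)/s_\lambda(\bu)$ along coordinatewise increase when $\mu\supseteq\lambda$), proved by an explicit induction using the generalized Jacobi--Trudi / Lindstr\"om--Gessel--Viennot machinery, and it is the technical heart of the paper. For the passage from rank one to arbitrary $A$, the spectral-decomposition-plus-multinomial idea does not give the sharp constant (the cross-terms are not individually dominated by the rank-one bound, and nothing forces the slack to vanish at the claimed threshold), and ``extremality/continuity'' is not an argument. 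The actual proof does not reduce to rank-one test matrices at all: it uses a FitzGerald--Horn-type integral identity to write $f[A]-f[B]$ in terms of $f'$ applied along a rank-one perturbation, and then the Schur-polynomial inequality above is invoked to control that integrand uniformly. Without that specific pair of ingredients your outline cannot be completed, and in particular the coincidence of the rank-one threshold with the full $\bp_n$ threshold---which is the whole content of the sharp result---remains unproved.
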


As the focus of this article is on studying the dimension-free preservers
(originally classified by Schoenberg and then Rudin), we do not provide
more details here, referring the reader to the aforementioned papers, as
well as the survey~\cite{BGKP-survey2} and the monograph~\cite{AK-book}.
We mention, however, that the determinantal calculation that is at the
heart of obtaining the closed-form threshold in Theorem~\ref{TKT}, as
well as at the heart of Loewner's theorem~\ref{Tloewner}, is as follows:

\textit{Given a smooth function $f(t)$, and real scalars $u_i, v_i$ for
$1 \leq i \leq n$, compute the Taylor coefficients of the $n \times n$
determinant} $\Delta(t) := \det f[(t u_i v_j)_{i,j=1}^n]$.

The first $\binom{n}{2}+1$ such derivatives/Taylor coefficients were
worked out by Loewner in the 1960s. However, \textit{all} Taylor
coefficients had been worked out in the special case $f(t) = 1/(1-t)$ by
Cauchy~\cite{Cauchy}, more than a century ago! This is the famous Cauchy
Determinantal Formula, which is an important result in symmetric function
theory, and which turns out to involve \textit{Schur polynomials} in the
$(x_i)$ and in the $(y_j)$ separately. In the 1880s, Frobenius
generalized this to $f$ a sum of two geometric series~\cite{Frobenius}.

Eventually, this question was settled by Khare in full generality -- in
both the analytic and algebraic settings. Here is the latter result (and
it subsumes the calculations by Cauchy, Frobenius, and Loewner):

\begin{theorem}[{\cite[Theorem~2.1]{horndet}}]\label{Tsymm}
Fix a commutative unital ring $R$, and let $t$ be an indeterminate.
Let $f(t) := \sum_{M \geqslant 0} f_M t^M \in R[[t]]$ be an arbitrary
formal power series.
Given vectors $\bu, \bv \in R^N$ for some $N \geqslant 1$, we
have:
\[
\Delta(t) := \det f[ t \bu \bv^T ] = V(\bu) V(\bv) \sum_{M \geqslant
\binom{N}{2}} t^M \sum_{{\bf n} = (n_N, \dots, n_1) \; \vdash M} s_{\bf
n}(\bu) s_{\bf n}(\bv) \, \cdot \prod_{j=1}^N f_{n_j},
\]
where $V(\bu) := \prod_{i<j} (u_i - u_j)$ is the Vandermonde determinant
for $\bu$, and similarly for $V(\bv)$.
\end{theorem}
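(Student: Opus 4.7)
The plan is a multilinear-expansion argument combined with the Jacobi bi-alternant (``second definition'') of Schur polynomials. Since everything takes place in the formal power series ring $R[[t]]$, convergence is automatic: each coefficient of $t^M$ involves only finitely many terms.

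First I would expand each matrix entry as $f(tu_iv_j)=\sum_{n\ge 0}f_n t^n u_i^n v_j^n$ and apply $N$-fold row multilinearity of $\det$ to obtain
\[
\Delta(t)=\sum_{n_1,\dots,n_N\ge 0}\Big(\prod_{i=1}^N f_{n_i}t^{n_i}u_i^{n_i}\Big)\det\bigl(v_j^{n_i}\bigr)_{i,j}.
\]
The $v$-determinant vanishes whenever two $n_i$'s coincide, so the sum restricts to tuples of distinct exponents. I would parametrize such tuples by their strictly increasing rearrangement $0\le m_1<\cdots<m_N$ together with the permutation $\tau\in S_N$ satisfying $n_i=m_{\tau(i)}$; the factor $\prod_i f_{n_i}t^{n_i}=\prod_j f_{m_j}t^{m_j}$ is $\tau$-invariant, while $\det(v_j^{m_{\tau(i)}})=\mathrm{sgn}(\tau)\det(v_j^{m_i})$. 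The permutation sum then assembles a second alternant $\sum_\tau\mathrm{sgn}(\tau)\prod_i u_i^{m_{\tau(i)}}=\det(u_i^{m_j})$, yielding
\[
\Delta(t)=\sum_{0\le m_1<\cdots<m_N}\Bigl(\prod_j f_{m_j}\Bigr)t^{m_1+\cdots+m_N}\det(u_i^{m_j})\det(v_i^{m_j}).
\]

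The next step is to reverse row order in each alternant (a sign $(-1)^{\binom{N}{2}}$ per factor, which cancels in the product) and invoke the Jacobi bi-alternant identity $\det(x_j^{\lambda_i+N-i})_{i,j}=V(\mathbf{x})\,s_\lambda(\mathbf{x})$ with the partition $\lambda_i:=m_{N+1-i}-(N-i)$, which satisfies $\lambda_1\ge\cdots\ge\lambda_N\ge 0$ precisely because the $m_i$ are strictly increasing nonnegatives. This gives
\[
\det(u_i^{m_j})\det(v_i^{m_j})=V(\bu)V(\bv)\,s_\lambda(\bu)s_\lambda(\bv).
\]
Relabeling $\mathbf{n}=(n_N,\dots,n_1):=(m_N,\dots,m_1)$ -- a strictly decreasing sequence of nonnegative integers, i.e.\ a ``strict partition'' of $M:=\sum m_j$ -- and writing $s_\mathbf{n}$ for $s_\lambda$ under this bijection, the requirement that the $m_i$ be distinct nonnegatives forces $M\ge 0+1+\cdots+(N-1)=\binom{N}{2}$, which is exactly the lower bound on $M$ in the statement. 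Substituting gives the claimed formula.

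The main obstacle is purely bookkeeping: one must keep three different indexings synchronized (unordered tuples $(n_i)$, strictly increasing sequences $(m_j)$, and decreasing partitions $\lambda$), and verify that the two $(-1)^{\binom{N}{2}}$ signs from the two row reversals cancel against each other rather than leaving a residual sign. No analysis or positivity enters anywhere, which is why the identity holds over an arbitrary commutative unital ring: the proof uses only row multilinearity of $\det$, anti-symmetrization, and the classical Jacobi bi-alternant formula, all of which are purely algebraic.
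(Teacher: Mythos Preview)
The survey does not supply its own proof of this theorem; it merely states the result and attributes it to \cite{horndet}. So there is nothing in the present paper to compare against.

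That said, your argument is correct and is essentially the canonical one: expand entrywise, use row multilinearity to reduce to generalized Vandermonde determinants, discard tuples with repeated exponents, antisymmetrize to produce the second alternant in the $u_i$, and then invoke the Jacobi bi-alternant identity $a_{\lambda+\delta}(\mathbf{x}) = V(\mathbf{x})\,s_\lambda(\mathbf{x})$ (valid as a polynomial identity over $\mathbb{Z}$, hence over any commutative ring by specialization). Your handling of the bijection between strictly increasing exponent sequences $(m_1,\dots,m_N)$ and partitions $\lambda$, and the observation that the two $(-1)^{\binom{N}{2}}$ signs cancel, are both fine. One tiny wording quibble: to pass from $\det(u_i^{m_j})_{i,j}$ with $m_j$ increasing in $j$ to the form $\det(x_j^{\lambda_i+N-i})_{i,j}$ you quote, you are implicitly transposing first and then reversing rows (or equivalently reversing columns); either way the sign is $(-1)^{\binom{N}{2}}$, so nothing is affected.
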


This result (or its analysis counterpart, wherein the inner sum is
precisely the $M$th Maclaurin coefficient of $\Delta(t)$) yields the
interesting conclusion that \textit{every smooth function / power series
``gives rise to'' all Schur polynomials}. This bridges analysis and
symmetric function theory, and also helps explain the appearance of Schur
polynomials in the preserver problem in fixed dimension
in~\cite{BGKP-fixeddim,KT}.

Later, Khare and Sahi~\cite{Sahi} worked out the analogue of
Theorem~\ref{Tsymm} for the matrix permanent ${\rm perm} f[(t u_i
v_j)_{i,j=1}^n]$, and in fact for all irreducible characters -- even more
generally, all complex class functions -- of the symmetric group and of
every subgroup. Thus, the entrywise calculus also connects (surprisingly)
to group representations and symmetric functions.

As a concluding trivia, we sketch in Figure~\ref{Figtree} the closely
knit academic lineage of several of the experts in this area, having
mentioned some of their contributions above.\footnote{Fej\'er had a
remarkable
\href{https://www.genealogy.math.ndsu.nodak.edu/id.php?id=7488}{list of
PhD students} -- here we name some others who feature in this article and
the Appendix: Paul Erd\"{o}s, L\'aszl\'o Fejes T\'oth, P\'al Tur\'an, and
John von Neumann.}

\begin{figure}[ht]\label{FigMathG}
\begin{tikzpicture}[line cap=round,line join=round,>=triangle 45,x=1.0cm,y=1.0cm]
\draw (-3,6.7) node[anchor=north west] {K.T.W. Weierstrass};
\draw (7.2,6.7) node[anchor=north west] {E.E. Kummer};
\draw (-4,4.1) node[anchor=north west] {H.A. Schwarz};
\draw (0,4.1) node[anchor=north west] {\color{blue}F.G. Frobenius};
\draw (4.6,4.1) node[anchor=north west] {L.I. Fuchs};
\draw (8,4.1) node[anchor=north west] {L. K\"onigsberger};
\draw (-3.6,2) node[anchor=north west] {L. Fej\'er};
\draw (-3.7,0.5) node[anchor=north west] {\color{blue}G. P\'olya};
\draw (2.4,2) node[anchor=north west] {\color{blue}I. Schur};
\draw (1.7,0.5) node[anchor=north west] {\color{blue}I.J. Schoenberg};
\draw (8.75,2) node[anchor=north west] {G. Pick};
\draw (8.4,0.5) node[anchor=north west] {\color{blue}C. Loewner};
\draw (8.5,-1) node[anchor=north west] {\color{blue}R.A. Horn};
\draw (-0.2,-1) node[anchor=north west] {\it (Source: Math-Genealogy)};
\draw (-1.8,6)-- (-2.6,4);
\draw (8.8,6)-- (-2.6,4);
\draw (-1.8,6)-- (1.5,4);
\draw (8.8,6)-- (1.5,4);
\draw (-1.8,6)-- (5.5,4);
\draw (8.8,6)-- (5.5,4);
\draw (-1.8,6)-- (9.6,4);
\draw (8.8,6)-- (9.6,4);
\draw (2.325,3.5)-- (3.5,2.8);
\draw (4.675,3.5)-- (3.5,2.8);
\draw (3.5,1.35)-- (3.5,0.5);
\draw (-2.6,3.5)-- (-2.6,2);
\draw (-2.6,1.35)-- (-2.6,0.5);
\draw (3.5,2.8)-- (3.5,2);
\draw (9.6,3.5)-- (9.6,2);
\draw (9.6,1.35)-- (9.6,0.5);
\draw (9.6,-0)-- (9.6,-1);
\end{tikzpicture}
\caption{Math-Genealogy of some of the experts in positivity, its
preservers, and connections}\label{Figtree}
\end{figure}
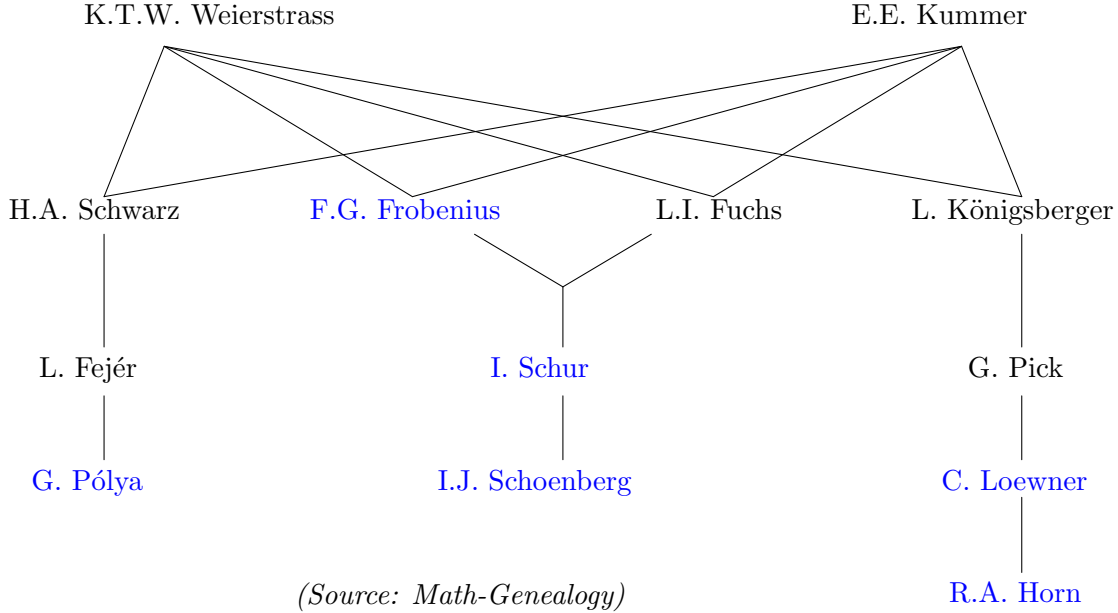

\subsection{Acting only on off-diagonal entries}

A variant of Schoenberg's theorem~\ref{Tschoenberg}, with a modern twist,
is as follows. Recall that Schoenberg was classifying the entrywise maps
sending Gram matrices to themselves -- equivalently, sending covariance
matrices to themselves. Now if the test matrices are correlation matrices
(i.e., Gram matrices of vectors on $S^\infty$), then one may want to
preserve the self-correlations $1$ along the diagonals, while
regularizing the other correlations. Thus, a natural variant of the
entrywise action is as follows:

\begin{definition}
Given a domain $I \subseteq \R$, a function $f : I \to \R$, and a square
matrix $A = (a_{ij})$, define $f^*[A]$ to have diagonal entries $a_{ii}$
and other entries $f(a_{ij})$.
\end{definition}

In 2015, Guillot and Rajaratnam showed that, perhaps surprisingly, the
dimension-free off-diagonal entrywise preservers are once again solutions
to Schoenberg's theorem -- but with an additional constraint:

\begin{theorem}[{\cite[Theorem~4.21]{Guillot_Rajaratnam2012b}}]
Fix a scalar $0 < \rho \leq \infty$ and let $I := (-\rho, \rho)$.
The following are equivalent for a function $f : I \to \R$:
\begin{enumerate}
\item $f^*[-]$ preserves positive semidefiniteness on $\bp_n(I)$ for all
sizes $n \geq 1$.

\item The function $f(x) = \sum_{k \geq 0} c_k x^k$ on $I$ for some
scalars $c_k \geq 0$, and such that $|f(x)| \leq |x|$ on $I$.
(So if $\rho = \infty$, then $f(x) \equiv cx$ on $\R$ with $c \in
[0,1]$.)
\end{enumerate}
\end{theorem}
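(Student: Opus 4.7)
The plan is to write $f^*[A] = f[A] + {\rm diag}(a_{ii} - f(a_{ii}))$ and exploit this decomposition in both directions. For $(2)\Rightarrow(1)$: given $A \in \bp_n(I)$, the matrix $f[A]$ is psd by P\'olya--Szeg\H{o} (Theorem~\ref{Tps}); and the diagonal correction is psd because $a_{ii} \geq 0$ (as $A$ is psd) combined with $|f(a_{ii})| \leq a_{ii}$ forces $a_{ii} - f(a_{ii}) \geq 0$. Since the psd cone is closed under sums, $f^*[A]$ is psd. When $\rho = \infty$, the parenthetical claim $f(x) = cx$ with $c \in [0,1]$ follows by combining $|f(x)| \leq |x|$ on $\R$ with the power series representation, since any monomial $c_k x^k$ with $c_k>0$ and $k \neq 1$ eventually violates the linear growth bound.

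For the hard direction $(1)\Rightarrow(2)$, I would first extract the pointwise bound. For each $x \in I$ and each $a$ with $|x| \leq a < \rho$, the matrix $\left(\begin{smallmatrix} a & x \\ x & a \end{smallmatrix}\right)$ lies in $\bp_2(I)$, so its $f^*$-image $\left(\begin{smallmatrix} a & f(x) \\ f(x) & a \end{smallmatrix}\right)$ must be psd. This forces $|f(x)| \leq a$; letting $a \downarrow |x|$ yields $|f(x)| \leq |x|$ on $I$.

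The main step is to show that the ordinary entrywise map $f[-]$ (with no diagonal modification) also preserves positivity on every $\bp_n(I)$; Rudin's Theorem~\ref{Trudin} will then supply the power series representation with nonnegative coefficients. Given $A \in \bp_n(I)$ and $m \geq 1$, I form the Kronecker inflation $B_m := J_m \otimes A \in \bp_{mn}(I)$, where $J_m$ denotes the $m \times m$ all-ones matrix. A direct index calculation shows
\[
f^*[B_m] = J_m \otimes f[A] \,+\, I_m \otimes {\rm diag}\bigl( a_{ii} - f(a_{ii}) \bigr),
\]
because each position $\bigl((k,i),(l,i)\bigr)$ with $k \neq l$ is off-diagonal in $B_m$ yet carries the value $a_{ii}$ and hence receives the action of $f$. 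The eigenvalues of $J_m$ are $m$ (once) and $0$ (with multiplicity $m-1$), so those of $J_m \otimes f[A]$ are $m \lambda_j(f[A])$ and $0$. By Weyl's inequality,
\[
\lambda_{\min}\bigl( f^*[B_m] \bigr) \;\leq\; m \cdot \lambda_{\min}(f[A]) \,+\, \max_{1 \leq i \leq n} \bigl( a_{ii} - f(a_{ii}) \bigr).
\]
If $\lambda_{\min}(f[A])$ were negative, taking $m$ large would make the right-hand side negative, contradicting $f^*[B_m] \in \bp_{mn}(\R)$. Hence $f[A] \in \bp_n(\R)$ for every $A \in \bp_n(I)$, and Theorem~\ref{Trudin} finishes the proof.

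The main obstacle is discovering the Kronecker-inflation construction $B_m = J_m \otimes A$: it simultaneously relocates each diagonal entry $a_{ii}$ of $A$ into off-diagonal positions of a larger psd matrix where $f$ acts, and produces an amplification of ${\rm spec}(f[A])$ by the factor $m$ against a diagonal correction whose operator norm stays bounded in $m$. Once this device is in hand, everything reduces to a $2\times 2$ inequality and the already-established theorems of P\'olya--Szeg\H{o} and Rudin.
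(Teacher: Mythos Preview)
The survey does not supply a proof of this theorem; it only states the result and cites~\cite{Guillot_Rajaratnam2012b}. So there is nothing in the paper to compare your argument against, and the question reduces to whether your proof stands on its own.

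It does. The decomposition $f^*[A] = f[A] + {\rm diag}(a_{ii}-f(a_{ii}))$ handles $(2)\Rightarrow(1)$ cleanly. For $(1)\Rightarrow(2)$, your $2\times 2$ test is the natural way to extract $|f(x)|\leq |x|$, and the Kronecker inflation $B_m = J_m\otimes A$ is exactly the right device: it pushes every diagonal entry $a_{ii}$ into an off-diagonal slot of a larger psd matrix so that $f$ genuinely acts on it, while amplifying the spectrum of $f[A]$ by $m$ against a bounded diagonal perturbation. One small remark on the Weyl step: your displayed inequality is literally correct in both cases (when $\lambda_{\min}(f[A])\geq 0$ the minimum eigenvalue of $J_m\otimes f[A]$ is $0 \leq m\,\lambda_{\min}(f[A])$, so the bound still holds), but you might signal that you only \emph{use} it when $\lambda_{\min}(f[A])<0$, where $m\,\lambda_{\min}(f[A])$ is genuinely the bottom eigenvalue of the Kronecker product. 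With that clarification, the reduction to Rudin's Theorem~\ref{Trudin} is complete and the argument is sound.
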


This line of inquiry was taken forward by Vishwakarma~\cite{Vishwakarma},
who generalized the problem in two ways. First, the function $f$ now
avoids prescribed principal submatrices / diagonal \textit{blocks} in
each dimension, not just $1 \times 1$ blocks / diagonal entries;
and second, on these diagonal blocks a different function $g$ acts.
Vishwakarma classified most of these cases, for $g(x) = \alpha x^k$ with
$\alpha \in (0,\infty)$ and $k \in \mathbb{Z}_{\geq 0}$. Here is a
special case of his main result.

\begin{theorem}[\cite{Vishwakarma}; taken from {\cite[Theorem 20.3]{AK-book}}]
Fix $0 < \rho \leq \infty$, and let $I = (-\rho,\rho)$ and $f,g : I \to
\R$. Now fix for each $n \geq 1$ a collection $T_n$ of subsets of $[n] :=
\{ 1, \dots, n \}$ -- i.e., $T_n \subseteq 2^{[n]}$. Given $A \in I^{n
\times n}$, define $(g,f)_{T_n}[A] \in \R^{n \times n}$ to be the matrix
with $(i,j)$ entry $g(a_{ij})$ if there exists $E \in T_n$ with $i,j \in
E$. If no such $E$ exists then set $(g,f)_{T_n}[A]_{ij} := f(a_{ij})$.

Now assume $T_n \nsubseteq \{ \{ 1 \}, \dots, \{ n \} \}$ for some $n
\geq 3$; and $T_n$ partitions a subset of $[n]$ for all $n \geq 1$. If
$g(x) = \alpha x^k$ with $\alpha \in (0,\infty)$ and $k \in
\mathbb{Z}_{\geq 0}$, and $(g,f)_{T_n}[A] \in \bp_n(\R)$ for all $A \in
\bp_n(I)$, then there are three cases:
\begin{enumerate}
\item If for all $n \geq 3$ we have $T_n = \{ [n] \}$ or $\{ \{ 1 \},
\dots, \{ n \} \}$, then $f$ is a convergent power series with
nonnegative coefficients, and $0 \leq f \leq g$ on $[0,\rho)$.

\item If $T_n$ is not a partition of $[n]$ for some $n \geq 3$, then
$f(x) = c g(x)$ for some $c \in [0,1]$.

\item If neither~(a) nor~(b) holds, then $f(x) = c g(x)$ for some $c \in
[-1/(K-1),1]$, where
\[
K := \max_{n \geq 1} |T_n| \in [2, +\infty].
\]
\end{enumerate}
\end{theorem}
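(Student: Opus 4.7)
The plan is to establish necessity in each of the three branches; sufficiency in every case is a routine Schur-product check against the ``pattern matrix'' $P(T_n, c)$ whose entry is $1$ inside blocks of $T_n$ and $c$ otherwise. The strategy uses two families of test matrices tuned to $g(x) = \alpha x^k$: rank-one matrices $A = \bu \bu^T$, for which $g[A]$ stays rank one, and block-constant matrices $A = tJ_n$, which probe the combinatorial pattern of $T_n$ directly.

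Cases (b) and (c) share a rigidity core. In each, one can locate some $n \geq 3$ and a set $E \in T_n$ with $|E| \geq 2$ together with a distinguished index $i^*$ lying either outside $\bigcup T_n$ (Case (b)) or in a different block (Case (c)). For $j_1, j_2 \in E$, the $3 \times 3$ principal submatrix of $(g,f)_{T_n}[\bu \bu^T]$ on $\{j_1, j_2, i^*\}$ has top-left $2 \times 2$ block equal to the rank-one matrix $\alpha (u_{j_1}^k, u_{j_2}^k)(u_{j_1}^k, u_{j_2}^k)^T$. Expanding the determinant and exploiting this rank-one collapse,
\[
\det = -\alpha \bigl(u_{j_1}^k f(u_{j_2} u_{i^*}) - u_{j_2}^k f(u_{j_1} u_{i^*})\bigr)^2,
\]
so nonnegativity forces $f(uv)/u^k$ to be independent of $u$. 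A symmetric argument in $v$ followed by a change of variables gives $f(x) = c' g(x)$ for a real constant $c'$. The admissible range of $c'$ is then pinned down by feeding $A = tJ_n$ through $(g,f)_{T_n}[-]$: the transform is $g(t)\,P(T_n, c')$, and on block-constant vectors $(v_i = \alpha_{b(i)})$ one computes
\[
v^T P(T_n, c') v = (1 - c') \sum_p m_p^2 \alpha_p^2 + c' \Bigl( \sum_p m_p \alpha_p \Bigr)^2,
\]
where $m_p$ runs over block sizes. A Cauchy--Schwarz analysis (saturated when all $K$ block sizes coincide) gives the sharp range $c' \in [-1/(K-1), 1]$ required in Case (c). In Case (b), the $2 \times 2$ Sylvester minor mixing an $E$-index with $i^*$ further forces $c'(1 - c') \geq 0$, i.e.\ $c' \in [0, 1]$.

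The main obstacle is Case (a). Here the hypothesis routes all information about $f$ through dimensions with $T_n = \{\{1\}, \dots, \{n\}\}$, in which the transform becomes ``$g$ on the diagonal, $f$ off the diagonal''. This is the Guillot--Rajaratnam configuration recalled earlier in the paper, generalised from $g(x) = x$ to $g(x) = \alpha x^k$. A cheap input is the $2 \times 2$ minor of $(g, f)_{T_n}[x J_n]$, which gives $|f(x)| \leq g(x)$ on $[0, \rho)$; nonnegativity of $f$ on $[0, \rho)$ follows from the $n \to \infty$ limit of the eigenvalue conditions for $(g, f)_{T_n}[tJ_n] = (g(t) - f(t))I + f(t) J$, whose smaller eigenvalue $g(t) + (n-1)f(t)$ blows down unless $f(t) \geq 0$. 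To upgrade these pointwise bounds to a convergent power-series representation with nonnegative coefficients, my plan is to follow the Loewner--Horn pathway of Theorem~\ref{Tloewner}: expand $\det\bigl((g, f)_{T_n}[t \bu \bv^T]\bigr)$ as a Taylor series in $t$ using a refinement of Theorem~\ref{Tsymm} that tracks the diagonal $g$-shift, deduce that $f^{(j)}$ is nonnegative on $(0, \rho)$ for every $j \leq n - 3$, and then let $n \to \infty$ and invoke Bernstein's theorem. The delicate combinatorial bookkeeping will be to preserve the Schur-polynomial structure of Theorem~\ref{Tsymm} under the hybrid ``diagonal-$g$, off-diagonal-$f$'' action, which is what distinguishes this case from the classical dimension-free Schoenberg--Rudin setting and from the pure Loewner--Horn expansion of a single function.
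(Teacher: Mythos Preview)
The paper does not supply a proof of this theorem: it is stated as a quotation from \cite{Vishwakarma} and \cite[Theorem~20.3]{AK-book}, with no argument given in the text. So there is no ``paper's own proof'' against which to compare your proposal.

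On the merits of your outline: your treatment of Cases~(b) and~(c) is sound and is essentially the argument in the cited references. The $3\times 3$ minor computation on a rank-one test matrix, collapsing the $g$-block to rank one and forcing $f(uv)/u^k$ to be constant, is exactly the right rigidity mechanism; and reading off the admissible range of $c'$ from the spectrum of the pattern matrix $P(T_n,c')$ on block-constant vectors is the standard endgame.

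Case~(a) is where your proposal is thin. Two issues. First, your argument presupposes that $T_n=\{\{1\},\dots,\{n\}\}$ holds for arbitrarily large $n$, since both the $n\to\infty$ eigenvalue step and the Loewner--Horn derivative bootstrap require the ``$g$ on the diagonal, $f$ off-diagonal'' configuration in every dimension. The hypothesis of Case~(a) does not guarantee this: it allows $T_n=\{[n]\}$ for all large $n$, in which case those dimensions carry no information about $f$ whatsoever. You need either an embedding argument (showing that the singleton configuration in one dimension $m$ forces constraints that propagate) or an additional hypothesis; as written, this is a gap. Second, the proposed ``refinement of Theorem~\ref{Tsymm} that tracks the diagonal $g$-shift'' is not spelled out, and this is precisely the nontrivial step: the hybrid diagonal/off-diagonal action does not fit the clean single-function expansion of Theorem~\ref{Tsymm}, and you have not indicated how the Schur-polynomial structure survives. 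In the sources, Case~(a) is handled by a more direct reduction to the off-diagonal preserver theorem of Guillot--Rajaratnam (stated just above in the paper) rather than by reworking the determinantal calculus; you may find that route cleaner than the one you sketch.
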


The first case is akin to Schoenberg's theorem, in the form of the final
solution set. The next case is much more restrictive; but it is the third
part of the result that is striking. Recall the profusion of
Schoenberg-type results above, which unanimously reveal absolutely
monotonic functions (power series with nonnegative coefficients) as the
dimension-free preservers. Nevertheless, part~(3) -- in the special case
$g(x) \equiv x$ -- reveals the possibility of $f(x) \equiv cx$ with
$c<0$. This is the first -- and to date, the only -- instance of a
dimension-free setting, in which the positivity preserver is not
absolutely monotonic.
%}}}

%{{{1 Section 5 - Allowing negative eigenvalues; multivariate versions
\section{Allowing negative eigenvalues; multivariate
versions}\label{Sinertia}

\subsection{Preservers of matrices with negative inertia}

We now discuss some results that are mostly taken from very recent
preprints, beginning with negative inertia preservers. Having discussed
entrywise maps preserving matrices with all nonnegative eigenvalues, it
is natural to ask what happens if one allows a few negative eigenvalues.
This was worked out by Belton--Guillot--Khare--Putinar
in~\cite{BGKP-inertia}, and we present a few of the findings.

\begin{definition}
Given integers $n \geq 1$ and $0 \leq k \leq n$, and a domain $I
\subseteq \mathbb{C}$, let $\cS_n^{(k)}(I)$ denote the Hermitian $n
\times n$ matrices with all entries in $I$ and exactly $k$ negative
eigenvalues.

Also denote by $\overline{\cS_n^{(k)}}(I)$ the ``closure'', wherein the
entries still stay in $I$ but the \textit{negative inertia} (i.e., number
of eigenvalues $< 0$) is at most $k$: $\displaystyle
\overline{\cS_n^{(k)}}(I) = \bigsqcup_{j=0}^k \cS_n^{(j)}(I)$.

Finally, the \textit{inertia} of an $n \times n$ Hermitian matrix is the
triple $(n_-, n_0, n_+)$, where the coordinates denote the numbers of
negative, zero, and positive eigenvalues, respectively.
\end{definition}

Our goal is to understand the entrywise preservers of negative inertia on
$\bigcup_{n \geq k} \cS_n^{(k)}(I)$ for all integers $k \geq 0$. Note
that the case of $k=0$ is precisely the (dimension-free)
Schoenberg--Rudin theorem above. We state the next result only for $I =
(-\rho,\rho)$ (and in it, compute the inertia preservers as well). The
cases of $I = (0,\rho)$ and $[0,\rho)$ will be presented through their
multivariate versions below.

\begin{theorem}[{\cite[Theorems~1.2 and~1.3]{BGKP-inertia}}]
Fix an integer $k \geq 0$ and a scalar $0 < \rho \leq \infty$. Let $I =
(-\rho,\rho)$ and $f : I \to \R$.
\begin{enumerate}
\item Then $f[-]$ preserves the inertia of all matrices in $\cS_n^{(k)}(I)$
for all $n \geq k$ if and only if $f(x) \equiv cx$ for some $c>0$.

\item The map $f[-]$ preserves the negative inertia of all matrices in
$\cS_n^{(k)}(I)$ for all $n \geq k$ if and only if:
\begin{enumerate}
\item $k=0$: This is the Schoenberg--Rudin theorem, and $f$ must be a
convergent power series on $I$ with nonnegative Maclaurin coefficients.
\item $k=1$: $f(x) \equiv cx$ or $f(x) \equiv -c$, for some $c>0$.
\item $k \geq 2$: $f(x) \equiv cx$ for some $c>0$.
\end{enumerate}
\end{enumerate}
\end{theorem}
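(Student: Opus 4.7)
The ``if'' direction is direct in each case: for $f(x) = cx$ with $c > 0$, the entrywise action equals scalar multiplication by $c$, preserving the whole inertia triple; case~(2)(a) is exactly the Schoenberg--Rudin positivity-preserver theorem; and for $f \equiv -c$ with $k=1$, the image $f[A] = -c\,\mathbf{1}_{n \times n}$ has rank one with sole nonzero eigenvalue $-cn < 0$, so $n_-(f[A]) = 1$ independently of $A$.

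For the ``only if'' direction, the plan is a reduction to Schoenberg--Rudin followed by a Cauchy--Schwarz rigidity argument. First, I apply $f$ entrywise to the test matrix $A_\epsilon := (-\epsilon I_k) \oplus 0_{n-k}$, which lies in $\cS_n^{(k)}(I)$. A rank-$(k+1)$ invariant-subspace analysis of $f[A_\epsilon]$, letting both $\epsilon \in (0,\rho)$ and $n \geq k$ vary, forces $f(0) = 0$ whenever $k \geq 2$ (the $n = k$ case requires $f(-\epsilon) < -(k-1) f(0)$, which is incompatible with $f(0) > 0$ after contrasting with the $n \geq k+1$ eigenvalue constraints); for $k = 1$ either $f(0) = 0$ or $f(0) < 0$ is permitted. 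Assuming $f(0) = 0$, replacing $0_{n-k}$ by an arbitrary $B \in \bp_{n-k}(I)$ forces $f[B]$ to be positive semidefinite and $f(-\epsilon) < 0$ for all $\epsilon$, so by Rudin's theorem~\ref{Trudin}, $f(x) = \sum_{j \geq 1} c_j x^j$ on $I$ with $c_j \geq 0$ and smallest nonzero $c_j$ of odd index.

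To extract linearity, I test against the matrix
\[
A = \begin{pmatrix} \lambda I_2 & v \\ v^T & d \end{pmatrix} \, \oplus\, (-\epsilon I_{k-1}) \, \oplus\, B, \qquad v = (\mu, \nu)^T,
\]
with $\lambda, d > 0$, $B$ an auxiliary psd matrix, and $\mu^2 + \nu^2 > \lambda d$. A Schur-complement computation confirms $A \in \cS_n^{(k)}(I)$ and reduces the preservation condition $n_-(f[A]) = k$ to the strict inequality $f(\mu)^2 + f(\nu)^2 > f(\lambda) f(d)$ on the open region $\mu^2 + \nu^2 > \lambda d$. Approaching the boundary with $\mu = 0$, $\nu = \sqrt{\lambda d}$ yields $f(\sqrt{\lambda d})^2 \geq f(\lambda) f(d)$, while the Cauchy--Schwarz inequality $\bigl(\sum c_j \lambda^j\bigr)\bigl(\sum c_j d^j\bigr) \geq \bigl(\sum c_j (\lambda d)^{j/2}\bigr)^2$ (valid for $c_j \geq 0$) gives the reverse; equality in Cauchy--Schwarz forces $f$ to be a single monomial $c_m x^m$. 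Approaching a second boundary at $\mu = \nu$ with $2\mu^2 \to (\lambda d)^+$ yields the constraint $2\mu^{2m} \geq (\lambda d)^m$, which fails for $m > 1$ but holds for $m = 1$; combined with $m$ odd and $c_m > 0$, this forces $m = 1$ and $f(x) = c_1 x$. For part~(1), the same argument applies, with the additional observation that inertia preservation forces rank preservation on rank-one Gram matrices $(a_i a_j)_{i,j}$, already yielding $f$ as a single monomial via a multiplicativity functional equation. Finally, the $k = 1$, $f(0) < 0$ branch is handled by $2 \times 2$ diagonal tests $\mathrm{diag}(a, c) \in \cS_2^{(1)}(I)$ for $a < 0 < c$ together with off-diagonal tests $\begin{pmatrix} a & b \\ b & c \end{pmatrix}$ having $\det < 0$; combined with the $n=1$ case forcing $f(b) < 0$ for $b < 0$, these pin down $|f(b)| = |f(0)|$ throughout $I$ and thence $f \equiv f(0) = -c$.

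The main obstacle is the rigidity step above: extracting the monomial structure from the Cauchy--Schwarz equality case (restricted to absolutely monotonic series), and then pinning the exponent down to $m=1$ via the second boundary limit. Both limits must be taken in the correct parameter regime, and the $-\epsilon I_{k-1}$ inflation trick that reduces the analysis to the fixed $(k+1)$-dimensional ``core'' block is crucial for uniformity in $k$.
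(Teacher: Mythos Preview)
The paper itself is a survey that merely \emph{states} this theorem, citing \cite{BGKP-inertia} for the proof, so there is no in-paper proof to compare against. I therefore evaluate your argument on its own terms.

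Your Cauchy--Schwarz rigidity step is clean and correct: once you know $f(0)=0$ and $f$ is absolutely monotonic, the boundary inequality $f(\sqrt{\lambda d})^2 \geq f(\lambda)f(d)$ combined with the Cauchy--Schwarz bound $f(\lambda)f(d) \geq f(\sqrt{\lambda d})^2$ forces equality, hence a single monomial $c_m x^m$; the second boundary test $2\mu^2 \to (\lambda d)^+$ then correctly pins down $m=1$.

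However, there is a genuine gap earlier, in the step where you claim that the test matrix $A_\epsilon = (-\epsilon I_k)\oplus 0_{n-k}$ forces $f(0)=0$ for $k\geq 2$ in part~(2). Your stated reasoning is that $n=k$ gives $f(-\epsilon) < -(k-1)f(0)$, and that this is ``incompatible with $f(0)>0$ after contrasting with the $n\geq k+1$ eigenvalue constraints.'' But this incompatibility does not exist. Writing $f[A_\epsilon] = f(0)\,\mathbf{1}_{n\times n} + (f(-\epsilon)-f(0))\,\mathrm{diag}(I_k,0)$, the eigenvalues are $f(-\epsilon)-f(0)$ with multiplicity $k-1$, zero with multiplicity $n-k-1$, and the two eigenvalues of
\[
\begin{pmatrix} f(-\epsilon)+(k-1)f(0) & f(0)\sqrt{k(n-k)} \\ f(0)\sqrt{k(n-k)} & f(0)(n-k) \end{pmatrix},
\]
whose determinant equals $f(0)(n-k)\bigl(f(-\epsilon)-f(0)\bigr)$. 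If $f(0)>0$ and $f(-\epsilon) < -(k-1)f(0) < f(0)$, this determinant is negative, so the $2\times 2$ block contributes exactly one negative eigenvalue, and $n_-(f[A_\epsilon]) = (k-1)+1 = k$ for \emph{every} $n\geq k$. No contradiction arises. (Concretely, for $k=2$ the function $f(x)=1$ for $x\geq 0$ and $f(x)=-2$ for $x<0$ passes all of your $A_\epsilon$ tests; it is only ruled out by matrices with \emph{negative off-diagonal} entries, such as $A = \begin{pmatrix} -3 & -2 \\ -2 & -3\end{pmatrix}$, for which $f[A] = -2\,\mathbf{1}_{2\times 2}$ has $n_-=1$.) So your decoupling trick --- which is essential for invoking Rudin's theorem on the psd block $B$ --- is not yet justified, and you need a different family of test matrices (allowing negative off-diagonal entries, or a limiting/continuity argument) to force $f(0)=0$ in the $k\geq 2$ case of part~(2).

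A second, more minor issue: your Schur-complement reduction asserts that $n_-(f[A])=k$ is \emph{equivalent} to the strict inequality $f(\mu)^2+f(\nu)^2 > f(\lambda)f(d)$. This is not quite right: when the Schur complement vanishes, the $3\times 3$ block can still have $n_-=1$ if its trace is negative (indeed this is exactly what happens for the legitimate solution $f\equiv -c$ in the $k=1$ case, where your strict inequality would fail). You should track the boundary case more carefully, though this does not affect the monomial-extraction step.
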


Thus, the class of dimension-free (negative) inertia preservers is very
rigid for $k>0$. It turns out that a more interesting question is to
classify the entrywise maps sending $\cS_n^{(k)}(I)$ to
$\overline{\cS_n^{(l)}}$. This too admits a complete solution, whose
multivariate version is perhaps more clarifying to state. For now, we
provide the meat of the assertion in the form of a summary:

\begin{theorem}[{\cite[Theorem A]{BGKP-inertia}}]\label{Tinertia}
Fix nonnegative integers $k,l$ and a scalar $0 < \rho \leq \infty$. Let
$I = (-\rho,\rho)$ and $f : I \to \R$ be such that $f[-] : \cS_n^{(k)}(I)
\to \overline{\cS_n^{(l)}}(\R)$ for all $n \geq k,l$.
\begin{enumerate}
\item If $k=0$, then $f(x)$ equals a(ny) real number $f(0)$ plus a
convergent power series with nonnegative Maclaurin coefficients and
vanishing at $x=0$.

\item If $k>0$, then $f$ is linear or constant.
\end{enumerate}
\end{theorem}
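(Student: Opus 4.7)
The plan is to handle both parts by a single direct-sum amplification that reduces to the Schoenberg--Rudin theorem (Theorem~\ref{Trudin}), and then, in the case $k \geq 1$, to kill the higher-order Maclaurin coefficients via a refined test-matrix construction and the Courant--Fischer principle.

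For the reduction, set $g(x) := f(x) - f(0)$, so $g(0) = 0$. Then $g[-]$ respects block-diagonal direct sums: for $M = M_1 \oplus \cdots \oplus M_s$, off-block entries vanish and $g(0) = 0$, so $g[M] = g[M_1] \oplus \cdots \oplus g[M_s]$. Given $A \in \bp_n(I)$, consider the $N$-fold sum $A^{\oplus N} \in \bp_{Nn}(I)$: writing $\nu_-(\cdot)$ for the negative inertia, $\nu_-(g[A^{\oplus N}]) = N \cdot \nu_-(g[A])$. Since $f = g + f(0)$ entrywise, $f[A^{\oplus N}] = g[A^{\oplus N}] + f(0) J_{Nn}$ is a rank-one perturbation, and Weyl's inequality yields $|\nu_-(f[A^{\oplus N}]) - N \nu_-(g[A])| \leq 1$. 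For Part~(1), the hypothesis $\nu_-(f[A^{\oplus N}]) \leq l$ forces $N \cdot \nu_-(g[A]) \leq l+1$ for every $N$, hence $\nu_-(g[A]) = 0$: thus $g[-]$ preserves positivity on $\bp_n(I)$ for all $n$, and Rudin's Theorem~\ref{Trudin} yields $g(x) = \sum_{r \geq 1} c_r x^r$ with $c_r \geq 0$, proving Part~(1). For Part~(2) the same argument runs with $B_0 \oplus A^{\oplus N} \in \cS_{k+Nn}^{(k)}(I)$ in place of $A^{\oplus N}$, where $B_0 \in \cS_k^{(k)}(I)$ is any fixed anchor (e.g.\ $-\varepsilon I_k$); this forces $g$ to be absolutely monotonic.

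The remaining task for Part~(2) is to show $c_r = 0$ for all $r \geq 2$. Suppose for contradiction that $c_{r_0} > 0$ for some $r_0 \geq 2$. The idea is to produce $A \in \cS_n^{(k)}(I)$ with $\nu_-(f[A])$ arbitrarily large. Working with $k=1$ for clarity (the case $k>1$ is handled by taking $k$ orthogonal rank-one subtractions), consider
\[
A = \sum_{i=1}^m s_i u_i u_i^T - t\, \mathbf{1}\mathbf{1}^T,
\]
with $u_1, \ldots, u_m \in \R^n$ mutually orthogonal, each orthogonal to $\mathbf{1}$, and $s_i, t > 0$ small enough to keep the entries of $A$ inside $I$. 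Then $A \in \cS_n^{(1)}(I)$ with $\mathbf{1}$ as its unique negative eigenvector. Exploiting the Hadamard identity $B \circ \mathbf{1}\mathbf{1}^T = B$, the expansion
\[
A^{\circ r} = (-t)^r \mathbf{1}\mathbf{1}^T + \sum_{a=1}^{r} \binom{r}{a} (-t)^{r-a} \Bigl(\sum_{i} s_i u_i u_i^T\Bigr)^{\circ a}
\]
holds for every $r \geq 0$. For $v \in V := \mathrm{span}(u_1, \ldots, u_m)$, the $f(0) \mathbf{1}\mathbf{1}^T$ term contributes nothing to $v^T f[A] v$ since $V \perp \mathbf{1}$. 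By tuning the ratio $t/s_i$ within the range allowed by $I$, the contribution of the $c_{r_0}$-term to $v^T f[A] v$ can be made to overpower the contributions from $c_1, \ldots, c_{r_0-1}$ and from the tail, yielding $v^T f[A] v < 0$ for every $v \in V$. By Courant--Fischer, $\nu_-(f[A]) \geq \dim V = m$; choosing $m > l$ (and $n$ large enough to accommodate $V$) contradicts the hypothesis and forces $c_{r_0} = 0$.

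The clean step is the reduction; the technical heart is Step~2, namely producing an arbitrarily large subspace $V$ on which the quadratic form $v^T f[A] v$ is strictly negative. The balancing argument against the lower coefficients $c_1, \ldots, c_{r_0-1}$ (which are nonnegative and may conspire to keep the form positive) is the main obstacle; when $\rho$ is small relative to $r_0$, one may need to enrich the test family (e.g.\ by also using diagonal matrices $\mathrm{diag}(d, -a, \ldots, -a) \in \cS_n^{(k)}(I)$) to squeeze out a contradiction uniformly in $l$.
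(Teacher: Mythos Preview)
The paper is a survey and does not include a proof of this theorem; it merely cites \cite{BGKP-inertia}. The only hint the paper gives about the actual argument appears after Theorem~\ref{Tmain}, where it remarks that an ``interesting additional ingredient in the proof involves the use of Sidon sets.'' So there is no in-paper proof to compare against; I will assess your proposal on its own merits.

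Your reduction step is correct and clean. For Part~(1), and for the first half of Part~(2), the block-diagonal amplification $A \mapsto B_0 \oplus A^{\oplus N}$ combined with the rank-one perturbation bound $|\nu_-(M + c\,\mathbf{1}\mathbf{1}^T) - \nu_-(M)| \leq 1$ does force $g := f - f(0)$ to preserve $\bp_n(I)$ for all $n$, and then Theorem~\ref{Trudin} gives absolute monotonicity. This part is fine.

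The genuine gap is in Step~2 of Part~(2), and it is not merely technical. Your own closing paragraph already concedes that the ``balancing argument \ldots\ is the main obstacle,'' and the obstacle is real. Take the simplest non-linear candidate $f(x) = x + x^2$ (so $c_1 = c_2 = 1$, $c_r = 0$ otherwise) and any $\rho < 1/2$. With your test matrix $A = sP - t\,\mathbf{1}\mathbf{1}^T$, where $P = \sum_i u_i u_i^T$ projects onto $V \perp \mathbf{1}$, one computes for $v \in V$ that
\[
v^T f[A]\, v \;=\; s(1 - 2t)\, v^T P v \;+\; s^2\, v^T P^{\circ 2} v.
\]
Since the entries of $A$ must lie in $(-\rho,\rho)$, one has $t < \rho < 1/2$, so $1 - 2t > 0$; both summands are nonnegative and hence $v^T f[A]\,v \geq 0$ for every $v \in V$. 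Your construction produces \emph{no} negative directions in $V$ at all, let alone $m > l$ of them. The suggested fallback of diagonal matrices $\mathrm{diag}(d,-a,\dots,-a) \in \cS_n^{(k)}(I)$ does not help either: $f[\,\cdot\,]$ acts entrywise on a diagonal matrix, so $\nu_-(f[D])$ is just the number of diagonal entries with $f(\cdot) < 0$, which for $f(x) = x + x^2$ on $(-\rho,\rho)$ is again exactly $k$.

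So as written, the proposal proves Part~(1) and reduces Part~(2) to killing $c_r$ for $r \geq 2$, but does not accomplish that last step. The actual argument in \cite{BGKP-inertia} (to which the paper alludes via Sidon sets) uses a genuinely different construction to manufacture arbitrarily large negative inertia; your rank-one-subtraction idea, while natural, is not strong enough on its own when $\rho$ is small relative to the coefficient ratios.
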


\subsection{Preservers of positivity and of inertia, in several
variables}

Schoenberg's theorem has a natural multivariable generalization, for any
number of variables $m \geq 1$. Note by the Schur product theorem that if
$A_1, \dots, A_m$ are any matrices in $\bp_n(\mathbb{C})$ for some $n
\geq 1$, then their entrywise product $A_1 \circ \cdots \circ A_m \in
\bp_n(\mathbb{C})$. Reformulated, this says that the function $f(\bx) =
x_1 \cdots x_m$ entrywise sends $\bp_n(\mathbb{C})^m$ to
$\bp_n(\mathbb{C})$ for all $n \geq 1$. In general, we define
\[
f[A_1, \dots, A_m]_{ij} := f(a_{ij}^{(1)}, \dots, a_{ij}^{(m)}), \qquad
A_p = (a_{ij}^{(p)})_{i,j=1}^n.
\]

Restricting to real matrices, the easy implication of the following 1995
generalization of Schoenberg's theorem -- by
FitzGerald--Micchelli--Pinkus -- is clear:

\begin{theorem}[{\cite[Theorem~2.1]{fitzgerald}}]
Let $I = \R$. An entrywise map $f : I^m \to \R$ sends $\bp_n(I)^m$ to
$\bp_n(\R)$ for all $n \geq 1$, if and only if $f$ equals a convergent
power series with nonnegative coefficients on $I^m$:
\begin{equation}\label{Emultientire}
f(\bx) = \sum_{\alpha \in \mathbb{Z}_{\geq 0}^m} c_\alpha \bx^\alpha
\qquad \text{for all } \bx \in I^m, \quad \text{with all } c_\alpha \geq
0.
\end{equation}
\end{theorem}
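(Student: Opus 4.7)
The implication $(2) \Rightarrow (1)$ is the multivariate analogue of Theorem~\ref{Tps}: each monomial $\bx^\alpha = x_1^{\alpha_1} \cdots x_m^{\alpha_m}$ acts on $(A_1,\dots,A_m)$ by the iterated Schur product $A_1^{\circ \alpha_1} \circ \cdots \circ A_m^{\circ \alpha_m}$, which lies in $\bp_n(\R)$ by Theorem~\ref{Tschur}. Since the set of preserver maps is a convex cone closed under nonnegative scaling, finite sums, and entrywise limits, every series of the form~\eqref{Emultientire} is automatically a preserver.

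For $(1) \Rightarrow (2)$ the plan is to bootstrap from Rudin's Theorem~\ref{Trudin} (applied with $\rho=\infty$) by induction on $m$; the base case $m=1$ is Theorem~\ref{Trudin} itself. For the inductive step, fix $y_1 \geq 0$ and note that $y_1 J_n := y_1 \mathbf{1}_n \mathbf{1}_n^T \in \bp_n(\R)$, so for any $A_2,\dots,A_m \in \bp_n(\R)$ the matrix $f[y_1 J_n, A_2,\dots,A_m]$ is psd, with $(i,j)$ entry $f(y_1, a_{ij}^{(2)},\dots, a_{ij}^{(m)})$. Hence the $(m-1)$-variable slice $g_{y_1}(\mathbf{y}):=f(y_1,y_2,\dots,y_m)$ is itself a preserver, and the inductive hypothesis gives $g_{y_1}(\mathbf{y})=\sum_{\beta \in \mathbb{Z}_{\geq 0}^{m-1}} d_\beta(y_1)\,\mathbf{y}^\beta$ with $d_\beta(y_1)\geq 0$ and absolute convergence on $\R^{m-1}$. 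I would next show that, for each $\beta$, the map $y_1 \mapsto d_\beta(y_1)$ is itself absolutely monotonic on $[0,\infty)$. The route is to apply Theorem~\ref{Trudin} to each one-variable slice $y_1 \mapsto f(y_1, \mathbf{y}_0)$ with $\mathbf{y}_0 \in [0,\infty)^{m-1}$ (using the same rank-one reduction in the other variables) and then extract the $\beta$-coefficient via Cauchy's formula applied to the entire extension of $g_{y_1}$ in its complex arguments -- an extension that exists precisely because the coefficients $d_\beta(y_1)$ are nonnegative. The resulting integral representation of $d_\beta(y_1)$ inherits absolute monotonicity in $y_1$ from that of $y_1 \mapsto f(y_1, \mathbf{y}_0)$, so Bernstein's theorem~\cite{Bernstein} yields $d_\beta(y_1)=\sum_k c_{k,\beta}\,y_1^k$ with $c_{k,\beta}\geq 0$, and the double expansion collapses to $f(\bx)=\sum_\alpha c_\alpha \bx^\alpha$ on $[0,\infty)\times \R^{m-1}$.

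By symmetry across coordinates the same series represents $f$ on every closed half-space $\{x_j \geq 0\}$, hence on $[0,\infty)^m$. Nonnegativity of the $c_\alpha$ together with absolute convergence at each point of $(0,\infty)^m$ forces $\sum_\alpha c_\alpha |\bx|^\alpha < \infty$ for every $\bx \in \R^m$, so the series defines an entire function $\widetilde f$ on $\R^m$ that coincides with $f$ on the positive orthant. The main obstacle is the final identification $f=\widetilde f$ on all of $\R^m$, since the rank-one test matrices $y_k J_n$ employed above only realize nonnegative fixed values. My plan to overcome this is to use rank-one psd matrices $A_k = \mathbf{v}^{(k)}(\mathbf{v}^{(k)})^T$ whose off-diagonal entries $v_i^{(k)} v_j^{(k)}$ realize arbitrary real values: for instance, $\mathbf{v}^{(k)}=(1,x_k)^T$ yields $A_k=\left(\begin{smallmatrix}1 & x_k\\ x_k & x_k^2\end{smallmatrix}\right) \in \bp_2(\R)$, so that the psd condition on $f[A_1,\dots,A_m]$ produces diagonal values $f(1,\dots,1)=\widetilde f(1,\dots,1)$ and $f(x_1^2,\dots,x_m^2)=\widetilde f(x_1^2,\dots,x_m^2)$, while the off-diagonal $f(\bx)$ is constrained by $f(\bx)^2 \leq \widetilde f(\mathbf{1})\,\widetilde f(x_1^2,\dots,x_m^2)$. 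Larger families of rank-one vectors (with bigger $n$ and varying signs) supply further constraints of the same type, and together with the agreement on the positive orthant they pin down $f=\widetilde f$ pointwise on $\R^m$. Rigorously justifying this sign-extension, together with the Cauchy/Bernstein coefficient extraction in the inductive step, is the principal technical burden.
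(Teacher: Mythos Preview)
The paper does not prove this theorem: it cites \cite{fitzgerald}, remarks that the easy implication is ``clear'' via the Schur product theorem (which your first paragraph handles correctly), and then moves on to the stronger Theorem~\ref{Tjems}, whose proof is deferred entirely to \cite{BGKP-hankel}. So there is no in-paper argument for $(1)\Rightarrow(2)$ to compare your sketch against.

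That said, your outline of the hard direction has two genuine gaps, not merely unfinished details. First, the Cauchy-integral step does not deliver what you claim. You are right that for fixed $y_1\geq 0$ the slice $g_{y_1}$ extends entire in $\mathbf{y}\in\mathbb{C}^{m-1}$, so $d_\beta(y_1)$ is a torus integral of $g_{y_1}$. But on that torus $\mathbf{y}$ is complex, and you only know $y_1\mapsto f(y_1,\mathbf{y}_0)$ is absolutely monotonic for \emph{real nonnegative} $\mathbf{y}_0$; for complex $\mathbf{y}$ the integrand need not be absolutely monotonic (or even real) in $y_1$, so nothing is ``inherited''. What you actually need is the nonnegativity of all mixed partials $\partial^\alpha f$ on $(0,\infty)^m$, and your argument is circular on precisely this point: extracting $d_\beta$ by differentiation in $\mathbf{y}$ and then asking for absolute monotonicity in $y_1$ \emph{is} the mixed-partial statement.

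Second, the $2\times 2$ rank-one test you propose for the sign-extension yields only the bound $f(\bx)^2\leq \widetilde f(\mathbf{1})\,\widetilde f(x_1^2,\ldots,x_m^2)$, and enlarging $n$ or varying the vectors still produces only inequalities of the shape $|f(\bx)|\leq \widetilde f(|x_1|,\ldots,|x_m|)$ --- never an identification. Passing from $[0,\infty)^m$ to all of $\R^m$ requires a different mechanism: either test matrices whose off-diagonal pattern forces equalities (e.g.\ multivariate Toeplitz matrices arising from positive definite functions on $\mathbb{Z}^m$, in the spirit of Rudin's original one-variable proof), or an a priori real-analyticity argument on a neighbourhood crossing the boundary of the positive orthant so that analytic continuation applies.
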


This result was recently strengthened by Belton--Guillot--Khare--Putinar
in two ways. First, the domain was reduced and also varied, to one- and
two- sided domains. Second, the test set of matrices was severely
reduced, to low rank Hankel ones.

\begin{theorem}[\cite{BGKP-hankel}]\label{Tjems}
Let $0 < \rho \leq \infty$, and let $I = (0,\rho), [0,\rho)$, or
$(-\rho,\rho)$. Fix a positive integer $m$ and suppose $f : I^m \to \R$.
The following are equivalent.
\begin{enumerate}
\item The entrywise map $f[-]$ sends $m$-tuples in $\bp_n(I)^m$ to
$\bp_n(\R)$.

\item Let $H_n(I)$ denote the Hankel matrices in $\bp_n(I)$ -- of rank at
most $2$ for $I = (0,\rho), [0,\rho)$, and rank at most $3$ for $I =
(-\rho,\rho)$. Then $f[-] : H_n(I)^m \to \bp_n(\R)$ for all $n \geq 1$.

\item The map $f$ is as in~\eqref{Emultientire}, on all of $I^m$.
\end{enumerate}
\end{theorem}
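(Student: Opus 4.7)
The proof will proceed as a three-way cycle $(1) \Rightarrow (2) \Rightarrow (3) \Rightarrow (1)$. The implication $(1) \Rightarrow (2)$ is immediate, since the low-rank Hankel matrices in $\bp_n(I)$ form a subset of $\bp_n(I)$. For $(3) \Rightarrow (1)$, expand $f(\bx) = \sum_\alpha c_\alpha \bx^\alpha$ with $c_\alpha \geq 0$; the monomial $\bx^\alpha$ applied entrywise to $(A_1,\ldots,A_m) \in \bp_n(I)^m$ yields the iterated Schur product $A_1^{\circ \alpha_1} \circ \cdots \circ A_m^{\circ \alpha_m} \in \bp_n(\R)$ by Theorem \ref{Tschur}, and the cone $\bp_n(\R)$ is closed under nonnegative combinations and entrywise limits.

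The content is in $(2) \Rightarrow (3)$, and the plan is a variable-by-variable reduction to the univariate Theorems \ref{Tbgkp-1sided} and \ref{Tbgkp-2sided}. Consider first the one-sided case $I = (0,\rho)$ or $[0,\rho)$. Fix $k \in \{1,\ldots,m\}$ and a basepoint $\bx^0 = (x_1^0,\ldots,x_{k-1}^0,\ast,x_{k+1}^0,\ldots,x_m^0) \in I^m$. For each $j \neq k$, set $A_j := x_j^0 \cdot \mathbf{1}_{n \times n}$; this is a rank-$\leq 1$ positive Hankel matrix (as $x_j^0 \geq 0$). For $A_k$ an arbitrary rank-$\leq 2$ Hankel matrix in $\bp_n(I)$, the tuple $(A_1,\ldots,A_m)$ lies in the hypothesis of (2), and the $(i,l)$-entry of $f[A_1,\ldots,A_m]$ equals $g_k((A_k)_{il})$, where $g_k(y) := f(x_1^0,\ldots,y,\ldots,x_m^0)$ denotes the single-variable restriction. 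Hence $g_k[-]$ preserves positivity on rank-$\leq 2$ Hankel matrices in $\bp_n(I)$, so by Theorem \ref{Tbgkp-1sided}, $g_k(y) = \sum_{r \geq 0} c_r^{(k)}(\bx^0) y^r$ with $c_r^{(k)}(\bx^0) \geq 0$. Thus $f$ is separately real analytic in each variable on $\mathrm{int}(I)^m$; Hartogs' theorem then yields joint real analyticity, so $f(\bx) = \sum_\alpha c_\alpha \bx^\alpha$. Nonnegativity of each $c_\alpha$ is obtained by iterating: the coefficient-map $\bx^0 \mapsto c_r^{(k)}(\bx^0)$ inherits the preservation property (by extracting it via a suitable limit of $g_k$ along rank-$1$ Hankel test matrices $\bv_u \bv_u^\top$ for $u \to 0^+$), and matching Taylor coefficients at $\bx = 0$ finishes the argument.

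For the two-sided case $I = (-\rho,\rho)$, the constant-matrix trick fails whenever $x_j^0 < 0$, since $x_j^0 \cdot \mathbf{1}_{n \times n}$ is then not positive semidefinite. However, the one-sided case already applies to $f|_{(0,\rho)^m}$ and produces a candidate series $\tilde f(\bx) = \sum_\alpha c_\alpha \bx^\alpha$ with $c_\alpha \geq 0$, which converges absolutely on the polydisk $\{|x_j| < \rho\}_{j=1}^m$ and hence defines a real-analytic function on $I^m$; it remains to show $f = \tilde f$ on $I^m$. The plan is to replace the constant test matrices $x_j^0 \cdot \mathbf{1}_{n \times n}$ by the rank-$1$ positive Hankel matrices $A_j := \bv_{u_j} \bv_{u_j}^\top$, where $\bv_{u_j} = (1, u_j, u_j^2, \ldots, u_j^{n-1})^\top$ and $u_j \in I$; these are psd for any real $u_j$. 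Combined with $A_k$ of rank $\leq 3$ in $\bp_n(I)$, the tuple falls within the hypothesis of (2); applying Theorem \ref{Tbgkp-2sided} together with a Hamburger-type moment analysis of the resulting two-parameter Hankel structure establishes that $f$ is jointly real analytic on $\mathrm{int}(I)^m$. The identity theorem, applied to $f - \tilde f$ on the connected open set $\mathrm{int}(I)^m$, then gives $f = \tilde f$ throughout.

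The main obstacle is this last step: in the two-sided case, one cannot cleanly ``freeze'' the inactive variables with a constant matrix, and the nonconstant rank-$1$ matrices $\bv_u \bv_u^\top$ couple the active variable $y$ with the inactive parameters $u_j$ through the Hankel index $i+l-2$. Disentangling this coupling requires a careful invocation of the univariate two-sided result and the Stieltjes/Hamburger moment correspondence, mirroring the additional difficulty encountered in Theorem \ref{Tbgkp-2sided} (rank $3$) relative to Theorem \ref{Tbgkp-1sided} (rank $2$). Modulo this technical core, the reduction to the univariate Schoenberg--Rudin--type theorems is straightforward.
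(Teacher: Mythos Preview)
Your cycle $(3)\Rightarrow(1)\Rightarrow(2)$ matches the paper's skeleton, and the paper itself defers $(2)\Rightarrow(3)$ entirely to the source~\cite{BGKP-hankel}. So the question is whether your sketch of $(2)\Rightarrow(3)$ is sound. There are two genuine gaps.

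First, the invocation of Hartogs' theorem is incorrect. Hartogs' theorem says that a separately \emph{holomorphic} function on a domain in $\mathbb{C}^m$ is jointly holomorphic; there is no real-analytic analogue. A function that is separately real analytic in each variable need not be jointly real analytic, even on a product of intervals. What you actually establish by freezing variables is separate \emph{absolute monotonicity}, which is stronger than separate real analyticity, but passing from separate to joint absolute monotonicity still requires work: you need the mixed partials $\partial_{x_1}^{j_1}\cdots\partial_{x_m}^{j_m} f \geq 0$, and separate absolute monotonicity only gives you the pure partials. This is not a matter of citing a different theorem; it is exactly the substance of the multivariate argument.

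Second, your proposed iteration --- that the coefficient function $\bx^0 \mapsto c_r^{(k)}(\bx^0)$ ``inherits the preservation property'' --- is where that substance should live, and it is not justified. Extracting $c_r^{(k)}$ requires a divided-difference or derivative limit, which is an alternating-sign linear combination of values of $f$; applied entrywise to Hankel test tuples this gives an alternating-sign combination of psd matrices, which is not automatically psd. Making this step work is precisely the nontrivial content of \cite[\S9]{BGKP-hankel}: one first establishes joint continuity from low-dimensional test matrices, then uses an integration-by-parts / FitzGerald--Horn type identity rather than naive differencing to show that each coefficient function is again a preserver. Your proposal identifies the right inductive structure but skips the mechanism that makes the induction close.

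For the two-sided case you concede the difficulty explicitly (``modulo this technical core''), so that part is a plan rather than a proof. The reduction to the one-sided restriction to obtain the candidate series $\tilde f$ is fine, but the identity-theorem step presupposes joint real analyticity of $f$ on $(-\rho,\rho)^m$, which is again the missing ingredient from the first paragraph above.
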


\begin{proof}[Skeleton of proof]
That $(3) \implies (1)$ is via the Schur product theorem, and that $(1)
\implies (2)$ is immediate. That $(2) \implies (3)$ was shown in
\cite{BGKP-hankel} in the following locations:
\begin{itemize}
\item For $I = (0,\rho)$: see Theorem~{9.6} in \textit{loc.\ cit.}

\item For $I = [0,\rho)$: see Theorem~9.6 and the proof of
Proposition~9.8.

\item For $I = (-\rho,\rho)$: see Theorem~9.11 and the subsequent
remarks. \qedhere
\end{itemize}
\end{proof}

Having understood positivity preservers, we turn to the multivariate
analogue of Theorem~\ref{Tinertia}. The following is
\cite[Theorem~B]{BGKP-inertia}:

\begin{theorem}[Schoenberg-type theorem with negativity
constraints]\label{Tmain}
Let $I := ( -\rho, \rho )$, $( 0, \rho )$, or $[ 0, \rho )$, where $0 <
\rho \leq \infty$. Also fix integers $m \geq 1$ and $k_1, \dots, k_m, l
\geq 0$. Rearrange the negative inertias $k_p$ such that any zero values
are at the start; thus there exists $0 \leq m_0 \leq m$ such that $k_1 =
\cdots = k_{m_0} = 0 < k_{m_0+1}, \dots, k_m$.

Now given any function $f : I^m \to \R$, the following are equivalent.
\begin{enumerate}
\item The entrywise map $f[ - ]$ sends $\times_{p=1}^m
\overline{\cS_n^{(k_p)}}(I)$ to $\overline{\cS_n^{(l)}}(\R)$ for all $n
\geq \max_p k_p$.

\item The entrywise map $f[ - ]$ sends $\times_{p=1}^m
\cS_n^{(k_p)}(I)$ to $\overline{\cS_n^{(l)}}(\R)$ for all $n \geq \max_p
k_p$.

\item There exists a function $F : (-\rho,\rho)^{m_0} \to \R$
and a non-negative constant $c_p$ for each $p = m_0 + 1, \dots, m$
such that
\begin{enumerate}
\item we have the representation
\begin{equation}\label{Epresrep2}
f( \bx ) = F( x_1, \ldots, x_{m_0} ) + \sum_{p = m_0 + 1}^m c_p x_p
\qquad \textrm{for all } \bx \in I^m,
\end{equation}
\item the function
$\bx' := ( x_1, \ldots, x_{m_0} ) \mapsto F( \bx' ) - F( {\bf 0}_{m_0} )$
is absolutely monotone, that is, it is represented on $I^{m_0}$
by a convergent power series with all Maclaurin coefficients
non-negative, and
\item the inequality
$\displaystyle {\bf 1}_{F( {\bf 0} ) < 0} + \sum_{p : c_p > 0} k_p \leq
l$ holds.
\end{enumerate}
\end{enumerate}
\end{theorem}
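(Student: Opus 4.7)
The plan is to establish $(1) \Leftrightarrow (2) \Leftrightarrow (3)$, with $(3) \Rightarrow (1)$ by direct verification, $(1) \Rightarrow (2)$ immediate from the inclusion $\cS_n^{(k_p)}(I) \subseteq \overline{\cS_n^{(k_p)}}(I)$, and the real effort going into $(2) \Rightarrow (3)$.

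For $(3) \Rightarrow (1)$: given $A_p \in \overline{\cS_n^{(k_p)}}(I)$, the representation (4.1.1) yields
\[
f[A_1, \dots, A_m] = (F - F(\mathbf{0}))[A_1, \dots, A_{m_0}] + F(\mathbf{0}) \cdot \mathbf{1}_{n \times n} + \sum_{p > m_0} c_p A_p.
\]
By the multivariate Schoenberg theorem (Theorem~\ref{Tjems}), the first summand is psd (since $F - F(\mathbf{0})$ is absolutely monotone on $I^{m_0}$ and $A_1, \dots, A_{m_0}$ are psd); the rank-one middle term contributes $\mathbbm{1}(F(\mathbf{0}) < 0)$ to the negative inertia; and each $c_p A_p$ with $c_p > 0$ contributes at most $k_p$. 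Subadditivity of negative inertia under Hermitian sums (a Weyl inequality), together with the hypothesis on $l$, then caps the total negative inertia at $l$.

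For $(2) \Rightarrow (3)$, the strategy is to extract univariate information along each coordinate via a block-diagonal / Cauchy-interlacing construction. Fix an index $p$ and any $\mathbf{c}' = (c_1, \dots, \widehat{c_p}, \dots, c_m) \in I^{m-1}$. Construct test matrices $A_q$ ($q \neq p$) of block form $M_q \oplus c_q \mathbf{1}_{(n-K) \times (n-K)}$, where the fixed $K \times K$ block $M_q$ is chosen so that $A_q \in \cS_n^{(k_q)}(I)$ (the case $I = (0,\rho)$ requires a small positive shift in the off-block entries). Set $A_p = 0_K \oplus B_p$ with $B_p \in \cS_{n-K}^{(k_p)}(I)$, so the bottom-right $(n-K) \times (n-K)$ block of $f[A_1, \dots, A_m]$ is precisely $g_{\mathbf{c}'}[B_p]$, where $g_{\mathbf{c}'}(x) := f(c_1, \dots, x, \dots, c_m)$. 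By Cauchy interlacing, this block has at most $l$ negative eigenvalues. Letting $B_p$ range over $\cS_{n-K}^{(k_p)}(I)$ and invoking the univariate Theorem~\ref{Tinertia} pins down $g_{\mathbf{c}'}$: it must be affine in $x$ when $p > m_0$ (where $k_p > 0$), and of the form $g_{\mathbf{c}'}(0)$ plus an absolutely monotone power series in $x$ when $p \leq m_0$.

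The main obstacle is the gluing step: these slice-wise conclusions must be upgraded to the global representation (4.1.1). One must rule out cross-terms such as $x_p x_q$ for $p, q > m_0$ and show that each slope $c_p$ is a constant independent of the remaining coordinates; this is achieved by iterated comparison of slices in different directions, since any nontrivial coupling or variable dependence would, after a suitable rescaling of the test matrices, produce more than $l$ negative eigenvalues and contradict hypothesis~(2). Negativity of any $c_p$ ($p > m_0$) is excluded by the same rescaling mechanism. Once all $c_p$ are shown to be nonnegative constants, the residual piece $F(x_1, \dots, x_{m_0})$ is identified by freezing the negative-inertia slots at fixed matrices of the required inertia and invoking Theorem~\ref{Tjems} on the first $m_0$ slots, yielding absolute monotonicity of $F - F(\mathbf{0})$. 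Finally, the inequality $\mathbbm{1}(F(\mathbf{0}) < 0) + \sum_{p: c_p > 0} k_p \leq l$ is recovered by evaluating $f[-]$ on an extremal test tuple (a direct sum of a rank-one negative matrix in each slot with $c_p > 0$, together with a psd configuration in the first $m_0$ slots saturating the $F(\mathbf{0})$ term) that realizes the left-hand side as an actual lower bound on the negative inertia of the image.
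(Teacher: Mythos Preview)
The paper is a survey and does not give its own proof of Theorem~\ref{Tmain}; it cites \cite{BGKP-inertia} and records only one proof hint: ``an interesting additional ingredient in the proof involves the use of Sidon sets.'' Your outline never invokes Sidon sets, and the steps you hand-wave are exactly where that ingredient is expected to do real work.

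Your $(3)\Rightarrow(1)$ via Weyl/subadditivity and $(1)\Rightarrow(2)$ are fine. The block-diagonal reduction to the univariate Theorem~\ref{Tinertia} is reasonable in spirit, but note two technical mismatches: for $I=(0,\rho)$ the block $0_K$ you place in $A_p$ has forbidden zero entries (your parenthetical shift only addresses $q\neq p$), and Theorem~\ref{Tinertia} as stated in the paper covers only $I=(-\rho,\rho)$, so the one-sided univariate input you appeal to has not been established independently.

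The substantive gaps are the last two steps. \textbf{Gluing:} knowing that $f$ is affine in each $x_p$ ($p>m_0$) for every frozen choice of the other coordinates does \emph{not} exclude cross-terms such as $x_p x_q$; you must show the slope in $x_p$ is constant in the remaining variables. Your ``iterated comparison of slices\dots after a suitable rescaling'' names the goal but not the mechanism, and for one-sided $I$ the other coordinates are nonnegative, so a variable slope $b+dx_q$ can stay nonnegative throughout and no sign contradiction is immediate. \textbf{The inequality in (3)(c):} your ``extremal test tuple'' must exhibit $f[A_1,\dots,A_m]$ with negative inertia \emph{at least} $\mathbbm{1}(F(\mathbf{0})<0)+\sum_{p:c_p>0}k_p$. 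Since negative inertia is only subadditive, a direct-sum construction does not automatically prevent cancellation among the summands $c_pA_p$ and $F(\mathbf{0})\mathbf{1}$; moreover each $A_p$ must lie in $\cS_n^{(k_p)}(I)$ (exactly $k_p$ negative eigenvalues, entries in $I$), which rules out naive block-diagonal choices when $I=(0,\rho)$. This separation-of-contributions problem is precisely where the Sidon-set construction of \cite{BGKP-inertia} is used; absent that or an explicit substitute, the argument is incomplete.
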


In addition to judiciously chosen test matrices and analysis techniques,
an interesting additional ingredient in the proof involves the use of
Sidon sets (also termed $B$-sets) from number theory and additive
combinatorics, whose use was pioneered by Singer~\cite{Singer} and
Erd\"os--Tur\'an~\cite{ET}; see also Bose--Chowla~\cite{BC}.

Having discussed the real case, we end by mentioning the multivariable
complex case too. In this case -- following Herz's theorem~\ref{Therz},
its multivariate counterpart was again shown in 1995 by
FitzGerald--Micchelli--Pinkus:

\begin{theorem}[{\cite[Theorem~3.1]{fitzgerald}}]\label{Tfmp}
Given an integer $m \geq 1$ and a function $f : \mathbb{C}^m \to
\mathbb{C}$, the following are equivalent.
\begin{enumerate}
\item The entrywise map $f[-]$ sends $\bp_n(\mathbb{C})^m$ to
$\bp_n(\mathbb{C})$ for all $n \geq 1$.

\item The function $f(\bz) = \sum_{\alpha,\beta \in \mathbb{Z}_{\geq
0}^m} c_{\alpha,\beta} \bz^\alpha \overline{\bz}^\beta$ for all
$\bz \in \mathbb{C}^m$, with all $c_{\alpha,\beta} \geq 0$.
\end{enumerate}
\end{theorem}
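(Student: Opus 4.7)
The implication $(2) \Rightarrow (1)$ is immediate: complex conjugation preserves Hermitian positive semidefiniteness (since $\overline{A} = A^T$ for Hermitian $A$), so each monomial $\bz^\alpha \overline{\bz}^\beta$ acts entrywise as a positivity preserver on $\bp_n(\mathbb{C})^m$ via iterated Schur products (Theorem~\ref{Tschur}); nonnegative linear combinations and pointwise limits then deliver every $f$ as in~(2).

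For $(1) \Rightarrow (2)$, the plan is to adapt Herz's proof of Theorem~\ref{Therz} to several variables by routing through Bochner's theorem on the $m$-torus. Given $\boldsymbol{\lambda} \in (0,\infty)^m$, $n \geq 1$, and angles $\boldsymbol{\theta}_1, \ldots, \boldsymbol{\theta}_n \in \mathbb{T}^m$, set $A_p := \lambda_p^2 \bigl( e^{i(\theta_{p,k} - \theta_{p,l})} \bigr)_{k,l=1}^n$ for each $p = 1,\ldots,m$; each $A_p$ is rank-one PSD, so by hypothesis $f[A_1, \ldots, A_m]$ is PSD, and its $(k,l)$-entry equals $g_{\boldsymbol{\lambda}}(\boldsymbol{\theta}_k - \boldsymbol{\theta}_l)$, where
\[
g_{\boldsymbol{\lambda}}(\boldsymbol{\phi}) := f(\lambda_1^2 e^{i\phi_1}, \ldots, \lambda_m^2 e^{i\phi_m}).
\]
Since the $\boldsymbol{\theta}_k$ are arbitrary, $g_{\boldsymbol{\lambda}}$ is a positive definite function on the compact abelian group $\mathbb{T}^m$ in the Mercer--Mathias sense of Definition~\ref{Dposdef}(2). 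Granted enough regularity of $f$ to guarantee continuity of $g_{\boldsymbol{\lambda}}$ -- forced by applying the positivity hypothesis to small Toeplitz test matrices as in Rudin's proof of Theorem~\ref{Trudin} -- Bochner's theorem on $\mathbb{T}^m$ (equivalently, Herglotz on $\mathbb{Z}^m$; cf.\ Theorem~\ref{Therglotz}) yields a Fourier series with nonnegative coefficients,
\[
g_{\boldsymbol{\lambda}}(\boldsymbol{\phi}) = \sum_{\gamma \in \mathbb{Z}^m} \widehat{g}_{\boldsymbol{\lambda}}(\gamma)\, e^{i\gamma \cdot \boldsymbol{\phi}}, \qquad \widehat{g}_{\boldsymbol{\lambda}}(\gamma) \geq 0 \text{ for all } \gamma \in \mathbb{Z}^m.
\]

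Matching coefficients via the substitution $z_p = \lambda_p^2 e^{i\phi_p}$ suggests the formal identity
\[
\widehat{g}_{\boldsymbol{\lambda}}(\gamma) = \sum_{\substack{\alpha, \beta \geq 0 \\ \alpha - \beta = \gamma}} c_{\alpha, \beta} \prod_{p=1}^m \lambda_p^{2(\alpha_p + \beta_p)};
\]
since distinct $(\alpha, \beta)$ with common difference $\gamma$ produce distinct monomials in $(\lambda_1^2, \ldots, \lambda_m^2)$, each candidate $c_{\alpha, \beta}$ is uniquely recoverable as a scaled Taylor coefficient at $\boldsymbol{\lambda} = \mathbf{0}$. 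The principal obstacle is upgrading the pointwise inequality $\widehat{g}_{\boldsymbol{\lambda}}(\gamma) \geq 0$ in $\boldsymbol{\lambda}$ to nonnegativity of each Taylor coefficient separately -- this implication fails in general (e.g., $1 - x^2 + x^4 > 0$ on $\R$). To close the gap one uses the preservation hypothesis at its full strength on arbitrary PSD matrices $A_p$, not merely the rank-one torus-valued ones above. Via Lemma~\ref{Lbasics}, this realizes $f$ as a positive definite kernel $K(\bx, \mathbf{y}) = f(\tangle{x_1, y_1}, \ldots, \tangle{x_m, y_m})$ on $\mathcal{H}^m$ for any complex Hilbert space $\mathcal{H}$; a Schoenberg-type analysis of such kernels (in the spirit of Theorems~\ref{Therz} and~\ref{Tchrres2} for $m=1$, applied inductively one variable at a time on positive-real slices of the remaining ones) then forces $c_{\alpha, \beta} \geq 0$ for every multi-index pair and completes the proof.
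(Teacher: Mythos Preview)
The paper does not actually prove Theorem~\ref{Tfmp}; it is stated with attribution to \cite[Theorem~3.1]{fitzgerald} and no argument is supplied. So there is no in-paper proof to compare against, and your proposal must stand on its own.

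It does not. You yourself flag the gap: from $\widehat{g}_{\boldsymbol{\lambda}}(\gamma) \geq 0$ for every $\boldsymbol{\lambda}$ you cannot conclude that each Taylor coefficient $c_{\alpha,\beta}$ is nonnegative, and your counterexample $1 - x^2 + x^4$ shows exactly why. The closing paragraph then offers only a gesture: ``a Schoenberg-type analysis \ldots\ applied inductively one variable at a time on positive-real slices of the remaining ones.'' This is not a proof step but a hope. Freezing all but one complex variable at a positive real value does not reduce cleanly to the one-variable Herz theorem, because the frozen variables still contribute mixed powers $z_p^{\alpha_p}\overline{z_p}^{\beta_p}$ that collapse to $\lambda_p^{2(\alpha_p+\beta_p)}$ on the real slice --- precisely the aliasing that created the obstacle in the first place. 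No induction hypothesis is stated, and no mechanism is given for separating the coefficients that Bochner lumped together.

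There is also a secondary gap earlier: you assume continuity of $g_{\boldsymbol{\lambda}}$ ``granted enough regularity of $f$ \ldots\ forced by applying the positivity hypothesis to small Toeplitz test matrices as in Rudin's proof.'' Rudin's argument for continuity on an open real interval does not transfer verbatim to a function of several complex variables restricted to a torus; at minimum you would need to establish continuity of $f$ on all of $\mathbb{C}^m$ first, and you have not indicated how. In short, the Bochner-on-the-torus step is a reasonable opening move, but the proof as written stops at the hard part.
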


This result was extended to classify the preservers of negative inertia.
Here we present the complex analogue of Theorem~\ref{Tmain}; this is
\cite[Theorem~C]{BGKP-inertia}:

\begin{theorem}\label{Tmain-complex}
Let the integers $m, k_1, \dots, k_m, l$ be as in Theorem~\ref{Tmain}.
Given any function $f : \mathbb{C}^m \to \mathbb{C}$, the following are
equivalent.
\begin{enumerate}
\item The entrywise transform $f[ - ]$ sends
$\times_{p=1}^m \overline{\cS_n^{(k_p)}}(\mathbb{C})$ to
$\overline{\cS_n^{(l)}}(\mathbb{C})$ for all $n \geq \max_p k_p$.

\item The entrywise transform $f[ - ]$ sends $\times_{p=1}^m
\cS_n^{(k_p)}(\mathbb{C})$ to $\overline{\cS_n^{(l)}}(\mathbb{C})$ for
all $n \geq \max_p k_p$.

\item There exists a function $F : \mathbb{C}^{m_0} \to \mathbb{C}$ and
non-negative constants $c_p$ and $d_p$ for each $p = m_0 + 1, \dots, m$
such that
\begin{enumerate}
\item we have the representation
\begin{equation}\label{Epresrep3}
f( \bz ) = F( z_1, \ldots, z_{m_0} ) +
\sum_{p = m_0 + 1}^m \bigl( c_p z_p + d_p \overline{z_p} \bigr)
\qquad \textrm{for all } \bz \in \mathbb{C}^m,
\end{equation}
\item the function $\bz' := (z_1, \dots, z_{m_0}) \mapsto F( \bz' ) - F(
{\bf 0}_{m_0} )$ is represented on $\mathbb{C}^{m_0}$ by a convergent
power series in $\bz'$ and $\overline{\bz'}$ with non-negative
coefficients, as in Theorem~\ref{Tfmp}, and
\item $f( {\bf 0}_m ) = F( {\bf 0}_{m_0} )$ is real, and we have
$\displaystyle {\bf 1}_{F( {\bf 0} ) < 0} + \sum_{p : c_p > 0} k_p +
\sum_{p : d_p > 0} k_p \leq l$.
\end{enumerate}
\end{enumerate}
\end{theorem}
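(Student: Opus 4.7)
The plan is to mimic the proof strategy of the real analogue Theorem~\ref{Tmain}, with the role of Theorem~\ref{Tjems} replaced throughout by the complex power-series classification of Theorem~\ref{Tfmp}. For the easy direction $(3)\Rightarrow(1)$, decompose $f$ according to~\eqref{Epresrep3} and apply each piece separately to a tuple $(A_1,\ldots,A_m)\in\times_{p=1}^m\overline{\cS_n^{(k_p)}}(\mathbb{C})$. For Hermitian $A_p$, the entrywise conjugate $\overline{A_p}$ equals $A_p^T$ and carries the same spectrum, so both $c_p A_p$ and $d_p\overline{A_p}$ are Hermitian with negative inertia at most $k_p$ whenever their coefficient is positive, and zero otherwise. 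Applying Theorem~\ref{Tfmp} to $F-F(\mathbf{0}_{m_0})$ shows that $F[A_1,\ldots,A_{m_0}]-F(\mathbf{0}_{m_0})J_n$ is positive semidefinite (where $J_n$ is the rank-one all-ones matrix), so $F[A_1,\ldots,A_{m_0}]$ is Hermitian (thanks to $F(\mathbf{0}_{m_0})\in\R$) with negative inertia at most $\mathbf{1}_{F(\mathbf{0}_{m_0})<0}$. Weyl's subadditivity of negative inertia under Hermitian sums then delivers the bound $l$ forced by (3c), establishing (1). The implication $(1)\Rightarrow(2)$ is immediate from $\cS_n^{(k_p)}(\mathbb{C})\subseteq\overline{\cS_n^{(k_p)}}(\mathbb{C})$.

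For the hard direction $(2)\Rightarrow(3)$ I would proceed in three stages. First, using test tuples $(A_1,\ldots,A_m)$ with each $A_p$ block-diagonal (the negative part in a $k_p\times k_p$ corner, zeros elsewhere) and inspecting a large enough all-zero off-diagonal block of the output, the Hermiticity forced by (2) pins down $f(\mathbf{0}_m)=F(\mathbf{0}_{m_0})\in\R$. Second, to extract the structure of $F$, embed test matrices in a block configuration of size $n+N$ that confines the entire ``inertia-realizing'' content of $A_p$ for $p>m_0$ to a common $N\times N$ corner, while leaving an upper-left $n\times n$ block free to range over $\bp_n(\mathbb{C})$ for $p\leq m_0$. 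The corresponding $n\times n$ block of $f[A_1,\ldots,A_m]$ equals $F[A_1^{(0)},\ldots,A_{m_0}^{(0)}]$ (up to a constant correction coming from the constant block entries), and Cauchy interlacing keeps its negative inertia $\leq l$. Letting $n\to\infty$ and invoking Theorem~\ref{Tfmp} yields the required representation of $F-F(\mathbf{0}_{m_0})$ from (3b).

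The third and most delicate stage is forcing affine linearity of $f$ in $(z_p,\overline{z_p})$ for each $p>m_0$, so that no mixed monomial $z_p^a\overline{z_p}^b$ with $a+b\geq 2$ survives. Freezing the other variables at reference inertia-realizing matrices and using test matrices $A_p\in\cS_n^{(k_p)}(\mathbb{C})$ whose entries realize a \emph{Sidon set} of complex scalars (in the spirit of Singer~\cite{Singer}, Erd\H{o}s--Tur\'an~\cite{ET}, and Bose--Chowla~\cite{BC}, as in the real proof of Theorem~\ref{Tmain}), any genuine nonlinear term of $f$ would contribute linearly independent rank-one increments to $f[A_1,\ldots,A_m]$ that eventually overwhelm the negative inertia bound $l$ as $n$ grows. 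This pins down $f(\bz)=F(z_1,\ldots,z_{m_0})+\sum_{p>m_0}(c_p z_p+d_p\overline{z_p})$; the nonnegativity $c_p,d_p\geq 0$ follows by dichotomizing the inertia contribution (a sign-testing argument), and a final bookkeeping with ``inertia-stacking'' test matrices (whose negative eigenvalues from different $p$'s occupy disjoint coordinate blocks) delivers the sharp inequality in (3c). The \textbf{main obstacle} is exactly this Sidon-based separation in the complex setting: one must engineer Hermitian test matrices that distinguish the $z_p$- and $\overline{z_p}$-dependence of $f$ --- invisible to the real case --- while simultaneously controlling the additive inertia contributions across all $m$ variables, and this step is considerably more delicate than its real analogue.
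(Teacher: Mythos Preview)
The survey does not actually prove Theorem~\ref{Tmain-complex}; it merely records the result as Theorem~C of~\cite{BGKP-inertia} and moves on. The only methodological breadcrumb the paper leaves is the remark, following the real analogue Theorem~\ref{Tmain}, that the proof of \emph{that} result combines ``judiciously chosen test matrices and analysis techniques'' with Sidon sets from~\cite{Singer,ET,BC}. So there is no in-paper proof to compare against.

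That said, your plan is entirely consistent with those hints and with the architecture of the real case as described. Your $(3)\Rightarrow(1)$ is essentially complete and correct: the observation that $\overline{A_p}=A_p^T$ keeps the spectrum intact is exactly the reason the $d_p\overline{z_p}$ terms appear in the complex theorem and not the real one, and the Weyl subadditivity of negative inertia does the bookkeeping. For $(2)\Rightarrow(3)$, the three-stage outline (reality of $f(\mathbf{0}_m)$, extraction of $F$ via block embeddings and Theorem~\ref{Tfmp}, Sidon-based elimination of nonlinear terms in the $p>m_0$ coordinates) mirrors the structure one expects from~\cite{BGKP-inertia}. You are right to flag the complex-specific difficulty of separating $z_p$ from $\overline{z_p}$: this is precisely where the complex proof must diverge from the real one, and your description of it as the main obstacle is accurate. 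Since the paper gives no further detail, one cannot say whether the precise test matrices in~\cite{BGKP-inertia} match yours, but the strategy is the same one the survey gestures at.
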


Thus, the rich class of preservers in this result and Theorem~\ref{Tmain}
mix the absolutely monotone class of Schoenberg, with the rigid class of
nonnegative homotheties. Notice that if all $k_p = l = 0$, we recover
Schoenberg's (multivariate) real and complex theorems.
%}}}

%{{{1 Section 6 - Recent developments on positivity preservers
\section{Recent developments on positivity preservers}

We are nearing the end of this survey. This rather short section has
three relatively disconnected parts.

\subsection{Preservers of positivity and of non-positivity}

In all of the results mentioned above, we have focused on classifying the
functions $f$ such that $f[A]$ is positive if $A$ is so. (We omit the
inertia and multivariate considerations of Section~\ref{Sinertia}.) Here
we consider the natural parallel question: what are the functions such
that $f[A]$ is positive semidefinite \textit{if and only if} $A$ is so?
A close variant is to replace ``positive semidefinite'' by ``positive
definite''.

Here are the answers, from recent work by
Guillot--Gupta--Vishwakarma--Yip. Remarkably, the answers are the same
for all dimensions and for any fixed dimension greater than two:

\begin{theorem}[{\cite[Theorem~1.8]{GGVY-sign}}]\label{Tsign}
Fix a dimension $n \geq 3$ and let $\F = \R$ or $\mathbb{C}$. The
following are equivalent for an arbitrary function $f : \F \to \F$:
\begin{enumerate}
\item $A \in \F^{n \times n}$ is a positive definite matrix if and only
if $f[A]$ is.

\item $A \in \F^{n \times n}$ is a positive semidefinite matrix if and
only if $f[A]$ is.

\item $f$ is a positive multiple of a continuous field automorphism of
$\F$. That is, $f(x) \equiv cx$ if $\F = \R$, and $f(z) \equiv cz$ or $c
\overline{z}$ if $\F = \mathbb{C}$, for some $c>0$.
\end{enumerate}
\end{theorem}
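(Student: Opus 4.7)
The implication (3) $\Rightarrow$ (1), (2) is immediate: entrywise multiplication by $c > 0$ scales every eigenvalue of a Hermitian matrix by $c$, and over $\mathbb{C}$ the map $z \mapsto \overline{z}$ applied entrywise sends a Hermitian matrix to its transpose, with the same spectrum. For (2) $\Rightarrow$ (3) (from which the equivalence (1) $\Leftrightarrow$ (2) falls out), I would handle $n = 3$ first and bootstrap to $n \geq 4$ via a Schur-complement reduction.

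For $n = 3$, the first step uses $A = a J_3$ (PSD iff $a \geq 0$) and $f[aJ_3] = f(a) J_3$ to obtain $\mathrm{sgn}(f(a)) = \mathrm{sgn}(a)$ and $f(0) \geq 0$. The crucial step is $f(0) = 0$, deferred to the last paragraph. Once $f(0) = 0$ is in hand, the $2 \times 2$-embedded test matrix $\bigl( \begin{smallmatrix} a & b \\ b & c \end{smallmatrix} \bigr) \oplus 0$ yields the clean equivalence $a c \geq b^2 \iff f(a) f(c) \geq f(b)^2$; rank-$1$ boundary cases force $f(x^2) f(y^2) = f(xy)^2$, making $g := f / f(1)$ multiplicative and strictly monotone on $(0, \infty)$, hence $g(x) = x^\alpha$ for some $\alpha > 0$. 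Testing on the rank-$2$ boundary matrix $C_{x, y} := u u^\top + v v^\top$ with $u = (\sqrt x, \sqrt x, 0)$ and $v = (0, \sqrt y, \sqrt y)$, together with its perturbations $C_{x, y} \pm \delta E_{22}$ (which lie on opposite sides of the PSD cone), pins down $f(x + y) = f(x) + f(y)$ and forces $\alpha = 1$; the odd symmetry $f(-x) = -f(x)$ then extends $f(x) = c x$ to all of $\R$ with $c := f(1) > 0$.

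For $n \geq 4$, pad $B \in \bp_3(\R)$ to $A := B \oplus 0_{n - 3} \in \bp_n(\R)$: a Schur-complement computation on the bottom-right all-$f(0)$ block of $f[A]$ shows that $B$ is PSD iff $f[B] - f(0) J_3$ is PSD, i.e., $\widetilde f := f - f(0)$ is a bidirectional PSD preserver on $\bp_3$ with $\widetilde f(0) = 0$. Applying the $n = 3$ result gives $\widetilde f(x) = c x$, hence $f(x) = c x + f(0)$; feeding $A = -\epsilon J_n$ (not PSD for $\epsilon > 0$) into the bidirectional preservation then forces $f[A] = (f(0) - c \epsilon) J_n$ to be non-PSD for \emph{every} $\epsilon > 0$, whence $f(0) = 0$. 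The complex case runs in parallel, with Hermitian conjugation replacing transposition and additional tests on Hermitian matrices with purely imaginary off-diagonal entries separating $z \mapsto c z$ from $z \mapsto c \overline{z}$.

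The main obstacle is the $n = 3$ step $f(0) = 0$. Supposing $f(0) = c > 0$ for contradiction, boundary analysis of PSD test matrices $\alpha I - \beta J_3$ (PSD iff $\alpha \geq 3 \beta$) forces the relation $f(2 s) = -2 f(-s)$ for $s > 0$; iteration then gives $h(2 s) = 2 h(s)$ for $h := f - 4 c$ on $(0, \infty)$, and combined with strict monotonicity this pins down $f$ rigidly enough. For the rank-$1$ PSD matrix $A = u u^\top$ with $u = (1, 2, -1)$, the derived form of $f$ now forces the decomposition $f[A] = d \cdot u u^\top + c \cdot J_3 + 3 c \cdot \widehat{u} \widehat{u}^\top$ (with $\widehat{u} := (1, 1, -1)$) --- three PSD rank-$1$ matrices with linearly independent supports, hence $f[A]$ has rank $3$ and is positive \emph{definite}. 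Since $A$ has rank $1$ and is not PD, this directly contradicts (1); to contradict (2), perturb $A$ by $-\epsilon v v^\top$ for $v$ in the null-space of $A$, producing a non-PSD matrix whose $f$-image remains PD for small $\epsilon$ by continuity of $f$ on $\R \setminus \{0\}$. The subtle point is extracting this rigidity from (2) alone, without prior continuity or multiplicativity of $f$.
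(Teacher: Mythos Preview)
The paper does not contain a proof of this theorem; it is a survey that only \emph{states} the result, citing \cite{GGVY-sign} for the proof. So there is nothing in the paper to compare your argument against, and I can only comment on the soundness of your outline itself.

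Your proposal has a genuine circularity in the crucial step $f(0)=0$ for $n=3$. In the final paragraph you invoke ``continuity of $f$ on $\R\setminus\{0\}$'' to argue that a small perturbation of $A=uu^\top$ keeps $f[A]$ positive definite; but at that point in the argument you have \emph{not} established continuity --- your derivation of continuity (via $g(x)=x^\alpha$) occurs in the second paragraph and explicitly \emph{assumes} $f(0)=0$. You even flag this as ``the subtle point'' but do not resolve it. The same circularity infects the claimed relation $f(2s)=-2f(-s)$: bidirectional PSD preservation does not by itself force boundary (rank-deficient) PSD matrices to map to boundary PSD matrices; extracting an \emph{equality} at the boundary again requires a perturbation-plus-continuity argument that is unavailable to you at this stage.

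There is also a logical gap in the overall scheme: proving $(3)\Rightarrow(1),(2)$ and $(2)\Rightarrow(3)$ leaves $(1)\Rightarrow(3)$ unaddressed. Your final paragraph gestures at contradicting $(1)$ directly, but the argument for $(1)$ there is even weaker (it needs to produce a \emph{non}-PD matrix whose image is PD, and your candidate $A=uu^\top$ is rank~$1$, hence not PD, so the ``iff'' in $(1)$ says nothing about $f[A]$). Finally, in the bootstrap to $n\geq 5$ the bottom-right block $f(0)J_{n-3}$ is singular, so you need the generalized Schur complement with the range condition checked; this is repairable, but as written it is incomplete.
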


In fact the authors prove significantly stronger results
in~\cite{GGVY-sign}, in that they study the problem for matrices with
entries in a wide class of sub-domains of $\R$ and $\mathbb{C}$.

\subsection{Preservers over finite fields}

We next study Schoenberg's theorem over a nonstandard setting: finite
fields $\F = \F_q$. In analogy to the real case, here one defines a
scalar to be \textit{positive} if it is the square of a nonzero element
-- i.e., a nonzero quadratic residue. Notice that these elements still
form half of the units $\F_q^\times$ when the prime power $q$ is odd.

Defining positive matrices is more challenging. It turns out that over
finite fields, even the basic characterizations of positive semidefinite
matrices -- in Theorem~\ref{Tbasics} -- are not all available. Instead,
we use a different characterization of positive \textit{definite}
matrices, one that is unavailable for semidefinite matrices:

\textit{A real symmetric matrix is positive definite if and only if its
leading principal minors are all positive.}

It turns out that this notion can be adapted usefully to finite fields.
This was studied in detail by Cooper--Hanna--Whitlatch in~\cite{CHW}, and
they showed that when $q$ is even or $q \equiv 3 \mod 4$, such matrices
$A_{n \times n}$ admit a Cholesky decomposition: $A = L L^T$, where $L$
is lower triangular with entries in $\F_q$ and positive diagonal entries.
Thus, we work with:

\begin{definition}
A symmetric $n \times n$ matrix over a finite field $\F_q$ is
\textit{positive definite} if its leading principal $1 \times 1, \dots, n
\times n$ minors are squares of nonzero elements in $\F_q$.
\end{definition}

We now present a natural class of entrywise preservers of positive
\textit{definiteness} over $\F_q$, in the spirit of P\'olya and
Szeg\H{o}'s century-old result. Namely, if ${\rm char}(\F_q) =: p$ and $x
\mapsto x^p$ is the Frobenius automorphism, then
\[
\det A^{\circ p} = \det (a_{ij}^p) = \sum_{\sigma \in S_n} \det(\sigma)
\prod_{i=1}^n a_{i \sigma(i)}^p = \left( \sum_{\sigma \in S_n}
\det(\sigma) \prod_{i=1}^n a_{i \sigma(i)} \right)^p = (\det A)^p.
\]

From this it follows that the same functions as in Theorem~\ref{Tsign}(3)
are preservers: positive multiples of field automorphisms $x \mapsto c
x^{p^k}$, with $c \in \F_q$ positive and $k \geq 0$ in $\mathbb{Z}$.
Remarkably, Guillot--Gupta--Vishwakarma--Yip showed that -- akin to
Theorem~\ref{Tsign} -- for every fixed $n \geq 3$, there are no other
preservers:

\begin{theorem}[\cite{GGVY-finite}]\label{Tfinite}
Let $q = p^\ell$ for a prime $p \geq 2$ and an integer $\ell \geq 1$, and
$f : \F_q \to \F_q$. The following are equivalent.
\begin{enumerate}
\item The map $f[-]$ preserves $\bp_n(\F_q)$ for some $n \geq 3$.
\item The map $f[-]$ preserves $\bp_n(\F_q)$ for all $n \geq 3$.
\item $f(x) = c x^{p^k}$ for some $c \in \F_q$ positive and $0 \leq k
\leq \ell-1$.
\end{enumerate}

If moreover $p$ is odd, these are also equivalent to:
\begin{enumerate}
\setcounter{enumi}{3}
\item $f(0) = 0$ and $f$ is an automorphism of the Paley graph over
$\F_q$, i.e.,
\[
(f(a)-f(b))^{(q-1)/2} = (a-b)^{(q-1)/2} \qquad \forall a,b \in \F_q.
\]
\end{enumerate}
\end{theorem}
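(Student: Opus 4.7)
The plan is to prove the cyclic chain $(3) \Rightarrow (2) \Rightarrow (1) \Rightarrow (3)$, with the equivalence $(3) \Leftrightarrow (4)$ in odd characteristic handled separately. The implications $(3) \Rightarrow (2) \Rightarrow (1)$ are immediate: the Frobenius computation displayed in the excerpt, applied via the Leibniz formula to each square submatrix $M$, yields $\det(M^{\circ p^k}) = (\det M)^{p^k}$, and entrywise scaling by positive $c$ multiplies the $m \times m$ minor by $c^m$, which stays a nonzero square since $c$ is. A Cholesky padding argument --- every $m \times m$ positive definite matrix extends to an $n \times n$ positive definite matrix by extending its Cholesky factor, using Cooper--Hanna--Whitlatch --- shows that a preserver of $\bp_n(\F_q)$ is automatically a preserver of $\bp_m(\F_q)$ for every $2 \leq m \leq n$, freeing us to test with $2 \times 2$ and $3 \times 3$ matrices throughout.

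For the hard direction $(1) \Rightarrow (3)$, the plan has four steps. Step~(a), \emph{vanishing at zero}: apply $f[-]$ to $\alpha I_n$ for positive $\alpha$; the image has leading $k \times k$ minors equal to $(f(\alpha) - f(0))^{k-1}(f(\alpha) + (k-1)f(0))$, and comparing the square-ness conditions for $k = 2, 3$ as $\alpha$ ranges over positive elements forces $f(0) = 0$. Step~(b), \emph{multiplicativity}: with $f(0) = 0$, the $2 \times 2$ preserver condition forces $f(a)f(d) - f(b)^2$ to be a nonzero square whenever $a$ and $ad - b^2$ are; parameterizing $b$ so $ad - b^2$ ranges over nonzero squares and comparing the resulting constraints yields that $g(x) := f(x)/f(1)$ is multiplicative on $\F_q^\times$. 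Step~(c), \emph{additivity} (this is where $n \geq 3$ is used): parameterize $3 \times 3$ positive definite matrices by their Cholesky factors $L$, lower triangular with $1$'s on the diagonal and three free parameters $x, y, z$ below, so that the $(2,3)$-entry of $LL^T$ takes the mixed form $xy + z$; the $3 \times 3$ determinantal condition on $f[LL^T]$, combined with the multiplicative relations from Step~(b), extracts $g(u+v) = g(u) + g(v)$. Step~(d): $g$ is then a ring endomorphism of $\F_q$, hence a Frobenius power $x \mapsto x^{p^k}$ for some $0 \leq k \leq \ell - 1$, so $f(x) = c x^{p^k}$ with $c = f(1)$ a nonzero square.

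For $(3) \Leftrightarrow (4)$ in odd characteristic, condition (4) says exactly that $f$ fixes $0$ and preserves the Legendre symbol of differences, i.e.\ $f$ is a Paley graph automorphism. The full automorphism group of the Paley graph over $\F_q$ is classically $\{ x \mapsto c \sigma(x) + d : c \in (\F_q^\times)^2,\ d \in \F_q,\ \sigma \in \mathrm{Aut}(\F_q) \}$ (Carlitz); imposing $f(0) = 0$ eliminates the translation part, leaving precisely the maps $x \mapsto c x^{p^k}$ of (3).

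The main obstacle is Step~(c). Over $\R$ or $\mathbb{C}$ the analogous step leans on spectral theory, continuity, or density of test sets; here the only available information is the discrete algebraic statement that certain polynomial expressions in the entries are nonzero squares, so additivity must be teased out of polynomial identities among the Cholesky parameters $x, y, z$. A genuine subtlety is that the $2 \times 2$ preserver condition is only \emph{conditional} --- it hypothesizes $ad - b^2$ is a nonzero square --- so $(x, y, z)$ must be varied widely enough to realize every pair $(u, v)$ whose sum one wishes to control. Small characteristics --- especially $p = 2$, where halves are unavailable and Step~(a) must be rearranged, and cases where $p \mid (n-1)$ cause expressions like $f(\alpha) + (k-1) f(0)$ to degenerate --- create secondary obstacles requiring tailored test matrices.
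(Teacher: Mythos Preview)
The paper is a survey and does not supply its own proof of this theorem; it cites \cite{GGVY-finite} and only remarks that the argument there ``involves an interesting mix of tools: the quadratic character $x^{(q-1)/2}$, the celebrated Weil character bounds, results of Carlitz on Paley graphs, and the Erd\H{o}s--Ko--Rado theorem.'' So the comparison is between your outline and that list of ingredients.

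Your treatment of $(3)\Rightarrow(2)\Rightarrow(1)$ matches the paper's discussion exactly (the Frobenius-on-minors computation is reproduced in the text just before the theorem), and your handling of $(3)\Leftrightarrow(4)$ via Carlitz's classification of Paley-graph automorphisms is also the indicated route. The padding reduction to small sizes is fine once $f(0)=0$ is in hand; note, however, that the Cholesky decomposition you invoke from Cooper--Hanna--Whitlatch is only asserted in the paper for $q$ even or $q\equiv 3\pmod 4$, so for $q\equiv 1\pmod 4$ you should pad by block-diagonal with $I_{n-m}$ instead (which works once $f(0)=0$).

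The genuine gap is in Steps~(b) and~(c). Over $\F_q$ the positive-definiteness hypothesis only tells you that certain polynomial expressions in the $f$-values are \emph{nonzero squares}; it never hands you an equality. Your plan to ``parameterize $b$ so $ad-b^2$ ranges over nonzero squares and compare the resulting constraints'' yields a family of statements of the form ``$f(a)f(d)-f(b)^2$ is a nonzero square,'' and there is no evident mechanism to upgrade these to the identity $f(ad)=f(a)f(d)/f(1)$. The same obstruction bites harder in Step~(c): the $3\times 3$ determinant of $f[LL^T]$ being a nonzero square is again a quadratic-residue condition, not an equation, so extracting $g(u+v)=g(u)+g(v)$ from it is not a matter of algebra alone. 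This is precisely where the cited proof brings in character sums: Weil-type bounds let one count how often such quadratic-residue constraints can be satisfied, and when the count is incompatible with anything but a Frobenius twist, one concludes. Your outline is honest that Step~(c) is the crux, but as written it is missing the counting/character-theoretic input that the paper flags as essential, and I do not see how to close it by purely elementary test-matrix manipulations.
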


Thus, unlike the ``classical'' Schoenberg--Rudin theorem, both the
``if-and-only-if'' versions in Theorem~\ref{Tsign} and
Theorem~\ref{Tfinite} over finite fields admit solutions in each fixed
dimension (and hence in the dimension-free setting).
Moreover, the proof over finite fields is completely different than over
the reals or complex numbers, and involves an interesting mix of tools:
the quadratic character $x^{(q-1)/2}$, the celebrated Weil character
bounds, results of Carlitz on Paley graphs, and the Erd\"os--Ko--Rado
theorem, to name a few.

\begin{remark}
We add that the papers~\cite{GGVY-sign} and~\cite{GGVY-finite} prove many
more results on ``if-and-only-if'' positivity preservers as well as
preservers over finite fields; what is provided above and here are merely
a sample. We refer the reader to these works for more details.
\end{remark}

\subsection{Schoenberg's theorem for integer matrices}\label{SDP}

We end this section by returning full circle to Schoenberg's theorem in
characteristic zero -- but this time over matrices with \textit{integer}
entries. In this case, one works with functions $f : \mathbb{Z} \to \R$
-- which have a discrete domain, so one cannot even take limits of
function-values!

Remarkably, Schoenberg's theorem still holds here, with somewhat stronger
hypotheses:

\begin{definition}
A real matrix is said to be \textit{partially defined} if a subset of its
entries are unspecified. Such a matrix is said to be \textit{positive
(semi)definite} if there exists a choice of each unspecified entry such
that the resulting \textit{matrix completion} is positive (semi)definite.
\end{definition}

We now come to the main new tool that is required to state the result.

\begin{definition}
Given a subset $I \subseteq \R$, a function $f : I \to \R$ is a
\textit{partially defined entrywise positivity preserver} if for every
partially defined psd matrix $A$ with specified entries in $I$, the
partially defined matrix $f[A]$ is also psd.
\end{definition}

In other words, for every partially defined psd matrix $A$ with specified
entries in $I$, both $A$ and $f[A]$ can be completed (via some choices of
real scalars) to psd matrices.

With these notions at hand, we state ``Schoenberg's theorem over
$\mathbb{Z}$'', shown recently by Damase--Pascoe:

\begin{theorem}[\cite{DP}]\label{TDP}
A function $f : \mathbb{Z} \to \R$ is a partially defined entrywise
positivity preserver, if and only if $f(n) = \sum_{k \geq 0} c_k n^k$ for
all integers $n$, with all $c_k \in [0, \infty)$.
\end{theorem}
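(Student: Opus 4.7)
Suppose $f(n) = \sum_{k \geq 0} c_k n^k$ with $c_k \geq 0$ and the series converging at every $n \in \mathbb{Z}$. Since all summands are nonnegative, convergence at arbitrarily large $|n|$ forces $c_k N^k \to 0$ for every $N$; hence $\limsup_k c_k^{1/k} = 0$, the radius of convergence is infinite, and $f$ extends to an entire map $\bar f : \R \to \R$ with the same Taylor series. For any partially defined psd integer matrix $A$, fix any psd completion $\tilde A \in \bp_n(\R)$. Then $\bar f[\tilde A] = \sum_{k \geq 0} c_k \tilde A^{\circ k}$ converges entrywise to a countable sum of Schur powers, each psd by Theorem~\ref{Tschur}, so the limit is psd. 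Restricted to the specified positions, $\bar f[\tilde A]$ equals $f[A]$, supplying the required psd completion.

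\textbf{Plan for the necessity direction.} Assume $f$ preserves psd-completability of every partial integer matrix. Elementary tests give preliminary constraints: a $1 \times 1$ completion forces $f(n) \geq 0$ for $n \geq 0$, and $2 \times 2$ partial blocks force the Cauchy--Schwarz-type inequality $f(c)^2 \leq f(a) f(b)$ whenever $a,b \geq 0$ and $c^2 \leq ab$. The main strategy is \emph{rational rescaling}: for each integer $N \geq 1$, define $g_N : N^{-1}\mathbb{Z} \to \R$ by $g_N(x) := f(Nx)$. If $B$ is psd with entries in $N^{-1}\mathbb{Z}$, then $NB$ is a psd integer matrix, so $g_N[B] = f[NB]$ is psd; and the same holds for partial matrices with entries in $N^{-1}\mathbb{Z}$, by the partial-definition hypothesis. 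As $N \to \infty$ the grids $N^{-1}\mathbb{Z}$ are dense in $\R$ and every real psd matrix is a limit of rational psd matrices. The aim is then to piece together the family $(g_N)$ into a continuous entrywise positivity preserver defined on arbitrarily large intervals of $\R$. Once this is in hand, Schoenberg's Theorem~\ref{Tschoenberg} (applied on each $[-R,R]$ after rescaling, and then letting $R \to \infty$) forces the limit to equal a power series $\sum c_k x^k$ with all $c_k \geq 0$ of infinite radius of convergence; restriction back to $\mathbb{Z}$ recovers $f$ in the desired form.

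\textbf{Main obstacle.} The crux is extracting a genuinely continuous limit from the discrete data $(g_N)$; it is here that the partial-definition hypothesis does its real work. The $2 \times 2$ Cauchy--Schwarz constraints alone are very far from implying analyticity: a priori the values $f(n)$ could be selected almost arbitrarily along subsequences, and the standard Schoenberg machinery (Bernstein's theorem, ultraspherical expansions, Choquet theory) simply does not act on sequences indexed by $\mathbb{Z}$. Partial definition is what converts integer-entry tests into effectively continuous ones: one pins down a few integer entries in order to probe $f$ at chosen integer points, while the unspecified slots (arranged, e.g., along chordal patterns where the standard psd-completability criterion is clean) absorb all of the continuous-parameter information needed to encode arbitrary real psd constraints. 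This is the mechanism by which the rigid discrete hypothesis on $f$ can plausibly be matched with the hypotheses of the continuous Schoenberg theorem, and negotiating this translation cleanly is, in my view, the principal technical hurdle that Damase and Pascoe's proof must overcome.
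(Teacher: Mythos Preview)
The paper does not actually prove Theorem~\ref{TDP}: it is stated as a forthcoming result of Damase--Pascoe~\cite{DP}, and the only comment offered is that ``one implication is precisely the P\'olya--Szeg\H{o} 1925 observation, via the Schur product theorem.'' So there is no proof in the paper against which to compare your necessity argument.

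Your sufficiency sketch is correct and matches the paper's one-line remark. The observation that convergence of $\sum c_k n^k$ with $c_k \geq 0$ at every integer forces infinite radius of convergence is sound, and then completing the partial matrix over $\R$ and applying the entire extension $\bar f$ entrywise is exactly the P\'olya--Szeg\H{o} mechanism.

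For necessity, you present only a plan and are candid that the main step is missing. One concrete issue with the rescaling heuristic as written: the maps $g_N(x) := f(Nx)$ do \emph{not} form a consistent system of refinements on the nested grids $N^{-1}\mathbb{Z}$. For instance $g_2(1/2) = f(1)$ while $g_4(1/2) = f(2)$, so there is no single function on $\mathbb{Q}$ being approximated. Each $g_N$ is a genuinely different dilation of $f$, and whatever Schoenberg-type conclusion you extract for $g_N$ would have to be translated back through that dilation before it says anything coherent about $f$ itself. This does not doom the approach---one might hope to show each $g_N$ individually extends to an absolutely monotone function and then argue compatibility---but it means ``piece together the family $(g_N)$'' is not a limiting process in any obvious sense, and the mechanism you have in mind needs to be made precise. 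Since the paper offers no proof here, I cannot tell you whether Damase and Pascoe proceed along these lines or by something entirely different.
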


In other words, as ``usual'' $f$ is the restriction to $\mathbb{Z}$ of an
entire function that is absolutely monotonic on $(0,\infty)$. Note that
one implication is precisely the P\'olya--Szeg\H{o} 1925 observation, via
the Schur product theorem.

\begin{remark}
Theorem~\ref{TDP} is valid even if one replaces $\mathbb{Z}$ by any $X
\subseteq \R$ with $\sup X = \infty$.
\end{remark}

While Theorem~\ref{TDP} is indeed a pleasing addition, it is natural to
ask if one can get rid of the ``partially defined'' test matrices:

\begin{question}
Suppose a (continuous) entrywise map $f : \mathbb{Z} \to \R$ preserves
positivity on $\bigcup_{n \geq 1} \bp_n(\mathbb{Z})$. Is $f$ the
restriction to $\mathbb{Z}$ of an entire function with nonnegative
Maclaurin coefficients?
\end{question}

Note that there can exist extensions of $f$ to the real line that are not
of the claimed form. For instance, the Dirichlet function $f(x) = {\bf
1}_{x \in \mathbb{Q}}$ preserves positivity on integer matrices. Even if
$f$ is required to be entire, the power series $f(x) = \sin (\pi x)$,
which sends every integer matrix to the zero matrix, is a preserver with
negative coefficients. The question above is if there exists any
extension with all Maclaurin coefficients nonnegative. In the absence of
an answer to this question, Damase--Pascoe work with partially defined
entrywise positivity preservers as a necessary sophistication, in order
to obtain a Schoenberg-type result.
%}}}

\appendix
\section{Sphere packings and kissing numbers in Euclidean
space}\label{Asphere}

This Appendix contains a parallel mini-survey of two famous problems in
discrete geometry: understanding sphere packings and kissing numbers for
spheres in Euclidean space. This also connects to the main body of this
article via Schoenberg's theorem~\ref{Tschoenberg-pd} (which classifies
the dimension-free entrywise positivity preservers with a rank
constraint). See Section~\ref{Sdelsarte}.

Much of this material can be found in the well known
monograph~\cite{CS99} by Conway and Sloane. See also \cite{Zong}, as well
as the classic 1964~text by Rogers~\cite{Rogers}.

The question of sphere packings is informally stated as follows: given a
dimension $n \geq 1$, how does one stack the maximum amount of congruent
balls -- or \textit{spheres} -- $B_{\R^n}(a_k, r)$ for a fixed $r>0$ and
centers $a_1, a_2, \ldots \in \R^n$, which can intersect only along their
boundaries.

%{{{1 Section A.1 - Problem statement and early history
\subsection{Problem statement and early history}

We now write the question more formally in order to set notation. We will
work with unit spheres -- i.e., $r=1$ -- and denote each sphere by its
center, so that the condition of non-overlaps is equivalent to all
centers being at least distance $2$ apart.

\begin{definition}
Fix a dimension $n \geq 1$.
\begin{enumerate}
\item A \textit{(sphere) packing} in $\R^n$ (or in general, in any metric
space) is a countably infinite subset (of ``centers'') $\mathcal{P}
\subset (\R^n, \| \cdot \|_2)$ such that $\| x-y \|_2 \geq 2$ for all $x
\neq y$ in $\mathcal{P}$.

\item A packing is a \textit{lattice packing} if $\mathcal{P}$ is a
lattice in $\R^n$.

\item The \textit{density} of a packing $\mathcal{P}$ is (informally) the
maximum proportion of space filled by the spheres, and is defined via the
formula
\begin{equation}
\Delta_{\mathcal{P}} := \lim \sup_{M \to \infty} \frac{1}{(2M)^n}
{\rm Vol}_{\R^n} \left( [-M,M]^n \cap \bigcup_{x \in \mathcal{P}}
B_{\R^n}(x,1) \right).
\end{equation}
\end{enumerate}
\end{definition}

In other words, take a large cube $[-M,M]^n$ in $\R^n$ and intersect it
with $\mathcal{P}$; now compute the percentage of this intersection
inside the cube; and take $M \to \infty$. Finally, we have:

\begin{definition}
The \textit{sphere packing density} of $\R^n$ is $\Delta_{\R^n} :=
\displaystyle \sup_{\text{packings }\mathcal{P}} \Delta_{\mathcal{P}}$.

\noindent On a related note, define the \textit{lattice packing density}
of $\R^n$ to be
\begin{equation}
\Delta^{(L)}_{\R^n} := \sup_{\text{lattice packings }\mathcal{P}}
\Delta_{\mathcal{P}}.
\end{equation}
\end{definition}

The \textbf{question} of interest is to compute $\Delta_{\R^n},
\Delta^{(L)}_{\R^n} \in [0,1]$ for $n>1$ (for $n=1$, they are clearly
$1$). This question has a storied history. Perhaps the earliest version
in modern times involves understanding close-packing of spheres, which
came up in the 1580s when Sir Walter Raleigh asked the English astronomer
and mathematician Thomas Harriot\footnote{On a historical note, Thomas
Harriot made remarkable contributions to mathematics and physics: he was
the first to observe sunspots using a telescope, preceded Galileo in
drawing a map of the moon using a telescope, and is said to have studied
refraction and discovered Snell's law before Snell. In mathematics, he
pioneered the modern way of computing with algebraic unknowns, and proved
Girard's theorem  on the area of a triangle on the unit sphere (predating
Girard).}
about efficiently stacking cannonballs (on ships). Harriot published in
the 1590s an analysis of stacking patterns -- which also led towards
atomic theory -- and later corresponded with his German contemporary
Johannes Kepler, who stated a conjectural form in~{1611} in the
work~\cite{Kepler}: the maximum packing density in three dimensions is
achieved by a pyramidal piling, akin to oranges in grocery stores.

It took almost four centuries to fully (and positively) settle Kepler's
conjecture.\footnote{On a light note: this means in particular that it
isn't only Fermat's Last Theorem that had to wait for centuries for a
resolution.} During this time, many researchers studied the problem in
three \textit{and two} dimensions. For $n=2$: in~1773,
Lagrange~\cite{Lagrange} studied extremal quadratic forms and deduced
that the lattice density in $\R^2$ is achieved by the hexagonal/honeycomb
packing: $\displaystyle \Delta^{(L)}_{\R^2} = \frac{\pi}{2\sqrt{3}}$.
However, the bound for all packings was realized much later. In 1910,
Thue showed~\cite{Thue} that this is indeed the unconstrained packing
density: $\displaystyle \Delta_{\R^2} = \frac{\pi}{2\sqrt{3}}$; however,
it is generally believed that his proof has a gap. A complete proof was
provided in the 1940s by Fejes T\'oth~\cite{LFT40,LFT42}.

The $n=3$ case was harder, and an early result is by Gauss, who showed in
his book review~\cite{Gauss} of Seeber's book on quadratic
forms~\cite{Seeber}\footnote{Seeber was a German mathematician and
physicist who is especially known for his mathematical work focusing on
crystallography. Thus, between Lagrange, Gauss, Seeber, and the French
physicist Auguste Bravais -- who is known for working on the lattice
theory of crystals -- one can regard this as the time when lattices
became formalized and mainstream in mathematics.}
that Lagrange's 1773 methods could be modified to yield the 3-dimensional
analogue:
$\displaystyle \Delta^{(L)}_{\R^3} = \frac{\pi}{3\sqrt{2}}$.
In other words, Kepler's cannonball packing achieves the lattice packing
density in $\R^3$.
(See Section~\ref{Shermite} for the connection of Lagrange's and Gauss'
study of extremal quadratic forms to sphere packings.)

Note that there are not one, but at least two different ways to achieve
the optimal cannonball packing density. Namely, on top of each ``sheet''
of spheres, one arranges the next sheet according to either the
\textit{FCC (face-centered cubic)} arrangement -- also denoted
\textit{CCP} -- or the \textit{HCP (hexagonal close-packed)} arrangement.
As one gets two such choices each time, both with the same density, there
are uncountably many arrangements of ``sheets'' in 3-space which yield
the Kepler packing density. This uncountable family is collectively
termed as \textit{Barlow packings}, in honor of the crystallographer
William Barlow's 1883 work~\cite{Barlow}.

Subsequently, the Kepler conjecture featured in Hilbert's $18^{\rm th}$
problem~\cite{Hilbert-ICM}, at the turn of the 20th century.
%}}}

%{{{1 Section A.2 - Applications
\subsection{Applications}

A quick digression: the problem of sphere packings was not only of
intrinsic mathematical interest (or for efficiently stacking cannonballs
or oranges, in real life), it features in other fields too -- starting
with Harriot and Barlow, whose aforementioned works relate sphere
packings to crystallography. Sphere packings are a reasonable starting
point for modeling the structure of gases, liquids, crystals, and
granular media, and can yield density estimates for ``idealized
materials''.

In the mathematical sciences, we now know that sphere packings are
related to number theory (via modular forms) and to optimization. One can
also connect the question to physics, including statistical mechanics and
the Thomson problem. E.g. one imagines the centers of the spheres to be
electrons that repel one another, and the densest packing is a ``minimum
energy configuration'' of electrons.

However, arguably the most important application of (higher-dimensional)
sphere packings is to communication theory: they are the continuous
analogues, in a sense, of length $n$ error-correcting codes. This emerges
from work of Shannon, Hamming, and others. Roughly speaking, one is
transmitting a set of signals -- represented here by points $x \in \R^n$,
with $n$ usually large. (E.g., the coordinates may be amplitudes at
different frequencies, typically in the hundreds or more.) The
transmission channel may have a noise level $\varepsilon > 0$, so one
expects the transmitted signal $x$ to be received at the other end as
some $y \in B_{\R^n}(x,\varepsilon)$. Thus, if one builds the signal set
/ ``vocabulary'' such that any two signals have distance at least $2
\varepsilon$, then this allows the mechanism to ``error-correct''.
Whereas if this is not the case, certain received signals could not be
``decoded'' to recover uniquely the transmitted signal.

Moreover, we would like to have as large a vocabulary as possible --
which translates precisely into maximally packing $\varepsilon$-spheres
in a fixed space. This is an important real-world application:
error-correcting codes are used by cell phones, the internet, and even
space probes to send signals reliably.

\begin{example}\label{ExHamming}
We provide an early example of such a code, and it is discrete in nature.
Given a set $F$ and integers $n, k \geq 1$, we first set some notation.
\begin{enumerate}
\item A \textit{code} (or \textit{$F$-code}) \textit{of length $n$} is a
subset $S \subset F^n$.

\item Such a subset is a \textit{$k$-error correcting code} if the
\textit{Hamming distance} between any $x \neq y \in S$ (i.e., the number
of coordinates where $x_i \neq y_i$) is at least $2k+1$.

\item A $k$-error correcting $F$-code of length $n$ is \textit{perfect}
if every word in $F^n$ is ``detectable'' by $S$ up to error at most $k$
(in particular, the balls in the next equality are disjoint):
\[
F^n = \bigsqcup_{s \in S} B_{Hamming}(s,k).
\]
\end{enumerate}

For instance, here is a binary 1-error correcting code of length 7:
\[
(1, 1, 0, 1, 0, 0, 0), \quad (0, 1, 1, 0, 1, 0, 0), \quad
(0, 0, 1, 1, 0, 1, 0), \quad (0, 0, 0, 1, 1, 0, 1).
\]
It is easy to check that the Hamming distance between any two unequal
points is $4$. Thus, the code can detect (and correct) errors in data
transmission of a single bit. \qed
\end{example}

The first perfect 1-error correcting codes were discovered in~1947 by
Hamming (but published in 1950)~\cite{Hamming}, and in~1949 by
Golay~\cite{Golay}, in two landmark papers in the field.
Since these works and another concurrent seminal work by
Shannon~\cite{Shannon}, a problem of considerable interest in
mathematics, information theory, and applications has been to try and
construct binary codes with large minimum distance. (For sphere packings,
in the sequel we will consider subsets/codes on the surface of a sphere,
termed \textit{spherical codes}.)
%}}}

%{{{1 Section A.3 - Optimizing quadratic forms: from Lagrange to Hermite, to lattice packings
\subsection{Optimizing quadratic forms: from Lagrange to
Hermite, to lattice packings}\label{Shermite}

Before we return to Kepler's conjecture and sphere packings in other
dimensions, we explain how Lagrange's study of quadratic forms leads to
lattice packings, via work of Hermite (and via a constant that is named
after him). Some of this account is taken from~\cite[Section~1]{Nguyen}.

Given integers $a,b,c$, Lagrange was studying which integers can be
represented by quadratic forms $ax^2 + bxy + cy^2$ for $x,y \in
\mathbb{Z}$ -- inspired by previous work of Euler and of Fermat (who had
studied the ``sum of two squares'' case $a=c=1,b=0$). Thus, in his 1770s
work~\cite{Lagrange}, Lagrange generalized Euclid's algorithm to such
binary quadratic forms. This was further generalized in 1850 by
Hermite~\cite{Hermite} along a common theme to this survey and to
Theorem~\ref{Tbasics}: for $x = (x_1, \dots, x_n) \in \R^n$, define the
quadratic form $Q(x) := \sum_{i,j=1}^n q_{ij} x_i x_j$ for some real
symmetric \textit{positive definite} matrix/quadratic form $Q$. Hermite
showed:

\begin{theorem}[\cite{Hermite}]\label{Thermite}
Given a positive definite real matrix $Q_{n \times n}$, there exist
integers $x_1, \dots, x_n$ such that
\[
0 < Q((x_1, \dots, x_n)) \leq (4/3)^{(n-1)/2} \det(Q)^{1/n}.
\]
\end{theorem}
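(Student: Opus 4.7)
My plan is to prove Theorem \ref{Thermite} by induction on $n$, using the classical reduction of a positive definite quadratic form to a one-dimensional square plus a positive definite form in one fewer variable (i.e.\ extracting the Schur complement), combined with the pigeonhole-style trick of rounding an affine linear function to the nearest integer.

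The base case $n=1$ is immediate: a positive definite $1\times 1$ form is $Q(x)=q_{11}x^2$ with $q_{11}=\det(Q)>0$, so $x=1$ gives $Q(1)=\det(Q)^{1/1}=(4/3)^0\det(Q)^{1/1}$. For the inductive step, the first move is a change of $\mathbb{Z}$-basis. Let $m:=\min\{Q(v):v\in\mathbb{Z}^n\setminus\{0\}\}$; this minimum is attained by positive definiteness (the set $\{v:Q(v)\leq C\}$ is compact, so contains finitely many lattice points). Choose a primitive $v_0\in\mathbb{Z}^n$ with $Q(v_0)=m$, extend it to a $\mathbb{Z}$-basis $v_0,v_2,\dots,v_n$ of $\mathbb{Z}^n$, and replace $Q$ by $\widetilde{Q}(y):=Q(Ay)$ where $A=[v_0\mid v_2\mid\cdots\mid v_n]\in GL_n(\mathbb{Z})$. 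Since $\det A=\pm 1$, we have $\det \widetilde Q=\det Q$, and $\widetilde q_{11}=\widetilde Q(e_1)=Q(v_0)=m$. So WLOG $q_{11}=m$.

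Now complete the square in the first variable:
\[
Q(x)=q_{11}\Bigl(x_1+\sum_{j\geq 2}\tfrac{q_{1j}}{q_{11}}x_j\Bigr)^2+Q'(x_2,\dots,x_n),
\]
where $Q'$ is the Schur complement, positive definite on $\R^{n-1}$, with $\det(Q')=\det(Q)/q_{11}$. By the inductive hypothesis, pick integers $y_2,\dots,y_n$, not all zero, with $0<Q'(y_2,\dots,y_n)\leq (4/3)^{(n-2)/2}\det(Q')^{1/(n-1)}$. Then choose $y_1\in\mathbb{Z}$ to be the nearest integer to $-\sum_{j\geq 2}(q_{1j}/q_{11})y_j$, so that the first parenthesis has absolute value $\leq 1/2$. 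The vector $y=(y_1,\dots,y_n)$ is nonzero, so by minimality of $m=q_{11}$,
\[
q_{11}\;\leq\;Q(y)\;\leq\;\tfrac{q_{11}}{4}+Q'(y_2,\dots,y_n)\;\leq\;\tfrac{q_{11}}{4}+(4/3)^{(n-2)/2}(\det Q/q_{11})^{1/(n-1)}.
\]
Rearranging gives $(3/4)q_{11}^{n/(n-1)}\leq (4/3)^{(n-2)/2}\det(Q)^{1/(n-1)}$, which on raising to the $(n-1)/n$ power yields $q_{11}\leq(4/3)^{(n-1)/2}\det(Q)^{1/n}$. Since $q_{11}=Q(v_0)$ for the primitive integer vector $v_0$, this is the desired bound.

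The main obstacle, and the step that requires the most care, is the reduction WLOG to $m=q_{11}$: one must know that the minimum is attained on the integer lattice (easy from positive definiteness plus discreteness) and that a primitive minimizer extends to a basis of $\mathbb{Z}^n$ (standard, e.g.\ via Smith normal form or elementary divisors). Once that is in place, the rest is the Schur-complement decomposition and a rounding argument, both routine. The bookkeeping of the exponent $(4/3)^{(n-1)/2}$ in the inductive step — showing that $(3/4)\cdot(4/3)^{(n-2)/2\cdot(n-1)/n}\cdot(4/3)^{(n-1)/2\cdot 1/n}$ telescopes correctly — is the only other place where one must be careful.
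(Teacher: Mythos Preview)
Your proof is correct; it is in fact Hermite's original 1850 argument, and the algebra in the inductive step checks out cleanly (from $(3/4)q_{11}\leq(4/3)^{(n-2)/2}(\det Q/q_{11})^{1/(n-1)}$ one gets $q_{11}^{n/(n-1)}\leq(4/3)^{n/2}(\det Q)^{1/(n-1)}$, and raising to the $(n-1)/n$ power gives the claim).

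The paper does not give a direct proof of Theorem~\ref{Thermite}. Instead, in a later Remark (after~\eqref{Emordell}) it observes that Hermite's bound follows by iterating Mordell's inequality $\gamma_n\leq\gamma_{n-1}^{(n-1)/(n-2)}$ down to $\gamma_n\leq\gamma_2^{\,n-1}$ and then invoking Lagrange's value $\gamma_2=\sqrt{4/3}$ (Theorem~\ref{Tlagrange}); neither ingredient is proved in the paper. Your route is therefore more self-contained. That said, the two approaches share the same engine: Mordell's inequality is itself established by exactly the Schur-complement-plus-rounding step you use, applied once (to pass from dimension $n$ to $n-1$) rather than inductively all the way down to $n=1$. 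So the difference is mostly packaging --- you carry the explicit constant $(4/3)^{(n-1)/2}$ through the induction, while the paper's route hides each step inside the Hermite-constant formalism and only cashes out the constant at the end via $\gamma_2$.
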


If we define $\| Q \| := \inf_{x \in \mathbb{Z}^n \setminus \{ 0 \}}
Q(x)$, it turns out that this infimum is positive and is attained. Thus
the ratio $\| Q \| / \det(Q)^{1/n}$ can be bounded above for all
positive definite $Q$. This yields the constant named after Hermite:

\begin{definition}
The \textit{$n$th Hermite constant} $\gamma_n := \displaystyle \sup_{Q =
Q^T \text{ positive definite}} \frac{\|Q\|}{\det(Q)^{1/n}}$. (This
supremum is in fact attained.)
\end{definition}

Moreover, the bivariate case of these facts had been shown by Lagrange:

\begin{theorem}[\cite{Lagrange}]\label{Tlagrange}
The supremum $\gamma_2$ exists/is attained, and equals $\sqrt{4/3}$.
\end{theorem}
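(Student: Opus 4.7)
The plan is to prove that $\gamma_2 = \sqrt{4/3}$ by combining the classical Lagrange/Gauss reduction of binary quadratic forms with the explicit example of the hexagonal form. First I would parametrize: any positive definite binary form may be written as $Q(x,y) = a x^2 + 2 b xy + c y^2$ with $a,c>0$ and $\det Q = ac - b^2 > 0$, and I would observe two invariances of the ratio $\|Q\|/\sqrt{\det Q}$. (i)~It is homogeneous of degree zero in $Q$, so we may normalize. (ii)~If $M \in GL_2(\mathbb{Z})$ and $Q' = M^T Q M$, then $\det Q' = \det Q$ and, because $M$ permutes $\mathbb{Z}^2 \setminus \{0\}$, also $\|Q'\| = \|Q\|$. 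These symmetries permit us to replace $Q$ by any $GL_2(\mathbb{Z})$-equivalent form.

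The heart of the proof is a Lagrange reduction to a canonical form. I would first show the infimum $\|Q\| = \inf_{v \in \mathbb{Z}^2 \setminus 0} Q(v)$ is attained: since $Q$ is positive definite, the sublevel sets $\{Q \leq t\}$ are compact ellipses, so the integer minimum is attained at some primitive vector $v_0 \in \mathbb{Z}^2$. Completing $v_0$ to a $\mathbb{Z}$-basis $(v_0, w)$ of $\mathbb{Z}^2$ and applying the corresponding change of basis (an element of $GL_2(\mathbb{Z})$), we may assume $v_0 = (1,0)$, so that $a = Q(1,0) = \|Q\|$. Next, replacing $w$ by $w - k v_0$ for an appropriate integer $k$, we arrange that the off-diagonal coefficient satisfies $|2b| \leq a$. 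Finally, since $(0,1)$ is a nonzero integer vector, minimality forces $c = Q(0,1) \geq a$.

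Given such a reduced form, the calculation is immediate:
\begin{equation*}
\det Q = ac - b^2 \;\geq\; a \cdot a - (a/2)^2 \;=\; \tfrac{3}{4} a^2,
\end{equation*}
so $\|Q\|/\sqrt{\det Q} = a/\sqrt{ac-b^2} \leq a/(a\sqrt{3}/2) = 2/\sqrt{3} = \sqrt{4/3}$. This proves the upper bound $\gamma_2 \leq \sqrt{4/3}$.

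For attainment — which also shows existence of the supremum — I would exhibit the form associated with the hexagonal ($A_2$) lattice: $Q_{\rm hex}(x,y) := x^2 + xy + y^2$. Here $a = c = 1$, $b = 1/2$, so $\det Q_{\rm hex} = 1 - 1/4 = 3/4$. An easy check (e.g., $Q_{\rm hex}(x,y) = (x + y/2)^2 + 3y^2/4 \geq 1$ for $y \neq 0$, while for $y=0$ one needs $x \neq 0$) gives $\|Q_{\rm hex}\| = 1$, attained at $(1,0), (0,1), (1,-1)$ and their negatives. Consequently $\|Q_{\rm hex}\|/\sqrt{\det Q_{\rm hex}} = 1/\sqrt{3/4} = \sqrt{4/3}$, matching the upper bound. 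The main obstacle is the reduction step: one must carefully justify that the integer minimum is attained and that the chosen minimizer can be extended to a $\mathbb{Z}$-basis — after which everything is a one-line determinant inequality.
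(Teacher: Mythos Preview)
Your proof is correct and is precisely the classical Lagrange reduction argument for binary quadratic forms. However, the paper itself does not give a proof of this statement: it is simply recorded with a citation to Lagrange~\cite{Lagrange} and then used later (e.g.\ in the remark following~\eqref{Emordell}). So there is no ``paper's own proof'' to compare against here. Your write-up supplies exactly the standard argument one would expect --- reduce $Q$ modulo $GL_2(\mathbb{Z})$ so that $a = \|Q\|$, $|2b| \leq a \leq c$, then bound $\det Q \geq 3a^2/4$, and exhibit the hexagonal form for sharpness --- and each step is justified. One small point you might make explicit: the minimizing integer vector is automatically primitive (if $v = kw$ with $k \geq 2$ then $Q(w) < Q(v)$), which is what allows it to be completed to a $\mathbb{Z}$-basis.
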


It turns out that determining the Hermite constant is equivalent to a
restricted version of computing the packing density: doing so for
\textit{lattices}. Recall that a lattice in $\R^n$ is the
$\mathbb{Z}$-span of an $\R$-basis.

\begin{definition}
Given a lattice $L \subset \R^n$, its \textit{covolume} is the
$n$-dimensional volume of $\R^n / L$; and its \textit{least length}
$\lambda_1(L)$ is the length of any shortest nonzero element of $L$.
\end{definition}

The Hermite constant can be described in lattice-theoretic terms:

\begin{proposition}\label{Phermite}
$\gamma_n$ equals the supremum of $\lambda_1(L)^2$ over all unit covolume
lattices:
\begin{equation}
\gamma_n := \sup \{ \lambda_1(L)^2 \, : \, {\rm Vol}_{\R^n}( \R^n / L ) =
1 \}.
\end{equation}
\end{proposition}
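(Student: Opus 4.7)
The plan is to build a concrete dictionary between positive definite quadratic forms and lattices, under which the quantity $\|Q\|/\det(Q)^{1/n}$ becomes the scale-invariant ratio $\lambda_1(L)^2/\mathrm{Vol}(\R^n/L)^{2/n}$; restricting lattices to unit covolume is then just a convenient normalization.

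First I would factor any positive definite $Q$ as $Q = B^{T}B$ for some $B \in GL_n(\R)$ (Cholesky, or via the spectral theorem together with Theorem~\ref{Tbasics}), so that $Q(x) = \|Bx\|_2^2$ for all $x \in \R^n$. Attach to $Q$ the lattice $L_Q := B\mathbb{Z}^n \subset \R^n$. Two immediate computations then do almost all the work: on one hand,
\[
\|Q\| \;=\; \inf_{x \in \mathbb{Z}^n \setminus \{0\}} \|Bx\|_2^2
\;=\; \inf_{v \in L_Q \setminus \{0\}} \|v\|_2^2
\;=\; \lambda_1(L_Q)^2;
\]
on the other hand, $\det(Q) = (\det B)^2$, while the covolume of $L_Q$ is $|\det B|$, so $\det(Q) = \mathrm{Vol}(\R^n/L_Q)^2$. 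Consequently
\[
\frac{\|Q\|}{\det(Q)^{1/n}} \;=\; \frac{\lambda_1(L_Q)^2}{\mathrm{Vol}(\R^n/L_Q)^{2/n}}.
\]

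Next I would check that this correspondence is essentially onto: any lattice $L \subset \R^n$ is $B\mathbb{Z}^n$ for some $B \in GL_n(\R)$ (take any $\mathbb{Z}$-basis and use it as the columns of $B$), so $L = L_Q$ for $Q := B^{T}B$. Different choices of $\mathbb{Z}$-basis change $B$ by a right $GL_n(\mathbb{Z})$-factor and so leave $Q$'s $GL_n(\mathbb{Z})$-orbit (and both sides of the equation) unchanged, but this ambiguity is harmless for taking suprema.

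Finally I would exploit scale invariance: under $L \mapsto cL$ with $c > 0$, both $\lambda_1(L)^2$ and $\mathrm{Vol}(\R^n/L)^{2/n}$ pick up a factor of $c^2$, so the ratio is invariant. Hence rescaling any lattice $L$ by $c = \mathrm{Vol}(\R^n/L)^{-1/n}$ produces a unit covolume lattice $L'$ with
\[
\lambda_1(L')^2 \;=\; \frac{\lambda_1(L)^2}{\mathrm{Vol}(\R^n/L)^{2/n}}.
\]
Taking the supremum over all positive definite $Q$ on the left, and transporting through the dictionary, yields the supremum of $\lambda_1(L')^2$ over unit covolume lattices on the right; the attainment claim for $\gamma_n$ transports along the same bijection. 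There is no genuine obstacle here: the only subtle point is bookkeeping the scaling exponents ($2$ versus $2/n$) so that the normalization to unit covolume matches the $\det(Q)^{1/n}$ in Hermite's definition.
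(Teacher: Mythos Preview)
Your proof is correct and follows essentially the same route as the paper: factor $Q = B^{T}B$ to build the lattice--form dictionary, verify that $\|Q\| = \lambda_1(L)^2$ and $\det(Q)^{1/n} = \mathrm{Vol}(\R^n/L)^{2/n}$, check surjectivity and the harmlessness of the $GL_n(\mathbb{Z})$ basis ambiguity, and use scale invariance to reduce to unit covolume. The only cosmetic difference is the order of presentation (you run the dictionary before the scaling argument, the paper does the reverse).
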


\begin{proof}
At the outset, note that if $L$ is any lattice, then rescaling $L$ by
$c>0$ rescales the least length by $c$ and the covolume by $c^n$. Thus,
\begin{equation}\label{Elattice}
\frac{\lambda_1(cL)^2}{{\rm Vol}_{\R^n}(\R^n/cL)^{2/n}} = 
\frac{\lambda_1(L)^2}{{\rm Vol}_{\R^n}(\R^n/L)^{2/n}},
\end{equation}
and so we need to compare the supremum of this right-hand side to
$\gamma_n$.

This is done using a standard correspondence between quadratic forms $Q >
0$ and lattices $L$. Given $Q$, Theorem~\ref{Tbasics} yields an
invertible matrix $B_Q = [{\bf v}_1 | \cdots | {\bf v}_n]$ whose columns
have Gram matrix $Q$. This leads to the lattice $L({B_Q}) :=
\oplus_{i=1}^n \mathbb{Z} {\bf v}_i$.
Moreover, the choice of basis is unique up to an orthogonal change:
$B_Q^T B_Q = Q = C_Q^T C_Q$ if and only if $B_Q C_Q^{-1} \in O(n)$.

Now if $B_Q = UC_Q$ for orthogonal $U$, then we compute:
\begin{align}\label{Ehermite2}
\| Q \| = &\ \inf_{x \in \mathbb{Z}^n \setminus \{ 0 \}} x^T Q x = 
\inf_{x \in \mathbb{Z}^n \setminus \{ 0 \}} x^T B_Q^T B_Q x = \inf_{x \in
\mathbb{Z}^n \setminus \{ 0 \}} \left\| \sum_{i=1}^n x_i {\bf v}_i
\right\|^2 = \lambda_1(L)^2,\notag\\
\det(Q)^{1/n} = &\ \det (B_Q^T B_Q)^{1/n} = |\det(B_Q)|^{2/n} = {\rm
Vol}_{\R^n}(\R^n/L)^{2/n},
\end{align}
and both calculations are unchanged under $B_Q \to U B_Q = C_Q$.

Conversely, given a lattice $L$, choose an ordered basis that generates
it, say $({\bf v}_1, \dots, {\bf v}_n)$. Now define $B_L := [{\bf v}_1 |
\cdots | {\bf v}_n]$, and $Q_L := B_L^T B_L$. Also note that if $C_L$ is
any other generating basis then $C_L = P B_L$ for some matrix $P \in
GL_n(\mathbb{Z})$ (so $\det P = \pm 1$), and hence $Q \leadsto P^T Q P$.
Now $\| P^T Q P \| = \lambda_1(L)^2 = \| Q \|$ by~\eqref{Ehermite2},
since the lattice $L$ is unchanged; and $\det(P^T Q P) = \det(Q)$.
Moreover, one can reverse both calculations in~\eqref{Ehermite2}, with
$B_Q$ replaced by $B_L$.

It follows that the set of ratios in the definition of $\gamma_n$ (across
all $Q>0$) equals the set of ratios on the right side in~\eqref{Elattice}
(across all $L$). Taking suprema, the result follows.
\end{proof}

Having gone from quadratic forms to lattices, we now mention the
\textit{lattice packing problem}: given a dimension $n \geq 1$, compute
$\Delta^{(L)}_{\R^n}$. This turns out to be equivalent to the above:

\begin{proposition}
Given $n \geq 1$, computing the lattice packing density of $\R^n$ is
equivalent to determining the Hermite constant $\gamma_n$.
\end{proposition}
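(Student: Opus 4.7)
The plan is to exhibit an explicit bijection between lattice packings (up to rescaling) and unit-covolume lattices, and then show that the packing density is an elementary closed-form function of the least length, so that optimizing one optimizes the other.

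First I would set up the packing associated to a lattice. Given a lattice $L \subset \R^n$, the set of centers $\mathcal{P} = L$ forms a packing precisely when $\lambda_1(L) \geq 2$ (so that any two distinct lattice points are at distance at least $2$). The standard "local-global" density calculation then gives
\[
\Delta_{\mathcal{P}} = \frac{{\rm Vol}_{\R^n}(B_{\R^n}(0,1))}{{\rm Vol}_{\R^n}(\R^n/L)} =: \frac{V_n}{{\rm Vol}_{\R^n}(\R^n/L)}.
\]
To justify this I would argue that a fundamental domain $\R^n/L$ contains exactly one lattice point on average, and the intersection of any translate $x + [-M,M]^n$ with $L$ contains $(2M)^n/{\rm Vol}_{\R^n}(\R^n/L) + O(M^{n-1})$ points, so the intersection with the union of unit balls covers a volume $V_n(2M)^n/{\rm Vol}_{\R^n}(\R^n/L) + O(M^{n-1})$ as $M \to \infty$, yielding the formula in the limit.

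Next, I would remove the constraint $\lambda_1(L) \geq 2$ by rescaling. For any lattice $L$ and $c>0$, the lattice $cL$ has $\lambda_1(cL) = c\lambda_1(L)$ and covolume $c^n{\rm Vol}_{\R^n}(\R^n/L)$. Choosing $c = 2/\lambda_1(L)$ makes $cL$ an admissible packing, and its density is
\[
\Delta_{cL} = \frac{V_n}{c^n {\rm Vol}_{\R^n}(\R^n/L)} = \frac{V_n}{2^n} \cdot \frac{\lambda_1(L)^n}{{\rm Vol}_{\R^n}(\R^n/L)} = \frac{V_n}{2^n}\cdot \left(\frac{\lambda_1(L)^2}{{\rm Vol}_{\R^n}(\R^n/L)^{2/n}}\right)^{n/2}.
\]
The right-hand ratio is precisely the quantity appearing in~\eqref{Elattice}, which is invariant under further rescaling of $L$, so taking the supremum over all lattices is the same as taking the supremum over unit-covolume lattices. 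By Proposition~\ref{Phermite} that supremum is $\gamma_n$.

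Taking the supremum over lattice packings therefore yields the closed-form relation
\[
\Delta^{(L)}_{\R^n} = \frac{V_n}{2^n}\, \gamma_n^{n/2},
\]
and since this is a strictly monotone invertible function of $\gamma_n$ (with $V_n$ an explicit, well-known constant), knowing $\Delta^{(L)}_{\R^n}$ is equivalent to knowing $\gamma_n$. The main obstacle I anticipate is the density computation itself: one must be careful that the $\lim\sup$ in the definition of $\Delta_{\mathcal{P}}$ really equals the "one ball per fundamental domain" heuristic, which requires controlling boundary effects of the cube $[-M,M]^n$ cleanly. Everything else is a bookkeeping translation between $(Q, L, B_L)$ via the correspondence already used in the proof of Proposition~\ref{Phermite}.
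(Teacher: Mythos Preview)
Your proposal is correct and follows essentially the same approach as the paper: compute the density of a lattice packing as the ratio of a single ball's volume to the covolume, rewrite this as $(\nu_n/2^n)\cdot\bigl(\lambda_1(L)^2/{\rm Vol}_{\R^n}(\R^n/L)^{2/n}\bigr)^{n/2}$, and then take the supremum and invoke Proposition~\ref{Phermite} to identify it with $\gamma_n$, yielding the closed-form relation~\eqref{Ehermite}. The only cosmetic difference is that the paper packs with spheres of radius $\lambda_1(L)/2$ directly (and normalizes to unit covolume) rather than fixing unit spheres and rescaling the lattice, but the computations are equivalent.
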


\begin{proof}
The result holds because of a simple equation governing the dependence
between three quantities:
(a)~the lattice packing density $\Delta^{(L)}_{\R^n}$,
(b)~the Hermite constant $\gamma_n$,
and (c)~the $n$-dimensional volume $\nu_n$ of the unit ball in $\R^n$ --
see \cite[Equation~(1)]{Bl29}:
\begin{equation}\label{Ehermite}
\gamma_n = 4 (\Delta^{(L)}_{\R^n} / \nu_n)^{2/n} \qquad
\Longleftrightarrow \qquad \Delta^{(L)}_{\R^n} = (\gamma_n/4)^{n/2} \nu_n
= (\gamma_n/4)^{n/2} \frac{\pi^{n/2}}{\Gamma(n/2 + 1)}.
\end{equation}

In fact this relationship holds for each lattice, and is not mysterious.
Define the \textit{packing radius} $r(L)$ to be the largest scalar such
that placing an $n$-sphere of this radius at each point of $L$ yields a
packing. Thus, $r(L) = \lambda_1(L)/2$. Now a ``fundamental domain'' for
$\R^n$ is the $n$-dimensional parallelopiped $\R^n/L$ (also called
``parallelotope''), and the space covered in it by the packing is
$2^n$-many spherical ``sectors/caps'' -- which make up exactly one sphere
of radius $\lambda_1(L)/2$, so of $n$-volume $\nu_n \cdot
(\lambda_1(L)/2)^n$. Thus, the density of the packing for $L$ is:
\[
\Delta_L := \frac{(\lambda_1(L)/2)^n}{{\rm Vol}_{\R^n} (\R^n/L)} \nu_n =
\frac{(\lambda_1(L)^2/4)^{n/2}}{{\rm Vol}_{\R^n} (\R^n/L)} \cdot
\frac{\pi^{n/2}}{\Gamma(n/2+1)}.
\]
Clearly, rescaling the lattice by $c$ changes both the numerator and
denominator by a factor of $c^n$, so we may normalize to assume that $L$
has covolume $1$. This is precisely~\eqref{Ehermite} before taking the
supremum, because of the proof of Proposition~\ref{Phermite} (and it
holds for every $L$ of covolume $1$), so now take the supremum.
\end{proof}
%}}}

%{{{1 Section A.4 - Lattice packing densities in low dimensions; Hermite constants
\subsection{Lattice packing densities in low dimensions; Hermite
constants}

We now return to the story of the Kepler conjecture, following Lagrange,
Gauss, Barlow, and Hilbert (see above). In 1953, L\'aszl\'o Fejes T\'oth
suggested a recipe for ascertaining Kepler's conjecture
(see~\cite{LFT53}), via checking a finite -- but very large -- number of
cases to solve a finite-variable optimization problem. This would require
advanced computing tools, which have since become available. In the 1990s
Hales, together with his student Ferguson, applied linear programming
techniques to try and minimize a function with 100+ variables, which they
had shown would suffice to compute the sphere packing density of $\R^3$.
Their research program took up 2+ years, 100,000 linear programming
problems, and 3 gigabytes of computer programs. The findings appeared in
the long articles~\cite{Hales,HF} in 2005--06. A decade later, Hales and
many coauthors completed and published a formal proof of the
computer-assisted calculations~\cite{Hales17}. (There was also a proof of
Kepler's conjecture by Wu-Yi Hsiang; but following criticism by G\'abor
Fejes T\'oth and others, the proof is currently regarded as incomplete.)

This ends the story of sphere packings in three dimensions; note that in
all cases discussed so far, the centers of the spheres in at least one
configuration in each dimension (which is the only one in $\R$ and
$\R^2$) lie on a lattice. These are precisely the lattices of Lie type
$A_n$ for $n=1,2,3$ -- i.e., the lattices generated in the hyperplane
$(1,\dots,1)^\perp \subset \R^{n+1}$ by the simple roots $\alpha_i = {\bf
e}_i - {\bf e}_{i+1}$ for $1 \leq i \leq n$.

From above, the search for the Hermite constant is the same as that for
the lattice packing density of Euclidean space. We have discussed the
$n=1,2,3$ cases above, and $\gamma_n, \Delta^{(L)}_{\R^n}$ are precisely
known for just six other values of $n$ to date. In Table~\ref{Tlpd} we
summarize these results -- via noting the values of $\gamma_n, \nu_n,
\Delta^{(L)}_{\R^n}$, the associated lattices (and all but one are of
simple Lie type), and their discoverers -- or the discoverers of
$\Delta^{(L)}_{\R^n}$, which is equivalent.

For completeness, we also mention the recent preprint \cite{Voronoi},
which computes the lattice packing density in nine dimensions -- or
equivalently, shows that the Hermite constant $\gamma_9 = 2$.

\begin{table}[ht]
\begin{tabular}{|c||c|c|c|c|c|c|c|c|c|}
\hline
$n$ & 1 & 2 & 3 & 4 & 5 & 6 & 7 & 8 & 24\\
\hline\hline
$\gamma_n^n$ & 1 & $\frac{4}{3}$ & 2 & 4 & 8 & $\frac{64}{3}$ & 64 &
$2^8$ & $4^{24}$\\
\hline
$\nu_n$ & 2 & $\pi$ & $\frac{4}{3} \pi$ & $\frac{1}{2} \pi^2$ &
$\frac{8}{15} \pi^2$ & $\frac{1}{6} \pi^3$ & $\frac{16}{105} \pi^3$ &
$\frac{1}{24} \pi^4$ & $\frac{1}{12!} \pi^{12}$\\
\hline
$\Delta^{(L)}_{\R^n}$ & 1 &
$\displaystyle \frac{\pi}{2\sqrt{3}}$ &
$\displaystyle \frac{\pi}{3\sqrt{2}}$ &
$\displaystyle \frac{\pi^2}{16}$ &
$\displaystyle \frac{\pi^2}{15\sqrt{2}}$ &
$\displaystyle \frac{\pi^3}{48\sqrt{3}}$ &
$\displaystyle \frac{\pi^3}{105}$ &
$\displaystyle \frac{\pi^4}{384}$ &
$\displaystyle \frac{\pi^{12}}{12!}$\\
\hline
Lattice & & & & & & & & & \\
(Lie) type & $A_1$ & $A_2$ & $A_3$ & $D_4$ & $D_5$ & $E_6$ & $E_7$ &
$E_8$ & Leech\\
\hline
& & & & \multicolumn{2}{c|}{Korkine,} & Hofreiter; &
\multicolumn{2}{c|}{} & Cohn,\\
By: & $-$ & Lagrange & Gauss & \multicolumn{2}{c|}{Zolotareff} &
Blichfeldt & \multicolumn{2}{c|}{Blichfeldt} & Kumar\\
\hline
Reference: & $-$ & \cite{Lagrange} & \cite{Gauss} & \cite{KZ72} &
\cite{KZ77} & \cite{Hofreiter}; \cite{Bl35} &
\multicolumn{2}{c|}{\cite{Bl35}} & \cite{CK09}\\
\hline
Year: & $-$ & 1773 & 1831 & 1872 & 1877 & 1933; 1935 &
\multicolumn{2}{c|}{1935} & 2009\\
\hline
\end{tabular}\vspace*{2mm}
\caption{The known Hermite constants and lattice packing
densities}\label{Tlpd}
\end{table}
%}}}

%{{{1 Section A.5 - Kissing numbers: exact answers and bounds
\subsection{Kissing numbers: exact answers and bounds}

Having discussed exact answers in a few dimensions for the lattice
packing density, before moving to all packings we take a detour into a
related problem: determining the \textit{kissing number} in $\R^n$. This
is the largest number of non-overlapping unit spheres that a unit sphere
can simultaneously touch, or ``kiss'' tangentially, and we will denote it
by $k(n)$. It is not hard to show that $k(1) =2$ and $k(2) =6$. The $n=3$
case was the subject of a famous 1694 debate between Newton (who thought
it was $12$) and Gregory (who thought it was $13$). Thus, this question
is also called the \textit{thirteen spheres problem}, and the integer of
interest is also called the \textit{Newton number} or \textit{contact
number}. It was solved more than 250 years later -- in 1952 -- by
Sch\"utte and van der Waerden~\cite{SvdW}, who showed that $k(3)$ is
indeed $12$.

In four dimensions, the kissing number was computed by Musin in 2008 to
be~$24$~\cite{Musin}. The only other kissing numbers that are known (see
Table~\ref{Tkissing}) are in dimensions $n=8, 24$ -- and they were both
computed independently in~1979 by Levenshtein~\cite{Levenshtein} and by
Odlyzko--Sloane~\cite{OS}: $k(8) = 240$ and $k(24) = 196560$. Remarkably,
these two numbers also arise from spheres with centers in the same two
lattices as for sphere packings! We will sketch their proofs below.

\begin{table}[ht]
\begin{tabular}{|c||c|c|c|c|c|c|c|c|c|}
\hline
$n$ & 1 & 2 & 3 & 4 & 8 & 24\\
\hline\hline
$k(n) = A(n,\pi/3)$ & 2 & 6 & 12 & 24 & 240 & $196560$\\
\hline
Lattice & $A_1$ & $A_2$ & $H_3$ & $D_4$ & $E_8$ & Leech\\
(Coxeter) type & & & & & & \\
\hline
By: & $-$ & $-$ & Sch\"utte, & Musin & \multicolumn{2}{c|}{Levenshtein;}\\
& & & van der Waerden & & \multicolumn{2}{c|}{Odlyzko--Sloane}\\
\hline
Reference: & $-$ & $-$ & \cite{SvdW} & \cite{Musin} &
\cite{Levenshtein}; \cite{OS} & \cite{Levenshtein}; \cite{OS}\\
\hline
Year: & $-$ & $-$ & 1952 & 2003 & 1979 & 1979\\
\hline
\end{tabular}\vspace*{2mm}
\caption{The known kissing numbers}\label{Tkissing}
\end{table}

In addition to these exact results, and decades before Musin's 2008
article, upper and lower bounds were sought -- for general $n$, not
specific values. (This will also be the theme when we consider the
(lattice) packing density of $\R^n$ for general $n$.) E.g.,
Coxeter~\cite{Coxeter} proposed some upper bounds in 1963. Wyner provided
in~1965 an asymptotic lower bound of $2^{0.2075n (1+o(1))}$~\cite{Wyner}.
But the object of our focus here and below is a $\sim$50-year old upper
bound due to Kabatiansky and Levenshtein~\cite{KL}.

\begin{definition}
Given an angle $\psi \in [0,\pi]$ and a dimension $n \geq 2$, a finite
subset $X \subset S^{n-1}$ is a \textit{spherical $\psi$-code} if the
spherical distance between any two vectors in $X$ is at least $\psi$:
\[
\sphericalangle x,y \ \geq \psi \qquad \Longleftrightarrow \qquad
\tangle{x,y} \leq \cos \psi.
\]
Let $A(n,\psi)$ denote the size of any maximum-cardinality spherical
$\psi$-code in $S^{n-1}$.
\end{definition}

\begin{remark}
The name ``code'' is akin to that in ``error-correcting code'' -- see
Example~\ref{ExHamming} -- in that it too stands for a set of points in a
metric space, with all nonzero distances uniformly bounded below.

Additionally, spherical codes have their origins in a well-known question
in mathematical biology: the \textit{Tammes problem}, formulated in~1930
by Tammes~\cite{Tammes}. The problem asks: given integers $n,N \geq 2$,
pack $N$ points on $S^{n-1}$ such that the minimum distance between
distinct points gets maximized.
\end{remark}

As far as sphere packings and kissing numbers go, note that if two
non-overlapping unit spheres kiss a common unit sphere, the closest that
their centers can get is precisely when the three centers form an
equilateral triangle. Thus, every packing or kissing problem involves
$\psi \geq \pi/3$, i.e., $\cos \psi \leq 1/2$. In particular, the kissing
number is
\begin{equation}
k(n) = A(n,\pi/3), \qquad \forall n \geq 2.
\end{equation}

Now the celebrated 1978 upper bound of Kabatiansky--Levenshtein is:

\begin{theorem}[{\cite[Corollary~1]{KL}}]\label{TKL}
For all $\psi \in (0,63^\circ)$, we have
\[
n^{-1} \log_2 A(n,\psi) \leq \frac{-1}{2} \log_2(1-\cos \psi) - 0.099 +
o(1).
\]
In particular, $k(n) = A(n,\pi/3) \leq 2^{n(0.401 + o(1))}$.
\end{theorem}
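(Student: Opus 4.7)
The plan is to deploy Delsarte's linear programming method, whose feasibility rests on Schoenberg's theorem~\ref{Tschoenberg-pd}, and then optimize the resulting LP with a carefully chosen Jacobi-polynomial test function. Throughout I will work on $S^{n-1}$ and encode a spherical $\psi$-code $X = \{x_1,\dots,x_M\}$ by its inner-product Gram matrix $G = (\langle x_i, x_j\rangle)_{i,j=1}^M$, whose off-diagonal entries lie in $[-1,\cos\psi]$ and whose diagonal entries equal $1$.

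First, I would isolate the LP bound. Suppose $f : [-1,1] \to \R$ admits a Schoenberg expansion $f(t) = \sum_{k \geq 0} f_k\, C_k^{(n)}(t)/C_k^{(n)}(1)$ with $f_k \geq 0$ for all $k$, $f_0 > 0$, and $f(t) \leq 0$ on $[-1,\cos\psi]$. By Theorem~\ref{Tschoenberg-pd}, each matrix $F_k := (C_k^{(n)}(\langle x_i, x_j\rangle)/C_k^{(n)}(1))_{i,j}$ is positive semidefinite, hence $\mathbf{1}^T F_k \mathbf{1} \geq 0$; moreover $F_0 = \mathbf{1}\mathbf{1}^T$ contributes exactly $M^2$. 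Summing yields
\[
f_0 M^2 \ \leq\ \sum_{i,j} f(\langle x_i, x_j\rangle) \ =\ M f(1) + \sum_{i \neq j} f(\langle x_i, x_j\rangle) \ \leq\ M f(1),
\]
since every off-diagonal $\langle x_i, x_j\rangle$ lies in $[-1,\cos\psi]$ where $f \leq 0$. This gives the LP bound $A(n,\psi) \leq f(1)/f_0$ for every admissible $f$.

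Next, I would choose a near-optimal $f$. A classical choice, going back to Levenshtein, is to take a square of an ultraspherical (or Jacobi) polynomial multiplied by a linear factor vanishing at $\cos\psi$:
\[
f_\psi(t) \ =\ (t - \cos\psi)\bigl(P^{(\alpha,\beta)}_m(t)\bigr)^2,
\]
with parameters $\alpha,\beta$ depending on $n$ and with $m$ chosen so that the largest root of $P^{(\alpha,\beta)}_m$ is just below $\cos\psi$. One then checks that the Gegenbauer/Jacobi coefficients of $f_\psi$ are all nonnegative (a standard linearization-of-products argument), so the LP is feasible, and the ratio $f_\psi(1)/f_{\psi,0}$ admits sharp asymptotics via Darboux/Plancherel--Rotach estimates on the extremal zeros of Jacobi polynomials. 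For $\psi \geq \pi/2$ this already produces an asymptotic bound of the form
\[
n^{-1} \log_2 A(n,\psi) \ \leq\ H\!\left(\tfrac{1-\cos\psi}{2}\right) + o(1),
\]
for an explicit function $H$ of $\cos \psi$.

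Finally, I would handle the case $\psi < \pi/2$ (which includes $\psi = \pi/3$, the kissing-number case) by the Kabatiansky--Levenshtein reduction: if $X \subset S^{n-1}$ is a spherical $\psi$-code and $C \subset S^{n-1}$ is any spherical cap of angular radius $\theta < \pi/2$, then the projection of $X \cap C$ onto the equatorial $S^{n-2}$ perpendicular to the cap's axis yields a spherical $\psi'$-code in $S^{n-2}$, where $\cos \psi' = (\cos \psi - \cos^2 \theta)/\sin^2 \theta$. Choosing $\theta$ to push $\psi'$ up to the threshold $\pi/2$ and covering $S^{n-1}$ by $O(\Delta_{\R^n}^{-1})$ such caps bounds $A(n,\psi)$ by $A(n-1,\psi')$ times a covering number; iterating this together with the base estimate above, and optimizing the single remaining parameter $\theta$, produces the desired exponent $-\tfrac12 \log_2(1-\cos\psi) - 0.099 + o(1)$. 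Specializing to $\psi = \pi/3$ yields $k(n) \leq 2^{n(0.401 + o(1))}$.

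The main obstacle I would expect is verifying nonnegativity of the Gegenbauer coefficients of $f_\psi$ and controlling the ratio $f_\psi(1)/f_{\psi,0}$ asymptotically; this requires quantitative information on the extreme zeros of Jacobi polynomials and on the Christoffel--Darboux kernel, and is the technical heart of \cite{KL}. The geometric reduction in the final step is comparatively elementary but must be executed with the correct trigonometric identity so that the optimization in $\theta$ actually yields the numerical constant $0.099$.
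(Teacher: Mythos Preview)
The paper does not prove Theorem~\ref{TKL}; it is quoted from~\cite{KL} as a black box. What the paper does, in Section~\ref{Sdelsarte}, is derive the Delsarte linear-programming bound (Theorem~\ref{TDGS}) --- which is exactly your Step~1 --- and then apply it with two explicit hand-built polynomials to pin down $k(8)$ and $k(24)$ (Theorem~\ref{TLP}). The asymptotic optimization over test functions that produces the exponent $0.401$ is never carried out in the text.

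Your sketch of the actual~\cite{KL} argument is broadly right through Step~2, but Step~3 is confused. First, the covering count ``$O(\Delta_{\R^n}^{-1})$ such caps'' is meaningless here: $\Delta_{\R^n}$ is the Euclidean packing density, not a spherical covering number, and is in any case a downstream quantity. Second, and more importantly, you have the projection argument running in the wrong direction. The cap-projection trick --- pass from a code on one sphere to a code on a lower-dimensional sphere with an adjusted angle --- is what the paper invokes \emph{after} Theorem~\ref{TKL}, in the subsection on converting spherical-code bounds to sphere-packing bounds on $\Delta_{\R^n}$; it is not part of establishing the bound on $A(n,\psi)$ itself. Kabatiansky--Levenshtein obtain the exponent $-\tfrac12\log_2(1-\cos\psi) - 0.099$ directly from the LP bound by analyzing the asymptotics of the Levenshtein test polynomial (via the location of the extremal zeros of the relevant Jacobi polynomials), uniformly across the stated range of $\psi$. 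So your Step~2 is the whole engine, and Step~3 should be dropped.
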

%}}}

%{{{1 Section A.6 - The $E_8$ and Leech lattices
\subsection{The $E_8$ and Leech lattices}\label{Slattices}

Notice in the aforementioned results -- on both the lattice packing and
kissing number problems -- that beyond the first three or four
dimensions, two dimensions stood out: $n=8, 24$. This is because in both
of these dimensions, both of the above problems get solved when the
centers lie in the same, remarkable lattice of rank $8$ or $24$. Thus,
for completeness we write down characterizations for both lattices.

The $E_8$ lattice is widely studied in the context of Lie theory,
mathematical and particle physics, and Hamming codes (among other areas).
It is a rank 8 lattice with any of the following properties:
\begin{enumerate}
\item It is at once integral, even, and unimodular. Namely, $x \cdot y$
is an integer for all $x,y \in E_8$; $x \cdot x$ is moreover even for all
$x$; and $E_8$ has a covolume $1$.

\item An alternate description is the set of points $x = (x_1, \dots,
x_8) \in \mathbb{R}^8$ such that $\sum_i x_i$ is an even integer, and the
$x_i$ are either all integers or all half-integers.
\end{enumerate}

$E_8$ is also the root lattice for the largest exceptional complex simple
Lie algebra; in Figure~\ref{FigE8} we provide its Dynkin diagram.

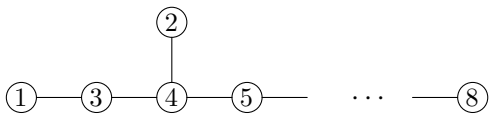
\begin{figure}[ht]
\begin{tikzpicture}[line cap=round,line join=round,>=triangle 45,x=1.0cm,y=1.0cm]
\draw(1,12) circle (0.2cm);
\draw(2,12) circle (0.2cm);
\draw(3,12) circle (0.2cm);
\draw(4,12) circle (0.2cm);
\draw(7,12) circle (0.2cm);
\draw(3,13) circle (0.2cm);
\draw (0.77,12.25) node[anchor=north west] {{\small 1}};
\draw (1.77,12.25) node[anchor=north west] {{\small 3}};
\draw (2.77,12.25) node[anchor=north west] {{\small 4}};
\draw (3.77,12.25) node[anchor=north west] {{\small 5}};
\draw (5.25,12.2) node[anchor=north west] {$\cdots$};
\draw (6.77,12.25) node[anchor=north west] {\small{8}};
\draw (2.77,13.25) node[anchor=north west] {{\small 2}};
\draw [-] (1.2,12) -- (1.8,12);
\draw [-] (2.2,12) -- (2.8,12);
\draw [-] (3.2,12) -- (3.8,12);
\draw [-] (4.2,12) -- (4.8,12);
\draw [-] (6.2,12) -- (6.8,12);
\draw [-] (3,12.2) -- (3,12.8);
\end{tikzpicture}
\caption{The nodes (or simple roots) of the $E_8$ Dynkin
diagram}\label{FigE8}
\end{figure}

It is known that the norm-square $Q : E_8 \to \R$, $Q(x) := \| x \|^2$ is
a positive definite quadratic form. (Thus, up to isometry, $E_8$ is also
the unique even, unimodular, positive definite lattice of rank $8$.) This
connects $E_8$ to both themes in this article: positivity and lattice
packings. Indeed, the existence of such a form was proved in the very
first article cited in this article, in~1868 by Smith~\cite{Smith}. It
was then explicitly constructed first by Korkine--Zolotareff~\cite{KZ73}
in~1873, in the same series of works in which they computed the Hermite
constant in dimensions~4 and~5 (see above). In~1938,
Mordell~\cite{Mordell1} showed the uniqueness of a lattice with the above
properties (he wrote a related work on lattice packings and $\gamma_n$,
mentioned below).

\begin{remark}\label{RE8kissing}
Moreover, one checks that the shortest norm-square of a nonzero vector in
$E_8$ is $\lambda_1(E_8)^2 = 2$, and there are precisely $240$ such
vectors ``nearest'' to the origin. Thus, if one places spheres of packing
radius $r(E_8) = \lambda_1(E_8)/2$ at these lattice points, they kiss the
congruent sphere centered at the origin. Thus, $k(8) \geq 240$.
\end{remark}

We next mention the \textit{Leech lattice} $\Lambda_{24}$, which also
features outside sphere packings in geometry (including
higher-dimensional versions of the Tammes problem in mathematical
biology), modular forms, group theory (via Conway groups), coding theory,
and moonshine theory and vertex operator algebras. This lattice was
introduced in~1967 by Leech \cite{Leech}, following lifting from
$(\mathbb{Z}/2\mathbb{Z})^{24}$ to $\mathbb{Z}^{24}$ the extended Golay
code \cite{Golay} -- this has minimal Hamming distance 8 (see
Example~\ref{ExHamming}). The Leech lattice is the unique (up to
isometry) unimodular even lattice in $\R^{24}$, and it has least length
$2$. Once again, one checks that there are 196560 points in
$\Lambda_{24}$ of least length (i.e., closest to the origin). Thus,
$k(24) \geq 196560$.
%}}}

%{{{1 Section A.7 - Kissing numbers and spherical codes, via Delsarte -- and Schoenberg
\subsection{Kissing numbers and spherical codes, via Delsarte -- and
Schoenberg}\label{Sdelsarte}

Here we elaborate on the \textit{linear programming method} that led to
the Kabatiansky--Levenshtein asymptotic upper bound for the kissing
number $k(n)$. This method was pioneered by Delsarte in the 1970s --
originally in \cite{Delsarte1} for cardinalities of binary codes; then
in~\cite{Delsarte2} for more general ``association schemes''; and finally
in 1977 with Goethals and Seidel~\cite{DGS}, where Gegenbauer polynomials
enter into the picture. This is the promised connection to Schoenberg's
work (see Theorem~\ref{Tschoenberg-pd}) and to the above survey on
positivity preservers.

\subsubsection{Refresher on Gegenbauer polynomials}

We recall here some basics on Gegenbauer polynomials $G^{(n)}_k(t)$,
which formed the basis (literally!) of Schoenberg's classification of
positive definite functions over spheres $S^{n-1}$. These can be defined
in multiple ways: for instance, via their generating function
\[
(1 - 2rt + r^2)^{(2-n)/2} = \sum_{k=0}^\infty r^k C_k^{(n)}(t); \qquad
G_k^{(n)}(t) := \frac{C_k^{(n)}(t)}{C_k^{(n)}(1)}
\]
if $n \geq 3$ -- and for $n=2$, we have:
\[
\frac{1-rt}{1-2rt+r^2} = \sum_{k=0}^\infty r^k G^{(2)}_k(t).
\]

A second recipe to define these polynomials is via a three term
recurrence, for any $n \geq 2$:
\[
G_0^{(n)}(t) = 1, \quad G_1^{(n)}(t) = t, \qquad G_k^{(n)}(t) =
\frac{(2k+n-4)t G_{k-1}^{(n)}(t) - (k-1) G_{k-2}^{(n)}(t)}{k+n-3}\
\forall k \geq 2.
\]

A third description is that the $G_k^{(n)}(t)$ are degree $k$ polynomials
that are normalized -- $G_k^{(n)}(1) = 1$ -- and form an orthogonal
system with respect to integrating on $[-1,1]$ against the measure
$(1-t^2)^{(n-3)/2}\, dt$. (This is the projection to $[-1,1]$ of the
surface measure $d\omega_{n-1}$ of the sphere.)

These orthogonal polynomials subsume various special cases. Setting
$n=2,3,4$, we recover, respectively: Chebyshev polynomials of the first
kind, Legendre polynomials, and Chebyshev polynomials of the second kind.
Moreover, while the above are perfectly adequate, self-contained
definitions/characterizations of Gegenbauer polynomials, we now provide a
fourth, beautiful description, wherein they arise naturally through
spherical harmonics.

\subsubsection{Spherical harmonics and the Addition Theorem}

The following account is taken from the survey~\cite{PZ} and the early
part of M\"uller's notes~\cite{Muller-book}.

We work over the sphere $S^{n-1} \subset \R^n$. Thus, for $0 \neq x \in
\R^n$, write $x = \|x\| \xi$, with $\xi \in S^{n-1}$. The area element on
the sphere $S^{n-1}$ will be denoted by $d\omega_{n-1} = d
\omega_{n-1}(\xi)$, and the $(n-1)$-dimensional ``surface area'' of the
sphere is
\[
\omega_{n-1} := \int_{S^{n-1}} d \omega_{n-1} = \frac{2
\pi^{n/2}}{\Gamma(n/2)}.
\]
Thus $\omega_0 = |S^0| = 2$, $\omega_1 = 2 \pi$, $\omega_2 = 4 \pi$, and
so on.

Next, we have the \textit{Laplace operator}
$\displaystyle \Delta_n := \sum_{j=1}^n \frac{\partial^2}{\partial
x_j^2}$, and its null vectors among polynomials:

\begin{definition}
Given integers $n \geq 2$ and $k \geq 0$, a \textit{spherical harmonic}
in $n$ dimensions of degree/order $k$ is a harmonic (i.e.\ $\Delta_n f
\equiv 0$) polynomial $H_k(x) = H_k((x_1, \dots, x_n))$ that is
homogeneous of degree $k$, and is now restricted to $S^{n-1}$. We will
denote these by $H_k(\xi)$. Let $\mathcal{SH}_{n,k}$ denote the space of
such polynomials.
\end{definition}

The above integral defines an inner product on the span of all spherical
harmonics (of all degrees) $\bigoplus_{k \geq 0} \mathcal{SH}_{n,k}$:
\begin{equation}
\tangle{f,g} := \int_{S^{n-1}} f(\xi) g(\xi) \, d \omega_{n-1}.
\end{equation}

By Green's theorem, one checks that for nonnegative integers $k \neq k'$
and corresponding degree spherical harmonics $H_k, H_{k'}$,
\begin{align*}
0 = \int_{\|x\| \leq 1} (H_k \Delta_n H_{k'} - H_{k'} \Delta_n H_k) \, dx
= &\ \int_{S^{n-1}} \left( H_k \frac{\partial H_{k'}}{\partial r} -
H_{k'} \frac{\partial H_k}{\partial r} \right) d \omega_{n-1}\\
= &\ \int_{S^{n-1}} (k'-k) H_k(\xi) H_l(\xi)\, d \omega_{n-1},
\end{align*}
since $H_k$ has normal derivative on $S^{n-1}$ (in the $r = \|x\|$
direction) equal to
\[
\left. \frac{\partial}{\partial r} H_k(r\xi) \right|_{r=1} = \left. k
r^{k-1} H_k(\xi) \right|_{r=1} = k \, H_k(\xi), \quad \forall k \geq 0.
\]

Thus, spherical harmonics of differing degrees are orthogonal -- and for
a given degree $k \geq 0$, one uses e.g.\ Gram--Schmidt to obtain an
orthonormal basis of $\mathcal{SH}_{n,k}$. One also has:
\begin{equation}\label{Edim}
N(n,k) := \dim \mathcal{SH}_{n,k} = \binom{k+n-2}{k} +
\binom{k+n-3}{k-1}, \qquad \forall n \geq 2,\, k \geq 0.
\end{equation}

\begin{example}[$n=2$]
We consider a well known special case, where $n=2$ and the sphere is the
unit circle. In this case $\dim \mathcal{SH}_{2,k} = 2$ for $k>0$, and
$\mathcal{SH}_{2,0} = \R \cdot 1$. For $k>0$, two linearly independent
degree $k$ spherical harmonics are $\Re \, (x_2 + i x_1)^k$ and $\Im \,
(x_2+ix_1)^k$. Now introduce polar coordinates: $x_1 = r \cos \theta, x_2
= r \sin \theta$. Using this, we obtain an orthonormal set of spherical
harmonics for each $k>0$:
\begin{align*}
S_{k,1} := &\ \frac{1}{\sqrt{\pi} \, r^k} \Re(x_2 + i x_1)^k =
\frac{1}{\sqrt{\pi}} \cos k(\textstyle \frac{\pi}{2} - \theta), \\
S_{k,2} := &\ \frac{1}{\sqrt{\pi}\, r^k} \Im(x_2 + i x_1)^k =
\frac{1}{\sqrt{\pi}} \sin k(\textstyle \frac{\pi}{2} - \theta).
\end{align*}
\end{example}

This example shows that when one thinks of $S^1$ not as a torus but as a
one-dimensional sphere, the way to generalize Fourier series to higher
dimensions is via spherical harmonics.

Now we bring in the orthogonal group.

\begin{lemma}\label{LSH}
If $u : \R^n \to \R$ is smooth and $A \in O(n)$, then $\Delta_n (u \circ
A) \equiv (\Delta_n u) \circ A$ on $\R^n$. Thus, if $H_k(\xi)$ is a
spherical harmonic then so is $H_k(A \xi)$.
\end{lemma}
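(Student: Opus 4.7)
The plan is to reduce the invariance of $\Delta_n$ under $O(n)$ to the defining orthogonality relation $A^T A = I$, and then check that the three defining properties of a spherical harmonic (harmonicity, polynomial homogeneity, and being well-defined on $S^{n-1}$) are each preserved under precomposition with $A$.

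For the first assertion, I would write $A = (a_{ij}) \in O(n)$ and apply the chain rule twice to $v(x) := u(Ax)$. Since $(Ax)_k = \sum_j a_{kj} x_j$, the first derivative is $\partial_j v(x) = \sum_k (\partial_k u)(Ax)\, a_{kj}$, and differentiating once more gives
\[
\partial_j^2 v(x) = \sum_{k,l} (\partial_k \partial_l u)(Ax)\, a_{kj} a_{lj}.
\]
Summing over $j$ and interchanging the order of summation, the orthogonality condition $\sum_j a_{kj} a_{lj} = (A A^T)_{kl} = \delta_{kl}$ collapses the double sum to $\sum_k (\partial_k^2 u)(Ax) = (\Delta_n u)(Ax)$, giving $\Delta_n(u \circ A) = (\Delta_n u) \circ A$ as claimed.

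For the second assertion, suppose $H_k$ is a spherical harmonic of degree $k$, viewed first as a harmonic homogeneous polynomial on $\R^n$. Composition with the linear map $A$ yields a polynomial $H_k \circ A$ which is again homogeneous of degree $k$ (since $H_k(A(tx)) = H_k(t \cdot Ax) = t^k H_k(Ax)$), and the first part shows it remains harmonic. Finally, since $A \in O(n)$ preserves the Euclidean norm, $A$ maps $S^{n-1}$ into itself, so the restriction $\xi \mapsto H_k(A\xi)$ is a well-defined spherical harmonic in $\mathcal{SH}_{n,k}$. The only step that requires a small computation is the orthogonality collapse above; the rest is formal, so I do not anticipate any real obstacle.
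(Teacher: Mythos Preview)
Your proof is correct and is precisely the explicit chain-rule computation the paper alludes to but does not write out; the orthogonality collapse $\sum_j a_{kj} a_{lj} = \delta_{kl}$ is exactly the point. The second assertion is handled just as the paper intends, so there is nothing to add.
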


\begin{proof}
The first assertion is an explicit computation, and immediately yields
the second.
\end{proof}

\begin{proposition}\label{Pspherical}
Fix integers $n \geq 2$ and $k \geq 0$ and an orthogonal matrix $A \in
O(n)$. Suppose $\{ S_{n,l} : 1 \leq l \leq N(n,k) \}$ is an orthonormal
basis of $(\mathcal{SH}_{n,k}, \tangle{\cdot,\cdot})$.
\begin{enumerate}
\item Then so is $\{ \xi \mapsto S_{n,l}(A \xi) : 1 \leq l \leq N(n,k)
\}$.

\item Define the matrix $C^{(A)}$ via expanding $S_{n,l}(A \cdot -)$ in
the orthonormal basis above:
\[
S_{n,l}(A \xi) = \sum_{r=1}^{N(n,k)} c_{lr}^{(A)} S_{n,r}(\xi), \qquad
C^{(A)} := (c_{lr}^{(A)})_{l,r=1}^{N(n,k)}.
\]
Then $C^{(A)}$ is also orthogonal, and $A \mapsto C^{(A)}$ is a group
homomorphism $: O(n) \to O(N(n,k))$.
In other words, $\mathcal{SH}_{n,k}$ is a finite-dimensional unitary
representation of $O(n)$, under $A \cdot S_{n,l}(\xi) := S_{n,l}(A^T
\xi)$.

\item The kernel
\begin{equation}\label{Egeg}
F : S^{n-1} \times S^{n-1} \to \R; \qquad F(\xi, \eta) :=
\sum_{l=1}^{N(n,k)} S_{n,l}(\xi) S_{n,l}(\eta)
\end{equation}
is invariant under the diagonal action of $O(n)$ on its arguments, and
hence depends only on their cosine $t = \tangle{\xi,\eta}$.
\end{enumerate}
\end{proposition}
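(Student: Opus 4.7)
The plan is to prove the three parts in sequence, with part~(1) as the analytic foundation, part~(2) as a bookkeeping consequence, and part~(3) as the payoff via a standard orbit argument.

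For part~(1), I would first invoke Lemma~\ref{LSH} to see that $\xi \mapsto S_{n,l}(A\xi)$ is again a harmonic homogeneous polynomial of degree $k$, hence lies in $\mathcal{SH}_{n,k}$. The orthonormality of the transformed family would then follow from the $O(n)$-invariance of the surface measure $d\omega_{n-1}$: making the change of variables $\eta = A\xi$ in the inner product gives
\[
\tangle{S_{n,l}(A\cdot),\, S_{n,r}(A\cdot)} = \int_{S^{n-1}} S_{n,l}(A\xi) S_{n,r}(A\xi)\, d\omega_{n-1}(\xi) = \int_{S^{n-1}} S_{n,l}(\eta) S_{n,r}(\eta)\, d\omega_{n-1}(\eta) = \delta_{lr}.
\]
Since $\{S_{n,l}(A\cdot)\}$ is an orthonormal set of the right cardinality $N(n,k)$ in a space of that dimension, it is a basis.

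For part~(2), the orthogonality of $C^{(A)}$ is automatic: the entries $c^{(A)}_{lr} = \tangle{S_{n,l}(A\cdot), S_{n,r}}$ are the matrix of a change between two orthonormal bases, which is necessarily orthogonal. The homomorphism property would follow from writing out the composition: expanding $S_{n,l}(AB\xi)$ in two ways (either directly using $C^{(AB)}$, or by first applying $C^{(A)}$ to pass from $A(B\xi)$ to the basis, then $C^{(B)}$ to pass from $B\xi$ to $\xi$) and comparing coefficients yields $C^{(AB)} = C^{(A)} C^{(B)}$.

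For part~(3), the diagonal $O(n)$-invariance of the kernel in~\eqref{Egeg} is the key computation:
\[
F(A\xi, A\eta) = \sum_l S_{n,l}(A\xi) S_{n,l}(A\eta) = \sum_{r,s} \Bigl( \sum_l c^{(A)}_{lr} c^{(A)}_{ls} \Bigr) S_{n,r}(\xi) S_{n,s}(\eta) = \sum_{r,s} \delta_{rs} S_{n,r}(\xi) S_{n,s}(\eta) = F(\xi,\eta),
\]
where I use that $(C^{(A)})^T C^{(A)} = I$ from part~(2). To conclude that $F$ depends only on $\tangle{\xi,\eta}$, I would apply the two-point homogeneity of $S^{n-1}$ under $O(n)$ already used in the proof of Lemma~\ref{Lsame}: two pairs $(\xi,\eta), (\xi',\eta') \in (S^{n-1})^2$ with $\tangle{\xi,\eta} = \tangle{\xi',\eta'}$ are $O(n)$-conjugate, so $F(\xi,\eta) = F(\xi',\eta')$.

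No step here is really an obstacle: the content is essentially the standard assertion that each $\mathcal{SH}_{n,k}$ carries an $O(n)$-action and that the reproducing-style kernel associated with any orthonormal basis is intrinsic to the representation (hence bi-invariant, hence a function of $\tangle{\xi,\eta}$ on the two-point homogeneous space $S^{n-1}$). The only subtle point worth emphasizing is the consistent choice of transpose/inverse conventions in part~(2), so that the group-homomorphism identity $C^{(AB)} = C^{(A)} C^{(B)}$ comes out with matching index orders.
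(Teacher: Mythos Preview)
Your proposal is correct and follows essentially the same route as the paper: Lemma~\ref{LSH} plus $O(n)$-invariance of $d\omega_{n-1}$ for part~(1), the two-way expansion of $S_{n,l}(AB\xi)$ for the homomorphism in part~(2), the same orthogonality-of-$C^{(A)}$ computation for the invariance of $F$ in part~(3), and then Lemma~\ref{Lsame} (or its underlying two-point homogeneity) to pass from diagonal invariance to dependence on $\tangle{\xi,\eta}$. The only cosmetic difference is that the paper extracts the orthogonality of $C^{(A)}$ simultaneously with the orthonormality in part~(1), whereas you defer it to part~(2) as a change-of-orthonormal-basis observation; both are fine.
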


Note that the group map $A \mapsto C^{(A)}$ need not be injective. For
instance, let $n$ be even and $A = -{\rm Id}_n$. Then $S_{n,l}(A \xi) =
S_{n,l}(\xi)$ for all $k \geq 0$, $1 \leq l \leq N(n,k)$, and $\xi \in
S^{n-1}$.

\begin{proof}\hfill
\begin{enumerate}
\item Fix $A \in O(n)$ and define $T_{n,l}(\xi) := S_{n,l}(A \xi)$. This
is also in $\mathcal{SH}_{n,k}$ by Lemma~\ref{LSH}, so we have structure
constants:
\[
S_{n,l}(A \xi) = T_{n,l}(\xi) = \sum_{r=1}^{N(n,k)} c^{(A)}_{lr}
S_{n,r}(\xi).
\]

Now compute $\tangle{T_{n,l}, T_{n,m}}$ in two ways: as an inner product
and as an integral. First, since the $S_{n,l}$ are orthonormal,
$\displaystyle \tangle{T_{n,l}, T_{n,m}} = \sum_{r=1}^{N(n,k)}
c_{lr}^{(A)} c_{mr}^{(A)}$.
Second, since $S^{n-1}$ and its surface measure $d \omega_{n-1}$ are
invariant under the orthogonal group,
\[
\tangle{T_{n,l}, T_{n,m}} = \int_{S^{n-1}} S_{n,l}(A \xi) S_{n,m}(A \xi)
\, d \omega_{n-1} = \int_{S^{n-1}} S_{n,l}(\xi) S_{n,m}(\xi) \, d
\omega_{n-1} = \delta_{l,m}.
\]
Equating the two expressions proves the assertion -- and also shows that
the matrix $C^{(A)} = (c_{lr}^{(A)})$ is orthogonal: $C^{(A)} (C^{(A)})^T
= {\rm Id}_{N(n,k)}$.

\item It suffices to show $A \mapsto C^{(A)}$ is multiplicative. This is
just a formal exercise -- given $A,B \in O(n)$, we have:
\begin{align*}
S_{n,l}(AB\xi) = &\ \sum_{r=1}^{N(n,k)} c^{(A)}_{lr} S_{n,r}(B \xi) =
\sum_{r=1}^{N(n,k)} c^{(A)}_{lr} \sum_{q=1}^{N(n,k)} c^{(B)}_{rq}
S_{n,q}(\xi)\\
= &\ \sum_{q=1}^{N(n,k)} \left( \sum_{r=1}^{N(n,k)} c_{lr}^{(A)}
c_{rq}^{(B)} \right) S_{n,q}(\xi). 
\end{align*}
On the other hand,
$\displaystyle S_{n,l}(AB\xi) = \sum_{q=1}^{N(n,k)} c^{(AB)}_{lq}
S_{n,q}(\xi)$. Hence $C^{(AB)} = C^{(A)} C^{(B)}$. The translation into
$\mathcal{SH}_{n,k}$ being a unitary representation is now another formal
exercise.

\item To show~\eqref{Egeg}, first note that $F$ is invariant under the
diagonal action of $O(n)$:
\[
F(A \xi, A \eta) = \sum_{l=1}^{N(n,k)} \sum_{m,q=1}^{N(n,k)}
c^{(A)}_{lm} c^{(A)}_{lq} S_{n,m}(\xi) S_{n,q}(\eta)
= \sum_{m,q=1}^{N(n,k)} S_{n,m}(\xi) S_{n,q}(\eta) \sum_{l=1}^{N(n,k)}
c^{(A)}_{lm} c^{(A)}_{lq}.
\]
But the inner sum is the $(m,q)$th entry of $(C^{(A)})^T C^{(A)} = {\rm
Id}_{N(n,k)}$, so we get
\[
F(A \xi, A \eta) = F(\xi, \eta) \qquad \forall \xi, \eta \in S^{n-1}, \,
A \in O(n).
\]

In particular, this invariance holds for $A \in SO(n)$. We are now done
by Lemma~\ref{Lsame}, since $K = F$ is clearly continuous.\qedhere
\end{enumerate}
\end{proof}

With these preliminaries on spherical harmonics and Gegenbauer
polynomials, one can identify exactly what is the function of the cosine
in~\eqref{Egeg}:

\begin{theorem}[{Addition Theorem,
\cite[Equation~(3.18)]{Muller}}]\label{Tadd}
Let $n,k$, and $\{ S_{n,l} : 1 \leq l \leq N(n,k) \}$ be as in
Proposition~\ref{Pspherical}.
Then the function in~\eqref{Egeg} is the rescaled $k$th Gegenbauer
polynomial:
\begin{equation}
\sum_{l=1}^{N(n,k)} S_{n,l}(\xi) S_{n,l}(\eta) =
\frac{N(n,k)}{\omega_{n-1}} G_k^{(n)}(\tangle{\xi,\eta}),
\end{equation}
where $\omega_{n-1} = \displaystyle \frac{2 \pi^{n/2}}{\Gamma(n/2)}$ is
the surface area of $S^{n-1}$.
\end{theorem}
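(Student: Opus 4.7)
The plan is to turn the two-variable kernel $F$ into a univariate polynomial $P_k$ via Proposition~\ref{Pspherical}(3), then pin down $P_k$ via the orthogonality characterization of Gegenbauer polynomials, and finally fix the scaling by setting $\xi = \eta$ and integrating.

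First I would set $P_k(t)$ via Proposition~\ref{Pspherical}(3) so that
\[
F_k(\xi, \eta) := \sum_{l=1}^{N(n,k)} S_{n,l}(\xi) S_{n,l}(\eta) = P_k(\tangle{\xi,\eta}),
\]
and show $P_k$ is a polynomial of degree at most $k$. To this end, extend each $S_{n,l}$ to its homogeneous degree-$k$ polynomial representative on $\R^n$, so that $F_k$ becomes a polynomial on $\R^n \times \R^n$; the $O(n)$-invariance established on the sphere extends to all of $\R^n \times \R^n$ by homogeneity. Specializing $\eta = \mathbf{e}_n$, the stabilizer is $SO(n-1)$ acting on $(\xi_1, \dots, \xi_{n-1})$, so $F_k(\xi, \mathbf{e}_n)$ is an $SO(n-1)$-invariant polynomial in $\xi$ and hence of the form $G(\xi_1^2 + \cdots + \xi_{n-1}^2, \xi_n)$ for some polynomial $G$. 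Restricting to $S^{n-1}$ we substitute $\xi_1^2 + \cdots + \xi_{n-1}^2 = 1 - \xi_n^2$, and deduce $P_k(\xi_n) = G(1-\xi_n^2, \xi_n)$ is a polynomial of degree at most $k$ in $\xi_n$.

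Next, for any polynomial $q$ with $\deg q < k$, the function $\xi \mapsto q(\tangle{\xi, \mathbf{e}_n})$ on $S^{n-1}$ is a polynomial of degree $<k$, so decomposes as a sum of spherical harmonics of degrees strictly less than $k$; since $\mathcal{SH}_{n,k}$ is $L^2$-orthogonal to these lower-degree pieces and $F_k(\cdot, \mathbf{e}_n) \in \mathcal{SH}_{n,k}$, we get $\int_{S^{n-1}} P_k(\tangle{\xi, \mathbf{e}_n}) q(\tangle{\xi, \mathbf{e}_n})\, d\omega_{n-1} = 0$. Switching to polar coordinates with $\xi_n = \cos\theta$, $d\omega_{n-1} = \sin^{n-2}\theta\, d\theta\, d\omega_{n-2}$ and substituting $t = \cos\theta$ yields
\[
\omega_{n-2} \int_{-1}^{1} P_k(t) q(t)\, (1-t^2)^{(n-3)/2}\, dt = 0.
\]
Thus $P_k$ is a polynomial of degree at most $k$ orthogonal on $[-1,1]$ to every polynomial of strictly lower degree with respect to the weight $(1-t^2)^{(n-3)/2}\, dt$. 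By the standard characterization of Gegenbauer polynomials, this forces $P_k = c_k\, G_k^{(n)}$ for some constant $c_k$.

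To determine $c_k$, set $\eta = \xi$: then $P_k(1) = \sum_l S_{n,l}(\xi)^2$, which by the inner-product-only dependence is independent of $\xi \in S^{n-1}$. Integrating over the sphere and using orthonormality of $\{S_{n,l}\}$ gives $P_k(1)\, \omega_{n-1} = \sum_l \tangle{S_{n,l}, S_{n,l}} = N(n,k)$. Since $G_k^{(n)}(1) = 1$ by definition, we conclude $c_k = N(n,k)/\omega_{n-1}$, proving the theorem. The step I expect to be trickiest is the polynomiality of $P_k$ (Step~2): extending the invariance off the sphere and using $SO(n-1)$-invariance to force the dependence on $\xi_1, \dots, \xi_{n-1}$ through $\xi_1^2 + \cdots + \xi_{n-1}^2$ must be done carefully, though once this is in hand the orthogonality reduction and normalization are routine.
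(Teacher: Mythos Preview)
Your argument is sound and is essentially the standard proof; note, however, that the paper itself does not prove the Addition Theorem but merely cites M\"uller, so there is no in-paper proof to compare against beyond the preparatory Proposition~\ref{Pspherical}. Your route---reducing to a univariate function via the $O(n)$-invariance of Proposition~\ref{Pspherical}(3), establishing polynomiality, then pinning down $P_k$ by orthogonality against lower-degree zonal polynomials and fixing the constant via $P_k(1)\,\omega_{n-1}=N(n,k)$---is exactly the classical argument in M\"uller's notes, so you have effectively supplied what the paper outsources.

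One small correction: when you specialize $\eta=\mathbf{e}_n$ and invoke invariance in the first $n-1$ coordinates, you should use the stabilizer $O(n-1)\oplus\{1\}$ rather than $SO(n-1)$. For $n\geq 3$ this makes no difference (both yield invariants that are polynomials in $\xi_1^2+\cdots+\xi_{n-1}^2$), but for $n=2$ the group $SO(1)$ is trivial and gives you nothing, whereas $O(1)=\{\pm 1\}$ forces evenness in $\xi_1$, which is what you need. Since Proposition~\ref{Pspherical}(3) already gives you full $O(n)$-invariance, this fix is free. You might also state explicitly the standard decomposition you are using in the orthogonality step---that restrictions to $S^{n-1}$ of polynomials of degree $<k$ lie in $\bigoplus_{j<k}\mathcal{SH}_{n,j}$---since the paper does not record it.
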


\begin{remark}
This result likely first appeared in work of M\"uller~\cite{Muller}; but
at the start of this paper he attributes the ideas in the entire paper to
a lecture given by Herglotz to the G\"ottingen Mathematical Society on
November 1, 1945. M\"uller again credits Herglotz for the Addition
Theorem in his book~\cite{Muller-book}.
\end{remark}

\begin{example}
Before proceeding further, we write down the Addition Theorem in the
special case discussed above: $n=2$. Write
$\xi = e^{i \theta}, \ \eta = e^{i \phi}$,
so that
\[
\tangle{\xi , \eta} = \cos \theta \cos \phi + \sin \theta \sin \phi =
\cos(\theta - \phi),
\]
which is the addition formula for $\cos(\cdot)$. Now 
\[
S_{2,1}(\xi) S_{2,1}(\eta) + S_{2,2}(\xi) S_{2,2}(\eta) = \frac{1}{\pi}
\cos(n (\theta - \phi)).
\]
Thus, as a function of $t = \tangle{\xi , \eta}$, the Addition Theorem
specializes to yield:
\[
t = \cos(\theta - \phi) \mapsto \cos(n(\theta - \phi)),
\]
which is precisely the Chebyshev polynomial (in $t$) of the first kind.
\qed
\end{example}

\subsubsection{The easier half of Schoenberg's theorem on Gegenbauer
polynomials}

We now come to Schoenberg's theorem~\ref{Tschoenberg-pd}. For spherical
code bounds like the one by Kabatiansky--Levenshtein, and to compute the
kissing numbers in dimensions $8$ and $24$, we only need the ``easier''
implication, and this follows quickly from the Addition Theorem:

\begin{proof}[Proof of Theorem~\ref{Tschoenberg-pd}, easier half]
The claim to be proved is:
\textit{any $\R_+$-linear combination of Gegenbauer polynomials
$G_k^{(n)} \circ \cos$ is positive definite on distance matrices of
$S^{n-1}$.}

To show this, we can remove the $\cos(\cdot)$ and replace distance
matrices by their entrywise cosines, aka Gram matrices drawn from
$S^{n-1}$. Now it suffices to work with a single Gegenbauer polynomial
$G_k^{(n)}$. But by the Addition Theorem~\ref{Tadd}, given any vectors
$\xi_1, \dots, \xi_N \in S^{n-1}$ and scalars $x_1, \dots, x_N \in \R$,
we have:
\begin{align}\label{Eharmonics}
x^T \cdot ( G_k^{(n)}(\tangle{\xi_i, \xi_j}) )_{i,j=1}^N \cdot x = &\
\sum_{i,j=1}^N x_i x_j G_k^{(n)}(\tangle{\xi_i, \xi_j})\\
= &\
\sum_{i,j=1}^N \sum_{l=1}^{N(n,k)} \frac{\omega_{n-1}}{N(n,k)} x_i x_j
S_{n,l}(\xi_i) S_{n,l}(\xi_j)\notag\\
= &\ \frac{\omega_{n-1}}{N(n,k)} \sum_{l=1}^{N(n,k)} \sum_{i,j=1}^N x_i
x_j S_{n,l}(\xi_i) S_{n,l}(\xi_j)\notag\\
= &\ \frac{\omega_{n-1}}{N(n,k)} \sum_{l=1}^{N(n,k)} \left( \sum_{i=1}^N
x_i S_{n,l}(\xi_i) \right)^2 \geq 0,\notag
\end{align}
and so $G_k^{(n)}[-]$ indeed sends Gram matrices from $S^{n-1}$ to
positive semidefinite matrices.
\end{proof}

\begin{remark}\label{Rgegenbauer}
In fact, we do not even need the full power of the ``easier half'' of
Schoenberg's theorem. Below, we will only need that the sum of the matrix
entries $\sum_{i,j=1}^N G_k^{(n)}( \tangle{\xi_i, \xi_j} )$ is
nonnegative. But this follows by setting all $x_i = 1$
in~\eqref{Eharmonics}.
\end{remark}

We conclude with some remarks on Schoenberg's 1942
paper~\cite{Schoenberg42} and a footnote in it. First, to show the above
easier half of Theorem~\ref{Tschoenberg-pd}, Schoenberg did not use the
Addition Theorem as it was unavailable at the time; instead, he used an
inductive and intrinsic Addition Formula for Gegenbauer polynomials $\{
G_k^{(n)} : k \geq 0 \}$ in terms of $\{ G_k^{(n-1)} : k \geq 0 \}$.

Second, there is a path from Schoenberg's results in~\cite{Schoenberg42}
to not just regularization of covariance matrices, but also to a
celebrated conjecture in complex analysis. Note that $G_k^{(n)} \circ
\cos$ is positive definite on $S^{n-1}$ for all $k$, hence on $S^{n-2}$.
Hence by the ``harder half'' of Schoenberg's
theorem~\ref{Tschoenberg-pd}, $G_k^{(n)}$ must be a nonnegative real
combination of lower order Gegenbauer polynomials $\{ G_k^{(n-1)} : k
\geq 0 \}$. Schoenberg noted this in \cite[Footnote~2]{Schoenberg42}, and
then remarked that this should hold when one replaces the integer
parameter $n \geq 2$ by $\rho+2$, for a nonnegative real parameter
$\rho$. It later emerged that this had already been worked out in~1884 by
Gegenbauer himself~\cite{Gegenbauer}. Almost a century later, in~1976
Askey and Gasper used these results to show in~\cite{AG} the
nonnegativity of a sequence of generalized hypergeometric functions
${}_3F_2$. These ideas were subsequently used in~1985 by de Branges, in
his famous resolution of the Bieberbach conjecture~\cite{deBranges}.

\subsubsection{Application: kissing numbers are attained on $E_8$ and
$\Lambda_{24}$, via the Delsarte--Goethals--Seidel bound}

To take stock of the last few pages: we have come from spherical
harmonics, to the Addition Theorem, to the easier half of Schoenberg's
theorem~\ref{Tschoenberg-pd} on positive definite functions on spheres,
aka positivity preservers on correlation matrices with rank bounded
above. We now use this result to compute the kissing numbers $k(8)$ and
$k(24)$, via Delsarte's method. In fact we show a stronger statement,
which involves one more notion.

\begin{definition}
Given an integer $n \geq 1$, let $k^{(L)}(n)$ denote the {\em lattice
kissing number of $\R^n$}, i.e., the largest number of unit spheres that
touch $S^{n-1}$, and whose centers lie on a lattice $L$ with
$\lambda_1(L) = 2$.
\end{definition}

The following observations are clear. First, we have $2n \leq k^{(L)}(n)
\leq k(n)$ for all $n$, since one can place sphere-centers at $\{ \pm 2
{\bf e}_j : 1 \leq j \leq n \}$ to kiss the unit sphere around the
origin. (Thus, $L = \oplus_{j=1}^n \mathbb{Z} (2 {\bf e}_j)$.)
Second, if $k(n)$ is attained at a lattice, then $k(n) = k^{(L)}(n)$.
This is indeed the case for $n = 1,2,3,4$.
Now one can show:

\begin{theorem}[\cite{Levenshtein,OS}]\label{TLP}
We have $k(8) = k^{(L)}(8) = 240$ (attained on $E_8$)
and $k(24) = k^{(L)}(24) = 196560$ (attained on $\Lambda_{24}$).
\end{theorem}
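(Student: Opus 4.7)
The plan is Delsarte's linear programming method, combined with explicit lattice constructions for the lower bounds. The lower bounds $k^{(L)}(8) \geq 240$ and $k^{(L)}(24) \geq 196560$ are essentially already recorded above: rescaling $E_8$ by $\sqrt{2}$ so that $\lambda_1 = 2$, its $240$ minimal vectors lie at distance exactly $2$ from the origin and at distance $\geq 2$ from each other (Remark~\ref{RE8kissing}); the analogous fact for the Leech lattice $\Lambda_{24}$ is noted in Section~\ref{Slattices}. Since $k^{(L)}(n) \leq k(n)$ trivially, it suffices to prove the matching upper bounds $k(8) \leq 240$ and $k(24) \leq 196560$.

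For the upper bound, let $\{\xi_1, \dots, \xi_N\} \subset S^{n-1}$ be any spherical $\pi/3$-code, i.e.\ $t_{ij} := \tangle{\xi_i, \xi_j}$ satisfies $t_{ii} = 1$ and $t_{ij} \in [-1, 1/2]$ for $i \neq j$. The core step is to produce, for each $n \in \{8, 24\}$, a polynomial
\begin{equation*}
f(t) = \sum_{k=0}^K c_k G_k^{(n)}(t), \qquad \text{with } c_0 > 0,\ c_k \geq 0 \ \forall k,\ \text{and } f(t) \leq 0 \text{ on } [-1, 1/2].
\end{equation*}
Granting such an $f$, I sum $f(t_{ij})$ over all $N^2$ pairs in two ways. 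Using $G_0^{(n)} \equiv 1$ together with the Gegenbauer positivity of Remark~\ref{Rgegenbauer} (namely $\sum_{i,j} G_k^{(n)}(t_{ij}) \geq 0$ for every $k$),
\begin{equation*}
\sum_{i,j=1}^N f(t_{ij}) \;=\; c_0 N^2 + \sum_{k \geq 1} c_k \sum_{i,j=1}^N G_k^{(n)}(t_{ij}) \;\geq\; c_0 N^2.
\end{equation*}
Conversely, isolating the diagonal and using $f \leq 0$ on $[-1, 1/2]$,
\begin{equation*}
\sum_{i,j=1}^N f(t_{ij}) \;=\; N f(1) + \sum_{i \neq j} f(t_{ij}) \;\leq\; N f(1).
\end{equation*}
Combining yields the bound $N \leq f(1)/c_0$.

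The main obstacle is exhibiting magic polynomials that make $f(1)/c_0$ equal to $240$ and $196560$, respectively. The recipe is to choose the roots of $f$ to match the inner-product set realized between distinct minimal vectors of $E_8$ (respectively $\Lambda_{24}$) rescaled to $S^{n-1}$. For $E_8$ this set is $\{-1, -1/2, 0, 1/2\}$, which after doubling the interior roots to enforce the sign condition gives
\begin{equation*}
f_8(t) = (t+1)\bigl(t+\tfrac{1}{2}\bigr)^2 \, t^2 \, \bigl(t-\tfrac{1}{2}\bigr);
\end{equation*}
for $\Lambda_{24}$ the inner-product set is $\{-1, \pm 1/2, \pm 1/4, 0\}$, giving
\begin{equation*}
f_{24}(t) = (t+1)\bigl(t+\tfrac{1}{2}\bigr)^2 \bigl(t+\tfrac{1}{4}\bigr)^2 \, t^2 \, \bigl(t-\tfrac{1}{4}\bigr)^2 \bigl(t-\tfrac{1}{2}\bigr).
\end{equation*}
Each is manifestly $\leq 0$ on $[-1, 1/2]$ via the sign of $(t+1)(t-1/2)$ combined with the squared factors. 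The nontrivial verification — that the expansion of each $f$ in the corresponding Gegenbauer basis has all nonnegative coefficients $c_k$, and that the resulting ratio $f(1)/c_0$ equals exactly $240$ (resp.\ $196560$) — is a finite, explicit Gegenbauer coefficient computation, carried out in~\cite{Levenshtein} and independently in~\cite{OS}. This closes the upper bound and establishes the claimed equalities; since the lower bounds are attained on $E_8$ and $\Lambda_{24}$, both $k(n)$ and $k^{(L)}(n)$ coincide with the stated values in dimensions $8$ and $24$.
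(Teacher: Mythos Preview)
Your proof is correct and follows essentially the same route as the paper: lattice lower bounds from Section~\ref{Slattices}, then the Delsarte linear-programming upper bound (your double-counting argument is exactly the proof of Theorem~\ref{TDGS}), applied to the same two magic polynomials. The only presentational difference is that the paper writes out the full Gegenbauer expansions explicitly (normalized so that $c_0 = 1$, whence $f(1) = 240$ and $196560$ directly), whereas you give the nice heuristic for the root placement and defer the coefficient check to \cite{Levenshtein,OS}.
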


The proof uses a famous upper bound on spherical codes, by
Delsarte--Goethals--Seidel:

\begin{theorem}[\cite{DGS}]\label{TDGS}
Fix an integer $n \geq 2$ and an angle $\psi \in (0,\pi]$. Let $f(t) =
\displaystyle \sum_{k=0}^\infty c_k G_k^{(n)}(t)$, where $n \geq 2$ and
all $c_k \in [0,\infty)$. Further assume that $c_0 > 0$ and $f(t) \leq 0$
for all $t \in [-1,\cos \psi]$. Then we have the upper bound
\begin{equation}
A(n,\psi) \leq \frac{f(1)}{c_0}.
\end{equation}
\end{theorem}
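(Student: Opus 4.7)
The plan is to use the standard two-sided estimation trick that is typical of Delsarte-type linear programming bounds. Let $X = \{\xi_1, \dots, \xi_N\} \subset S^{n-1}$ be any spherical $\psi$-code, so that $\tangle{\xi_i, \xi_j} \in [-1, \cos\psi]$ for $i \neq j$, while $\tangle{\xi_i, \xi_i} = 1$. I will consider the single scalar
\[
S := \sum_{i,j=1}^N f(\tangle{\xi_i, \xi_j})
\]
and bound it from above and below in two different ways. The inequality $N \leq f(1)/c_0$ will fall out by comparing the two bounds; taking the supremum over all $\psi$-codes $X$ then yields the claim about $A(n,\psi)$.

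For the upper bound, I would split the sum $S$ into its $N$ diagonal terms and its $N(N-1)$ off-diagonal terms. The diagonal contributes exactly $N f(1)$. For each off-diagonal term, the hypothesis $f \leq 0$ on $[-1, \cos\psi]$ together with $\tangle{\xi_i, \xi_j} \in [-1, \cos\psi]$ forces $f(\tangle{\xi_i, \xi_j}) \leq 0$. Hence $S \leq N f(1)$.

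For the lower bound, I would expand $f$ in the Gegenbauer basis and interchange sums:
\[
S = \sum_{k=0}^d c_k \sum_{i,j=1}^N G_k^{(n)}(\tangle{\xi_i, \xi_j}).
\]
This is where Schoenberg's theorem enters -- or more precisely, the weak consequence isolated in Remark~\ref{Rgegenbauer}: each inner double sum $\sum_{i,j} G_k^{(n)}(\tangle{\xi_i, \xi_j})$ is nonnegative (take $x_i \equiv 1$ in the positive semidefiniteness of the Gegenbauer-evaluated Gram matrix). Since every $c_k \geq 0$, all $k \geq 1$ summands contribute nonnegatively, and the $k=0$ summand contributes $c_0 \sum_{i,j} G_0^{(n)}(\tangle{\xi_i, \xi_j}) = c_0 N^2$, because $G_0^{(n)} \equiv 1$. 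Therefore $S \geq c_0 N^2$.

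Combining the two bounds gives $c_0 N^2 \leq N f(1)$, and since $c_0 > 0$ and $N \geq 1$, we obtain $N \leq f(1)/c_0$. The main conceptual content is the PSD-positivity of the Gegenbauer kernels $G_k^{(n)}(\tangle{\cdot,\cdot})$, which is already available via the Addition Theorem~\ref{Tadd} (and is the only ``hard'' input); everything else is bookkeeping. No genuine obstacle is expected, but care is needed that one only invokes the nonnegativity of the row sums of the Gegenbauer Gram matrices, which is strictly weaker than full positive definiteness and hence does not require the converse direction of Theorem~\ref{Tschoenberg-pd}.
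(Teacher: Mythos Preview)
Your proposal is correct and matches the paper's proof essentially line for line: both bound the double sum $\sum_{i,j} f(\tangle{\xi_i,\xi_j})$ above by $Nf(1)$ via the sign hypothesis on $f$, and below by $c_0 N^2$ via Remark~\ref{Rgegenbauer} and $G_0^{(n)} \equiv 1$. The only cosmetic difference is that the paper fixes $N = A(n,\psi)$ at the outset, whereas you work with an arbitrary $\psi$-code and take the supremum at the end.
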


As promised, this will use the entrywise calculus and Schoenberg's
positivity preserver $f$!

\begin{proof}
Choose $N := A(n,\psi)$-many points $\xi_1, \dots, \xi_N$ on the sphere
with pairwise angles at least $\psi$. Apply the entrywise map $f[-]$ to
their Gram matrix. Then,
\[
\sum_{i,j=1}^N f(\tangle{\xi_i, \xi_j}) =
\sum_{k=0}^\infty c_k \sum_{i,j=1}^N G_k^{(n)}(\tangle{\xi_i, \xi_j})
\geq c_0 \sum_{i,j=1}^N G_0^{(n)}(\tangle{\xi_i, \xi_j}) = c_0 N^2,
\]
using Remark~\ref{Rgegenbauer} and that $G_0 \equiv 1$.

Also note that by hypothesis, $f$ applied to any off-diagonal entry is
non-positive, since distinct points have inner product at most
$\cos(\psi)$. Therefore,
\[
\sum_{i,j=1}^N f(\tangle{\xi_i, \xi_j}) = N f(1) + \sum_{i \neq j}
f(\tangle{\xi_i,\xi_j}) \leq N f(1).
\]
Combining the two bounds gives $A(n,\psi) = N \leq f(1)/c_0$.
\end{proof}

Finally, we employ Theorem~\ref{TDGS} to show:

\begin{proof}[Proof of Theorem~\ref{TLP}, taken from~\cite{OS}]
We first prove the $n=8$ case. We in fact showed (but did not state) in
Section~\ref{Slattices} that
$k(8) \geq k^{(L)}(8) \geq 240$.
Now consider the carefully chosen degree 6 polynomial
\begin{align}
f(t) = &\ \frac{320}{3} (t+1) (t + \textstyle \frac{1}{2})^2 t^2 (t -
\textstyle \frac{1}{2})\\
= &\ G_0^{(8)} +
\frac{16}{7} G_1^{(8)} +
\frac{200}{63} G_2^{(8)} +
\frac{832}{231} G_3^{(8)} +
\frac{1216}{429} G_4^{(8)} +
\frac{5120}{3003} G_5^{(8)} +
\frac{2560}{4641} G_6^{(8)},\notag
\end{align}
where we suppress the ``$(t)$'' from the Gegenbauer polynomials,
and one solves for the coefficients of the polynomials $G_k^{(8)}$ by a
triangular change of basis from the monomial basis.

Then $f \leq 0$ on $[-1,1/2]$, so one can apply Theorem~\ref{TDGS} with
$\psi = \pi/3$ to upper bound the kissing number:
\[
k(8) = A(8,\pi/3) \leq f(1) / c_0 = 240.
\]
Combined with the above lower bounds, we are done by sandwiching.

The proof is similar for $n=24$: we had seen that $k(24) \geq k^{(L)}(24)
\geq 196560$. Now consider
\begin{align}
f(t) = &\ \frac{1490944}{15} (t+1) (t + \textstyle \frac{1}{2})^2 (t +
\textstyle \frac{1}{4})^2 t^2 (t - \textstyle \frac{1}{4})^2 (t -
\textstyle \frac{1}{2})\\
= &\ G_0^{(24)} +
\frac{48}{23} G_1^{(24)} +
\frac{1144}{425} G_2^{(24)} +
\frac{12992}{3825} G_3^{(24)} +
\frac{73888}{22185} G_4^{(24)} \notag\\
&\ + \frac{2169856}{687735} G_5^{(24)}
+ \frac{59062016}{25365285} G_6^{(24)}
+ \frac{4472832}{2753575} G_7^{(24)} \notag\\
&\ + \frac{23855104}{28956015} G_8^{(24)}
+ \frac{7340032}{20376455} G_9^{(24)}
+ \frac{7340032}{80848515} G_{10}^{(24)}. \notag
\end{align}
Once again $f \leq 0$ on $[-1,1/2]$, so we apply Theorem~\ref{TDGS} with
$\psi = \pi/3$ to obtain $k(24) \leq 196560$. Combined with the above
lower bounds, we are again done.
\end{proof}
%}}}

%{{{1 Section A.8 - From spherical codes to sphere packing upper bounds
\subsection{From spherical codes to sphere packing upper bounds}

We now return from Delsarte, Schoenberg, and kissing numbers, back to
packing densities for Euclidean spaces. Recall Theorem~\ref{TKL} by
Kabatiansky--Levenshtein, which involved using linear programming to
obtain an upper bound on the Kissing number/spherical code $A(n,\pi/3)$.
From this, the authors deduced an upper bound on the packing density
itself -- which remained the state-of-the-art for many years:

\begin{theorem}[\cite{KL}]
For all angles $\theta \in [\pi/3,\pi]$, we have the relation
\begin{equation}
\Delta_{\R^n} \leq (1 - \cos \theta)^{n/2} 2^{-n/2} A(n+1,\theta) =
\sin(\theta/2)^n A(n+1,\theta).
\end{equation}
In particular, for $\theta = \pi/3$, we have using Theorem~\ref{TKL}:
\begin{equation}
\Delta_{\R^n} \leq 2^{-n(0.599+o(1))}.
\end{equation}
\end{theorem}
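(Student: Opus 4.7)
The plan is to embed a sphere packing in $\R^n$ into a spherical $\theta$-code on $S^n \subset \R^{n+1}$ via vertical (orthogonal) projection onto a sphere of radius $R = 1/\sin(\theta/2)$, chosen precisely so that a packing distance of $2$ corresponds to chord distance $2\sin(\theta/2)$ on $S^n$, i.e., angular separation $\theta$.

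Fix a packing $\mathcal{P} \subset \R^n$ of unit balls (so $\|x - y\| \geq 2$ for distinct $x, y \in \mathcal{P}$) and set $R := 1/\sin(\theta/2)$, which lies in $[1,2]$ for $\theta \in [\pi/3, \pi]$. Define the vertical projection
\[
\phi : \overline{B_{\R^n}(0,R)} \to S^n \subset \R^{n+1}, \qquad
\phi(c) := \frac{1}{R}\bigl(c,\ \sqrt{R^2 - \|c\|^2}\bigr).
\]
A direct computation yields
\[
\bigl\|\phi(c_1) - \phi(c_2)\bigr\|_{\R^{n+1}}^2
= \frac{1}{R^2}\Bigl(\|c_1 - c_2\|^2 + \bigl(\sqrt{R^2 - \|c_1\|^2} - \sqrt{R^2 - \|c_2\|^2}\bigr)^2\Bigr)
\geq \frac{\|c_1 - c_2\|^2}{R^2},
\]
so distinct centers $c_1, c_2 \in \mathcal{P} \cap B_{\R^n}(0,R)$ map to points on $S^n$ with chord distance at least $2/R = 2\sin(\theta/2)$, equivalently angular separation at least $\theta$. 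Therefore $\phi\bigl(\mathcal{P} \cap B_{\R^n}(0,R)\bigr)$ is a spherical $\theta$-code, yielding $|\mathcal{P} \cap B_{\R^n}(0,R)| \leq A(n+1,\theta)$; the same bound holds for any translate $|\mathcal{P} \cap B_{\R^n}(t,R)|$ (by applying the argument to the shifted packing $\mathcal{P} - t$).

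To promote this pointwise bound to a density bound, I would use translation averaging: integrating over $t \in [-M,M]^n$ and swapping summation with integration,
\[
\frac{1}{(2M)^n}\int_{[-M,M]^n} \bigl|\mathcal{P} \cap B_{\R^n}(t,R)\bigr|\,dt
= \frac{1}{(2M)^n} \sum_{c \in \mathcal{P}} \mathrm{vol}\bigl(B_{\R^n}(c,R) \cap [-M,M]^n\bigr).
\]
Every $c$ in the interior $[-(M-R), M-R]^n$ contributes $\nu_n R^n$ to the sum, boundary contributions are $O(M^{n-1})$, and the definition of $\Delta_\mathcal{P}$ forces $|\mathcal{P} \cap [-M,M]^n|\cdot \nu_n \geq (\Delta_\mathcal{P} - o(1))(2M)^n$ for arbitrarily large $M$. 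Hence the average tends to $\Delta_\mathcal{P} R^n$, so some translate satisfies $|\mathcal{P} \cap B_{\R^n}(t,R)| \geq (\Delta_\mathcal{P} - o(1)) R^n$. Combined with the code bound and passing to the supremum over packings, this yields $\Delta_{\R^n} R^n \leq A(n+1,\theta)$, i.e., $\Delta_{\R^n} \leq \sin(\theta/2)^n A(n+1,\theta)$.

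The only real obstacle is the routine but careful bookkeeping in the averaging step -- handling boundary contributions and reconciling integer counts with the real number $\Delta_{\R^n} R^n$ via the $\limsup$ definition of density. For the final numerical assertion, specialize $\theta = \pi/3$: since $\sin(\pi/6) = 1/2$ and $A(n+1, \pi/3) = k(n+1)$, substituting the Kabatiansky--Levenshtein kissing bound $k(n+1) \leq 2^{(n+1)(0.401 + o(1))}$ from Theorem~\ref{TKL} yields $\Delta_{\R^n} \leq 2^{-n} \cdot 2^{(n+1)(0.401 + o(1))} = 2^{-n(0.599 + o(1))}$.
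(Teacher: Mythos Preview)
Your proposal is correct and follows essentially the same approach as the paper's sketch: project the packing centers from an $n$-disk of radius $R = 1/\sin(\theta/2)$ vertically onto the hemisphere of $S^n$, observe that this projection can only increase distances so the images form a spherical $\theta$-code, and use a translation-averaging argument to find a translate of the disk containing at least $\Delta_{\mathcal{P}} R^n$ centers. You supply more explicit detail in both the distance computation and the averaging step than the paper does, but the ideas are identical.
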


\begin{proof}[Sketch of proof, taken from~\cite{CZ}]
Let $\mathcal{P}$ be any packing of $\R^n$ with unit spheres, with
density $\Delta$. Consider a solid sphere $\overline{B}_{\R^{n+1}}(0, R)
\subset \R^{n+1}$, whose radius $R$ we choose later. Also fix a
hyperplane $P$ passing through the origin, and consider the
$n$-dimensional closed disk $D := P \cap \overline{B}_{\R^{n+1}}(0, R)$.
Given that $D$ has $n$-volume equal to that of $R^n$ unit spheres, one
expects that on average, a translation of the above sphere packing of
$\R^n \cong P$ should intersect $D$ in $\Delta R^n$ many \textit{centers}
of unit spheres. Choose such a translation, and project these centers
onto the upper hemisphere of the boundary $R \cdot S^n$. The spherical
distance between any two of these points is bigger than their Euclidean
distance, which in turn exceeds the distance between their projections in
$D$.

Now we bring in $\theta$. Ensuring that any two projected points in the
hemisphere are at least $\theta$ angle apart, is equivalent to their arc
having length $\geq R \theta$, which holds if the chord joining them has
length $\geq 2 R \sin(\theta/2)$. But we know that all chords have length
at least $2$, so we equate these bounds and set $R = 1/\sin(\theta/2) >
1/(\theta/2)$. Then
\[
\Delta \cdot R^n \leq A(n+1,\theta),
\]
and this is the desired bound.
\end{proof}

In 2014, Cohn and Zhao improved this bound, using a similarly short
argument:

\begin{theorem}[\cite{CZ}]
For all angles $\theta \in [\pi/3,\pi]$, we have the relation
\begin{equation}
\Delta_{\R^n} \leq \sin(\theta/2)^n A(n,\theta).
\end{equation}
\end{theorem}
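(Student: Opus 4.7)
The plan is to modify the Kabatiansky--Levenshtein argument sketched above so that the spherical $\theta$-code produced lives on $S^{n-1}$ rather than on $S^n$; this replaces $A(n+1,\theta)$ by $A(n,\theta)$. The conceptual shift is to run the argument \emph{locally at every ambient point} $p \in \R^n$ and then integrate over $p$, instead of lifting the packing once to a single hemisphere in $\R^{n+1}$. Throughout set $R := 1/\sin(\theta/2)$; the hypothesis $\theta \in [\pi/3, \pi]$ is equivalent to $R \in [1, 2]$.

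The core step is a \emph{local packing lemma}: for every $p \in \R^n$, the count $N(p) := |\mathcal{P} \cap B_{\R^n}(p, R)|$ is bounded by $A(n, \theta)$. To prove this, fix distinct $y_1, y_2 \in \mathcal{P} \cap B_{\R^n}(p, R) \setminus \{p\}$, write $a := \|y_1 - p\|$ and $b := \|y_2 - p\|$, both in $(0, R]$, and combine the law of cosines with the packing inequality $\|y_1 - y_2\| \geq 2$ to get
\[
\cos \angle y_1 p y_2 \ \leq \ \frac{a^2 + b^2 - 4}{2 a b}.
\]
The desired conclusion $\angle y_1 p y_2 \geq \theta$ is then implied by the inequality $F(a, b) := a^2 + b^2 - 2 a b \cos \theta \leq 4$. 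Since $F$ is a positive semidefinite quadratic form in $(a, b)$, its maximum on the square $[0, R]^2$ is attained on the boundary, and a short check using $R \leq 2$ together with the identity $R^2 \cdot 2(1 - \cos \theta) = 4 R^2 \sin^2(\theta/2) = 4$ shows this maximum equals $4$, attained at the single corner $(R, R)$. Therefore the radial unit vectors $(y - p) / \|y - p\|$ for $y \in \mathcal{P} \cap B_{\R^n}(p, R) \setminus \{p\}$ form a spherical $\theta$-code on $S^{n-1}$, so there are at most $A(n, \theta)$ of them.

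I would then finish by double counting $I_M := \int_{[-M, M]^n} N(p)\, dp$ for large $M$. The local lemma yields $I_M \leq A(n, \theta) \cdot (2M)^n$. Fubini's theorem rewrites
\[
I_M \ = \ \sum_{x \in \mathcal{P}} {\rm Vol}_{\R^n}\bigl( B_{\R^n}(x, R) \cap [-M, M]^n \bigr),
\]
which, up to a lower-order boundary term of size $O_{R, n}(M^{n-1})$ independent of $\mathcal{P}$, equals $|\mathcal{P} \cap [-M, M]^n| \cdot \nu_n R^n$. Dividing by $(2M)^n$ and invoking the definition of $\Delta_{\mathcal{P}}$ as the $M \to \infty$ limsup of $|\mathcal{P} \cap [-M,M]^n| \cdot \nu_n / (2M)^n$, we conclude $\Delta_{\mathcal{P}} R^n \leq A(n, \theta)$, which rearranges to $\Delta_{\mathcal{P}} \leq \sin(\theta/2)^n A(n, \theta)$. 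Taking supremum over packings $\mathcal{P}$ gives the stated bound.

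The main point of difficulty I anticipate is the local-lemma quadratic check, which depends crucially on $\theta \geq \pi/3$: for $\theta < \pi/3$ one has $R > 2$, and at the corner $(0, R)$ the value $F(0, R) = R^2$ exceeds $4$, so the angle bound can fail for pairs in $\mathcal{P} \cap B_{\R^n}(p,R)$ having one center very close to $p$ and the other near the boundary of the ball. Conditional on that lemma, the averaging step is standard, with the same boundary bookkeeping that appears in the Kabatiansky--Levenshtein proof; the entire improvement over their theorem is packaged into the fact that the test code now lives in $S^{n-1}$ rather than $S^n$.
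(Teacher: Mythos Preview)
The paper does not actually supply a proof of this theorem; it merely states the result, attributes it to Cohn--Zhao~\cite{CZ}, and remarks that the argument is ``similarly short'' to the Kabatiansky--Levenshtein sketch. Your proposal is correct and is precisely the Cohn--Zhao argument: a local geometric lemma bounding $|\mathcal{P}\cap B_{\R^n}(p,R)|$ by $A(n,\theta)$ via angle comparison, followed by an averaging step over~$p$. The quadratic check $F(a,b)\le 4$ on $[0,R]^2$ is where the hypothesis $\theta\ge\pi/3$ (i.e.\ $R\le 2$) is genuinely used, as you note. Two microscopic points you could make explicit but which do not affect the argument: (i) distinct centers in $B(p,R)$ cannot project to the same unit direction from~$p$, since two collinear centers on the same side of~$p$ would lie at distance $<R\le 2$; and (ii) for the measure-zero set of $p\in\mathcal{P}$ one has $N(p)\le A(n,\theta)+1$, which is irrelevant after integration.
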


This is at least as good, because all $A(n,\theta)$-many points can be
arranged along the equatorial sub-sphere $S^{n-1}$ in $S^n \subset
\R^{n+1}$.

We now list a few previously shown upper bounds. The first is by Rogers
in 1958:

\begin{theorem}[\cite{Rogers58}]
Let $X$ be a regular $(n+1)$-point simplex in $\R^n$ of side length $2$
and a vertex at the origin. Then
\begin{equation}\label{Erogers58}
\Delta_{\R^n}  \leq \sigma_n, \qquad \text{where } \sigma_n :=
\frac{(n+1) {\rm Vol}_{\R^n}(B_{\R^n}(0,1) \cap X)}{{\rm Vol}_{\R^n}(X)}.
\end{equation}
\end{theorem}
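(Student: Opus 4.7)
The plan is to bound the density of an arbitrary packing $\mathcal{P} \subset \R^n$ by decomposing space into simplices with vertices in $\mathcal{P}$, and then controlling the local density inside each simplex via a reduction to the regular-simplex configuration.

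First, take the Delaunay decomposition of $\mathcal{P}$ (perturbing $\mathcal{P}$ infinitesimally to ensure genericity). This partitions $\R^n$ into nondegenerate $n$-simplices $T$ whose vertices $v_0, \ldots, v_n$ all lie in $\mathcal{P}$, so automatically every edge of every such $T$ satisfies $\|v_i - v_j\| \geq 2$. The defining property of the Delaunay decomposition -- that for each $y \in T$ the nearest point of $\mathcal{P}$ to $y$ is a vertex of $T$ -- implies that
\[
T \cap \bigcup_{x \in \mathcal{P}} B_{\R^n}(x,1)
\;=\; \bigsqcup_{i=0}^n \bigl( T \cap B_{\R^n}(v_i,1) \bigr),
\]
the right-hand side being a disjoint union because the vertex-balls are pairwise disjoint (their centers are at distance $\geq 2$).

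The key geometric lemma is then the \emph{Rogers simplex bound}: for every $n$-simplex $T$ with all edge lengths $\geq 2$,
\[
\sum_{i=0}^n {\rm Vol}_{\R^n}\bigl(T \cap B_{\R^n}(v_i,1)\bigr) \;\leq\; \sigma_n \cdot {\rm Vol}_{\R^n}(T),
\]
with equality when $T$ is the regular simplex $X$ of side length $2$. Granted this, summing over the Delaunay simplices that meet the cube $[-M,M]^n$ -- and noting that they partition $\R^n$ -- yields
\[
{\rm Vol}_{\R^n}\Bigl( [-M,M]^n \cap \bigcup_{x \in \mathcal{P}} B_{\R^n}(x,1) \Bigr)
\;\leq\; \sigma_n (2M)^n + O(M^{n-1}),
\]
where the error term absorbs the contributions of simplices straddling the cube boundary. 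Dividing by $(2M)^n$ and taking $M \to \infty$ gives $\Delta_{\mathcal{P}} \leq \sigma_n$; the supremum over all packings then gives $\Delta_{\R^n} \leq \sigma_n$.

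The main obstacle is the Rogers simplex bound itself, which expresses that the regular simplex of side $2$ is locally the worst case among all simplices with minimum edge length $2$. Equality at $T = X$ is immediate: by the symmetry exchanging the $n+1$ vertices of $X$, each ${\rm Vol}_{\R^n}(X \cap B_{\R^n}(v_i,1))$ equals ${\rm Vol}_{\R^n}(X \cap B_{\R^n}(0,1))$, and summing over $i$ gives exactly $\sigma_n \cdot {\rm Vol}_{\R^n}(X)$. The difficulty is the general case. A tempting route is to bound each ${\rm Vol}_{\R^n}(T \cap B_{\R^n}(v_i,1))$ above by the volume of the full spherical sector of $B_{\R^n}(v_i,1)$ with solid angle $\Omega(T, v_i)$, but this sector may fail to be contained in $T$ precisely when $T$ is long and thin, so this purely angular estimate breaks down on the ``skinny'' side of the moduli space of simplices. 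One fix is to clip each sector against $T$ itself and exploit monotonicity: one tries to show that the local density ratio cannot increase when an edge of length $>2$ is shortened, so the supremum is forced onto the compact boundary of the moduli space where every edge has length exactly $2$, which (up to rigid motion) is $X$. Making this monotonicity rigorous -- in particular ruling out pathologies from flat degenerations, where the excess spherical sector spills outside $T$ but the simplex itself also has tiny volume -- is the most technical step, and is handled by Rogers via a careful ray-by-ray slicing comparison inside $T$.
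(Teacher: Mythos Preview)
The paper does not prove this theorem: it is merely stated with a citation to Rogers~\cite{Rogers58}, followed by the one-line gloss ``the packing density of $\R^n$ cannot exceed the packing density of a regular simplex of edgelength $2$ with unit spheres at the vertices,'' and then the text moves on to other bounds. So there is no in-paper proof to compare against.

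As for your outline on its own merits: the skeleton is correct and is indeed Rogers' original strategy --- Delaunay-triangulate, observe that every cell is a simplex with edge lengths $\geq 2$, and reduce to the per-simplex density bound. But you have not actually proved anything, because you explicitly defer the one nontrivial step (the Rogers simplex bound) to ``a careful ray-by-ray slicing comparison'' that you do not carry out. Your discussion of why the na\"ive solid-angle argument fails is accurate, and your instinct about monotonicity under edge-shortening is in the right direction, but as written this is a proof \emph{plan}, not a proof. A couple of technical points also deserve care: (i) the Delaunay complex of an infinite discrete set need not consist of bounded simplices unless the packing is saturated (or you intersect with a large box and handle the boundary), and (ii) the claim that $T \cap \bigcup_{x\in\mathcal P} B(x,1)$ equals the union over \emph{vertices} of $T$ requires that no non-vertex packing center is within distance $1$ of $T$, which follows from the empty-circumsphere property but should be stated rather than conflated with the nearest-point property.
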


In other words, the packing density of $\R^n$ cannot exceed the packing
density of a regular simplex of edgelength $2$ with unit spheres at the
vertices.

Our final two bounds here are older: in 1929, Blichfeldt showed
in~\cite{Bl29} that $\Delta_{\R^n} \leq \frac{n+2}{2} \cdot 2^{-n/2}$, 
%Klartag slides
and in 1944, Mordell~\cite{Mordell2} provided a bound in the language of
the Hermite constant:
\begin{equation}\label{Emordell}
\gamma_n \leq \gamma_{n-1}^{(n-1)/(n-2)}.
\end{equation}

\begin{remark}
Repeated use of Mordell's bound~\eqref{Emordell} shows that
\[
\gamma_n \leq \gamma_{n-1}^{\frac{n-1}{n-2}} \leq
\gamma_{n-2}^{\frac{n-1}{n-3}} \leq \cdots \leq \gamma_2^{n-1}.
\]
But Lagrange's theorem~\ref{Tlagrange} shows $\gamma_2 = \sqrt{4/3}$; now
combining these two facts yields Hermite's original
theorem~\ref{Thermite}, reformulated as: $\gamma_n \leq (4/3)^{(n-1)/2} =
\gamma_2^{n-1}$.
\end{remark}

We end this part by mentioning that work on upper bounds continues -- see
the very recent work~\cite{SZ}, where new upper bounds are shown both for
spherical codes (for angles $\theta < 62.997^\circ$), and then for sphere
packings in dimensions $n \geq 2000$. (This work improves by a constant
factor the Kabatiansky--Levenshtein upper bound~\cite{KL}, as did
Cohn--Zhao~\cite{CZ}.) We add that Cohn maintains a
\href{https://cohn.mit.edu/sphere-packing/}{webpage} with the latest
numerical upper bounds in low dimensions, and references for these.
%}}}

%{{{1 Section A.9 - Lower bounds on sphere packings
\subsection{Lower bounds on sphere packings}

In comparison, there are many results in the 20th and 21st centuries that
address lower bounds for the packing density $\Delta_{\R^n}$. The first
is a simple ``folklore'' estimate.

\begin{lemma}
$\Delta_{\R^n} \geq 2^{-n}$.
\end{lemma}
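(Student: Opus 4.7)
The plan is to produce a single packing $\mathcal{P}$ whose density is at least $2^{-n}$, using the standard \emph{maximal packing} (saturation) argument. Concretely, I would invoke Zorn's lemma (or a direct greedy construction on a countable dense set, followed by taking a suitable limit) to obtain a packing $\mathcal{P} \subset \R^n$ that is \emph{saturated}, in the sense that no further point can be added while keeping pairwise distances $\geq 2$. Equivalently, for every $y \in \R^n$ there must exist some $x \in \mathcal{P}$ with $\|y - x\|_2 < 2$, since otherwise $y$ itself could be appended to $\mathcal{P}$. This is the one non-trivial ingredient, and it is the step I expect to require the most care; the reduction to a countable construction (e.g.\ iteratively adjoining rational points at distance $\geq 2$ from the existing set) avoids any genuine difficulty with Zorn.

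From saturation, the dilated family of open balls $\{B_{\R^n}(x,2) : x \in \mathcal{P}\}$ covers $\R^n$. Fixing the cube $K_M := [-M,M]^n$, I would observe the sandwich
\[
K_M \;\subseteq\; \bigcup_{x \in \mathcal{P} \cap K_{M+2}} B_{\R^n}(x, 2),
\qquad
\bigsqcup_{x \in \mathcal{P} \cap K_{M-1}} B_{\R^n}(x,1) \;\subseteq\; K_M \cap \bigcup_{x \in \mathcal{P}} B_{\R^n}(x,1),
\]
where the first inclusion uses saturation and the second uses that unit balls around centers in $K_{M-1}$ stay inside $K_M$. Taking $n$-dimensional volumes and using that a radius-$2$ ball has volume $2^n \nu_n$ while a unit ball has volume $\nu_n$, the first inclusion yields
\[
(2M)^n \;\leq\; |\mathcal{P} \cap K_{M+2}|\cdot 2^n \nu_n,
\]
and together with disjointness of the unit balls (which is the defining property of a packing), the second inclusion yields
\[
\mathrm{Vol}_{\R^n}\!\left(K_M \cap \bigcup_{x \in \mathcal{P}} B_{\R^n}(x,1)\right) \;\geq\; |\mathcal{P} \cap K_{M-1}| \cdot \nu_n.
\]

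Combining these two bounds (after noting that $|\mathcal{P} \cap K_{M+2}|$ and $|\mathcal{P} \cap K_{M-1}|$ differ only by a boundary term of order $M^{n-1}$, which is negligible compared to $M^n$), I would divide by $(2M)^n$ and pass to $M \to \infty$ to conclude
\[
\Delta_{\mathcal{P}} \;=\; \limsup_{M \to \infty} \frac{1}{(2M)^n} \mathrm{Vol}_{\R^n}\!\left(K_M \cap \bigcup_{x \in \mathcal{P}} B_{\R^n}(x,1)\right) \;\geq\; 2^{-n}.
\]
Since $\Delta_{\R^n} \geq \Delta_{\mathcal{P}}$ by definition, the bound $\Delta_{\R^n} \geq 2^{-n}$ follows. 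The only genuinely subtle points are (a)~the existence of a saturated packing, which I would handle as sketched above, and (b)~ensuring the boundary-layer discrepancy between $\mathcal{P} \cap K_{M-1}$ and $\mathcal{P} \cap K_{M+2}$ is asymptotically negligible, which follows from the trivial upper bound $|\mathcal{P} \cap K_R| \leq C_n R^n$ for any packing.
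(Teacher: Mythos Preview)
Your proposal is correct and follows essentially the same approach as the paper: take a saturated packing, observe that the radius-$2$ balls around its centers cover $\R^n$, and deduce the $2^{-n}$ bound. The paper's proof is a two-line sketch that simply asserts ``the density of the doubled spheres is $1$, the result follows,'' whereas you have spelled out the volume sandwich and boundary-layer estimate that make this rigorous; but the underlying idea is identical.
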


\begin{proof}
In fact, we claim that this estimate is achieved by any ``saturated''
packing $\mathcal{P}$ -- one in which no additional sphere can be
inserted. To see why, note as above that all spheres in a packing have
centers separated by a distance of $2$. Now we claim that $\bigcup_{x \in
\mathcal{P}} B_{\R^n}(x,2) = \R^n$ -- for if not, there would be a point
in the complement, which is $2$ apart from all other centers. But then
one can add another unit sphere here, which contradicts saturation. Now
since the density of the ``doubled spheres'' is $1$, the result follows.
\end{proof}

This bound has seen several improvements throughout the past century and
this one -- many of the following are lower bounds even for lattice
sphere packing densities.

\begin{theorem}
The packing density $\Delta_{\R^n} \cdot 2^n$ is at least as large as:
\begin{enumerate}
\item (Minkowski, 1905 \cite{Minkowski} and Hlawka, 1943 \cite{Hlawka}.)
$2 \zeta(n) = 2 \sum_{j=1}^\infty j^{-n} = 2 + O(2^{-n})$.

\item (Rogers, 1947 \cite{Rogers47}.)
$\displaystyle \frac{2n \zeta(n)}{e(1-e^{-n})}$. In particular, this
exceeds $0.73n$ for $n \gg 0$. (See also Davenport and
Rogers~\cite{DR47}.)

\item (Ball, 1992 \cite{Ball}.)
$2(n-1) \zeta(n)$.

\item (Krivelevich--Litsyn--Vardy, 2004 \cite{KLV}.)
$n/100$.

\item (Vance, 2011 \cite{Vance}.)
$\displaystyle \frac{6n \zeta(n)}{e(1-e^{-n/4})}$, if $4|n$.

\item (Venkatesh, 2012 \cite{Venkatesh}.)
$65963 n$ for all sufficiently large $n$; and (the first super-linear
growth:) $\frac{1}{2} n \log \log n$ for infinitely many $n$.

\item (Campos--Jenssen--Michelen--Sahasrabudhe \cite{CJMS}.)
$\frac{1 - o(1)}{2} n \log n$.

\item (Gargava--Viazovska \cite{GV}.)
$n \log \log n - O(e^{-c n(\log n + O(1))}$ for infinitely many $n$,
where $c>0$ is a universal constant.

\item (Klartag \cite{Klartag}.)
$c n^2$, where $c>0$ is a universal constant. (In a sense, this is an
adaptation/follow-up of the work of Rogers.)
\end{enumerate}
\end{theorem}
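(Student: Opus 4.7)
Since the statement bundles nine separate results by different authors across more than a century, my plan is to organize them around two main paradigms: (a) the \emph{Minkowski--Hlawka mean-value method} for lattice packings (underlying (1)--(3) and (5)--(9)), and (b) \emph{probabilistic/combinatorial constructions} from coding theory (item (4)). The foundational step is to prove (1), the Minkowski--Hlawka theorem. Parameterize unit-covolume lattices in $\R^n$ by $SL_n(\R)/SL_n(\mathbb{Z})$ equipped with Haar-probability measure $\mu$, and invoke Siegel's mean-value formula: for any compactly supported measurable $f : \R^n \to \R$,
\[
\int \sum_{0 \neq v \in L} f(v) \, d \mu(L) = \int_{\R^n} f(x) \, dx.
\]
Applying this to (essentially) the indicator of an open ball of radius $2$ and restricting to primitive vectors introduces the factor $\zeta(n)$; this produces a unimodular $L$ with $\lambda_1(L) \geq 2$, hence by~\eqref{Ehermite} a lattice packing of density at least $2 \zeta(n) \cdot 2^{-n}$, which is (1).

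The plan for (2)--(3), (5)--(6), (8)--(9) is to combine this averaging with additional structure. For Rogers (2) one refines by averaging over admissible bases rather than all lattices, picking up a factor of order $n$; for Ball (3) one replaces the ball by the difference body of a simplex and applies a polarization/symmetrization trick to gain an extra factor of $n-1$. For Vance (5), Venkatesh (6), and Gargava--Viazovska (8), I would restrict averaging to lattices carrying a free action of the ring of integers $\mathbb{Z}[\zeta_m]$ in a cyclotomic field: each nonzero lattice vector then comes with a free orbit of size $\phi(m)$, multiplying the classical bound by $\phi(m)$. Optimizing $m$ against $n = k \phi(m)$ in a number-theoretic range, and using random choices of ideal classes or division algebras, yields the stated super-linear growth (via multiplicative characters for Venkatesh, and via a modular-form-based mean-value formula for Gargava--Viazovska). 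Klartag (9) adapts Rogers-type averaging using convex-geometric input (log-concave measure inequalities and Milman-type estimates) to squeeze out the factor $n^2$. The Campos--Jenssen--Michelen--Sahasrabudhe bound (7) replaces pure averaging with a cluster-expansion/graph-theoretic argument for large independent sets in the ``forbidden-difference'' graph on a candidate lattice. Finally, (4) by Krivelevich--Litsyn--Vardy is non-lattice: start with a binary code of linear minimum Hamming distance (e.g.\ from an expander graph as in Example~\ref{ExHamming}), apply Construction~A to lift to $\R^n$, and tune parameters to secure the $n/100$ bound.

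The main obstacle is that averaging only yields \emph{existence}, and each step beyond $2 \zeta(n) \cdot 2^{-n}$ demands a finer or more restrictive mean-value identity. The hardest ingredient in the recent items (6), (8), (9) is establishing the analogue of Siegel's formula on a \emph{proper subvariety} of the space of lattices (those with prescribed cyclotomic or algebraic symmetry), where the natural invariant measure is delicate to construct and the contribution of ``near-exceptional'' orbits has to be controlled; controlling these error terms is what pushes the argument from linear to genuinely super-linear growth. The coding-theoretic route (4) sidesteps lattices but imports a separate delicate machinery about independent sets in high-girth graphs; and Klartag's record (9) requires convex-geometric estimates that go beyond all earlier approaches.
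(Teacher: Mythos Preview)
The paper gives no proof of this theorem: it is a survey statement that simply lists nine lower bounds with attributions and references, and then moves on. So there is nothing to compare your proposal against except the original sources.

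As a high-level roadmap your sketch is reasonable for several items (Siegel's mean-value formula for (1), cyclotomic symmetry for (6), independent-set arguments for (7), division algebras for (8)), but a few attributions of method are off. For Vance~(5) the extra symmetry comes from the Hurwitz quaternions, not a cyclotomic ring $\mathbb{Z}[\zeta_m]$; this is precisely why the hypothesis is $4 \mid n$ rather than $\phi(m) \mid n$. For Klartag~(9), the mechanism is not a static convex-geometric inequality of Milman type layered on Rogers' averaging; the title of~\cite{Klartag} already signals the method---a \emph{stochastically evolving ellipsoid}, i.e.\ a random dynamical process on the space of lattices/ellipsoids whose analysis yields the $cn^2$ bound. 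Also, the Campos--Jenssen--Michelen--Sahasrabudhe packing~(7) is not a lattice packing, so ``forbidden-difference graph on a candidate lattice'' mislocates the argument; their input is a bound on independent sets in graphs with controlled codegree, applied to a hard-sphere model. Finally, your description of Ball~(3) via ``the difference body of a simplex and a polarization/symmetrization trick'' does not match Ball's actual argument in~\cite{Ball}. None of these are fatal for a survey-level sketch, but if you intend to present proofs rather than pointers, these items would need to be reworked.
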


Thus, the search for better -- maybe even sharp -- lower and upper bounds
for sphere packings in \textit{general} (and large) dimension $n$ is by
no means over, and should see more exciting developments in the years
ahead.
%}}}

%{{{1 Section A.10 - Conclusion: Cohn--Elkies and Viazovska
\subsection{Conclusion: Cohn--Elkies and Viazovska}

Having discussed (asymptotic) upper and lower bounds for the packing
density in all/large dimensions, as well as low-dimensional special cases
$(n=1,2,3$), we now conclude this section by mentioning the recent
success in determining $\Delta_{\R^n}$ for $n=8, 24$ -- once again using
the lattices $E_8$ and $\Lambda_{24}$, respectively. In these specific
dimensions, one uses linear programming bounds once again -- not on
spherical codes via Delsarte's methods, but directly on $\R^n$ via a 2003
result of Cohn and Elkies. To state it, we first recall that the
\textit{Fourier transform} of an $L^1$ map $f : \R^n \to \R$ is:
\[
\widehat{f}(y) := \int_{\R^n} f(x) e^{-2 \pi i \tangle{x,y}}\ dx.
\]

We also need the following notion.

\begin{definition}
An $L^1$ map $f : \R^n \to \R$ is said to be \textit{admissible} if there
exists $\delta \in (0,\infty)$ such that $|f(x)|$ and $|\widehat{f}(x)|$
are bounded above by a constant times $(1 + \|x\|)^{-n-\delta}$.
\end{definition}

Now Cohn and Elkies show:

\begin{theorem}[\cite{CE}]\label{TCE}
Suppose $f : \R^n \to \R$ is admissible, and $r>0$ is such that
\begin{enumerate}
\item $f(0), \widehat{f}(0) > 0$,
\item $f(x) \leq 0$ whenever $\|x\| \geq r$, and
\item $\widehat{f}(y) \geq 0$ for all $y \in \R^n$.
\end{enumerate}
Then one can upper bound the packing density of $\R^n$:
\begin{equation}\label{ECE}
\Delta_{\R^n} \cdot \frac{\widehat{f}(0)}{f(0)} \quad \leq \quad {\rm
Vol}_{\R^n}(B_{\R^n}(0,r/2)) = (r/2)^n \nu_n = (r/2)^n
\frac{\pi^{n/2}}{\Gamma(n/2+1)}.
\end{equation}

If moreover $\lambda_1(L) = r$ for some lattice $L$, then it attains the
packing bound,
\begin{equation}\label{Emagic}
\Delta_L = \Delta_{\R^n} = {\rm Vol}_{\R^n}(B_{\R^n}(0,r/2))
\frac{f(0)}{\widehat{f}(0)}
\end{equation}
if and only if $f \equiv 0$ on $L \setminus \{ 0 \}$ and
$\widehat{f} \equiv 0$ on $L^* \setminus \{ 0 \}$.
\end{theorem}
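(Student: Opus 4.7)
The plan is to prove this via the Poisson summation formula, first for lattice packings and then to extend to arbitrary packings by a density/approximation argument. Admissibility is the hypothesis we need to invoke Poisson summation, since the polynomial decay condition on $|f|$ and $|\widehat f|$ guarantees absolute convergence of the relevant lattice sums and their Fourier transforms. The sign conditions on $f$ and $\widehat f$ are calibrated precisely so that, after Poisson summing, the ``geometric'' side and the ``spectral'' side of the identity both turn into inequalities pointing the same way, and crushing them together yields~\eqref{ECE}.

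First I would treat a lattice packing. Let $L \subset \R^n$ be any lattice with $\lambda_1(L) \geq r$, so that the unit balls of radius $r/2$ centered at points of $L$ form a packing. By admissibility and Poisson summation, for every translate $v \in \R^n$,
\[
\sum_{x \in L} f(v + x) \;=\; \frac{1}{\mathrm{Vol}(\R^n/L)} \sum_{y \in L^*} \widehat f(y)\, e^{2\pi i \langle v, y\rangle}.
\]
Setting $v = 0$: on the left, the $x=0$ term contributes $f(0)$, and every other term satisfies $\| x\| \geq \lambda_1(L) \geq r$, hence $f(x) \leq 0$ by hypothesis~(2); so the left-hand side is $\leq f(0)$. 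On the right, since $\widehat f \geq 0$ by hypothesis~(3), the sum is $\geq \widehat f(0)$. Rearranging gives $\mathrm{Vol}(\R^n/L) \geq \widehat f(0)/f(0)$, and the packing density $\mathrm{Vol}(B_{\R^n}(0,r/2))/\mathrm{Vol}(\R^n/L)$ is bounded above by the right-hand side of~\eqref{ECE}.

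The main obstacle is the extension from lattice to arbitrary packings. The standard remedy is to first upgrade to \emph{periodic} packings, since one can show (via a Minkowski-style volume argument) that $\Delta_{\R^n}$ is the supremum of densities of periodic packings: lattices $L$ together with finitely many translates $v_1, \dots, v_N$ in a fundamental domain, with all pairwise distances between distinct centers $\geq r$. For such a configuration, apply the same Poisson summation trick to the ``autocorrelation'' function; explicitly,
\[
\sum_{j,k=1}^N \sum_{x \in L} f(v_j - v_k + x) \;=\; \frac{1}{\mathrm{Vol}(\R^n/L)} \sum_{y \in L^*} \widehat f(y) \left| \sum_{j=1}^N e^{2\pi i \langle v_j, y\rangle} \right|^2.
\]
The diagonal contribution $j=k$, $x=0$ on the left yields $Nf(0)$, while every other term corresponds to a difference of two distinct sphere-centers and hence has norm $\geq r$, so is non-positive by hypothesis~(2); the right-hand side is bounded below by the $y=0$ term, $N^2 \widehat f(0)/\mathrm{Vol}(\R^n/L)$. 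Dividing by $N$ gives the desired bound on the density $N \,\mathrm{Vol}(B_{\R^n}(0,r/2))/\mathrm{Vol}(\R^n/L)$, and taking the supremum over periodic packings yields~\eqref{ECE} for $\Delta_{\R^n}$ in general.

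For the final assertion, suppose some lattice $L$ has $\lambda_1(L) = r$ and attains equality~\eqref{Emagic}. Then both inequalities in the lattice argument above must be saturated. Saturation of $\sum_{x \in L} f(x) \leq f(0)$ combined with $f(x) \leq 0$ on the nonzero points of $L$ (all of norm $\geq r$) forces $f \equiv 0$ on $L \setminus \{0\}$. Saturation of $\sum_{y \in L^*} \widehat f(y) \geq \widehat f(0)$ combined with $\widehat f \geq 0$ similarly forces $\widehat f \equiv 0$ on $L^* \setminus \{0\}$. The converse is immediate: if these two vanishing conditions hold, then both chains of inequalities collapse to equalities, and~\eqref{Emagic} is attained. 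The content of the subsequent work of Viazovska (for $n=8$) and Cohn--Kumar--Miller--Radchenko--Viazovska (for $n=24$) is precisely the construction of such ``magic'' admissible functions $f$ tailored to $L = E_8$ and $L = \Lambda_{24}$.
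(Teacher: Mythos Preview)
Your proof is correct and follows the same Poisson-summation approach as the paper, with the same two-sided squeeze: bound the left side of the summation formula from above by $f(0)$ using hypothesis~(2), and the right side from below by $\widehat{f}(0)/\mathrm{Vol}(\R^n/L)$ using hypothesis~(3). Your treatment of the equality case is also identical to the paper's. The paper's proof is explicitly labeled a sketch \emph{for lattice packings only}, so your autocorrelation argument for periodic packings (summing over translates $v_j - v_k$ and bounding the exponential sum below by its $y=0$ term) goes beyond what the paper presents; this is the standard Cohn--Elkies extension and is correctly outlined.
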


Recall here that the \textit{dual lattice} $L^*$ is defined to be the
lattice either generated by the dual basis to a given basis of $L$, or
equivalently,
\[
L^* = \{ t \in \R^n \, : \, \tangle{x,t} \in \mathbb{Z}\ \forall x \in L
\}.
\]

\begin{remark}
Note that the volume on the right of \eqref{ECE} may be $\geq 1$, in
which case the bound is trivial. However, for fixed $r$ the $n$-volume
decreases to $0^+$ as $n \to \infty$, and so the bound is certainly
relevant for large $n$.
\end{remark}

The proof of Theorem~\ref{TCE} and the subsequent analysis use the
Poisson summation formula. Note that if $f$ is admissible, then $f,
\widehat{f}$ are both in $L^1$ and continuous. Now we have:

\begin{theorem}[Poisson summation]
Suppose $f : \R^n \to \R$ is admissible, and $L \subset \R^n$ is a
lattice. Then for every $v \in \R^n$,
\begin{equation}\label{Epoisson}
\sum_{x \in L} f(x+v) = \frac{1}{{\rm Vol}_{\R^n}(\R^n/L)} \sum_{t \in
L^*} e^{-2\pi i \tangle{v,t}} \widehat{f}(t),
\end{equation}
with both sides converging absolutely.
\end{theorem}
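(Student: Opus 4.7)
The plan is to prove Poisson summation via Fourier analysis on the compact quotient torus $T := \R^n/L$. First I would form the periodization
\[
F(v) := \sum_{x \in L} f(x+v),
\]
and verify it is well-defined and continuous. The admissibility bound $|f(y)| \leq C(1+\|y\|)^{-n-\delta}$ gives, for $v$ in any compact set, a uniform summable majorant on $L$ (by a standard integral comparison, $\sum_{x \in L} (1+\|x\|)^{-n-\delta} < \infty$). Hence $F$ is continuous, and clearly $F(v+y)=F(v)$ for every $y \in L$, so $F$ descends to a continuous function on $T$.

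Next I would expand $F$ in a Fourier series on $T$. The group of continuous characters of the compact abelian group $T$ is parametrized by $L^*$: the map $v \mapsto e^{2\pi i \tangle{v,t}}$ is $L$-invariant precisely when $\tangle{x,t} \in \mathbb{Z}$ for all $x \in L$, i.e.\ $t \in L^*$. Writing $V := {\rm Vol}_{\R^n}(\R^n/L)$, the Fourier coefficients of $F$ are
\[
c_t = \frac{1}{V} \int_{\R^n/L} F(v)\, e^{-2\pi i \tangle{v,t}}\, dv, \qquad t \in L^*.
\]
The key computation is the standard \emph{unfolding}: inserting the definition of $F$, using absolute convergence to swap sum and integral, translating the variable by $-x \in L$, and observing that $e^{-2\pi i \tangle{x,t}}=1$ for $x\in L$, $t\in L^*$, one collapses translates of a fundamental domain by $L$ into all of $\R^n$ and obtains
\[
c_t \;=\; \frac{1}{V} \int_{\R^n} f(u)\, e^{-2\pi i \tangle{u,t}}\, du \;=\; \frac{\widehat{f}(t)}{V}.
\]

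Finally I would invoke pointwise Fourier inversion on $T$, i.e.\ deduce $F(v) = \sum_{t \in L^*} c_t\, e^{2\pi i \tangle{v,t}}$. This is the main analytic obstacle, since for a general continuous function on a torus only $L^2$ convergence is automatic. Here the \emph{second half} of admissibility saves the day: $|\widehat{f}(t)| \leq C(1+\|t\|)^{-n-\delta}$ is summable over the lattice $L^*$, so $\sum_{t \in L^*} |\widehat{f}(t)| < \infty$, and the candidate Fourier series converges absolutely and uniformly to a continuous function on $T$; having the same Fourier coefficients as the continuous function $F$ forces them to agree pointwise. Substituting $c_t = \widehat{f}(t)/V$ gives
\[
\sum_{x \in L} f(x+v) \;=\; \frac{1}{V} \sum_{t \in L^*} \widehat{f}(t)\, e^{2\pi i \tangle{v,t}},
\]
and since $L^* = -L^*$ as a set, the sum is unchanged by $t \mapsto -t$, yielding the formula \eqref{Epoisson} with the sign convention stated. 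Absolute convergence of both sides has already been established en route, completing the proof.
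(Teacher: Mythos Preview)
Your proof is correct and is the standard argument via periodization and Fourier inversion on the torus $\R^n/L$; the use of both halves of the admissibility hypothesis (to get absolute convergence of the periodization and of the dual-lattice sum, respectively) is exactly right, as is the final $t\mapsto -t$ symmetry to match the paper's sign convention.

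There is nothing to compare your approach to: the paper does not actually prove this theorem. It merely states Poisson summation as a known tool, and then immediately uses it (in the subsequent proof environment) to sketch the Cohn--Elkies bound on $\Delta_{\R^n}$ for lattice packings. So you have supplied a proof that the paper omits.
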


\begin{proof}[Sketch of proof of Theorem~\ref{TCE} for lattice packings,
taken from \cite{Cohn}]
Let $L$ be any lattice in $\R^n$. Since any lattice packing of
$L$ by spheres of packing radius $r(L) = \lambda_1(L)/2$ has the same
packing density under rescaling the lattice and the spheres, let us
rescale $L$ such that $r(L) = r/2$. Then the lattice packing density is
\[
\Delta_L = \frac{{\rm Vol}_{\R^n}(B_{\R^n}(0,r/2))}{{\rm
Vol}_{\R^n}(\R^n/L)},
\]
so it suffices to show the \textbf{claim} that if $r(L) = r/2$ then $L$
has covolume at least $\widehat{f}(0)/f(0)$. To see this, apply Poisson
summation~\eqref{Epoisson} with $v=0$. As $\lambda_1(L) = r$, the
hypotheses give us that
(a)~the left side of~\eqref{Epoisson} is bounded above by $f(0)$, while
(b)~the right side is bounded below by the ratio of $\widehat{f}(0)$ and
the covolume of $L$.
This proves the claim.

For the final assertion, the preceding paragraph says that
\begin{equation}\label{Efinal}
f(0) \geq \sum_{x \in L} f(x) = \frac{1}{{\rm Vol}_{\R^n}(\R^n/L)}
\sum_{t \in L^*} \widehat{f}(t) \geq \frac{\widehat{f}(0)}{{\rm
Vol}_{\R^n}(\R^n/L)}.
\end{equation}
Hence (by the previous working,) \eqref{Emagic} holds if and only if the
extremal terms in~\eqref{Efinal} are equal, and this happens if and only
if both inequalities in~\eqref{Efinal} are equalities. This completes the
proof.
\end{proof}

This proof is remarkable, in that one ends up throwing out all but one
terms on both sides of the Poisson summation formula! So if this approach
is to yield a lattice that attains the packing density $\Delta_{\R^n}$
(and hence $\Delta_{\R^n}^{(L)}$), then a remarkably constrained function
$f$ would need to exist with all of the above properties. In~\cite{Cohn},
Cohn calls such an $f$ a \textit{magic function}.

Let us now get even \textit{more} restrictive. If the lattice achieving
this result is to be one of the two special lattices -- $L = E_8,
\Lambda_{24}$ -- then we also note that $L^* \cong L$! Moreover, $r =
\lambda_1(L)$, which is $\sqrt{2}$ for $E_8$ and $2$ for the Leech
lattice $\Lambda_{24}$, as mentioned in Section~\ref{Slattices}.

Thus, we would then need a magic function $f$ which satisfies the
hypotheses of Theorem~\ref{TCE} and all other constraints above; and
moreover, $f$ and $\widehat{f}$ vanish on all nonzero points in $E_8$ or
$\Lambda_{24}$. (Finding such an $f$ seems truly ``magical''!) Otherwise,
one has to try a completely different approach to attaining the packing
density.

Initial investigations did seem to be promising. For instance,
Cohn--Kumar showed~\cite{CK09} that in 24 dimensions, the packing density
is very close to the lattice packing density:
\[
\Delta_{\R^{24}}^{(L)} = \Delta_{\Lambda_{24}} \leq \Delta_{\R^{24}} \leq
\Delta_{\Lambda_{24}} \cdot (1 + 1.65 \cdot 10^{-30}).
\]

So, does such a magic function exist? The first answer came for $n=8$ --
in which case, it does exist! This was the main result of
Viazovska~\cite{Viazovska}, which she announced in an arXiv preprint on
``Pi Day 2016''.\footnote{Coincidentally, this date abbreviates to
$3/14/16$ -- perhaps the best date approximating $\pi$ this century, as
it gives $\pi$ rounded to four decimal places!}
The proof and the underlying magic function quickly got understood, and
within a week, Viazovska along with Cohn--Kumar--Miller--Radchenko posted
another paper~\cite{CKMRV1} where they found the analogous magic function
for the Leech lattice as well. These magic functions show:

\begin{theorem}[\cite{Viazovska,CKMRV1}]
For $n=8$, the packing density and lattice packing density agree, and
equal $\pi^4 / 384$ at the lattice $E_8$. For $n=24$, the same statement
holds, except that the lattice is $\Lambda_{24}$ and the density is
$\pi^{12} / 12!$.
\end{theorem}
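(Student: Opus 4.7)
The strategy is to invoke the Cohn--Elkies bound (Theorem~\ref{TCE}) for $n \in \{8, 24\}$ with a carefully constructed ``magic'' Schwartz function $f : \R^n \to \R$. Both $E_8$ and $\Lambda_{24}$ are self-dual unimodular even lattices, with $\lambda_1(E_8) = \sqrt{2}$ and $\lambda_1(\Lambda_{24}) = 2$. Thus if one can produce $f$ satisfying (a) $f$ is admissible with $f(0) = \widehat{f}(0) > 0$, (b) $\widehat{f} \geq 0$ everywhere on $\R^n$, (c) $f(x) \leq 0$ for $\|x\| \geq r$ (with $r = \sqrt{2}$ and $r = 2$ respectively), and (d) $f$ vanishes on $L \setminus \{0\}$ while $\widehat{f}$ vanishes on $L^* \setminus \{0\} = L \setminus \{0\}$, then the equality case~\eqref{Emagic} of Cohn--Elkies yields
\[
\Delta_{\R^n} \;=\; \Delta_L \;=\; (r/2)^n \nu_n,
\]
which evaluates to $\pi^4 / 384$ when $(n,r) = (8, \sqrt{2})$ and to $\pi^{12}/12!$ when $(n,r) = (24, 2)$. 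The matching lower bounds $\Delta_{\R^n} \geq \Delta_{L}$ are immediate from the fact that $L$ is itself a (lattice) packing, as noted in Remark~\ref{RE8kissing} and Section~\ref{Slattices}.

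The next step is to restrict attention to radial Schwartz functions $f(x) = \varphi(\|x\|)$ and exploit the eigenspace decomposition of the Fourier transform on such functions, writing $f = f_+ + f_-$ with $\widehat{f_+} = f_+$ and $\widehat{f_-} = -f_-$. Conditions~(c)--(d) then translate into two essentially independent vanishing patterns on concentric shells: since the nonzero norm-squares of vectors in $E_8$ (resp.\ $\Lambda_{24}$) form $\{2, 4, 6, \dots\}$ (resp.\ $\{4, 6, 8, \dots\}$), one needs $f_+$ and $f_-$ to each vanish (and to have double zeros, to avoid a sign change) at every radius $\sqrt{2k}$ for $k \geq 1$ (resp.\ $2\sqrt{k}$ for $k \geq 1$). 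The gain of the $(f_+, f_-)$ split is that each piece need only be constructed from a single modular object, since the Fourier-eigenfunction condition has been absorbed into the modular transformation law.

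The central -- and genuinely hard -- construction is to produce $f_+$ and $f_-$ as Laplace transforms along the imaginary axis of specific (quasi)modular forms for a congruence subgroup of $SL_2(\mathbb{Z})$ of carefully chosen weight. The weight is fixed by requiring that the modular transformation $\tau \mapsto -1/\tau$ become (after a contour shift and the Fourier-inversion identity for Gaussians in $\R^n$) the eigenfunction condition $\widehat{f_\pm} = \pm f_\pm$, while the $q$-expansions at the cusps are engineered to encode the prescribed double vanishing at each $\sqrt{2k}$. For $n = 8$, Viazovska identifies the correct ingredients from the quasimodular Eisenstein series $E_2$ together with $E_4, E_6$ and Jacobi theta constants; for $n = 24$, analogous forms of higher weight are used. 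Once the vanishing and eigenfunction conditions are arranged, one still must verify the sign conditions $\widehat{f} \geq 0$ everywhere and $f \leq 0$ outside $B_{\R^n}(0, r)$. These are not at all automatic from the modular construction and require delicate estimates on coefficient sums and on integral representations of $f_\pm$ near and beyond the first lattice shell.

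The principal obstacle I anticipate is precisely this sign verification: one is simultaneously imposing four constraints (two vanishing patterns and two sign conditions) on the outputs of a tight algebraic recipe, and success relies on numerical coincidences that appear to be specific to the extremal even unimodular lattices $E_8$ and $\Lambda_{24}$ (and to the corresponding weights). Once the magic function is in hand, the packing density computation is a mechanical evaluation of $(r/2)^n \nu_n$, with the volume formula $\nu_n = \pi^{n/2}/\Gamma(n/2+1)$ yielding $\pi^4/384$ and $\pi^{12}/12!$ as claimed.
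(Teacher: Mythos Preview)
Your proposal is correct and in fact considerably more detailed than the paper's own treatment: the paper does not give a proof of this theorem at all, but merely sets up the Cohn--Elkies framework (Theorem~\ref{TCE} and the ensuing discussion of ``magic functions''), states the theorem, and then remarks that ``Viazovska (et al) used the theory of modular forms to come up with the relevant magic functions.'' Your sketch goes further by outlining the radial eigenfunction decomposition $f = f_+ + f_-$, the Laplace-transform construction from quasimodular forms, and the sign-verification step --- all of which are faithful to the actual arguments in \cite{Viazovska,CKMRV1}.

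One small slip: for $\Lambda_{24}$ you first (correctly) list the nonzero norm-squares as $\{4,6,8,\dots\}$, but then parameterize the vanishing radii as $2\sqrt{k}$ for $k \geq 1$, which gives norm-squares $4,8,12,\dots$ instead. The correct parameterization is $\sqrt{2k}$ for $k \geq 2$, matching the even-lattice structure with no roots. This does not affect the validity of your strategy.
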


In these works, Viazovska (et al) used the theory of modular forms to
come up with the relevant magic functions. This was followed by the 2022 work of
Cohn--Kumar--Miller--Radchenko--Viazovska \cite{CKMRV2}, where they also
showed that the lattices $E_8$ and $\Lambda_{24}$ in fact minimized
energy for every potential function that is a completely monotonic
function of $\|x\|^2$ -- such a phenomenon was previously known to hold
only for $n=1$. In particular, this also generalizes the optimality of
the sphere packing densities on these lattices. For additional background
and details, we refer the reader to two articles by Cohn. The first is
his beautiful account~\cite{Cohn} of Viazovska's work and its recent
predecessors. The second consists of his lecture
notes~\cite{Cohn-lectures}, which address not only sphere packing and
kissing numbers, but also spherical harmonics, energy minimization, and
more.
%}}}

\subsection*{Acknowledgments}

I firstly thank Gadadhar Misra for asking me to write a survey on
Schoenberg's theorem, and also for his patience during my writing the
survey and then expanding it to include the Appendix.
I thank Mihai Putinar for many enriching additions and references,
E.K.~Narayanan for useful conversations about spherical harmonics,
and Dominique Guillot for going through an early draft in detail and
offering numerous helpful suggestions that improved the exposition.
I thank Sujit Sakharam Damase for doing so as well, and for valuable
discussions about Sections~\ref{Sposdef}, \ref{SDP}, and
Appendix~\ref{Asphere}. In particular, the Appendix would not have been
written without his enthusiasm for sphere packings and Gegenbauer (and
other orthogonal families of) polynomials, perhaps owing to his work with
James Pascoe~\cite{DP}.

It is also worth pointing out that both positivity and its preservers, as
well as sphere packings, are areas of mathematics with numerous
connections to other fields and extensive research over more than a
century, and to do justice to either in a few pages is not possible or
even reasonable. In particular, any omissions and errors are mine.

%{{{1 Bibliography

%}}}

\end{document}